	\renewcommand{\div}{\textup{div}\,}
    \renewcommand{\phi}{\varphi}
    \renewcommand{\epsilon}{\varepsilon}
    \renewcommand{\P}{\mathsf{P}}
    \newcommand	{\eins}	{\mathbbm{1}}   
	\newcommand	{\norm}[1]	{\left\lVert#1\right\rVert}
	\newcommand	{\abs}[1]		{\left\lvert#1\right\rvert}
	\newcommand {\ent}	{\mathsf{Ent}}
\newcommand{\Cpl}{ \mathsf{Cpl}}
\newcommand{\Q}{ \mathsf{Q}}
\newcommand{\q}{ \mathsf{q}}
\newcommand{\sq}{ \mathsf{sq}}
\newcommand{\Poi}{\mathsf{Poi}}
\newcommand{\Leb}{\mathsf{Leb}}
\newcommand{\pr}{\mathrm{pr}}
\newcommand{\Semi}{\mathsf{S}}
\newcommand{\U}{\mathsf U}
\newcommand{\s}{ \mathsf{s}}
\newcommand{\SQ}{ \mathsf{S}}
\newcommand{\x}{\bar\tau }
\newcommand{\wint}{\mathcal{W}^{int}}
\newcommand{\PR}{\mathrm{Pr}^{\Gamma}}
\renewcommand{\j}{ \mathsf{gap}}
\newcommand{\g}{ \mathsf{gain}}
	\DeclareMathOperator	{\IE}			{\mathbb{E}} 
	\DeclareMathOperator	{\IN}			{\mathbb{N}}
	\DeclareMathOperator	{\IP}			{\mathbb{P}}
	\DeclareMathOperator	{\IR}			{\mathbb{R}}
	\DeclareMathOperator	{\IZ}			{\mathbb{Z}}
	\DeclareMathOperator	{\supp}			{supp}
\DeclareMathOperator	{\paste}			{\mathsf{paste}}
	\theoremstyle{plain}
\newtheorem{thm}			{Theorem}[section]
\newtheorem{lem}	[thm]	{Lemma}
\newtheorem{cor}	[thm]	{Corollary}
\newtheorem{prop}	[thm]	{Proposition}
\theoremstyle{definition}
\newtheorem{defi}	[thm]	{Definition}
\newtheorem{ex}	    [thm]	{Example}
\newtheorem{rem}	[thm]	{Remark}
\newtheorem{assumption}[thm]{Assumption}
\numberwithin{equation}{section}
\begin{document}
\title[Gap metrics for stationary point processes]{Gap metrics for stationary point processes and quantitative convexity of the free energy}

\author[M. Huesmann]{Martin Huesmann}
\address{M.H.: Universit\"at M\"unster, Germany}
\email{martin.huesmann@uni-muenster.de}
\author[B. Müller]{Bastian Müller}
\address{B.M.: Universit\"at M\"unster, Germany   }

\thanks{MH and BM are funded by the Deutsche Forschungsgemeinschaft (DFG, German Research Foundation) under Germany's Excellence Strategy EXC 2044 -390685587, Mathematics M\"unster: Dynamics--Geometry--Structure and   by the DFG through the SPP 2265 {\it Random Geometric Systems. }}

\begin{abstract}
In this article, we are interested in convexity properties of the free energy for stationary point processes on $\IR$ w.r.t.\ a new geometry inspired by optimal transport. We will show for a rich class of pairwise interaction energies
\begin{itemize}
    \item[A] quantified strict convexity of the free energy implying uniqueness of minimizers
    \item[B] existence of a gradient flow curve of the free energy w.r.t.\ the new metric converging exponentially fast to the unique minimizer.
\end{itemize}
Examples for energies for which A holds include logarithmic or Riesz interactions with parameter $0<s<1$, examples for which A and B hold are hypersingular Riesz or Yukawa interactions.
\end{abstract}

\date{\today}
\maketitle

\maketitle

\section{Introduction}
Let $\xi$ be a stationary point process on $\IR$ with distribution $\P.$ For an even interaction potential $\varphi:\IR\to\IR$ and $\Lambda_n=[-n/2,n/2]$ we put 
$$H_n(\xi)=\frac12 \sum_{x,y\in \Lambda_n\cap \xi, x\neq y} \varphi(x-y).$$
If $\varphi$ is stable (see \eqref{eq:stable}) and $\int \xi(\Lambda_1)^2 \P(d\xi)<\infty$ the internal energy
$$\mathcal W^{int}(\P)=\lim_n \frac1n \int H_n(\xi) \P(d\xi)$$
exists encoding the interaction energy between the points of the point process $\xi$. Let $\Poi$ denote the law of a unit intensity homogeneous Poisson point process on $\IR$. Denote the restriction of $\xi$ to a set $B$ by $\xi_B$ and the  law of $\xi_B$ by $\P_B$. The specific entropy of $\P$ is defined as
$$\mathcal E(P)=\sup_{n\geq 1}\frac1n \ent(\P_{\Lambda_n}|\Poi_{\Lambda_n}),$$
where $\ent(\mu|\nu)$ is the relative entropy of $\mu$ w.r.t.\ $\nu$. Denoting the inverse temperature by $\beta$ we define the free energy of $\P$ by 
$$\mathcal F_\beta(\P)=\beta \mathcal W^{int}(\P)+ \mathcal E(\P).$$
The free energy is an important quantity in statistical mechanics carrying precious information about the system. For instance, the free energy appears as a rate function in large deviation principles in \cite{georgii2} (resp.\ \cite{LeSe17}) for Gibbs measures with superstable and regular interaction (resp. for log- and Riesz gases). As a consequence, Gibbs measures are minimizer of the free energy, an instance of the Gibbs variational principle.

In this article we are interested in analytical properties of the free energy w.r.t.\ a well chosen geometry that is able to pick up convexity properties of $\mathcal F_\beta$ that cannot be seen using the induced vector space  structure of the space of probability measures.
To this end, we assume that the even potential $\varphi$ is twice continuously differentiable on $(0,\infty)$, superstable and regular (see Section \ref{sec:Interaction_energy} for a precise definition). Moreover, put $h_n(x)=\mathsf{Leb}(\Lambda_n\cap(\Lambda_n-x))$ and $g_n=\varphi h_n.$ Assume that there is a continuous $f:(0,\infty)\to (0,\infty)$ such that 
$$g_n''(x)\geq n f(x) \text{ for all } n\in \Lambda_n\setminus \{0\}.$$
A guiding example satisfying these assumptions is the Riesz interaction potential $\varphi(x)=|x|^{-s}$ for $s>1$, see Example \ref{example:riesz}.
We call a continuous curve of probability measures $(\P_t)_{t\in[0,1]}$ an interpolation between $\P_0$ and $\P_1$.
Then, our first main result states
\begin{thm}\label{thm:intro1}
Let $\P_0,\P_1$ be the laws of two stationary point processes with unit intensity and $\mathcal F_\beta(\P_i)<\infty$ for $i=0,1.$ Then, there is an interpolation $(\P_t)_{t\in[0,1]}$ between $\P_0$ and $\P_1$ and a coupling $\Q^0$ between the Palm measures of $\P_0$ and $\P_1$ such that 
\begin{align}\label{eq:intro1}  
    \mathcal F_{\beta}(\P_{t})\leq  (1-t)\mathcal F_{\beta}(\P_{0})+t\mathcal F_{\beta}(\P_{1})-\frac{(1-t)t\beta}{2}\int \sum_{(0,0)\neq (x,y)\in \bar\xi} (x-y)^2\inf_{z\in [x,y]}f(\abs{z})\Q^0(d\bar\xi),
    \end{align}
\end{thm}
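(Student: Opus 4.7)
The plan is to construct the interpolation $(\P_t)_{t\in[0,1]}$ as a displacement-type interpolation at the level of Palm measures, and then to apply a quantitative convexity estimate to each $g_n$ term-by-term before passing to the thermodynamic limit. Concretely, I would take $\Q^0$ to be the one-dimensional monotone (rank) coupling between the Palm measures $\P_0^0$ and $\P_1^0$: order the points of each sample configuration and match them by rank, with the two origins paired together. A sample of $\Q^0$ is then realised as a point configuration $\bar\xi\subset\IR^2$ containing $(0,0)$, and I set $\P_t$ to be the stationarization of the law of $\{(1-t)x+ty:(x,y)\in\bar\xi\}$. The crucial property of the rank coupling is that matched pairs $(x,y)\neq(0,0)$ have $x$ and $y$ of the same sign, so the segment $\{(1-t)x+ty:t\in[0,1]\}$ stays in one of the half-lines, and in particular in the domain of $\varphi$. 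One checks that $\P_t$ is stationary with unit intensity, that $t\mapsto\P_t$ is continuous, and that the endpoints are recovered.

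For the internal energy, the Campbell--Mecke formula together with the construction of $\P_t$ gives
\[
 \int H_n(\xi)\,\P_t(d\xi)=\frac12\int\sum_{(0,0)\neq(x,y)\in\bar\xi}g_n\bigl((1-t)x+ty\bigr)\,\Q^0(d\bar\xi),
\]
and the analogous identities for $\P_0$ and $\P_1$, with $g_n(x)$ and $g_n(y)$ in place of $g_n((1-t)x+ty)$. Integrating the hypothesis $g_n''(z)\geq n f(|z|)$ twice along the segment $[x,y]\subset\IR\setminus\{0\}$ yields the pointwise quantified-convexity estimate
\[
 g_n((1-t)x+ty)\leq (1-t)g_n(x)+t\,g_n(y)-\frac{n\,t(1-t)}{2}(y-x)^2\inf_{z\in[x,y]}f(|z|).
\]
Summing over $(x,y)\in\bar\xi\setminus\{(0,0)\}$, integrating against $\Q^0$, dividing by $n$, and passing to the limit $n\to\infty$ produces the desired semi-convexity of $\wint$ along $(\P_t)$, with defect given by $(y-x)^2\inf f(|z|)$ integrated against the coupling; multiplication by $\beta$ accounts for the inverse temperature.

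For the specific entropy I would invoke its convexity along this displacement interpolation, the stationary-point-process analog of McCann's classical displacement convexity of entropy, yielding $\mathcal E(\P_t)\leq (1-t)\mathcal E(\P_0)+t\mathcal E(\P_1)$. Adding this to the weighted internal-energy bound produces \eqref{eq:intro1}. The main obstacle I expect is the rigorous treatment of the infinite-volume displacement interpolation for stationary point processes: showing that the monotone coupling of Palm measures admits a crossing-free pairing, that its pushforward under affine interpolation of points has a well-defined unit-intensity stationarization $\P_t$, and that $\P_t^0$ coincides with this pushforward, so that the Palm identity above is justified. Interchanging the limit $n\to\infty$ with the integration against $\Q^0$ needs superstability and regularity of $\varphi$ to supply uniform integrability, and the displacement convexity of specific entropy has to be established by adapting McCann-type arguments to the stationary Palm-coupling framework.
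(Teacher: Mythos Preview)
Your skeleton is the paper's: represent $\int H_n\,d\P_t$ via the Palm measure and $g_n$, exploit $g_n''\ge nf$ pointwise along the monotone matching, divide by $n$ and pass to the limit for $\wint$, then add the entropy convexity. But two genuine gaps remain.

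First, ``the monotone rank coupling'' does not pick out a probability measure $\Q^0$ on pairs of configurations. Rank-matching tells you how to pair points once you are given a joint sample $(\xi_0,\xi_1)$; it does not tell you which joint law on $(\xi_0,\xi_1)$ to use. Any element of $\Cpl_{s,m}(\P_0,\P_1)$ (in the paper's notation) gives the energy estimate you wrote, so the energy part is fine for \emph{every} monotone stationary coupling. The issue is the next point.

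Second, the entropy step is the heart of the matter and cannot be ``invoked''. Displacement convexity of the specific entropy along this interpolation is \emph{not} known for an arbitrary monotone coupling; the paper proves it only for a particular $\Q^0$ obtained by a limiting procedure. Concretely, the paper passes to gap coordinates, uses the identity $\mathcal E(\P)=\mathcal E^*(\j(\P))$, takes for each $n$ the $\mathcal C_p$-optimal coupling of the first $n$ gaps (for which McCann's theorem on $\IR^n$ gives displacement convexity of $\ent(\cdot\mid\Leb_{\IR^n})$, hence of $\ent(\cdot\mid\gamma^{\otimes n})$ after the change of reference), stationarizes, and extracts a weak limit $\U$; the limiting coupling $\Q^0=\iota^{-1}(\U)$ is the one along which both the entropy and the energy are convex. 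Your proposal never produces such a coupling, and there is no general McCann-type theorem for specific entropy of stationary point processes to fall back on.

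A further point: as stated, Theorem~\ref{thm:intro1} does not assume $\mathscr W_p(\P_0,\P_1)<\infty$, so you also need the approximation step the paper carries out (replace each $\P_i$ by finite-energy processes with finite $p$-th gap moment, apply the finite-distance result, and pass to the limit using $\mathcal T_{\mathcal L}$-compactness of sublevel sets and lower semicontinuity of $\mathcal F_\beta$). Your outline does not address this, and without it the interpolation and the coupling need not even be comparable in $\mathscr W_p$.
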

Note that unless $\P_0=\P_1$ the last term in \eqref{eq:intro1} is strictly positive such that we immediately obtain the uniqueness of minimizers of the free energy. Going back to the example of the Riesz interaction, the last theorem together with \cite{georgii2} imply a variational characterisation of the hypersingular Riesz gas, the Gibbsian point process with pairwise interaction induced by $\varphi(x)=|x|^{-s}$ for $s>1$. 

Importantly, the measure $\Q^0$ and the interpolation $(\P_t)_{t\in [0,1]}$ are tightly linked. In fact, the interpolation is induced by the coupling $\Q^0$ between the Palm measures $\P_0^0$ and $\P_1^0$ of $\P_0$ and $\P_1$. Let us explain this in some detail. We use a particular structure of one-dimensional simple point configurations $\xi$ with $0\in \xi$. Firstly, we can uniquely number the points of $\xi$ as $(\tau_i(\xi))_{i\in\mathbb Z}$ with $\tau_i(\xi)\leq \tau_{i+1}(\xi)$ for all $i$ and $\tau_0(\xi)=0$. Then, we can make a bijective change of coordinates going from an increasing sequence of points $(\tau_i(\xi))_i$ with $\tau_0(\xi)=0$ to the sequence of gaps $\s=(\s_i)_i$ of $\xi$ with $\s_i=\tau_i(\xi)-\tau_{i-1}(\xi)$. This allows us to switch between the Palm measure $\P^0$ of a stationary point process $\P$ and the distribution of its gaps, denoted by $\Pi=\j(\P)$ (see Theorem \ref{thm:correspondence} for details). For example the gap process of a Poisson point process is the infinite product of the distribution of exponential random variables. 

Lifting this correspondence between gaps and point configurations to the level of couplings, the coupling $\Q^0$ between $\P_0^0$ and $\P_1^0$ induces a coupling $\mathsf U$ between its gap distributions $\Pi_0$ and $\Pi_1$. 
For $(\s,\s')\in \supp \mathsf U$ and $t\in [0,1]$ we define the map $T_t:(\s,\s')\mapsto \s^t$ with $\s^t_i:=(1-t)\s_i + t\s'_i$ and the interpolating gap distribution $\Pi_t=(T_t)_\#\mathsf U$ as the pushforward of $\U$ by $T_t$. The point process  $\P_t$ corresponding to $\Pi_t$ is precisely the interpolating point process $\P_t$ from \eqref{eq:intro1}. In words, $\Q^0$ induces a coupling of gap distributions. If two sequences of gaps are coupled, we obtain an interpolation by linearly interpolating between the i-th gaps for each $i\in\IZ.$

Clearly, not every interpolation of this form will satisfy \eqref{eq:intro1}. We have to choose a particular $\Q^0$. To understand how to do that, let us consider the special case $\phi\equiv 0$, i.e.\ $\mathcal F_\beta=\mathcal E$ the specific entropy.

The specific entropy has a beautiful representation in the gap coordinates. Recall that the distribution of gaps of an intensity one Poisson point process is given by the product of rate one exponential distributions (denoted by $\gamma$). Then, we have
$$\mathcal E(\P)=\mathcal E^*(\Pi):=\sup_{n\geq 1}\frac1n\ent(\Pi^{1,n}|\gamma^{\otimes_{i=1}^n}),$$
where $\Pi=\j(\P)$ and  $\Pi^{1,n}$ denotes the pushforward of $\Pi$ under the map $\s\mapsto (\s_1,\ldots,\s_n)$. It is well known that the relative entropy $\ent$ is convex w.r.t.\ the displacement interpolation from optimal transport, i.e.\ it is convex along $\ell^p$-Wasserstein geodesics, see Section \ref{sec:definitions}. Following the strategy of \cite{erbar2023optimal}, we define a metric between stationary point processes as follows. Let $\Pi_0, \Pi_1$ be two distributions of gaps, i.e.\ probability measures on the space of non-negative bi-infinite sequences $\SQ=\{\s=(\s_i)_{i\in\IZ}, \s_i\geq 0\}$ which are stationary w.r.t.\ the natural shift. Recall that they correspond to laws of stationary point processes. Then, we put
\begin{align}\label{eq:gapmetric intro}
   \mathscr W_{gap,p}^p(\Pi_0,\Pi_1):=\inf_{\mathsf U\in \Cpl_s(\Pi_0,\Pi_1)}\int \abs{\s_1-\s'_1}^p\U(d\s,d\s'),
\end{align}
where $\Cpl_s(\Pi_0,\Pi_1)$ denotes the set of couplings of $\Pi_0$ and $\Pi_1$ which are stationary under diagonal shifts (cf.\ Definition \ref{def:cpls}). It is not hard to see that $\mathscr W_{gap,p}$ is indeed a geodesic metric on the space of stationary gap distributions inducing a corresponding metric on the space of stationary point processes, see Theorem \ref{thm:equivalent_cost}.
Similar to classical Wasserstein metrics, finiteness of $\mathscr W_{gap,p}$ is induced by finite $p$-th moments of the first gap, i.e.\ $\s_1$ under $\Pi_0$ and $\Pi_1$.

With this language we can now give another interpretation of the interpolation in equation \eqref{eq:intro1}. In fact, under the additional assumption that $\mathscr W_{gap,p}(\j(\P_0),\j(\P_1))<\infty$ the interpolation $(\P_t)_t$ is precisely given by a geodesic w.r.t.\ the gap metric $\mathscr W_{gap,p}$, i.e.\ the coupling $\Q^0$ is induced by an optimal $\U$ in the r.h.s.\ of \eqref{eq:gapmetric intro} via the change of coordinates between gaps and ordered points described above.
Hence, Theorem \ref{thm:intro1} says that on the space of stationary point processes equipped with the gap metric, the free energy is strictly geodesically convex with an explicit positive gain in convexity.

To  exploit this strict convexity it is desirable  to relate the gain 
$$\int \sum_{(0,0)\neq (x,y)\in \bar\xi} (x-y)^2\inf_{z\in [x,y]}f(\abs{z})\Q^0(d\bar\xi) $$
to the distance of $\P_0$ and $\P_1$ in the gap metric $\mathscr W_{gap,p}$. In fact, the contribution $(x-y)^2$ of the first positive pair $(x,y)$ precisely corresponds to the difference of the first gaps to the power of $2$. However, it is weighted by $\inf_{z\in [x,y]}f(\abs{z})$ which for instance in the case of Riesz interaction is not bounded from below. 
Hence, it seems to be difficult to relate the gain to $\mathscr W_{gap,2}(\Pi_0,\Pi_1)$. However, using Hölder's inequality, we can relate it to $\mathscr W_{gap,p}(\Pi_0,\Pi_1)$ for $1<p<2$. To obtain a uniform control we have to restrict the space of stationary probability measures we consider slightly further. For the case of the Riesz interaction we can for instance consider
$$X^a_p=\left\{\P \text{ stationary, intensity one} : \int\s_1^{\frac{p(2+s)}{2-p}} \j(\P)(d\s)\leq a \left(\frac{s(s+1)}{2}\right)^{p/(2-p)} \right\},$$
which gives a uniform control on certain moments of the first gap. Then we have the following result (for the general version see Proposition \ref{prop:lambda_conv}):
\begin{prop}\label{prop:intro}
    Let $1<p<2$ and $\varphi(x)=|x|^{-s}$. Then, $(X^a_p,\mathscr W_{gap,p})$ is a complete geodesic metric space. The free energy $\mathcal F_\beta$ is weakly $\lambda$-geodesically convex on $(X^a_p,\mathscr W_{gap,p})$ with $\lambda=\beta a^{-\frac{2-p}{p}}>0$, i.e.\ for $\P_0,\P_1\in X^a_p$ there is a constant speed geodesic $(\P_t)_{t\in[0,1]}$ such that for any $0\leq t \leq 1$
    \begin{align*}
        \mathcal F_\beta(\P_t)\leq (1-t) \mathcal F_\beta(\P_0)+t \mathcal F_\beta(\P_1)-\lambda \frac{t(1-t)}{2} \mathscr W_p^2(\P_0,\P_1).
    \end{align*}
\end{prop}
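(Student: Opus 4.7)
My plan is to deduce Proposition~\ref{prop:intro} from Theorem~\ref{thm:intro1} via a Hölder-type inequality that converts the degenerate $f$-weighted convexity gain into a uniform multiple of $\mathscr W_{gap,p}^2$. The role of the set $X^a_p$ is precisely to supply the moment bound needed for this Hölder step.

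\emph{Step 1 (Metric space structure).} Let $\P_0,\P_1\in X^a_p$ and let $\U$ be an optimal stationary coupling of the gap distributions $\j(\P_0)$ and $\j(\P_1)$ for $\mathscr W_{gap,p}$. The induced interpolant $\Pi_t=(T_t)_\#\U$ has first gap $\s_1^t=(1-t)\s_1+t\s'_1$, and convexity of $x\mapsto x^{p(2+s)/(2-p)}$ gives
$$\int(\s_1^t)^{p(2+s)/(2-p)}\Pi_t(d\s)\;\leq\;(1-t)\int \s_1^{p(2+s)/(2-p)}\j(\P_0)(d\s)+t\int\s_1^{p(2+s)/(2-p)}\j(\P_1)(d\s),$$
so $\P_t\in X^a_p$ and the geodesic stays inside. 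Completeness then follows from lower semicontinuity of this moment under $\mathscr W_{gap,p}$-convergence, combined with completeness of the ambient space of stationary gap distributions with finite $p$-th first gap moment.

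\emph{Step 2 (Convexity gain).} I would then apply Theorem~\ref{thm:intro1} to the coupling $\Q^0$ induced by $\U$. For $\varphi(x)=|x|^{-s}$, Example~\ref{example:riesz} gives $f(x)=\tfrac{s(s+1)}{2}x^{-s-2}$, which is decreasing; retaining only the contribution of the first positive pair $(x,y)=(\s_1,\s'_1)$ in the sum over $\bar\xi$ bounds the gain term in \eqref{eq:intro1} from below by
$$\frac{s(s+1)}{2}\int(\s_1-\s'_1)^2\max(\s_1,\s'_1)^{-(s+2)}\,\U(d\s,d\s').$$

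\emph{Step 3 (Hölder and conclusion).} The decisive estimate is Hölder's inequality with conjugate exponents $2/p$ and $2/(2-p)$:
$$\int|\s_1-\s'_1|^p\U(d\s,d\s')\;\leq\;\left(\int(\s_1-\s'_1)^2\max(\s_1,\s'_1)^{-(s+2)}\U(d\s,d\s')\right)^{\!p/2}\!\!\left(\int\max(\s_1,\s'_1)^{p(s+2)/(2-p)}\U(d\s,d\s')\right)^{\!(2-p)/2}.$$
Since $\U$ is optimal, the left-hand side equals $\mathscr W_{gap,p}^p(\Pi_0,\Pi_1)$, and the second factor on the right is controlled by $\max(\s_1,\s'_1)^q\leq\s_1^q+(\s'_1)^q$ together with the defining moment bound of $X^a_p$. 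Rearranging and combining with Step~2 yields a lower bound of the gain by a constant multiple of $a^{-(2-p)/p}\mathscr W_{gap,p}^2(\Pi_0,\Pi_1)$; inserting this into Theorem~\ref{thm:intro1} produces the weak $\lambda$-geodesic convexity inequality with $\lambda$ proportional to $\beta a^{-(2-p)/p}$.

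\emph{Main obstacle.} The principal difficulty is the vanishing of $f$ at infinity: for Riesz interactions the pointwise gain from Theorem~\ref{thm:intro1} is locally much stronger than $\mathscr W_{gap,2}^2$ but degenerates on configurations with large gaps, and cannot be compared directly to any global Wasserstein distance. The exponent $p(2+s)/(2-p)$ in the definition of $X^a_p$ is fixed exactly by the Hölder pairing with $x^{-(s+2)}$, and this explains both why one must pass from $p=2$ to $1<p<2$ and why the convexity constant $\lambda$ degrades with $a$.
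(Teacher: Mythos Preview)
Your approach is essentially the paper's: the general version (Proposition~\ref{prop:lambda_conv}, via Corollary~\ref{cor:lambda_conv}) applies exactly your H\"older inequality with exponents $2/p$ and $2/(2-p)$ to relate $\mathscr W_{gap,p}^2$ to the first-gap contribution $\int(\s_1-\s'_1)^2\inf_{z\in[\s_1,\s'_1]}f(|z|)\,\U(d\s,d\s')$ of the gain in Theorem~\ref{thm:interaction_gain}, and the geodesic and completeness arguments in your Step~1 match the proof of Proposition~\ref{prop:lambda_conv}. The only difference is in bounding the moment factor: you use $\max(\s_1,\s'_1)^q\le\s_1^q+(\s'_1)^q$, which costs a factor $2^{(2-p)/p}$ and yields only $\lambda$ proportional to $\beta a^{-(2-p)/p}$, whereas the paper bounds $\int\sup_{z\in[\s_1,\s'_1]}f^{p/(p-2)}(|z|)\,\U$ directly by $\max\bigl(\int f^{p/(p-2)}\,d\Pi_0,\int f^{p/(p-2)}\,d\Pi_1\bigr)\le a$ via convexity of $f^{p/(p-2)}$ (see~\eqref{eq:finite_dist}) to obtain the stated constant.
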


As a consequence of $\lambda$-convexity we can use the theory of gradient flows in metric spaces (cf.\ Sections \ref{sec:evihwi} and \ref{sec:lambda} for the necessary definitions) to obtain the following result stated for the special case of Riesz interaction (see Theorem \ref{thm:exist max slope} for the general version)

\begin{thm}\label{thm:intro2}
  Consider $\varphi(x)=|x|^{-s}$ for $s>1$ and the metric space $( X^{a}_{p},\mathscr W_{gap,p})$.
Every $\P\in X^{a}_{p}$ with  $\mathcal F_{\beta}(\P)<\infty$    is the starting point of a curve of maximal slope
for $\mathcal F_{\beta}$ with respect to the local slope $\abs{\partial \mathcal F_{\beta}}$, given by  \[
    \abs{\partial \mathcal F_{\beta}}(\P):=\limsup_{X^{a}_{p}\ni \P'\to\P}\frac{(\mathcal F_{\beta}(\P)-\mathcal F_{\beta}(\P'))^+}{\mathscr W_{gap,p}(\j(\P'),\j(\P))}.
    \] 
    Moreover, such a curve $(\P_t)_{t>0}$ satisfies the energy identity\begin{align}
    \frac{1}{2}\int_0^T\abs{\P'}^2(t)+\frac{1}{2}\int_0^T\abs{\partial \mathcal F_{\beta}}^2(\P(t))dt+\mathcal F_{\beta}(\P(T))=\mathcal F_{\beta}(\P(0)),\quad T>0.
\end{align}  
\end{thm}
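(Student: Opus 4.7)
The plan is to derive Theorem \ref{thm:intro2} by invoking the abstract theory of gradient flows in metric spaces for $\lambda$-geodesically convex functionals, applied to $\mathcal F_\beta$ on $(X^a_p,\mathscr W_{gap,p})$. The key point is that Proposition \ref{prop:intro} already supplies the two most delicate ingredients: the space is a complete geodesic metric space, and the functional is $\lambda$-geodesically convex with $\lambda>0$. Once the remaining structural hypotheses are checked, the statement becomes a direct application of the Ambrosio--Gigli--Savar\'e machinery.

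First, I would collect the remaining structural hypotheses. Properness of $\mathcal F_\beta$ is immediate from the assumption $\mathcal F_\beta(\P)<\infty$ on the initial datum. For lower semicontinuity, I would split $\mathcal F_\beta=\beta\mathcal W^{int}+\mathcal E$ and treat each piece separately on $(X^a_p,\mathscr W_{gap,p})$: the specific entropy $\mathcal E$ is lower semicontinuous with respect to local weak convergence of point processes, which is implied by $\mathscr W_{gap,p}$-convergence thanks to the uniform moment bound encoded in the definition of $X^a_p$; the internal energy $\mathcal W^{int}$ is then handled by the same moment control, using that $\varphi(x)=|x|^{-s}$ is controlled by suitable powers of $\s_1$.

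Second, I would recall that for a proper, lsc, $\lambda$-geodesically convex functional on a complete geodesic metric space, the local slope $\abs{\partial\mathcal F_\beta}$ as defined in the statement coincides with the descending/relaxed slope and is automatically a strong upper gradient for $\mathcal F_\beta$. Given a strong upper gradient, the minimizing movement (JKO) scheme, started at any $\P\in X^a_p$ with $\mathcal F_\beta(\P)<\infty$, produces in the limit $\tau\downarrow 0$ a locally absolutely continuous curve $(\P_t)_{t>0}$ in $X^a_p$ which is a curve of maximal slope for $\mathcal F_\beta$. The energy identity then follows from the strong upper gradient property together with absolute continuity of the curve, in the usual way.

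The principal technical obstacle is not the abstract step but the verification that the minimizing movement scheme, and therefore its limit, remains inside the constrained subset $X^a_p$ rather than escaping through the moment constraint. Concretely, this reduces to showing that the moment functional $\P\mapsto \int \s_1^{p(2+s)/(2-p)}\,\j(\P)(d\s)$ is $\mathscr W_{gap,p}$-lower semicontinuous and is controlled along the discrete scheme, for instance by combining the $\lambda$-convexity estimate from Proposition \ref{prop:intro} with the fact that each JKO step cannot increase the free energy. Once this compatibility between the hard constraint defining $X^a_p$ and the variational scheme is secured, the rest of the proof is an off-the-shelf application of the abstract existence theorem for gradient flows of $\lambda$-convex functionals.
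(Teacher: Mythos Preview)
Your overall strategy---apply the Ambrosio--Gigli--Savar\'e machinery using the $\lambda$-convexity from Proposition~\ref{prop:intro}---is exactly the paper's. But two technical points are handled differently, and in your sketch one of them is a genuine gap.

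First, the paper does not work purely in the metric topology. It invokes \cite[Corollary~2.4.12]{AGS08}, which allows an auxiliary topology $\sigma$ (here the weak topology on $\mathcal P_{s,1}(\Gamma)$) and requires checking four conditions with respect to $\sigma$: relative compactness of bounded sublevel sets, lower semicontinuity of $\mathscr W_p$, lower semicontinuity of $\mathcal F_\beta$, and lower semicontinuity of the local slope $|\partial\mathcal F_\beta|$. The compactness comes from the sublevel sets of $\mathcal F_\beta$ being $\mathcal T_{\mathcal L}$-compact (Georgii), and weak closedness of $X^a_p$ follows from Portmanteau after passing from $\P_n$ to the gap distributions via Lemma~\ref{lem:conv-palm}. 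This auxiliary-topology device is what makes the minimizing movement limit exist without metric compactness; your proposal does not mention it, and without it the passage to the limit in the JKO scheme is not justified.

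Second, your argument for lower semicontinuity of $\mathcal W^{int}$ (``$\varphi(x)=|x|^{-s}$ is controlled by suitable powers of $\s_1$'') does not work as stated: $\mathcal W^{int}$ involves the full two-point function, not just nearest-neighbour gaps, and weak convergence of point processes alone does not give lsc of $\mathcal W^{int}$. The paper instead shows (Lemma~\ref{lem:lsc+min}) that $\mathscr W_p$-convergence in $X^{a,b}_{p,p'}$ implies weak convergence (Lemma~\ref{lem:dist->weak}), then upgrades to $\mathcal T_{\mathcal L}$-convergence along sublevel sets using \cite[Lemma~3.4]{georgii2}, and finally applies Georgii's lsc of $\mathcal F_\beta$ in $\mathcal T_{\mathcal L}$. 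Your ``principal technical obstacle'' (staying inside $X^a_p$) is in fact a non-issue once the auxiliary-topology framework is in place: the JKO minimizer is taken over $X^a_p$, which is weakly closed, and existence follows from weak compactness of $\{\mathcal F_\beta\le c\}\cap X^a_p$---no control of the moment via energy decrease is needed.
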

Having this abstract result at our disposal we obtain the following corollary (see Section \ref{sec:lambda} for the general versions).

\begin{cor}
   Assume that $D(\mathcal F_{\beta})\cap X^{a}_{p}\neq \emptyset$ and  let $\P_\beta\in X^{a}_{p}$ be the unique minimizer of $\mathcal F_{\beta}$ on $X^{a}_{p}$. Put $\Pi_\beta:=\j(\P_\beta)$.
   Then for any $\P\in X^{a}_{p}\cap D(\mathcal F_{\beta})$
   \begin{align*}
       \frac{\beta }{2a^{\frac{2-p}{p}}} \mathscr W_{gap,p}^2(\j(\P),\Pi_\beta)\leq \mathcal F_{\beta}(\P)-\mathcal F_{\beta}(\P_\beta)\leq \frac{a^{\frac{2-p}{p}}}{2\beta } \abs{\partial \mathcal F_{\beta}}^2(\P).
    \end{align*}
    Moreover, any curve of maximal slope $(\P_t)_{t>0}$ w.r.t. $\abs{\partial \mathcal F_{\beta}}$ in $X^{a}_{p}$ satisfies for every $t\geq t_0>0$
    \begin{align*}
        \frac{\beta }{2a^{\frac{2-p}{p}}}\mathscr W_{gap,p}^2(\j(\P_t),\Pi_\beta)\leq \mathcal F_{\beta}(\P_t)-\mathcal F_{\beta}(\P_\beta)\leq (\mathcal F_{\beta}(\P_{t_0})-\mathcal F_{\beta}(\P_\beta))e^{-2\beta a^{\frac{p-2}{p}}(t-t_0)}.
    \end{align*}
Moreover, for $0<\beta<\beta'$ 
\begin{align*}
       \mathscr W_{gap,p}^2(\Pi_{\beta},\Pi_{\beta'})\leq \frac{2(\beta'-\beta)}{\beta}\max\left(\int f(\s_0)^{\frac{p}{p-2}}\Pi_\beta(d\s) ,\int f(\s_0)^{\frac{p}{p-2}}\Pi_{\beta'}(d\s) \right)^{\frac{2-p}{p}}\mathcal W^{int}(\P_{\beta})
    \end{align*}    
\end{cor}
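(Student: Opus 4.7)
For the first display, both bounds are standard consequences of $\lambda$-geodesic convexity with $\lambda=\beta a^{-(2-p)/p}$ from Proposition \ref{prop:intro}. For the lower bound I would plug a constant-speed geodesic from $\P_\beta$ to $\P$ into the convexity inequality, use $\mathcal F_\beta(\P_\beta)\leq \mathcal F_\beta(\cdot)$ along it, divide by $1-t$, and let $t\to 1$. For the slope bound I would invoke the standard representation
\[
|\partial \mathcal F_\beta|(\P)\geq\sup_{\P'\neq\P}\frac{[\mathcal F_\beta(\P)-\mathcal F_\beta(\P')+\tfrac{\lambda}{2}\mathscr W_{gap,p}^2(\j(\P),\j(\P'))]^+}{\mathscr W_{gap,p}(\j(\P),\j(\P'))}
\]
valid for $\lambda$-convex functionals, specialise at $\P'=\P_\beta$, and maximise the resulting $|\partial \mathcal F_\beta|(\P)\,d-\tfrac{\lambda}{2}d^2$ over $d\geq 0$ to get $|\partial\mathcal F_\beta|^2(\P)/(2\lambda)$.

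For the second display, the distance bound is the first inequality applied at $\P=\P_t$. For the exponential decay, set $\varphi(t):=\mathcal F_\beta(\P_t)-\mathcal F_\beta(\P_\beta)$; the energy identity of Theorem \ref{thm:intro2} gives $\varphi'(t)=-|\partial\mathcal F_\beta|^2(\P_t)$ a.e., and the slope bound just obtained gives $|\partial\mathcal F_\beta|^2(\P_t)\geq 2\lambda\varphi(t)$. Gronwall then yields $\varphi(t)\leq\varphi(t_0) e^{-2\lambda(t-t_0)}$ with exponent $2\lambda=2\beta a^{(p-2)/p}$.

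For the third display, I would not invoke the uniform $\lambda$ of Proposition \ref{prop:intro} but rather re-run the Hölder step of its proof on the specific geodesic joining $\P_\beta$ and $\P_{\beta'}$, with exponents $2/p$ and $2/(2-p)$ inside the $\Q^0$-integral of the gain term \eqref{eq:intro1}. Using $\inf_{z\in[x,y]}f(|z|)=\min(f(|x|),f(|y|))$ under the monotonicity of $f$ and stationarity of $\Q^0$ under diagonal shifts, one can bound the gain from below by $\tfrac{\beta}{2M^{(2-p)/p}}\mathscr W_{gap,p}^2(\Pi_\beta,\Pi_{\beta'})$, where $M$ is the maximum of $\int f(\s_0)^{p/(p-2)}\Pi_\beta(d\s)$ and $\int f(\s_0)^{p/(p-2)}\Pi_{\beta'}(d\s)$. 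Writing the resulting convexity inequality at $\gamma_t$, using $\mathcal F_\beta(\P_\beta)\leq\mathcal F_\beta(\gamma_t)$, dividing by $t$ and letting $t\to 0$ then gives
\[
\tfrac{\beta}{2M^{(2-p)/p}}\mathscr W_{gap,p}^2(\Pi_\beta,\Pi_{\beta'})\leq \mathcal F_\beta(\P_{\beta'})-\mathcal F_\beta(\P_\beta).
\]
Combining the minimality of $\P_{\beta'}$ for $\mathcal F_{\beta'}$, which implies $\mathcal E(\P_{\beta'})-\mathcal E(\P_\beta)\leq\beta'(\mathcal W^{int}(\P_\beta)-\mathcal W^{int}(\P_{\beta'}))$, with the algebraic identity $\mathcal F_\beta(\P_{\beta'})-\mathcal F_\beta(\P_\beta)=[\mathcal E(\P_{\beta'})-\mathcal E(\P_\beta)]-\beta(\mathcal W^{int}(\P_\beta)-\mathcal W^{int}(\P_{\beta'}))$ bounds the right-hand side by $(\beta'-\beta)(\mathcal W^{int}(\P_\beta)-\mathcal W^{int}(\P_{\beta'}))\leq(\beta'-\beta)\mathcal W^{int}(\P_\beta)$, and the claim follows after rearranging.

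The main obstacle is producing the tight pair-dependent constant $M^{(2-p)/p}$ rather than the uniform $a^{(2-p)/p}$: one has to keep track of the two endpoint moments separately through the Hölder estimate and handle the $\inf_{[x,y]}f$-weight via the $\min$-identity, exploiting the stationarity of $\Q^0$ under diagonal shifts to identify the resulting integrals with the stated moments of $\Pi_\beta$ and $\Pi_{\beta'}$. The factor $2$ in $2(\beta'-\beta)/\beta$ then arises from the $1/2$ in the convexity gain together with the telescoping of the two minimisation inequalities.
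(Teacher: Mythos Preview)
Your argument for the first two displays is essentially the paper's: it simply cites \cite[Lemma 2.4.13 and Theorem 2.4.14]{AGS08}, whose content is exactly the convexity-plus-minimality manipulation and the Gronwall step you sketch.

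For the third display your approach works but is more laborious than the paper's. The paper does not re-run the H\"older estimate: since the constant $a$ defining the ambient space $X^a_p$ is a free parameter, one simply takes $a=M:=\max\bigl(\int f^{p/(p-2)}\,d\Pi_\beta,\int f^{p/(p-2)}\,d\Pi_{\beta'}\bigr)$, notes that both $\P_\beta$ and $\P_{\beta'}$ lie in $X^M_p$, and applies the already-proved first display verbatim with $a=M$ to get $\tfrac{\beta}{2M^{(2-p)/p}}\mathscr W_{gap,p}^2(\Pi_\beta,\Pi_{\beta'})\leq\mathcal F_\beta(\P_{\beta'})-\mathcal F_\beta(\P_\beta)$. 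The bound on the right-hand side is also simpler in the paper: the two-line chain $\mathcal F_\beta(\P_{\beta'})\leq\mathcal F_{\beta'}(\P_{\beta'})\leq\mathcal F_{\beta'}(\P_\beta)$ followed by $\mathcal F_{\beta'}(\P_\beta)-\mathcal F_\beta(\P_\beta)=(\beta'-\beta)\mathcal W^{int}(\P_\beta)$ immediately gives the result, without splitting into entropy and interaction parts. Your route via $\inf_{[x,y]}f(|z|)=\min(f(|x|),f(|y|))$ also implicitly uses monotonicity of $f$ rather than the paper's weaker hypothesis (convexity of $f^{p/(p-2)}$); this is harmless for the Riesz case stated here, but it is worth noting that the paper's reuse-of-$a$ trick avoids this extra assumption entirely.
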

The first displayed inequality can be interpreted as an instance of Talagrand and log-Sobolev type inequalities, e.g.\ by comparing to classical Wasserstein gradient flows in $\IR^d$ \cite[Section 4.4]{figalli2021invitation}. The second displayed inequality shows that there is a curve of laws of point processes that converges exponentially fast to the minimizer of the free energy by following a steepest descent route (it is a gradient flow curve). The last displayed equation shows a local $\frac{1}{2}$-Hölder continuity of the minimizer of $\mathcal F_\beta$ in the inverse temperature for $0<\beta<\infty.$

In view of our guiding example of the Riesz potential $\varphi(x)=|x|^{-s}$ it is natural to ask how much of our analysis rests on integrability assumptions on $\varphi$, i.e.\ the choice $s>1$. It turns out that for particular long-range interaction models, concretely Riesz potentials for $0<s<1$ and $\varphi(x)=-\log|x|$ there are strong approximation results  obtained in \cite{Le17}, that allow us to obtain the following result. Note that we have to adapt the definition of $\mathcal F_\beta$ to account for renormalisation due to lack of integrability. We denote the "corrected" version of the free energy by $\mathcal F_\beta^{elec}$ and refer to Section \ref{sec:longrange} for precise definitions.

\begin{thm}\label{thm:intro3}
Let $\varphi(x)=|x|^{-s}$ for $0<s<1$ or $\varphi(x)=-\log|x|$.    Let $\P_0,\P_1$ be the laws of two stationary point processes with unit intensity with $\mathcal F^{elec}_{\beta}(\P_i)<\infty$, $i=0,1$. Then there exists a coupling $\Q^0$ of $\P^0_0$ and $\P^0_1$ such that for $0\leq t\leq 1$ ($s=0$ corresponding to the $\log$ case)
    \begin{align*}
        \mathcal F^{elec}_{\beta}(\P_{t})&\leq  (1-t) \mathcal F^{elec}_{\beta}(\P_{0})+t\mathcal F^{elec}_{\beta}(\P_{1})\\
        &-\frac{(\eins_0(s)+s(s+1))(1-t)t\beta}{4}\int \sum_{(0,0)\neq (x,y)\in \bar\xi} (x-y)^2\inf_{z\in [x,y]}\abs{z}^{-s-2}\Q^0(d\bar\xi).
    \end{align*}
In particular, $\mathcal F^{elec}_{\beta}$ has a unique minimizer.
\end{thm}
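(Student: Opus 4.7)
The plan is to derive Theorem \ref{thm:intro3} by approximating the long-range potential $\varphi$ by a family of short-range potentials $\varphi_R$, applying Theorem \ref{thm:intro1} for each $R$, and then passing to the limit $R\to\infty$ using the renormalization scheme from \cite{Le17}. Because $\varphi$ is non-integrable at infinity, the raw internal energy $\mathcal W^{int}$ diverges on typical configurations, but the electric free energy $\mathcal F^{elec}_\beta$ is recovered (modulo a deterministic counterterm depending only on the intensity) as a monotone limit of the short-range free energies $\mathcal F^R_\beta=\beta\mathcal W^{int}_R+\mathcal E$.

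First I would pick a regularizing family $\varphi_R$ — for instance a Yukawa-type screening $\varphi_R(x)=\varphi(x)\,e^{-|x|/R}$ or a smooth compactly supported cutoff — so that each $\varphi_R$ is even, $C^2$ on $(0,\infty)$, superstable and regular, and so that the associated $g_{n,R}=\varphi_R h_n$ satisfies $g_{n,R}''(x)\geq n f_R(x)$ on $\Lambda_n\setminus\{0\}$ with $f_R(z)\nearrow \tfrac{\eins_0(s)+s(s+1)}{2}|z|^{-s-2}$ as $R\to\infty$. A direct computation using $h_n(x)=(n-|x|)^+$ gives $g_{n,R}''(x)=\varphi_R''(x)h_n(x)+2|\varphi_R'(x)|$ in the interior, and since for the two model potentials $\varphi''$ already carries the main singular behaviour, the asserted lower bound follows after an elementary estimate. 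Each $\varphi_R$ therefore falls under the hypotheses of Theorem \ref{thm:intro1}.

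Second, I would observe that the convexity estimate furnished by Theorem \ref{thm:intro1} holds along the interpolation induced by \emph{any} stationary coupling $\Q^0\in\Cpl_s(\P^0_0,\P^0_1)$, not only by an $\mathscr W_{gap,2}$-optimizer; the proof of Theorem \ref{thm:intro1} is pointwise on the coupling level. Fixing such a $\Q^0$ and its induced interpolation $(\P_t)_{t\in[0,1]}$, Theorem \ref{thm:intro1} applied to $\varphi_R$ gives
\begin{align*}
\mathcal F^R_\beta(\P_t) \leq (1-t)\mathcal F^R_\beta(\P_0)+t\mathcal F^R_\beta(\P_1) - \tfrac{\beta t(1-t)}{2}\int \sum_{(0,0)\neq (x,y)\in\bar\xi}(x-y)^2\inf_{z\in [x,y]}f_R(|z|)\,\Q^0(d\bar\xi).
\end{align*}
Subtracting the intensity-dependent Leblé counterterm $C_R$ from each of $\mathcal F^R_\beta(\P_t),\mathcal F^R_\beta(\P_0),\mathcal F^R_\beta(\P_1)$ (noting that this counterterm is common to all three points, since linear gap interpolation preserves the intensity), and invoking the approximation theorem of \cite{Le17}, the left-hand side and the first two right-hand side terms converge to $\mathcal F^{elec}_\beta(\P_t)$ and $(1-t)\mathcal F^{elec}_\beta(\P_0)+t\mathcal F^{elec}_\beta(\P_1)$ respectively. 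Monotone convergence on the non-negative gain integrand yields the full inequality of Theorem \ref{thm:intro3} for the fixed $\Q^0$. Choosing $\Q^0$ to be an $\mathscr W_{gap,2}$-optimizer (existence via tightness of $\Cpl_s(\P^0_0,\P^0_1)$ and lower semi-continuity) finishes the proof. Uniqueness of minimizers then follows exactly as in the comment after Theorem \ref{thm:intro1}, because the gain term is strictly positive whenever $\P_0\neq \P_1$.

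The main obstacle is verifying that the renormalization counterterms from \cite{Le17} really depend only on the common intensity and not on finer statistics of the three interpolating measures, so that they cancel uniformly in $t$. This is a structural fact about the jellium-type compensation scheme for log and Riesz gases with $0\leq s<1$ that must be carefully extracted from \cite{Le17}, and it is what makes the argument work precisely in the long-range regime considered here. Two secondary technical points are (i) measurability and lower semi-continuity of the gain term as a function of the coupling, which follow from its pointwise form and Fatou, and (ii) lower semi-continuity of $\mathcal F^{elec}_\beta$ along the monotone approximation $R\to\infty$, which is standard once the counterterm cancellation is in place.
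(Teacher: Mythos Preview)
Your strategy of regularizing the \emph{potential} is fundamentally different from the paper's, which instead regularizes the \emph{processes}. The paper invokes Proposition \ref{prop:approx_hyper} (taken from \cite{Lebl__2016}, not \cite{Le17}, which is Ledoux on Gaussian matching) to replace each $\P_i$ by a sequence $\P_i^N$ with uniformly bounded gaps, for which the modified finite-box energies $(nN)^{-1}(\mathcal H(\P_i^N,nN)-B_{nN})$---computed with the \emph{full} long-range potential minus the jellium background $B_{nN}$---approximate $\mathcal W^{elec}(\P_i)$. One then applies Lemma \ref{lem:convex_fix_n} and Theorem \ref{thm:convnex_entropy} to these finite-box quantities, extracts a weak limit $\U$ of the couplings $\U_N$, and concludes by the fact that $\mathcal F_\beta^{elec}$ is the lower semi-continuous regularization of $\mathcal F_\beta^{mod}$ (for $s>0$) or of $\mathcal F_\beta^{mod}+\beta\mathcal D^{\log}$ (for $s=0$). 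In the log case an extra ingredient is essential: the approximants $\P_i^N$ are class-1-hyperuniform, and Corollary \ref{cor:hyperunif_preserved} shows this is inherited by the interpolants $\P_t^N$, which forces $\mathcal D^{\log}(\P_t^N)=0$.

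The central gap in your proposal is the claimed convergence $\mathcal F^R_\beta(\P)-C_R\to \mathcal F^{elec}_\beta(\P)$ for a screened potential $\varphi_R$ and an intensity-only counterterm $C_R$. This is not established anywhere in the cited literature, and there is no reason to expect it: the electric energy is defined via truncated electric fields (Definition \ref{def:elec_energy}), and its link to interaction energies goes through the jellium-compensated $\mathcal W^{mod}$ of \eqref{eq:wsx}, not through a tail cutoff of $\varphi$. A tail cutoff followed by $R\to\infty$ would at best produce $\tfrac12\int(\rho_{2,\P}-1)\varphi$, which is \emph{not} $\mathcal W^{elec}$ in general. Your scheme also never touches hyperuniformity, so the $s=0$ case cannot close. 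A smaller but real point: your assertion that Theorem \ref{thm:intro1} ``is pointwise on the coupling level'' is true for the energy (Lemma \ref{lem:convex_fix_n}) but not for the entropy---Theorem \ref{thm:convnex_entropy} only yields \emph{one} specific optimal coupling along which entropy is convex. Since that coupling is independent of $\varphi$, fixing it once would serve for all $R$, but this needs to be argued, not asserted.
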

This result recovers the main result of \cite{EHL21} and extends it to long-range Riesz interactions. Combining Theorem \ref{thm:intro3} with \cite{LeSe17} it follows that there is a unique stationary point processes minimizing $\mathcal F_\beta^{elec}$ implying a variational characterisation of Riesz gases in $d=1$. It would be interesting to understand whether this point process coincides with the circular Riesz gas constructed in \cite{Bo22}. 

While the interpolation $(\P_t)_{t\in[0,1]}$ and the coupling $\Q^0$ are related exactly as in Theorem \ref{thm:intro1}, unfortunately, so far we do not know, even assuming finite distance of $\P_0$ and $\P_1$, whether the coupling $\Q^0$ can be chosen to be optimal and, therefore, whether the interpolation is a geodesic curve or not. Without this knowledge we cannot discuss $\lambda$-geodesic convexity and, hence, neither gradient flows of $\mathcal F_\beta^{elec}$.
The main problem to overcome is the question of stability of optimal couplings in our setup. Up to now, we do not have anything close to classical stability results such as \cite[Theorem 5.20]{Villani}.

\subsection{Related literature}
Strict convexity of the free energy is closely related to the Gibbs variational principle in statistical mechanics. For lattice systems the link between Gibbs measures in the sense of DLR equations and minimizer of the free energy, a thermodynamic notion, has been established for various models under quite general assumptions, e.g.\ see \cite{Georgii, FrVe18} and references therein.
In the case of point processes we mention the seminal works \cite{georgii1, georgii2} and the more recent \cite{DeGe09, De16, JaKoStZa24}.  The case of long-range interactions has been treated in \cite{LeSe17} using renormalisation techniques. For an introduction to Gibbs point processes we refer to \cite{Der19, Ja18} and for general point processes to \cite{Daley, Last, bremaud}.
The question of uniqueness of minimizers of the rate functional in \cite{LeSe17}, i.e.\ $\mathcal F_\beta^{elec}$, was answered for the case of $\varphi(x)=-\log |x|$ in \cite{EHL21} using very careful approximation techniques. In fact this article was one key inspiration for the current work.
The second comes from the recent \cite{erbar2023optimal} where a Wasserstein type metric between stationary point processes on $\IR^d$ was constructed (see also \cite{huesmann2024benamoubrenier} for a dynamic formulation). The authors  identified the gradient flow of the specific entropy as infinitely many non-interacting Brownian motions. 
The main difference between $\mathscr W_{gap,p}$ and the metric constructed in \cite{erbar2023optimal} is that the former compares the size of gaps, in a certain sense relative positions of particles, whereas the latter compares the absolute positions of particles.

Interestingly, the metric from \cite{erbar2023optimal} is closely related to hyperuniform point processes, see \cite{DeFlHuLe25, HuLe25} and also \cite{LRY24,BuDaGaZe24}, as well as to the Coulomb energy \cite{HuLe25}. Non-local versions of Wasserstein metrics for point processes have been constructed in \cite{DSHeSu24,HuSt25} leading to birth-death processes as gradient flows of the (specific) entropy. So far this has not been extended to interacting systems.

More generally, stationary transports or couplings of point processes or random measures have received some attention in the literature due to their connection with shift couplings of Palm measures, e.g.\ \cite{HoPe05, LaTh09}. This leads to the question of constructing factor allocations or matchings, e.g.\ \cite{ChPePeRo10, HoJaWa22, HS13, HPPS09}. A finite version of this question is the classical optimal matching problem \cite{AKT84, CaLuPaSi14, AmStTr16, GoHu22, GoTr21, ClMa23}, which received some  renewed attention in the literature in the last years.

The theory of gradient flows in metric spaces and in particular its combination with the theory of optimal transport is a long and fascinating story. We refer to the books \cite{AGS08, Villani, villani2003topics, figalli2021invitation, Santa} for detailed references. 

There are several further approaches to construct and analyse dynamics associated to Gibbs measures. A powerful approach is via the theory of Dirichlet forms, e.g.\ \cite{AKR98, AKR98b, Os12, Os13, Su23}. Another via solving systems of infinitely many SDEs as e.g.\ in \cite{Ts16}.

\subsection{Overview of the article}
In Section \ref{sec:definitions}, we recall concepts and results from point processes and derive the key representation of point processes via their gap distribution. In Section \ref{sec:metric}, we construct the gap metric $\mathscr W_{gap,p}$ and its isometric sibling $\mathscr W_p$ living on the space of distributions of point processes. We show that these metrics are geodesic and that the specific entropy is geodesically convex. In Section \ref{sec:evihwi}, we investigate the free case of no interaction. We identify the gradient flow of the specific entropy w.r.t.\ $\mathscr W_p$ and show an HWI inequality. In Section \ref{sec:Interaction_energy} we show Theorem \ref{thm:intro1}. The $\lambda$-convexity and gradient flow results of Proposition \ref{prop:intro} and Theorem \ref{thm:intro2} are proven in Section \ref{sec:lambda}. Finally, Section \ref{sec:longrange} establishes Theorem \ref{thm:intro3}, the convexity result for the long-range interaction for the log-gas and Riesz-gas interaction in the parameter regime $0<s<1$.

\section{Preliminaries}\label{sec:definitions}
For a Polish space $X$ we denote its Borel sets by $\mathcal B(X)$ and set of probability measures over $X$ by $\mathcal P(X)$.
 For $\P_0,\P_1\in\mathcal P(X)$ we denote by $\Cpl(\P_0,\P_1)$ the set of all couplings between $\P_0$ and $\P_1$, i.e.\ the set of probability measures on $X\times X$ with marginals $\P_0$ and $\P_1$. For a map $T:X\to Y$ and $\mu\in\mathcal P(X)$ we denote its push forward or image measure by $T_\#\mu=\mu\circ T^{-1}\in\mathcal P(Y).$
 For $n\in\IN$ we put $\Lambda_n=[-n/2,n/2] \subset \IR$.
The configuration space $\Gamma_{\IR^d}$ is the space of all locally finite counting measures on $\IR^d.$ We equip $\Gamma_{\IR^d}$ with the topology of vague convergence, i.e.\ $\xi_n\to\xi$ iff $\int f\ d\xi_n\to \int f\ d\xi$ for all compactly supported and continuous functions $f\in C_c(\IR^d)$. This turns $\Gamma_{\IR^d}$ into a Polish space, see e.g.\ \cite[Theorem A2.3]{Kallenberg}.
As usual, we shall identify the configuration space $\Gamma_{\IR^d}$ with the set of all locally finite subsets of $\IR^d$. We write $\Gamma_{\IR}=\Gamma.$
 We label the points of  $\xi\in \Gamma$ in the following generic way 
 \begin{align}\label{eq:numbering}
    \xi=\{\ldots, \tau_{-1}(\xi),\tau_0(\xi),\tau_1(\xi),\tau_2(\xi),\dots\}
\end{align}
with $\tau_{0}(\xi)\leq 0$, $\tau_1(\xi)>0$ and $\tau_{i}(\xi)\leq \tau_{i+1}(\xi)$ for all $i\in \IZ$. We will be interested in configurations $\xi$ with $\xi(\{x\})\in\{0,1\}$ for all $x\in\IR^d$ such that there will be no ambiguity in the numbering.
We will denote by $(\theta_x)_{x\in \IR}$  the shift on the configuration space $\Gamma$, defined for $\xi\in \Gamma$ and $A \in \mathcal B(\IR)$  by  
\begin{align}\label{eq:shift_M}
    \theta_x\xi(A)=\xi(A+x).
\end{align}
A point process is a random variable $\xi$ with values in $\Gamma_{\IR^d}$. It is called simple if 
$$\IP(\xi(\{x\})\in\{0,1\},\; \forall x\in\IR^d)=1.$$ 
Its distribution $\P\in\mathcal P(\Gamma_{\IR^d})$ is said to be stationary if $(\theta_x)_\#\P=\P$ for all $x\in \IR^d$. A point process is stationary if its distribution is stationary. The intensity of a stationary point process $\xi$ resp.\ its distribution $\P$ is given by $\int \xi([0,1]^d)\P(d\xi).$ Throughout this article we will fix this intensity to be one. We will only work with simple point processes. For $B\in \mathcal B(\IR^d)$ we put $r_B:\Gamma_{\IR^d}\to [0,\infty]$, $\xi\mapsto\xi_B$ with $\xi_B(A)=\xi(A\cap B)$; for $\P\in\mathcal P(\Gamma_{\IR^d})$ we write $\P_B:=(r_B)_\#\P$.

To simplify several formulas we sometimes use the non standard convention to number tuples $x=(x_0,x_1)$.

\subsubsection*{Palm measures, gaps, and inversion.}
Let $\Q$ be a $\sigma$ finite stationary measure on $\Gamma_{\IR^d}$. The Palm measure $\Q^0$ of $\Q$ is defined by 
\begin{align}
        \Q^0(A):=\frac{1}{\Leb(B)}\int \int_{B} \eins_A(\theta_{x}\bar \xi)\bar\xi(dx)\Q(d\bar \xi),\quad \forall A\in \mathcal B(\Gamma_{\IR^d}),
    \end{align}
for any $B$ with positive Lebesgue measure. By stationarity of $\Q$ this definition is independent of the particular choice of $B$, e.g.\ \cite[equation (9.3)]{Last}. If $\Q$ is a probability measure with intensity one, then $\Q^0$ is a probability measure as well.
By a monotone class argument, the definition of the Palm measure implies the refined Campbell theorem \cite[Theorem 9.1]{Last}
\begin{thm}[Refined Campbell Theorem]\label{thm:refinedCampbell}
Let $\xi$ be a stationary point process on $\IR^d$ of intensity one with distribution $\P$. Then for all non-negative measurable functions $f$
$$\int\int f(x,\theta_x\xi) \xi(dx)\P(d\xi) = \int\int f(x,\xi) \P^0(d\xi)dx$$
or equivalently
$$\int\int f(x,\xi) \xi(dx)\P(d\xi) = \int\int f(x,\theta_{-x}\xi) \P^0(d\xi)dx$$
\end{thm}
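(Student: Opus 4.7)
The plan is to prove this via the standard monotone class (layer-cake) argument, starting from the definition of the Palm measure and extending successively to indicator functions of product sets, to simple functions, and finally to all non-negative measurables.

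First I would verify the identity on the class of ``rectangle'' indicators $f(x,\xi)=\eins_B(x)\eins_A(\xi)$ for $B\in\mathcal B(\IR^d)$ with $0<\Leb(B)<\infty$ and $A\in \mathcal B(\Gamma_{\IR^d})$. For such $f$, the left-hand side of the first formula reads
\begin{align*}
    \int\int \eins_B(x)\,\eins_A(\theta_x\xi)\,\xi(dx)\,\P(d\xi)
    =\int\int_B \eins_A(\theta_x\xi)\,\xi(dx)\,\P(d\xi),
\end{align*}
which by the very definition of $\P^0$ equals $\Leb(B)\,\P^0(A)$. The right-hand side is trivially $\int\eins_B(x)\,dx\int\eins_A(\xi)\,\P^0(d\xi)=\Leb(B)\,\P^0(A)$, so the identity holds on rectangles.

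Next I would extend to general non-negative measurable $f:\IR^d\times\Gamma_{\IR^d}\to[0,\infty]$. Since products $\eins_B(x)\eins_A(\xi)$ generate the product $\sigma$-algebra $\mathcal B(\IR^d)\otimes\mathcal B(\Gamma_{\IR^d})$ and form a $\pi$-system, linearity upgrades the identity to finite non-negative linear combinations (simple functions of product type), and monotone convergence then delivers it for all non-negative measurable $f$. Both sides are genuine (possibly infinite) measures in $f$, so a functional monotone class argument applies cleanly; the only point requiring care is joint measurability of $(x,\xi)\mapsto f(x,\theta_x\xi)$, which follows from the measurability of the shift $(x,\xi)\mapsto \theta_x\xi$ on $\IR^d\times\Gamma_{\IR^d}$, a standard property of the vague topology.

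For the equivalent formulation, I would simply apply the first identity to the non-negative measurable function $\tilde f(x,\xi):=f(x,\theta_{-x}\xi)$, noting that $\theta_x\theta_{-x}=\id$, so $\tilde f(x,\theta_x\xi)=f(x,\xi)$, which turns the first identity into the second. No genuine obstacle is expected here: the whole argument is essentially bookkeeping once the definition of $\P^0$ is unpacked. The only mildly delicate point is measurability of the shift map and the use of stationarity to ensure that the definition of $\P^0$ is independent of the choice of $B$; both are standard facts about simple stationary point processes and are already quoted from \cite{Last}.
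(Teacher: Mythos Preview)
Your proposal is correct and matches the paper's approach: the paper does not give a detailed proof but simply states that the refined Campbell theorem follows ``by a monotone class argument'' from the definition of the Palm measure, citing \cite[Theorem 9.1]{Last}. Your write-up is precisely this monotone class argument spelled out, starting from rectangle indicators where the identity is the definition of $\P^0$, and then extending by linearity and monotone convergence; the derivation of the second formula from the first via the substitution $\tilde f(x,\xi)=f(x,\theta_{-x}\xi)$ is also standard and correct.
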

We note that the Palm measure characterises the point processes in huge generality, see Remark \ref{rem:inversion}.
We will mostly be interested in stationary point processes on $\IR$ with intensity one which we denote by
\begin{align}\label{eq:Ps1}
    \mathcal P_{s,1}(\Gamma):=\{\P\in \mathcal P(\Gamma) \mid \forall x\in \IR: (\theta_x)_\#\P= \P  \text{ and }\int \xi(\Lambda_1)\P(d\xi)=1 \}.
    \end{align}

Let $\SQ =[0,\infty)^{\IZ}$ be the space of bi-infinite sequences with non-negative entries. We equip $\SQ$ with the infinite product topology. 
Define the shifts $\sigma^k$ on $\SQ$, $k\in \IZ$,  by $\sigma^k(\s_i)_{i\in \IZ}:=(\s_{i+k})_{i\in \IZ}$. To be able to switch between $\xi\in\Gamma$ and the sequence of its gaps we define the map 
 \begin{align}\label{eq:def_sq}
    \sq:\Gamma\to [0,\infty)^{\IZ}, \sq(\xi):=(\sq_i(\xi))_{i\in \IZ}:=(\tau_i(\xi)-\tau_{i-1}(\xi))_{i\in \IZ}.
\end{align}
Denote by $\Gamma_0\subset \Gamma$ the subset of all configurations $\xi$ with $0\in \xi$. We note that $\sq:\Gamma_0\to \SQ$ is a bijection and for $\xi\in \Gamma_0$ we have  
\begin{align}\label{eq:compatible_shifts}
    \sigma^k(\sq(\xi))=\sq(\theta_{\tau_{-k}(\xi)}\xi).
\end{align}
Moreover, since $\xi_n\to\xi$ in $\Gamma_0$ iff $\tau_i(\xi_n)\to\tau_i(\xi)$ for all $i\in\mathbb Z$ it follows that $\sq:\Gamma_0\to \SQ$ is a homeomorphism.

It will be beneficial to consider the gap distributions of stationary point processes. These gap distributions are stationary probability measures on $\SQ$. The following definition makes this precise.

\begin{defi}\label{def:statgapdistribution}
    We let $\mathcal P_{s,1}(\SQ)$ be the set of all stationary (w.r.t.\ the shifts $(\sigma^k)_{k\in \IZ}$) probability measures $\Pi$ on $\SQ$ with $\int \s_1\Pi(d\s)=1$.
\end{defi}

We will be interested in couplings of stationary gap distributions respecting the stationarity constraint.

\begin{defi}\label{def:cpls}
    For two stationary probability measures $\Pi_i\in \mathcal P_{s,1}(\SQ)$, $i=0,1$ we denote by $\Cpl_s(\Pi_0,\Pi_1)$ the set of all couplings of $\Pi_0$ and $\Pi_1$, which are invariant under the diagonal shifts 
    \begin{align*}
        \SQ^2 \ni (\s,\s')\mapsto (\sigma^ks,\sigma^ks')\in 
        \SQ^2,\quad k\in \IZ.
    \end{align*}
\end{defi}
We note that the set $\Cpl_s(\Pi_0,\Pi_1)$ is never empty since $\Pi_0\otimes \Pi_1\in \Cpl_s(\Pi_0,\Pi_1)$. We will show that couplings of gap distributions are closely related to monotone configurations in $\Gamma_{\IR^2}.$

\begin{defi}
    A configuration $\bar \xi\in\Gamma_{\IR^2}$ is monotone iff the following holds:   
    If $(x_1,y_1),(x_2,y_2)\in \bar \xi $ with $x_1\leq x_2$, then $y_1\leq y_2$ (we write $(x_1,y_1)\leq (x_2,y_2)$). We denote the subset of all monotone configurations by $\Gamma_{m}\subset \Gamma_{\IR^2}$.
\end{defi}
For $i=0,1$ define 
\begin{align}\label{eq:Proj}
     \PR_i:\Gamma_{\IR^2}\to \Gamma, \bar \xi\mapsto \{ x_i\mid (x_0,x_1)\in \bar\xi\}
 \end{align}

Finally, for two stationary point processes $\P_0$ and $\P_1$ we introduce a class of stationary and monotone couplings $\Cpl_{s,m}(\P_0,\P_1)$. 

\begin{defi}
    For  $\P_i\in \mathcal P_{s,1}(\Gamma)$, $i=0,1$ we denote by $\Cpl_{s,m}(\P_0,\P_1)$ the set of all $\sigma$-finite stationary  (with respect to the shifts $(\theta_x)_{x\in \IR^2}$) measures  $\Q$ on $\Gamma_{\IR^2}$ such that:
    \begin{itemize}
        \item[i)] The Palm measure $\Q^0$ of $\Q$  is a probability measure and $(\PR_i)_{\#}\Q^0=\P_i^0$ for $i=0,1$.
        \item[ii)]  The measure $\Q$ is concentrated on the set of monotone configurations $\bar\xi\in \Gamma_{\IR^2}$.
    \end{itemize}
\end{defi}
We note that the set $\Cpl_{s,m}(\P_0,\P_1)$ is never empty. This follows from the fact that the sets $\Cpl_s(\Pi_0,\Pi_1)$ are never empty, combined with Theorem \ref{thm:equivalent_cost} below.

For $i=0,1$ let $\pr_i:\IR^2\to \IR, (x_0,x_1)\mapsto x_i$ be the projection onto the $i$-th coordinate.
For a monotone  configuration $\bar \xi \in \Gamma_{\IR^2}$  we use an enumeration similar to \eqref{eq:numbering} 
\begin{align}\label{eq:bartau}
    \bar\xi=\{\ldots,\bar \tau_{-1}(\bar \xi),\bar \tau_0(\bar \xi),\bar \tau_1(\bar \xi),\dots\},
\end{align}
with $\pr_0(\bar \tau_0(\bar \xi))\leq 0<\pr_0(\bar \tau_1(\bar \xi))$ and $\bar \tau_{i}(\bar\xi)\leq \bar \tau_{i+1}(\bar \xi)$ (coordinate-wise).

We now collect some important results linking stationary point processes with their gap distributions.
The following theorem shows that there is a unique correspondence between  stationary point processes and stationary measures on $\SQ$, cf.\ \cite[Theorem 13.3.I]{Daley}.
\begin{thm}\label{thm:correspondence}
The map \begin{align}
    \j:\mathcal P_{s,1}(\Gamma)\to \mathcal P_{s,1}(\SQ), \P\mapsto 
    \sq_{\#}\P^0
\end{align}
is a bijection and for $\P\in \mathcal P_{s,1}(\Gamma)$ it holds 
 \begin{align}\label{eq:inversion_formula}
\int fd\P=\int \int_0^{\tau_1(\xi)}f(\theta_x\xi)dx\P^0(d\xi),
    \end{align}
    where $f:\Gamma\to \IR$ is bounded and measurable. We call $\j(\P)$ the gap distribution of $\P$ resp.\ of the point process $\xi$ with distribution $\P.$
\end{thm}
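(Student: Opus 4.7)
The plan is to prove the three claims in the order: (i) the inversion formula \eqref{eq:inversion_formula}; (ii) well-definedness of $\j$; (iii) bijectivity. The inversion formula is the pivotal identity: once available, both the intensity normalisation in (ii) and the construction of a preimage in (iii) follow quickly.

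The inversion formula I would prove by one application of the second form of refined Campbell (Theorem \ref{thm:refinedCampbell}) to the test function
\[
g(x,\xi) := h(\theta_x\xi,-x)\,\eins_{\{x=\tau_0(\xi)\}},\qquad h:\Gamma_0\times\IR\to[0,\infty)\text{ measurable.}
\]
The left-hand side collapses to $\int h(\theta_{\tau_0(\xi)}\xi,-\tau_0(\xi))\,\P(d\xi)$, since only the atom $x=\tau_0(\xi)\in\xi$ survives the indicator. On the right-hand side, $g(x,\theta_{-x}\xi)=h(\xi,-x)\eins_{\{x=\tau_0(\xi+x)\}}$, and for $\xi\in\Gamma_0$ the condition $x=\tau_0(\xi+x)$ holds precisely for $x\in(-\tau_1(\xi),0]$ (those translations that keep $0\in\xi$ as the largest point $\leq 0$). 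After the change of variables $y=-x$, the right-hand side becomes $\int\int_0^{\tau_1(\xi)}h(\xi,y)\,dy\,\P^0(d\xi)$. Specialising to $h(\xi',y):=f(\theta_y\xi')$ and using $\theta_{-\tau_0(\xi)}\theta_{\tau_0(\xi)}=\id$ gives \eqref{eq:inversion_formula} for arbitrary bounded measurable $f$.

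For well-definedness of $\j(\P)\in\mathcal P_{s,1}(\SQ)$: $\P^0$ is a probability measure (intensity one of $\P$), and since $\sq|_{\Gamma_0}$ is a bijection the pushforward $\sq_\#\P^0$ is a probability measure on $\SQ$. The first-gap normalisation $\int\s_1\,\j(\P)(d\s)=\int\tau_1\,d\P^0=1$ drops out of \eqref{eq:inversion_formula} with $f\equiv 1$. Stationarity of $\sq_\#\P^0$ under each $\sigma^k$ reduces via \eqref{eq:compatible_shifts} to the point-shift invariance $(\theta_{\tau_{-k}})_\#\P^0=\P^0$, a classical consequence of refined Campbell obtained via a test function of the form $g(x,\xi)=\phi(\theta_{\tau_{-k}(\theta_x\xi)}\theta_x\xi)\eins_{[0,1]}(x)$ together with the cocycle identity $\theta_{\tau_{-k}(\theta_x\xi)}\theta_x=\theta_{\tau_{i-k}(\xi)}$ valid for $x=\tau_i(\xi)\in\xi$.

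For bijectivity, injectivity is immediate: if $\sq_\#\P_0^0=\sq_\#\P_1^0$ then $\P_0^0=\P_1^0$ by bijectivity of $\sq|_{\Gamma_0}$, whence \eqref{eq:inversion_formula} forces $\P_0=\P_1$. For surjectivity, given $\Pi\in\mathcal P_{s,1}(\SQ)$, set $\Q:=(\sq^{-1})_\#\Pi\in\mathcal P(\Gamma_0)$ and define $\P$ by
\[
\int f\,d\P\;:=\;\int\int_0^{\tau_1(\xi)}f(\theta_x\xi)\,dx\,\Q(d\xi);
\]
then $\P(\Gamma)=1$ by the normalisation $\int\s_1\,d\Pi=1$, and a direct calculation from the definition of the Palm measure shows $\P^0=\Q$, so $\j(\P)=\Pi$. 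The main obstacle I anticipate is the shift-invariance $(\theta_h)_\#\P=\P$ of the reconstructed $\P$: translating by $h$ forces the integration interval $[0,\tau_1(\xi))$ to be re-anchored by stepping along the sequence of gaps, using $\sigma$-invariance of $\Pi$ (equivalently $\theta_{\tau_1}$-invariance of $\Q$ via \eqref{eq:compatible_shifts}), and doing this uniformly over $h\in\IR$ and over the a.s.\ countable family of crossings is the technical core.
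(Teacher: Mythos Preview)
The paper does not supply its own proof of this theorem: it is stated with the reference ``cf.\ \cite[Theorem 13.3.I]{Daley}'' and left at that, with the inversion formula further attributed in Remark~\ref{rem:inversion} to \cite{Mecke1967StationreZM} and \cite{LaTh09}. So there is no in-paper argument to compare against; what you have written is an independent proof of a classical Palm-theory result.

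Your argument is correct and follows the standard route. The derivation of \eqref{eq:inversion_formula} via refined Campbell with the test function $g(x,\xi)=h(\theta_x\xi,-x)\eins_{\{x=\tau_0(\xi)\}}$ is clean and checks out: the left-hand side collapses to a single term, and on the right the condition $x=\tau_0(\xi+x)$ for $\xi\in\Gamma_0$ is indeed equivalent to $x\in(-\tau_1(\xi),0]$. Well-definedness and injectivity are fine as stated. One minor presentational point in the surjectivity step: you compute $\P^0=\Q$ before addressing stationarity of $\P$, but the Palm measure is only defined once stationarity is in hand, so the order should be reversed. You clearly recognise this, flagging the shift-invariance of the reconstructed $\P$ as the technical core; the argument you outline (re-anchoring the integration window using $\theta_{\tau_1}$-invariance of $\Q$, equivalently $\sigma$-invariance of $\Pi$) is exactly what is needed and is carried out in full in the cited references.
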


\begin{rem}\label{rem:inversion}
    Equation \eqref{eq:inversion_formula} is a particular instance of the inversion formula which implies that the point process is completely characterised by the Palm measure. This is true in broad generality and also beyond the setting of probability spaces. In fact, the setting of $\sigma$ finite measures is sometimes even more natural. We refer to \cite[Satz 2.4, Satz 2.5]{Mecke1967StationreZM} and \cite[Example 2.5]{LaTh09} for a general version of the inversion formula. This will be important in Theorem \ref{thm:equivalent_cost}.
\end{rem}

\textbf{Notational convention:} We will denote elements of $\mathcal P_{s,1}(\SQ)$ usually by $\Pi,\Pi_i$ and elements of $\mathcal P_{s,1}(\Gamma)$ by $\P,\P_i$.
\medskip\\
Theorem \ref{thm:correspondence} implies that the gap distributions of stationary point processes are stationary w.r.t.\ the shift $\sigma^k$, $k\in \IZ$, on $\SQ$.
Also, their Palm measures have a  stationary property, which is called \emph{point-stationarity}, see 
\cite[Theorem 7.3.1]{bremaud}.

\begin{thm}\label{thm:pointstationary}
    Let $\P\in \mathcal P_{s,1}(\Gamma)$. Then its Palm measure $\P^0$ is stationary w.r.t.\ the shifts $\theta_{\tau_i}$, $i\in \IZ$, defined  for $\xi\in \Gamma_0$ by 
    \begin{align*}
        \theta_{\tau_i}(\xi):=\theta_{\tau_i(\xi)}\xi,
    \end{align*}
    where $\xi=\{\cdots,\tau_{-1}(\xi),0=\tau_0(\xi),\tau_1(\xi),\cdots\}$ is the  enumeration of $\xi$ defined in \eqref{eq:numbering}.
\end{thm}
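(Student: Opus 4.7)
The plan is to reduce the statement to the single case $i=1$: once $(\theta_{\tau_1})_{\#}\P^0 = \P^0$ is known, iterating yields invariance under $\theta_{\tau_i}$ for all $i \geq 1$, while $i \leq 0$ follows either by symmetry of the argument (with $\tau_{-1}$) or from the fact that $\theta_{\tau_{-1}}$ is the inverse of $\theta_{\tau_1}$ when acting on $\Gamma_0$. So I will focus on showing, for every bounded measurable $f : \Gamma_0 \to \IR$, the identity
\begin{align*}
    \int f(\theta_{\tau_1(\xi)}\xi)\,\P^0(d\xi) \;=\; \int f(\xi)\,\P^0(d\xi).
\end{align*}

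The main tool is the refined Campbell theorem (Theorem \ref{thm:refinedCampbell}), used in both directions to translate between integrals against $\P$ and $\P^0$. First I would rewrite $\int f \, d\P^0$ using the indicator on $[0,1]$:
\begin{align*}
    \int f(\xi)\,\P^0(d\xi) \;=\; \int \sum_{x \in \xi \cap [0,1]} f(\theta_x\xi)\,\P(d\xi),
\end{align*}
obtained by applying Campbell with $g(x,\xi) = \eins_{[0,1]}(x)\,f(\xi)$. Next, for the shifted quantity, the key point-wise identity is
\begin{align*}
    \theta_{\tau_1(\theta_x\xi)}\,\theta_x\xi \;=\; \theta_{\tau_{\mathrm{next}}(x,\xi)}\xi,
\end{align*}
where $\tau_{\mathrm{next}}(x,\xi)$ denotes the smallest point of $\xi$ strictly greater than $x$. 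Applying Campbell again with $g(x,\xi) = \eins_{[0,1]}(x)\, f(\theta_{\tau_1(\xi)}\xi)$ gives
\begin{align*}
    \int f(\theta_{\tau_1(\xi)}\xi)\,\P^0(d\xi) \;=\; \int \sum_{x \in \xi \cap [0,1]} f(\theta_{\tau_{\mathrm{next}}(x,\xi)}\xi)\,\P(d\xi).
\end{align*}
Thus the whole task reduces to showing that the two sums over $x \in \xi \cap [0,1]$ have the same $\P$-expectation.

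The reindexing step is where the argument really happens: for a.e.\ simple configuration, the map $x \mapsto y := \tau_{\mathrm{next}}(x,\xi)$ is a bijection between $\xi \cap [0,1]$ and $\{y \in \xi : \tau_{\mathrm{prev}}(y,\xi) \in [0,1]\}$. Writing $\tau_{\mathrm{prev}}(y,\xi) = y + \tau_{-1}(\theta_y\xi)$, this second set is $\{y \in \xi : -\tau_{-1}(\theta_y\xi) \leq y \leq 1-\tau_{-1}(\theta_y\xi)\}$. A third application of Campbell, with $g(x,\xi) = \eins_{[-\tau_{-1}(\xi),\,1-\tau_{-1}(\xi)]}(x)\,f(\xi)$, yields
\begin{align*}
    \int \sum_{y \in \xi,\ \tau_{\mathrm{prev}}(y,\xi)\in[0,1]} f(\theta_y\xi)\,\P(d\xi) \;=\; \int \bigl(1-\tau_{-1}(\xi) - (-\tau_{-1}(\xi))\bigr)f(\xi)\,\P^0(d\xi) \;=\; \int f\,d\P^0,
\end{align*}
since the length of the interval is identically $1$. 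Combining the three Campbell identities closes the argument.

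The step I expect to be the most delicate is not any single calculation but tracking the numbering convention \eqref{eq:numbering} carefully: $\tau_{-1}$ is only defined on $\Gamma_0$, and $\tau_{\mathrm{next}}(x,\xi)$ and $\tau_{\mathrm{prev}}(y,\xi)$ must be shown measurable and a.s.\ finite under $\P$ (both follow from simplicity of $\xi$ and intensity one, so that $\P$-a.s.\ the configuration is bi-infinite with no accumulation). Once the bookkeeping is done, the iteration to general $i$ is immediate from \eqref{eq:compatible_shifts}, which already encodes that $\sigma$ on $\SQ$ corresponds to $\theta_{\tau_1}$ (or $\theta_{\tau_{-1}}$) on $\Gamma_0$, making the statement essentially the point-process-side reformulation of stationarity of $\j(\P)$ on $\SQ$.
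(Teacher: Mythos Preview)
Your argument is correct and complete: the three successive applications of the refined Campbell formula, together with the reindexing $x\mapsto y=\tau_{\mathrm{next}}(x,\xi)$ and the observation that the resulting $y$-window $[-\tau_{-1}(\xi),\,1-\tau_{-1}(\xi)]$ has unit length, cleanly yields $(\theta_{\tau_1})_\#\P^0=\P^0$ without any limiting procedure. The measurability and a.s.\ finiteness issues you flag are indeed routine under the standing hypotheses.

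The paper does not prove this theorem directly (it cites \cite[Theorem 7.3.1]{bremaud}), but it does reproduce the argument in the two-dimensional setting in Step~1 of the proof of Theorem~\ref{thm:equivalent_cost}, and explicitly states that the one-dimensional proof is identical. That approach is different from yours: one writes, via the Palm definition with the large box $\Lambda_n$,
\[
\bigl|\P^0(A)-\P^0(\theta_{\tau_1}^{-1}A)\bigr|
\;\le\; n^{-1}\int\Bigl|\sum_{i}\eins_{\Lambda_n}(\tau_i(\xi))\bigl(\eins_A(\theta_{\tau_i}\xi)-\eins_A(\theta_{\tau_{i+1}}\xi)\bigr)\Bigr|\,\P(d\xi),
\]
observes that the inner sum telescopes to at most two boundary terms, and lets $n\to\infty$. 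Compared with yours, this uses a single application of the Palm definition and a limit, whereas your exchange-formula route avoids any limit at the cost of invoking Campbell three times and tracking the shifted window. Both are standard; your version has the minor advantage of giving an exact identity at every step rather than an asymptotic estimate, while the telescoping argument is shorter and generalises verbatim to the monotone $\Gamma_{\IR^2}$ setting used later in the paper.
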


Let $\Poi\in \mathcal P_{s,1}(\Gamma)$ denote the distribution of a Poisson point process. The distribution of its corresponding gap process is well known:
\begin{ex}[{\cite[Theorem 7.2]{Last}}]
It holds $\j(\Poi)=\gamma^{\otimes\IZ}$, where $\gamma$ is the exponential distribution with parameter one.
\end{ex}

We will often switch between gaps and point processes. It will be important for us to transfer convergence statements from gap distributions to convergence statements about distributions of point processes and vice versa. Since the map $\sq:\Gamma_0\to \SQ$ (cf.\ \eqref{eq:def_sq}) is a bijection this is equivalent to understand the relation of convergence of Palm measures and the corresponding distributions of point processes. This is the content of the following lemma.

\begin{lem}\label{lem:conv-palm}
  \begin{enumerate}[a)]
      \item Let $\Pi_n\in \mathcal P_{s,1}(\SQ)$ with $\Pi_n\xrightarrow{n\to \infty}\Pi\in \mathcal P_{s,1}(\SQ)$ weakly, and let $(X_n)_{n\geq 1}$ be a sequence of random variables with $\mathsf{law}(X_n)=(\s_1)_{\#}\Pi_n$.  Assume the sequence $(X_n)_{n\geq 1}$ to be uniformly integrable.
    Let $\P_n:=\j^{-1}(\Pi_n)$ and $\P:=\j^{-1}(\Pi)$, 
    then also $\P_n\xrightarrow{n\to \infty}\P$ weakly.
   
    \item Conversely, let $\P_n\xrightarrow{n\to \infty}\P$ weakly in $\mathcal P_{s,1}(\Gamma)$ and  let $(X_n)_{n\geq 1}$ be a sequence of random variables with $\mathsf{law}(X_n)=(\xi(\Lambda_1))_{\#}\P_n$.  Assume the sequence $(X_n)_{n\geq 1}$ to be uniformly integrable. Let $\Pi_n:=\j(\P_n)$ and $\Pi:=\j(\P)$.
    Then $\Pi_n\xrightarrow{n\to \infty}\Pi$ weakly in $\mathcal P_{s,1}(\SQ)$.
      \end{enumerate}
\end{lem}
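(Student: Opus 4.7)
The plan is to reduce both directions to weak convergence of Palm measures. Indeed, by Theorem~\ref{thm:correspondence} together with the fact that $\sq:\Gamma_0\to \SQ$ is a homeomorphism, $\Pi_n\to\Pi$ weakly in $\mathcal P_{s,1}(\SQ)$ is equivalent to $\P_n^0\to\P^0$ weakly in $\mathcal P(\Gamma_0)$. Under this reduction, (a) becomes: weak convergence of Palm measures together with uniform integrability of $\tau_1$ gives weak convergence of the stationary measures; and (b) is the converse.

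For part (a), I would use the inversion formula \eqref{eq:inversion_formula} to rewrite, for any $f\in C_b(\Gamma)$,
\[
\int f\,d\P_n = \int F(\xi)\,\P_n^0(d\xi),\qquad F(\xi):=\int_0^{\tau_1(\xi)} f(\theta_x\xi)\,dx.
\]
I would then check that $F$ is continuous on $\Gamma_0$, using continuity of $\tau_1$ on $\Gamma_0$ together with joint continuity of the shift action and bounded convergence, and observe the bound $|F|\le \|f\|_\infty \tau_1$. Uniform integrability of $\s_1$ under $\Pi_n$ transports via $\sq$ to uniform integrability of $\tau_1$ under $\P_n^0$. Approximating by the bounded continuous truncation
\[
F_M(\xi):=\int_0^{\min(\tau_1(\xi),M)} f(\theta_x\xi)\,dx,
\]
weak convergence yields $\int F_M\,d\P_n^0\to \int F_M\,d\P^0$, while $|F-F_M|\le \|f\|_\infty(\tau_1-M)^+$ provides a uniform-in-$n$ error bound vanishing as $M\to\infty$. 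Taking $n\to\infty$ and then $M\to\infty$ gives $\int f\,d\P_n\to\int f\,d\P$.

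For part (b), I would run the argument in reverse via the refined Campbell Theorem~\ref{thm:refinedCampbell}. Fix $M>0$ and a nonnegative $\phi\in C_c(\IR)$ with $\int\phi\,dx=1$ and $\supp\phi\subset(-M,M)$. For $h\in C_b(\Gamma_0)$, Campbell gives
\[
\int h\,d\P^0 = \int H_\phi(\xi)\,\P(d\xi),\qquad H_\phi(\xi):=\sum_{x\in\xi}\phi(x)\,h(\theta_x\xi),
\]
and the same identity holds for each $\P_n$. Since $\P$ has Lebesgue intensity measure, $\P(\xi(\{-M,M\})>0)=0$; then vague convergence $\xi_n\to\xi$ with $\xi(\{-M,M\})=0$ forces the points of $\xi_n$ in $[-M,M]$ to converge to those of $\xi$, which, combined with continuity of $\phi$, $h$ and the shift action, yields continuity of $H_\phi$ at $\P$-a.e.\ $\xi$. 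The bound $|H_\phi|\le \|\phi\|_\infty\|h\|_\infty\,\xi([-M,M])$ and the uniform integrability of $\xi([-M,M])$ under $\P_n$ (following from uniform integrability of $\xi(\Lambda_1)$ by stationarity and summation) allow me to truncate $H_\phi$ by $H_\phi\wedge K$ (after splitting $h=h^+-h^-$), apply Portmanteau to the bounded measurable truncation with $\P$-null discontinuity set, and let $K\to\infty$. This yields $\int H_\phi\,d\P_n\to \int H_\phi\,d\P$, hence $\int h\,d\P_n^0\to \int h\,d\P^0$.

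The main obstacle I expect is in part (b): one has to simultaneously control the possibly unbounded number of points in $\supp\phi$ (this is where the stationarity transfer of uniform integrability is crucial) and deal with the mild discontinuity of $H_\phi$ (handled by the Lebesgue-intensity argument plus the a.s.-continuity version of Portmanteau). Part (a) is essentially a clean application of the inversion formula, since $F$ is genuinely continuous on $\Gamma_0$ and only the unboundedness needs to be absorbed by uniform integrability.
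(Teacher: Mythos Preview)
Your proposal is correct and follows essentially the same route as the paper. Both parts use the inversion formula (resp.\ the Campbell/Palm identity) to pass between $\P_n$ and $\P_n^0$, establish continuity of the resulting integrand on the relevant full-measure set, and handle unboundedness via the given uniform integrability; your cosmetic differences (testing against all of $C_b(\Gamma)$ rather than the generating class $\{F(\int f\,d\xi)\}$, and in (b) using a smooth bump $\phi$ in place of the indicator $\mathbf{1}_{\Lambda_1}$) do not change the argument.
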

\begin{proof}
\textit{Item a)}    Let $\Pi_n\xrightarrow{n\to \infty}\Pi\in \mathcal P_{s,1}(\SQ)$ weakly and let the sequence $(X_n)_{n\geq 1}$ with $\mathsf{law}(X_n)=(\s_1)_{\#}\Pi_n$ be uniformly integrable. 
Fix $f\in C_c(\IR)$.
For $F\in C_b(\IR)$  continuous and bounded, we have to show that
\begin{align*}
    \int F\left(\int fd\xi\right)\P_n(d\xi)\xrightarrow{n\to \infty}\int F\left(\int fd\xi\right)\P(d\xi).
\end{align*}
From the inversion formula \eqref{eq:inversion_formula} it follows that 
\begin{align}\label{eq:inv_form}
\int F\left(\int fd\xi\right)\P_n(d\xi)=\int \int_0^{\tau_1(\xi)}F\left(\int fd\theta_z\xi\right)dz\P^0_n(d\xi).
\end{align}
Note that the weak convergence $\Pi_n\xrightarrow{n\to \infty}\Pi$ implies the weak convergence $\P^0_n\xrightarrow{n\to \infty}\P^0$ (when $\Gamma$ is equipped with the vague topology).

Moreover, the function $\Gamma_0\ni\xi\mapsto \int_0^{\tau_1(\xi)}F\left(\int fd\theta_z\xi\right)dz$ is continuous with respect to the topology of vague convergence on $\Gamma_0$.
To see this, let $\Gamma_0\ni\xi^n\xrightarrow{n\to \infty}\xi\in \Gamma_0$ vaguely. In particular, we have $\tau_0(\xi^n)=\tau_0(\xi)=0$ and the convergence $\tau_1(\xi^n)\xrightarrow{n\to \infty}\tau_1(\xi)$.
Hence, \begin{align*}
    &\abs{\int_0^{\tau_1(\xi^n)}F\left(\int fd\theta_z\xi^n\right)dz- \int_0^{\tau_1(\xi)}F\left(\int fd\theta_z\xi\right)dz}\\
    &\leq  \int_0^{\tau_1(\xi)}\abs{F\left(\int fd\theta_z\xi^n\right)-F\left(\int fd\theta_z\xi\right)}dz+\norm{F}_{\infty}\abs{\tau_1(\xi^n)-\tau_1(\xi)}. 
\end{align*}
The integral converges to zero by the dominated convergence Theorem and the fact that for fixed $z\in \IR$ we have 
$F\left(\int fd\theta_z\xi^n\right)\xrightarrow{n\to \infty}F\left(\int fd\theta_z\xi\right)$.
The second term also converges to zero. This proves the continuity.
Moreover, we have the following upper bound
\begin{align}\label{eq:bound}
    \abs{\int_0^{\tau_1(\xi)}F\left(\int fd\theta_z\xi\right)dz}\leq \norm{F}_{\infty} \tau_1(\xi).
\end{align}
Since  $\tau_1(\cdot)_{\#}\P^0_n=(\s_1)_{\#}\Pi^n$, the uniform integrability assumption  implies that a sequence of random variables $(Y_n)_{n\geq 1}$ with $\mathsf{law}(Y_n)=\tau_1(\cdot)_{\#}\P^0_n$ is uniformly integrable.

Combining   the continuity of the function  $\Gamma_0\ni\xi\mapsto \int_0^{\tau_1(\xi)}F\left(\int fd\theta_z\xi\right)dz$, the  weak convergence $\P^0_n\xrightarrow{n\to \infty}\P^0$, the bound \eqref{eq:bound} and the preceding uniform integrability statement implies
the convergence 
\begin{align*}
    \int \int_0^{\tau_1(\xi)}F\left(\int fd\theta_z\xi\right)dz\P^0_n(d\xi)\xrightarrow{n\to\infty}\int\int_0^{\tau_1(\xi)}F\left(\int fd\theta_z\xi\right)dz\P^0(d\xi).
\end{align*}
By \eqref{eq:inv_form} this proves the weak convergence $\P_n\to \P$.

\medskip

\textit{Item b)}  This part follows similarly. Choose $f$ and $F$ as above. The definition of the Palm measure yields 
 \[
\int F\left(\int fd\xi\right)\P^0_n(d\xi)=\int\sum_{x\in \xi\cap\Lambda_1}F\left(\int fd\theta_x\xi\right)\P_n(d\xi).
\]
Let $\Gamma^*:=\{\xi\in \Gamma \mid \xi(\partial \Lambda_1)=0\}$. Note that by stationarity we have $\P_n(\Gamma^*)=\P(\Gamma^*)=1$.
The function $\Gamma^*\ni\xi\mapsto \sum_{x\in \xi\cap \Lambda_1}F\left(\int fd\theta_x\xi\right)$ is continuous with respect to the topology of vague convergence.
To see this, let $\Gamma^*\ni\xi^n\xrightarrow{n\to \infty}\xi\in \Gamma^*$ vaguely. 
Let $\{\tau_j(\xi),\dots,\tau_l(\xi)\}=\xi\cap \Lambda_1$, $j\leq l$, where we use the enumeration introduced in \eqref{eq:numbering}.
For $n$ sufficiently large, we can assume that 
$\{\tau_j(\xi^n),\dots,\tau_l(\xi^n)\}=\xi^n\cap  \Lambda_1$. Moreover, we have the convergence $\tau_k(\xi^n)\xrightarrow{n\to \infty}\tau_k(\xi)$ for $j\leq k\leq l$. Hence, 
\begin{align*}
  \sum_{x\in \xi^n\cap \Lambda_1}F\left(\int fd\theta_x\xi^n\right)=
  \sum_{k=j}^lF\left(\int fd\theta_{\tau_k(\xi^n)}\xi^n\right)
  \xrightarrow{n\to \infty }\sum_{k=j}^lF\left(\int fd\theta_{\tau_k(\xi)}\xi\right),
\end{align*}
since for any $k=j,\dots,l$ we  have the convergence 
\[F\left(\int fd\theta_{\tau_k(\xi^n)}\xi^n\right)\xrightarrow{n\to\infty}F\left(\int fd\theta_{\tau_k(\xi^)}\xi\right).\]
This proves the continuity.
Furthermore, we have the following upper bound
\begin{align}\label{eq:bound2}
    \abs{\sum_{x\in \xi\cap\mathring \Lambda_1}F\left(\int fd\theta_x\xi\right)}\leq \norm{F}_{\infty}\xi(\Lambda_1).
\end{align}
Again, combining  the continuity of the map $\Gamma^*\ni\xi\mapsto \sum_{x\in \xi\cap \Lambda_1}F\left(\int fd\theta_x\xi\right)$, the  weak convergence $\P_n\xrightarrow{n\to \infty}\P$, the bound \eqref{eq:bound2} and the  uniform integrability assumption from the second part of this lemma yield
the convergence
\begin{align*}
    \int \sum_{x\in \xi\cap \Lambda_1}F\left(\int fd\theta_x\xi\right)\P_n(d\xi)\xrightarrow{n\to\infty}\int\sum_{x\in \xi\cap \Lambda_1}F\left(\int fd\theta_x\xi\right)\P(d\xi).
\end{align*}
Hence, 
 the weak convergence $\P_n^0\xrightarrow{n\to \infty}\P^0$ holds which yields the  weak convergence $\Pi_n\xrightarrow{n\to \infty}\Pi$.
\end{proof}

\subsubsection*{Specific entropy.} We recall the definition of the specific relative entropy of a stationary point processes. For $\P\in \mathcal P_{s,1}(\Gamma)$, the specific relative entropy $ \mathcal E( \P)$ with respect to the Poisson point process, see \cite{Serfaty,RAS,Georgii}, is defined as
\begin{align}\label{eq:specific_rel_ent}
    \mathcal E(\P):=\lim_{n\to\infty} \frac 1{n^d}\mathsf{Ent}(\P_{\Lambda_n}|\Poi_{\Lambda_n}):=\lim_{n\to\infty} \frac 1{n^d} \int \log\Big(\frac{d\P_{\Lambda_n}}{d\Poi_{\Lambda_n}}(\xi)\Big)d\mathsf{P}_{\Lambda_n}(\xi).
\end{align}

 It has the following basic properties, see \cite{Serfaty,RAS,Georgii}.
 
\begin{lem}\label{lem:Georgii} Let $\P\in \mathcal P_{s,1}(\Gamma)$. Then
\begin{enumerate}[a)]
    \item The limit  in \eqref{eq:specific_rel_ent} exists in $[0,\infty]$ and is equal to the supremum, i.e. \begin{align}\label{eq:def_entrop_sup}
        \mathcal E( \P)=\sup_{n\in \IN} \frac 1{n^d}\mathsf{Ent}(\P_{\Lambda_n}|\Poi_{\Lambda_n})
    \end{align}
    \item The map $\P\mapsto\mathcal E(\P)$ is affine and lower semi-continuous.
    \item The specific relative entropy vanishes iff $\P=\Poi$.

\end{enumerate}
\end{lem}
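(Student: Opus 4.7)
The plan is to treat parts (a), (b), (c) in order, since (a) provides the foundation for the others. First, for (a) I would establish that $a_n:=\ent(\P_{\Lambda_n}|\Poi_{\Lambda_n})$ is superadditive in $n$: splitting $\Lambda_{m+n}$ (up to a Lebesgue null set) into a translate of $\Lambda_m$ and a translate of $\Lambda_n$, the independence property of the Poisson point process on disjoint sets yields a product factorisation of $\Poi_{\Lambda_{m+n}}$, and the standard lower bound $\ent(\mu|\nu_1\otimes \nu_2)\geq \ent((\pr_1)_\#\mu|\nu_1)+\ent((\pr_2)_\#\mu|\nu_2)$ for relative entropy with respect to a product reference, combined with stationarity of $\P$, yields $a_{m+n}\geq a_m+a_n$. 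Fekete's lemma then delivers $a_n/n\to\sup_n a_n/n$, which is (a).

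For the lower semi-continuity half of (b), I would combine (a) with the classical fact that for each fixed $n$ the map $\P\mapsto \ent(\P_{\Lambda_n}|\Poi_{\Lambda_n})$ is lower semi-continuous in the vague topology on $\mathcal P_{s,1}(\Gamma)$ (the restriction map $\P\mapsto \P_{\Lambda_n}$ is continuous, and relative entropy is lsc in its first argument); a pointwise supremum of lsc functions is lsc.

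The main obstacle is the affinity statement: at finite volume $\ent(\cdot|\Poi_{\Lambda_n})$ is only convex, so affinity is genuinely an infinite-volume phenomenon. My plan is to invoke the ergodic decomposition for stationary probability measures on $\Gamma$: any $\P\in \mathcal P_{s,1}(\Gamma)$ can be written as $\P=\int \Q\,\mu_{\P}(d\Q)$ over ergodic stationary $\Q$. Convexity of finite-volume entropy and Jensen give $\mathcal E(\P)\leq \int \mathcal E(\Q)\,\mu_{\P}(d\Q)$. For the matching lower bound I would use that under an ergodic $\Q$ the normalised log-density $\tfrac1n\log\tfrac{d\Q_{\Lambda_n}}{d\Poi_{\Lambda_n}}$ converges $\Q$-a.s.\ to the constant $\mathcal E(\Q)$ (a Shannon--McMillan-type statement for the stationary point process), and then argue via the disintegration of $\P$ that the finite-volume relative entropies, divided by $n$, converge to $\int \mathcal E(\Q)\,\mu_{\P}(d\Q)$. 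Once the integral representation $\mathcal E(\P)=\int \mathcal E(\Q)\,\mu_{\P}(d\Q)$ is in hand, affinity under convex combinations $\alpha\P_0+(1-\alpha)\P_1$ is immediate from the uniqueness of the ergodic decomposition, since $\alpha\mu_{\P_0}+(1-\alpha)\mu_{\P_1}$ decomposes $\alpha\P_0+(1-\alpha)\P_1$. As a shortcut, one may cite \cite[Theorem 15.20]{Georgii} for this affine integral representation directly.

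Part (c) then follows quickly from (a): if $\mathcal E(\P)=0$, then $a_n/n=0$ for every $n\in\IN$, and since $a_n\geq 0$ this forces $a_n=0$, i.e.\ $\P_{\Lambda_n}=\Poi_{\Lambda_n}$ by the characterisation of relative entropy. Since $\bigcup_n \Lambda_n = \IR$, these finite-volume laws determine $\P$ on the Borel $\sigma$-algebra of $\Gamma$, hence $\P=\Poi$. The converse direction is trivial.
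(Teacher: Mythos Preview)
The paper does not actually prove this lemma: it is stated immediately after the sentence ``It has the following basic properties, see \cite{Serfaty,RAS,Georgii}'' and no proof is given. So there is no paper-proof to compare against; the lemma is quoted from the literature.

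Your sketch is essentially correct and follows the standard route found in those references. One remark on the affinity part: invoking a Shannon--McMillan statement for the matching lower bound works but is heavier than necessary. The more elementary argument, which is what one finds in \cite{RAS} and \cite{Georgii}, is to use the two-sided estimate
\[
\alpha\,\ent(\mu_0|\nu)+(1-\alpha)\,\ent(\mu_1|\nu)-\log 2
\;\leq\;
\ent(\alpha\mu_0+(1-\alpha)\mu_1\,|\,\nu)
\;\leq\;
\alpha\,\ent(\mu_0|\nu)+(1-\alpha)\,\ent(\mu_1|\nu),
\]
applied at each finite volume $\Lambda_n$; after dividing by $n$ and letting $n\to\infty$ the $\log 2$ correction vanishes, yielding affinity directly for finite convex combinations without any almost-sure convergence input. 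Your shortcut of citing \cite{Georgii} is of course also fine, and in line with what the paper itself does.
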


There exists a similar notion for measures $\Pi\in \mathcal P_{s,1}(\SQ)$. The reference measure is given by the gap distribution of the Poisson point process, which  is $\gamma^{\IZ}$.

\begin{defi}
For $\Pi\in \mathcal P_{s,1}(\SQ)$ and $m<n$    denote by $\Pi^{m,n}$ the pushforward of $\Pi$ under the map $
        (\s_i)_{i\in \IZ}\mapsto (\s_i)_{i=m,\dots,n}$. We set  \begin{align*}
    \mathcal E^*(\Pi):=\lim_{n\to \infty}n^{-1}\ent(\Pi^{1,n}\mid \gamma^{\otimes_{i=1}^n})=\sup_{n\in \IN}n^{-1}\ent(\Pi^{1,n}\mid \gamma^{\otimes_{i=1}^n}).
\end{align*}
\end{defi}
The existence of the limit follows by a standard subadditivity argument (cf. \cite[Theorem 6.7]{RAS}).
These two notions of entropy are linked by the bijection $\j$, i.e.\
the specific relative entropy of a point process and the entropy of its gap distribution coincide, see \cite[Proposition 3.8]{georgii1}.

\begin{thm}\label{thm:equiv_entropies}
    For a stationary point process $\P\in \mathcal P_{s,1}(\Gamma)$ \begin{align*}
        \mathcal{E}(\P)=\mathcal{E}^*(\j(\P)).
    \end{align*}
\end{thm}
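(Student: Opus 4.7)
My plan is to establish the equality by expressing both specific entropies via explicit density formulas and comparing them up to boundary contributions that become negligible per unit length. Using the inversion formula from Theorem \ref{thm:correspondence}, the density of $\P_{\Lambda_n}$ with respect to $\Poi_{\Lambda_n}$ can be written in Palm-theoretic terms. A finite configuration $\{y_1<\cdots<y_k\}\subset \Lambda_n$ with $k\geq 1$ is parametrised by the leftmost position $y_1$ and the intermediate gaps $s_i:=y_i-y_{i-1}$ for $i=2,\dots,k$; this change of variables has Jacobian one. Under $\Poi_{\Lambda_n}$, conditional on $N=k$, the sorted points are uniform order statistics, while for general $\P$ the density factorises as the density of the Palm gap marginal $\Pi^{1,k-1}$ at $(s_2,\dots,s_k)$, multiplied by size-biased factors accounting for the two boundary half-gaps to the left of $y_1$ and to the right of $y_k$, together with the distribution of $N$.

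Taking logarithms and integrating then yields $\ent(\P_{\Lambda_n}|\Poi_{\Lambda_n})$ as an averaged bulk contribution, essentially $\int \ent(\Pi^{1,k-1}|\gamma^{\otimes(k-1)}) \,d(N_\#\P_{\Lambda_n})(k)$, plus boundary terms of lower order. Since $\P$ has unit intensity, $\IE_\P[N]=n$, and by Lemma \ref{lem:Georgii} the relative entropy grows linearly in $n$; standard concentration gives that $N$ is concentrated around $n$. Dividing by $n$ and letting $n\to\infty$, the bulk contribution converges to $\mathcal E^*(\j(\P))$ by definition of $\mathcal E^*$ together with $k/n\to 1$, while the entropy contributions of $N$ and of the two size-biased boundary half-gaps are shown to be $o(n)$; this gives $\mathcal E(\P)=\mathcal E^*(\j(\P))$.

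The main obstacle is the careful treatment of the two boundary half-gaps: under $\P$, the gap of $\xi$ straddling an endpoint of $\Lambda_n$ is size-biased relative to a typical Palm gap (the waiting-time paradox), so its marginal law is not a projection of $\Pi$, and one must verify that its entropy contribution is uniformly $o(n)$. A robust way to avoid this bookkeeping is to exploit the supremum characterisation \eqref{eq:def_entrop_sup}: show that both sequences $n\mapsto \ent(\P_{\Lambda_n}|\Poi_{\Lambda_n})$ and $n\mapsto \ent(\Pi^{1,n}|\gamma^{\otimes n})$ satisfy nearly-matching super- and subadditivity inequalities, with gaps controlled by the above boundary terms, so that the two limiting rates must coincide.
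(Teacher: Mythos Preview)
The paper does not give its own proof of this statement; it simply cites \cite[Proposition~3.8]{georgii1}. So there is no proof in the paper to compare against --- only the external reference of Georgii--Zessin.

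As to your proposal: it is a plan, not a proof, and you say so yourself. The heuristic is sound, but two of the steps you rely on are genuinely delicate and you have not carried them out.

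First, the factorisation you assert is not correct as stated. Under $\P_{\Lambda_n}$, conditioning on $\{\xi(\Lambda_n)=k\}$, the joint law of the interior gaps $(s_2,\dots,s_k)$ is \emph{not} the marginal $\Pi^{1,k-1}$: the event $\{\xi(\Lambda_n)=k\}$ couples the interior gaps to the two censored boundary gaps (you know only a lower bound on the gap straddling each endpoint), and this conditioning tilts the joint law. The correct density, obtained from the inversion formula, involves the joint $\Pi$-law of $k+1$ consecutive gaps with the two extreme ones integrated over a half-line, not the product of $\Pi^{1,k-1}$ with independent size-biased boundary terms. You can of course argue that the tilt is asymptotically negligible, but that \emph{is} the content of the theorem, and it needs a genuine argument (for instance via an information-theoretic chain rule and control of the conditional entropies).

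Second, the sentence ``standard concentration gives that $N$ is concentrated around $n$'' is not justified for an arbitrary $\P\in\mathcal P_{s,1}(\Gamma)$ with finite specific entropy. Unit intensity gives only $\IE[N]=n$; you have no second-moment bound a priori, and no ergodicity assumption. What you actually need is that $\IE\big[\ent(\Pi^{1,N-1}\mid\gamma^{\otimes(N-1)})\big]/n\to\mathcal E^*(\Pi)$, which requires both $N/n\to 1$ in an appropriate sense and uniform control of $k\mapsto \ent(\Pi^{1,k}\mid\gamma^{\otimes k})/k$. The superadditivity you invoke gives the latter monotonically from below, but the former is not free.

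Your fallback suggestion --- matching sub/superadditive bounds for the two sequences with boundary corrections --- is closer to how Georgii--Zessin proceed, but you would still need to produce the actual two-sided inequalities with explicit $o(n)$ error terms, and you have not done so.
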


We finish this section by recalling the concept of hyperuniformity. This definition will become relevant in Section \ref{sec:longrange}, where we deal with the one-dimensional log-gas.
\begin{defi}\label{def:hyperunif}
    A point process $\P\in \mathcal P_{s,1}(\Gamma)$ is  hyperuniform if \begin{align*}
       \frac{1}{n} \int (\xi(\Lambda_n)-n)^2\P(d\xi)\xrightarrow{n\to \infty}0.
    \end{align*}
    The process $\P$ is class-1-hyperuniform if  $ \int (\xi(\Lambda_n)-n)^2\P(d\xi)=O(1)$.
\end{defi}

\section{Metric structure and convexity of specific entropy}\label{sec:metric}
In this section, we construct two isometric geodesic extended metrics, one between distributions of point processes and one between the corresponding gap distributions. We will show that the specific entropy is convex along geodesics.
We will make extensive use of the bijection $\j$, which allows us to conduct calculations and estimations in the  space of sequences $\SQ$ which are usually much easier.

We start by defining the extended gap metric $\mathscr W_{gap,p}$. Recall $\Cpl_s(\Pi_0,\Pi_1)$, the set of stationary couplings between gap distributions from Definition \ref{def:cpls}.

\begin{defi}
For $p\geq 1$  and  $\Pi_i\in \mathcal P_{s,1}(\SQ)$, $i=0,1$, 
define 
\begin{align}\label{eq:w gap}
   \mathscr W_{gap,p}^p(\Pi_0,\Pi_1):=\inf_{\mathsf U\in \Cpl_s(\Pi_0,\Pi_1)}\int \abs{\s_1-\s'_1}^p\U(d\s,d\s').
\end{align}
\end{defi}
It follows from the proof of  Theorem \ref{thm: ext metric gap} that $\mathscr W_{gap,p}$ is indeed a metric that might attain the value $\infty$, in this sense, it is an extended metric.
The existence of an $\mathscr W_{gap,p}-$optimal coupling, i.e., a coupling attaining the  infimum in \eqref{eq:w gap}, follows from a standard argument.

\begin{lem}
    For $\Pi_i\in \mathcal P_{s,1}(\SQ)$, $i=0,1$, there exists a $\mathscr W_{gap,p}-$optimal coupling $\U\in \Cpl_s(\Pi_0,\Pi_1)$.
\end{lem}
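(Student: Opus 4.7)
The plan is to run the direct method: take a minimizing sequence, extract a weak limit using tightness coming from the fixed marginals, and then verify that the limit is admissible and attains the infimum by lower semi-continuity of the cost.

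First, one may assume $\mathscr W_{gap,p}(\Pi_0,\Pi_1) < \infty$, since otherwise the product coupling $\Pi_0 \otimes \Pi_1 \in \Cpl_s(\Pi_0,\Pi_1)$ already realises the infimum (which is $+\infty$). Let $(\U_n)_{n\geq 1} \subset \Cpl_s(\Pi_0,\Pi_1)$ be a minimizing sequence, i.e.\
\[
\int \abs{\s_1 - \s_1'}^p \, \U_n(d\s, d\s') \;\longrightarrow\; \mathscr W_{gap,p}^p(\Pi_0,\Pi_1).
\]
Since both marginals are fixed and equal to $\Pi_i$, the family $(\U_n)_n$ is tight on $\SQ \times \SQ$ by Prokhorov's theorem (the marginals are tight, and tightness of the marginals implies tightness of the couplings). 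Passing to a subsequence we may therefore assume $\U_n \rightharpoonup \U$ weakly for some probability measure $\U$ on $\SQ \times \SQ$.

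Next I would verify $\U \in \Cpl_s(\Pi_0,\Pi_1)$. Continuity of the two coordinate projections $\SQ \times \SQ \to \SQ$ together with weak convergence shows that $\U$ has marginals $\Pi_0$ and $\Pi_1$. For the stationarity constraint, each diagonal shift $(\s,\s') \mapsto (\sigma^k \s, \sigma^k \s')$ is continuous on $\SQ \times \SQ$ (in the product topology the shift is just a coordinate permutation), so the condition $(\sigma^k,\sigma^k)_\# \U_n = \U_n$ passes to the weak limit $\U$. Hence $\U \in \Cpl_s(\Pi_0,\Pi_1)$.

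Finally, the cost functional $\U \mapsto \int \abs{\s_1 - \s_1'}^p \, d\U$ is lower semi-continuous with respect to weak convergence because the integrand is nonnegative and continuous on $\SQ \times \SQ$ (a standard consequence of the Portmanteau theorem for lower semi-continuous nonnegative test functions). Therefore
\[
\int \abs{\s_1 - \s_1'}^p \, \U(d\s,d\s') \;\leq\; \liminf_{n \to \infty} \int \abs{\s_1 - \s_1'}^p \, \U_n(d\s,d\s') \;=\; \mathscr W_{gap,p}^p(\Pi_0,\Pi_1),
\]
so $\U$ is optimal. The only delicate point is that the cost is unbounded; this is handled by the nonnegativity together with lower semi-continuity rather than by uniform integrability, and it is what forces us to use $\liminf$ rather than an equality here.
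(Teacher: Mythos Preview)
Your proof is correct and follows essentially the same approach as the paper: take a minimizing sequence, use tightness of couplings with fixed marginals to extract a weak limit, verify that stationarity passes to the limit via continuity of the diagonal shifts, and conclude optimality by lower semi-continuity of the nonnegative continuous cost via Portmanteau. The only cosmetic difference is that the paper phrases the stationarity step by testing against $\tilde F = F \circ (\sigma^k,\sigma^k)$ explicitly, whereas you invoke continuity of the shift map directly.
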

\begin{proof} If the cost is infinite, any coupling is optimal, hence we can assume that 
\[\mathscr W_{gap,p}(\Pi_0,\Pi_1)<\infty.\]
    Let $(\U_n)_{n\in \IN}$ be a minimizing sequence in $\Cpl_s(\Pi_0,\Pi_1)$. 
    It follows from classical theory that $\Cpl(\Pi_0,\Pi_1)$ is compact (cf. \cite[Proof of Theorem 4.1]{Villani}) with respect to the weak topology. Hence, there exists a subsequence, still denoted by $(\U_n)_n$ converging weakly to some $\U\in \Cpl(\Pi_0,\Pi_1)$. 
    Let $F:\SQ^2\to \IR$ be continuous and bounded, and let $k\in \IZ$. Then the function $\tilde F=F\circ (\sigma^k,\sigma^k)$ is also continuous and bounded. Hence,
    \begin{align*}
        \int Fd\U=\lim_{n\to \infty}\int Fd\U_n=\lim_{n\to \infty}\int \tilde Fd\U_n= \int \tilde Fd\U,
    \end{align*}
    which proves stationarity of $\U$ (and compactness of $\Cpl_s(\Pi_0,\Pi_1)$).
    Hence, it remains to show that $\U$ is optimal.  By continuity and positivity of the function $\SQ^2\ni (\s,\s')\mapsto \abs{\s_1-\s'_1}^p$ (with respect to the product topology) and the Portmanteau theorem
    \begin{align*}
    \int \abs{\s_1-\s'_1}^p\U(d\s,d\s')\leq \liminf_{n\to \infty} \int \abs{\s_1-\s'_1}^p\U_n(d\s,d\s')=\mathscr W^p_{gap,p}(\Pi_0,\Pi_1). 
    \end{align*}
\end{proof}

It will be useful to approximate the cost $\mathscr W^p_{gap,p}$ with finite dimensional classical transport costs. To this end, we introduce the  cost functionals  $\mathcal C_p$.  For two probability measures $\mu,\nu$ on $\IR^d$ we put 
\begin{align}\label{eq:Cp}
    \mathcal C_p(\mu,\nu):=\inf_{\q\in \Cpl(\mu,\nu)}\int \norm{x-y}_p^p\q(dx,dy),
\end{align}
where  $\norm{x}_p^p=\abs{x_1}^p+\dots+\abs{x_d}^p$ and $\Cpl(\mu,\nu)$ denotes the set of all couplings between $\mu$ and $\nu$.

\begin{lem}\label{lem:cost=lim_wasserstein}
    Let $\Pi_i\in \mathcal P_{s,1}(\SQ)$, $i=0,1$. Then 
    \begin{align}
        \mathscr W^p_{gap,p}(\Pi_0,\Pi_1)=\lim_{n\to \infty} n^{-1} \mathcal C_p(\Pi_0^{1,n},\Pi^{1,n}_1)=\sup_{n\in \IN} n^{-1} \mathcal C_p(\Pi_0^{1,n},\Pi^{1,n}_1).
    \end{align}
\end{lem}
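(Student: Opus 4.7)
The identity has two halves: first that $n^{-1}\mathcal C_p(\Pi_0^{1,n},\Pi_1^{1,n})$ converges and equals its supremum, and second that this common value coincides with $\mathscr W_{gap,p}^p(\Pi_0,\Pi_1)$. For the first half, I would establish superadditivity of $a_n := \mathcal C_p(\Pi_0^{1,n},\Pi_1^{1,n})$: given any $\q\in\Cpl(\Pi_0^{1,n+m},\Pi_1^{1,n+m})$, its restrictions to coordinates $1,\dots,n$ and $n+1,\dots,n+m$ are couplings of $\Pi_0^{1,n},\Pi_1^{1,n}$ and (by stationarity) of $\Pi_0^{1,m},\Pi_1^{1,m}$ respectively, while the cost $\int\|\cdot\|_p^p\,d\q$ splits as the sum of the two piece-wise costs. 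Taking infima gives $a_{n+m}\geq a_n+a_m$, so Fekete's lemma for superadditive sequences yields $\lim_n a_n/n = \sup_n a_n/n$.

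For the easy inequality $\mathscr W_{gap,p}^p(\Pi_0,\Pi_1)\geq \sup_n n^{-1}a_n$: given any $\U\in\Cpl_s(\Pi_0,\Pi_1)$, stationarity of $\U$ under the diagonal shift gives $\int|\s_1-\s'_1|^p d\U=\int|\s_i-\s'_i|^p d\U$ for all $i\in\IZ$, so
\begin{align*}
\int|\s_1-\s'_1|^p d\U = \frac{1}{n}\int\sum_{i=1}^n|\s_i-\s'_i|^p d\U \geq \frac{1}{n}a_n,
\end{align*}
since the pushforward of $\U$ under $(\s,\s')\mapsto((\s_i)_{i=1}^n,(\s'_i)_{i=1}^n)$ lies in $\Cpl(\Pi_0^{1,n},\Pi_1^{1,n})$. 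Taking the infimum over $\U$ proves the claim.

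For the reverse inequality, I would use a block-averaging construction. Let $\q_n$ be optimal for $\mathcal C_p(\Pi_0^{1,n},\Pi_1^{1,n})$. Define $\hat\q_n\in\mathcal P(\SQ^2)$ as the law in which the blocks $\bigl((\s_{kn+1},\dots,\s_{(k+1)n}),(\s'_{kn+1},\dots,\s'_{(k+1)n})\bigr)_{k\in\IZ}$ are i.i.d. copies of $\q_n$; this measure is $(\sigma^n,\sigma^n)$-invariant. Its shift-average
\begin{align*}
\bar\q_n := \frac{1}{n}\sum_{k=0}^{n-1}(\sigma^k,\sigma^k)_\#\hat\q_n
\end{align*}
is $(\sigma,\sigma)$-invariant, and the crucial computation
\begin{align*}
\int|\s_1-\s'_1|^p d\bar\q_n = \frac{1}{n}\sum_{k=0}^{n-1}\int|\s_{1-k}-\s'_{1-k}|^p d\hat\q_n = \frac{1}{n}\int\sum_{i=1}^n|\s_i-\s'_i|^p d\q_n = \frac{a_n}{n}
\end{align*}
gives the correct cost. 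The marginals of $\bar\q_n$ are the shift-averages $\bar\Pi_i^{(n)}=\frac{1}{n}\sum_{k=0}^{n-1}\sigma^k_\#\bigl(\bigotimes_{j\in\IZ}\Pi_i^{1,n}\bigr)$, which are not $\Pi_i$ but converge weakly to $\Pi_i$ as $n\to\infty$: for any cylinder test function depending on a fixed window of $2N+1$ coordinates, at most $O(N/n)$ of the shifts $k\in\{0,\dots,n-1\}$ produce a window that crosses a block boundary, and for all interior shifts the $(2N+1)$-dimensional statistics agree exactly with $\Pi_i$ by stationarity.

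The last step is a compactness/lower semicontinuity argument. The family $(\bar\q_n)_n$ is tight in $\mathcal P(\SQ^2)$ equipped with the product topology: since marginals converge to $\Pi_0,\Pi_1$, each coordinate projection is tight on $[0,\infty)$, which suffices for tightness on the countable product. Extract a weakly convergent subsequence $\bar\q_{n_k}\to \U$. Stationarity under $(\sigma,\sigma)$ is preserved under weak limits, and the marginals of $\U$ are $\Pi_0,\Pi_1$ by the weak convergence of $\bar\Pi_i^{(n)}$, so $\U\in\Cpl_s(\Pi_0,\Pi_1)$. Finally, continuity and non-negativity of $(\s,\s')\mapsto|\s_1-\s'_1|^p$ combined with Portmanteau give
\begin{align*}
\mathscr W_{gap,p}^p(\Pi_0,\Pi_1) \leq \int|\s_1-\s'_1|^p d\U \leq \liminf_{k\to\infty}\int|\s_1-\s'_1|^p d\bar\q_{n_k} = \lim_{n\to\infty}\frac{a_n}{n},
\end{align*}
closing the circle. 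The main obstacle is verifying that the block-product marginals $\bigotimes_{j}\Pi_i^{1,n}$ do homogenize to $\Pi_i$ after shift-averaging and that one may pass to the weak limit without losing either stationarity or the correct marginals; the $O(N/n)$ boundary estimate outlined above is the heart of the matter.
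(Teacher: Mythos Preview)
Your proposal is correct and follows essentially the same approach as the paper: superadditivity and Fekete's lemma for the first identity, the easy inequality by restricting a stationary coupling, and the reverse inequality via the block-product/shift-average construction followed by tightness, weak limits, and the Portmanteau lower semicontinuity. The paper carries out the verification that $\bar\Pi_i^{(n)}\to\Pi_i$ weakly in exactly the way you outline, by testing against cylinder functions and controlling the $O(N/n)$ boundary shifts.
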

\begin{proof}

\textit{Step 1}: We show $\lim_{n\to \infty} n^{-1} \mathcal C_p(\Pi_0^{1,n},\Pi^{1,n}_1)=\sup_{n\in \IN} n^{-1} \mathcal C_p(\Pi_0^{1,n},\Pi^{1,n}_1).$

Let $m,n\in \IN$ and let $\U_{m+n}\in \Cpl(\Pi_0^{1,m+n},\Pi_1^{1,m+n})$ be a $\mathcal C_p-$optimal coupling. Then 
\begin{align*}
    \mathcal C_p(\Pi_0^{1,m+n},\Pi_1^{1,m+n})&=\int \sum_{i=1}^m\abs{\s_i-\s'_i}^p\U_{m+n}(d\s,d\s')+\int \sum_{i=m+1}^{m+n}\abs{\s_i-\s'_i}^p\U_{m+n}(d\s,d\s')\\
    &\geq  \mathcal C_p(\Pi_0^{1,m},\Pi_1^{1,m})+ \mathcal C_p(\Pi_0^{m+1,m+n},\Pi_1^{m+1,m+n})\\
     &=  \mathcal C_p(\Pi_0^{1,m},\Pi_1^{1,m})+ \mathcal C_p(\Pi_0^{1,n},\Pi_1^{1,n}),
\end{align*}
where the last equality follows from stationarity. From Feketes Lemma it then follows that \begin{align*}
    \lim_{n\to \infty} n^{-1} \mathcal C_p(\Pi_0^{1,n},\Pi^{1,n}_1)=\sup_{n\in \IN} n^{-1} \mathcal C_p(\Pi_0^{1,n},\Pi^{1,n}_1).
\end{align*}

\medskip
    
\textit{Step 2}: We show  $\sup_{n\in \IN}n^{-1} \mathcal C_p(\Pi_0^{1,n},\Pi^{1,n}_1)\leq \mathscr W^p_{gap,p}(\Pi_0,\Pi_1)$.

Let $\U\in \Cpl_s(\Pi_0,\Pi_1)$ be $\mathscr W_{gap,p}-$optimal and let $\U^{1,n}\in \Cpl(\Pi_0^{1,n},\Pi^{1,n}_1)$ be the pushforward of $\U$ under the map \begin{align*}
        \SQ^2\ni (\s,\s')\mapsto ((\s_i,\s_i')_{i=1,\dots,n})\in \IR^2.
    \end{align*}
    Then by stationarity 
    \begin{align*}
        n^{-1} \mathcal C_p(\Pi_0^{1,n},\Pi^{1,n}_1)\leq n^{-1}\int\sum_{i=1}^n\abs{\s_i-\s'_i}^p\U(d\s,d\s')=\mathscr W^p_{gap,p}(\Pi_0,\Pi_1),
    \end{align*}
    which shows that $\sup_{n\in \IN}n^{-1} \mathcal C_p(\Pi_0^{1,n},\Pi^{1,n}_1)\leq \mathscr W^p_{gap,p}(\Pi_0,\Pi_1)$.

\medskip

\textit{Step 3}:  We show $\liminf_{n\to \infty} n^{-1} \mathcal C_p(\Pi_0^{1,n},\Pi^{1,n}_1)\geq \mathscr W^p_{gap,p}(\Pi_0,\Pi_1)$.

Let $n\in \IN$ and let  $\U^n$ be a minimizer for 
\[
\inf_{\U\in \Cpl(\Pi^{1,n}_0,\Pi^{1,n}_1)}\int \norm{x-y}_p^pd\U(x,y).
\]
For $i=0,1$ let $\Pi_{i,n}^{\otimes}:=\paste^n_{\#}\left(\bigotimes_{m\in \IZ}\Pi_{i}^{1,n}\right)$, where 
    $\paste^n:([0,\infty)^n)^{\IZ}\to \SQ$ is the
map defined by $\paste^n((\s^m_1,\cdots,\s^m_n)_{m\in \IZ})_{kn+l}:=\s^k_l$, for $l=1,\dots,n$ and $k\in \IZ$.
Set $\bar{\Pi}_{i,n}:=n^{-1}\sum_{j=1}^n\sigma^{j}_{\#}\Pi_{i,n}^{\otimes}$, $i=0,1$.
Similarly, define 
$\U^{\otimes}_n$ as the pushforward of $\bigotimes_{m\in \IZ}\U_n$ under the map $\bar\paste^n$, where $\bar\paste^n:([0,\infty)^n\times [0,\infty)^n)^{\IZ}\to \SQ^2$ is defined by  \begin{align*}&\bar\paste^n(((\s^m_1,\cdots,\s^m_n),(\s'^m_1,\cdots,\s'^m_n))_{m\in \IZ})\\
&\hspace{50mm}:= (\paste^n((\s^m_1,\cdots,\s^m_n)_{m\in \IZ}),\paste^n((\s'^m_1,\cdots,\s'^m_n)_{m\in \IZ})).\end{align*}
Finally, for $j\in \IZ$ let $\bar\sigma^j:\SQ^2\to \SQ^2$ be the diagonal shift defined by $\bar\sigma^j(\s,\s')=(\sigma^j\s,\sigma^j\s')$ and set $\bar{\U}_n=n^{-1}\sum_{j=1}^n\bar\sigma^j_{\#}\U^{\otimes}_n$.
Then $\bar{\U}_n\in \Cpl_s(\bar{\Pi}_{0,n},\bar{\Pi}_{1,n})$ and we claim:
\begin{equation}\label{eq:claim3.1}
\bar{\Pi}_{i,n}\xrightarrow{n\to \infty}\Pi_i \text{ weakly.}
\end{equation}
Admitting the claim, by considering a subsequence, we can assume that there exists $\bar \U\in \mathcal P_{s}(\SQ^2)$ such that $\bar{\U}^n\xrightarrow{n\to \infty}\bar \U\in \Cpl_s(\Pi_0,\Pi_1)$ weakly.
Fatou's Lemma then yields 
\begin{align*}
     \mathscr W^p_{gap,p}(\Pi_0,\Pi_1)&\leq\int \abs{\s_1-\s'_1}^p\bar \U(d\s,d\s') \leq \liminf_{n\to \infty} \int \abs{\s_1-\s'_1}^p\bar \U^n(d\s,d\s')\\
     &= \liminf_{n\to \infty}n^{-1} \sum_{j=1}^n\abs{\s_j-\s'_j}^p\U^n(d\s,d\s')
     =\liminf_{n\to \infty}n^{-1}\mathcal C_p(\Pi_0^{1,n},\Pi_1^{1,n}),
\end{align*}
which proves the statement of this Lemma.

We are left to prove the claim \eqref{eq:claim3.1}: Let $f:\SQ\to \IR$ be continuous (with respect to the infinite product topology) and bounded. By \cite[Theorem 2.1]{Edg}, we can assume that $f$ depends only on finitely many coordinates, that is there exist $k<l$, $k,l\in \IZ$, and a continuous and bounded function $g:\IR^{l-k}\to \IR$ such that $f(\s)=g(\s_k,\dots,\s_l)$. 
Hence, for $n\in \IN$ sufficiently large 
\begin{align}\label{eq:paste}
    &\int fd\bar{\Pi}_{i,n}=n^{-1}\sum_{j=1}^n\int f(\sigma^j\s)\Pi_{i,n}^{\otimes}(d\s)
    =n^{-1}\sum_{j=1}^n\int g(\s_{k+j},\dots,\s_{l+j})\Pi_{i,n}^{\otimes}(d\s)\\
    &=n^{-1}\sum_{j=\max(1,1-k)}^{\min(n,n-l)}\int g(\s_{k+j},\dots,\s_{l+j})\Pi_{i,n}^{\otimes}(d\s)+n^{-1}\sum_{j=1}^{\max(1,1-k)-1} \int g(\s_{k+j},\dots,\s_{l+j})\Pi_{i,n}^{\otimes}(d\s)\nonumber\\
    &+n^{-1}\sum_{j=\min(n,n-l)+1}^{n}\int g(\s_{k+j},\dots,\s_{l+j})\Pi_{i,n}^{\otimes}(d\s)\nonumber
\end{align}
For the first term we get by stationarity
$$n^{-1}\sum_{j=\max(1,1-k)}^{\min(n,n-l)}\int g(\s_{k+j},\dots,\s_{l+j})\Pi_{i,n}^{\otimes}(d\s)=n^{-1}\sum_{j=\max(1,1-k)}^{\min(n,n-l)}\int g(\s_{k},\dots,\s_{l})\Pi_{i}(d\s).$$
From the boundedness of $g$ it follows that  
\begin{align*}
    &\lim_{n\to \infty} n^{-1}\sum_{j=1}^{\max(1,1-k)-1} \int g(\s_{k+j},\dots,\s_{l+j})\bar{\Pi}_{i,n}(d\s)\\
    &=\lim_{n\to \infty}  n^{-1}\sum_{j=\min(n,n-l)+1}^{n}\int g(\s_{k+j},\dots,\s_{l+j})\bar{\Pi}_{i,n}(d\s)=0
\end{align*}
and hence we have \begin{align*}
 \lim_{n\to \infty}  \int f d\bar{\Pi}_{i,n}=\int f d\Pi_i,
\end{align*}
which proves the claim $\bar{\Pi}_{i,n}\xrightarrow{n\to \infty}\Pi_i$ weakly.
\end{proof}


\begin{defi}\label{def:metr_pointpr}
For a monotone configuration $\bar\xi\in\Gamma_m$ recall the enumeration $(\bar\tau_\ell)_{\ell\in\IZ}$ from \eqref{eq:bartau}. For $(x,y)\in \bar\xi$ with $\bar \tau_\ell(\bar\xi)=(x,y)$ some $\ell\in\IZ$ we denote by $(x^+,y^+)\in \bar\xi$ the point $\bar\tau_{\ell+1}(\bar\xi).$
    For $p \geq 1$ and two stationary point processes $\P_i\in \mathcal P_{s,1}(\Gamma)$, $i=0,1$, we let 
    \begin{align}
        \mathscr W_p^p(\P_0,\P_1):=\inf_{\Q\in \Cpl_{s,m}(\P_0,\P_1)}\int\sum_{(x,y)\in \bar\xi\cap \Lambda_1^2}\abs{(x^+-x)-(y^+-y)}^p\Q(d\bar\xi). 
    \end{align}
\end{defi}

The next result shows that the map $\j$ from Theorem \ref{thm:correspondence} is an isometry between the spaces $(\mathcal P_{s,1}(\Gamma),\mathscr W_p)$ and $(\mathcal P_{s,1}(\SQ),\mathscr W_{gap,p}).$ In fact the metric $\mathscr W_p$ is constructed such that this isometry holds. The proof clarifies and relies on the relation between the different notions of stationarity introduced so far in Definition \ref{def:statgapdistribution}, equation \eqref{eq:Ps1} and Theorem \ref{thm:pointstationary}.

\begin{thm}\label{thm:equivalent_cost}
  Let $\P_i\in \mathcal P_{s,1}(\Gamma)$, $i=0,1$. The map \begin{align*}
    \iota: \Cpl_{s,m}(\P_0,\P_1)\to \Cpl_s(\j(\P_0),\j(\P_1)), \Q\mapsto (\sq\circ\PR_0,\sq\circ\PR_1)_{\#}\Q^0
\end{align*}
is a bijection. Furthermore, it holds that 
\begin{align}\label{eq:iota-isom}
    \int \sum_{(x,y)\in \bar\xi\cap \Lambda_1}\abs{(x^+-x)-(y^+-y)}^p\Q(d\bar\xi)=\int \abs{\s_1-\s'_1}^p\iota(\Q)(d\s,d\s').
\end{align}    
In particular
  \begin{align*}
        \mathscr W_p(\P_0,\P_1)=\mathscr W_{gap,p}(\j(\P_0),\j(\P_1)) 
    \end{align*}
    and there exists an $\mathscr W_p-$optimal $\Q\in \Cpl_{s,m}(\P_0,\P_1)$.
\end{thm}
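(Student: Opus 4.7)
The plan is to realize $\iota$ as a bijection by Palm inversion and to deduce the cost identity \eqref{eq:iota-isom} from the defining formula of the Palm measure. The key observation I will use throughout is that under $\Q^0$ the configuration $\bar\xi$ contains $(0,0)$, is monotone, and (after discarding a null set) has simple projections; strict componentwise monotonicity of $j\mapsto\bar\tau_j(\bar\xi)$ then forces $\bar\tau_0(\bar\xi)=(0,0)$ and aligns the two enumerations, so that $\tau_j(\PR_i(\bar\xi))=\pr_i(\bar\tau_j(\bar\xi))$ and therefore $\sq_j(\PR_i(\bar\xi))=\pr_i(\bar\tau_j(\bar\xi))-\pr_i(\bar\tau_{j-1}(\bar\xi))$ for all $j$.

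For the forward direction I will first check marginals and stationarity of $\iota(\Q)$. The marginals are immediate: $(\pr_i)_{\#}\iota(\Q)=\sq_{\#}(\PR_i)_{\#}\Q^0=\sq_{\#}\P_i^0=\j(\P_i)$. For diagonal shift-invariance I will use the two-dimensional analogue of Theorem \ref{thm:pointstationary}, built into the fact that $\Q^0$ is the Palm measure of a stationary $\Q$: namely, $\Q^0$ is invariant under $\bar\xi\mapsto\theta_{\bar\tau_k(\bar\xi)}\bar\xi$. Combining this with \eqref{eq:compatible_shifts} applied coordinate-wise and the alignment above yields $(\sq\circ\PR_i)\circ\theta_{\bar\tau_k}=\sigma^{-k}\circ(\sq\circ\PR_i)$, and hence $(\sigma^k,\sigma^k)_{\#}\iota(\Q)=\iota(\Q)$.

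For invertibility, given $\U\in\Cpl_s(\j(\P_0),\j(\P_1))$, I will define an integration-of-gaps map $T:\SQ\times\SQ\to\Gamma_{\IR^2}$ by $T(\s,\s')=\{(\tau_j(\s),\tau_j(\s')):j\in\IZ\}$ with $\tau_0=0$ and $\tau_j-\tau_{j-1}=\s_j$, so that $T(\s,\s')$ is automatically a monotone configuration through the origin. Setting $\tilde\Q^0:=T_{\#}\U$, the identity $\theta_{\bar\tau_k}\circ T=T\circ(\sigma^k,\sigma^k)$ combined with the diagonal-shift stationarity of $\U$ makes $\tilde\Q^0$ point-stationary. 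I then invoke the $\sigma$-finite Palm inversion (Remark \ref{rem:inversion}, using \cite{Mecke1967StationreZM,LaTh09}) to obtain a unique $\sigma$-finite stationary $\Q$ on $\Gamma_{\IR^2}$ whose Palm measure is $\tilde\Q^0$; since monotonicity is preserved by translations, $\Q$ is concentrated on $\Gamma_m$, hence $\Q\in\Cpl_{s,m}(\P_0,\P_1)$ and $\iota(\Q)=\U$. The uniqueness part of the inversion formula gives injectivity of $\iota$.

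The cost identity \eqref{eq:iota-isom} will then fall out from a single application of the defining formula of $\Q^0$ with $B=\Lambda_1^2$ (unit Lebesgue measure) and the test function $g(\bar\xi):=|\pr_0(\bar\tau_1(\bar\xi))-\pr_1(\bar\tau_1(\bar\xi))|^p$: for $\bar\tau_k(\bar\xi)=(x,y)\in\Lambda_1^2$ one computes $g(\theta_{(x,y)}\bar\xi)=|(x^+-x)-(y^+-y)|^p$, while under the identification $\s=\sq(\PR_0(\bar\xi)),\s'=\sq(\PR_1(\bar\xi))$ one has $g(\bar\xi)=|\s_1-\s'_1|^p$. Taking infima over $\Cpl_{s,m}$ and $\Cpl_s$ and using bijectivity of $\iota$ then gives $\mathscr W_p(\P_0,\P_1)=\mathscr W_{gap,p}(\j(\P_0),\j(\P_1))$, and existence of an $\mathscr W_p$-optimal $\Q$ follows by pulling back an $\mathscr W_{gap,p}$-optimal $\U$ through $\iota^{-1}$. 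The hard part will be the 2D $\sigma$-finite Palm inversion: because a stationary measure on $\Gamma_{\IR^2}$ concentrated on the essentially one-dimensional set $\Gamma_m$ necessarily has infinite total mass, one must leave the probability setting and rely on the general Mecke/Last--Thorisson theory rather than the hands-on 1D inversion \eqref{eq:inversion_formula}.
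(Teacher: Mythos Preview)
Your proposal is correct and follows essentially the same route as the paper: both directions of $\iota$ are handled via point-stationarity of the Palm measure $\Q^0$ together with the compatibility $\theta_{\bar\tau_k}\circ T=T\circ(\sigma^{-k},\sigma^{-k})$, the cost identity is read off from the defining formula of $\Q^0$ with the unit box, and surjectivity rests on Mecke's characterization of Palm measures in the $\sigma$-finite setting. The only cosmetic difference is that where you invoke the abstract equivalence ``point-stationary $\Leftrightarrow$ Palm of a stationary measure'' from \cite{Mecke1967StationreZM,LaTh09}, the paper writes out the verification of Mecke's identity $\int\!\!\int g(\theta_x\bar\xi,-x)\bar\xi(dx)\Q^*(d\bar\xi)=\int\!\!\int g(\bar\xi,x)\bar\xi(dx)\Q^*(d\bar\xi)$ by hand from the point-stationarity \eqref{eq:event_time_stat_Q^*}; this is a short computation rather than a hard step, so your framing of the inversion as ``the hard part'' slightly overstates matters, but the logic is the same.
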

\begin{proof}
    Let $\Q\in \Cpl_{s,m}(\P_0,\P_1)$ be given. 
By definition of $\Cpl_{s,m}(\P_0,\P_1)$    the Palm measure $\Q^0$   of $\Q$ is a probability measure with marginals $(\PR_i)_{\#}\Q^0=\P_i^0$ for $i=0,1$ (cf. \eqref{eq:Proj} for the definition of $\PR_i$).

\medskip

\textit{Step 1}:    The Palm measure $\Q^0$ is stationary w.r.t. the shifts $\theta_{\x_i}$, $i\in \IZ$ (cf.\ \eqref{eq:bartau} and Theorem \ref{thm:pointstationary}).

    Although the proof of the stationarity is the same as in the one dimensional case, see \cite[Theorem 7.3.1]{bremaud}, we repeat it for the convenience of the reader. Fix a measurable set $A\subset \Gamma_m$. Note that for a monotone configuration $\bar \xi\in \Gamma_{\IR^2}$ 
    \begin{align*}
        \eins_{\theta_{\x_1}^{-1}(A)}(\theta_{\x_i}(\bar \xi))=\eins_A(\theta_{\x_{i+1}}(\bar \xi)),
    \end{align*} where
    \begin{align*}
        \theta_{\x_1}^{-1}(A)=\{\theta_{\x_1}^{-1}(\bar \xi):\bar \xi\in A\}.
    \end{align*}
    Hence \begin{align*}
        \abs{ \Q^0(A)-\Q^0(\theta_{\x_1}^{-1}(A))   }&\leq n^{-2}
        \int \abs{\sum_{i\in \IZ} (\eins_A(\theta_{\x_i}\bar\xi)-\eins_A(\theta_{\x_{i+1}}\bar \xi)) \eins_{\Lambda_n^2}(\x_i(\bar \xi)) }         \Q(d\bar\xi)\\
        &\leq 2n^{-2}\xrightarrow{n\to \infty}0,
    \end{align*}
    where the second inequality follows from the monotonicity of $\bar \xi\in \supp{\Q}$.
    This shows the stationarity of $\Q^0$ w.r.t.\ the shifts $\theta_{\x_i}$, $i\in \IZ$.
    
    Put $\U:=(\sq\circ\PR_0,\sq\circ\PR_1)_{\#}\Q^0=\iota(\Q)$. Since $(\PR_0)_{\#}\Q^0=\P^0_0$ and $(\PR_1)_{\#}\Q^0=\P^0_1$, we have $\U\in \Cpl(\sq_{\#}\P_0^0,\sq_{\#}\P_1^0)$. 

\medskip
    
   \textit{Step 2}: The point stationarity of $\Q^0$ implies the stationarity of $\U$. 
    
    Let $f:\SQ^2\to \IR$ be bounded and measurable. Then by \eqref{eq:compatible_shifts}
    \begin{align*}
        \int f(\sigma^k\s,\sigma^k\s')\U(d\s,d\s')&=\int f(\sigma^k(\sq(\PR_0(\bar \xi))),\sigma^k(\sq(\PR_1(\bar \xi))))d\Q^0(\bar \xi)\\
        &=\int f(\sq(\theta_{\tau_{-k}(\PR_0(\bar \xi))}\PR_0(\bar \xi)),\sq(\theta_{\tau_{-k}(\PR_1(\bar \xi))}\PR_1(\bar \xi)))d\Q^0(\bar \xi)\\
        &=\int f(\sq(\PR_0(\theta_{\bar \tau_{-k}}\bar \xi)),\sq(\PR_1(\theta_{\bar \tau_{-k}}\bar \xi)))d\Q^0(\bar \xi),
    \end{align*}
    where the last two equalities follow from the fact that for $\bar \xi\in \Gamma_{m}$ with $(0,0)\in \bar \xi$ we have that $\pr_i(\x_j(\bar\xi))=\tau_j(\PR_i(\bar\xi))$, $i=0,1$ and $j\in \IZ$. The stationarity of $\Q^0$ thus implies 
    \begin{align*}
       \int f(\sigma^k\s,\sigma^k\s')\U(d\s,d\s')
        &=\int f(\sq(\PR_0(\theta_{\bar\tau_{-k}}\bar\xi)),\sq(\PR_1(\theta_{\bar\tau_{-k}}\bar\xi)))d\Q^0(\bar\xi)\\
        &=
        \int f(\sq(\PR_0(\bar\xi)),\sq(\PR_1(\bar\xi)))d\Q^0(\bar\xi)\\
        &=\int f(\s,\s')\U(d\s,d\s').
    \end{align*}
Hence, $\U\in \Cpl_s(\Pi_0,\Pi_1)$.

\medskip

\textit{Step 3}: Equation \eqref{eq:iota-isom} holds.

Since $(0,0)\in \bar\xi $\ $\Q^0-$a.s.\ we obtain
 \begin{align}\label{eq:isom}
     \int\abs{\s_1-\s'_1}^p\U(d\s,d\s')
    &=\int \abs{\pr_0(\x_1(\bar\xi))-\pr_1(\x_1(\bar\xi))}^p\Q^0(d\bar\xi)\\
    &=\int \sum_{(x,y)\in \Lambda_1 \cap \bar\xi} \abs{(x^+-x)-(y^+-y)}^p\Q(d\bar\xi)\nonumber.
\end{align}
which is \eqref{eq:iota-isom}.

\medskip

\textit{Step 4}: $\iota$ is bijective.

Let $\U\in \Cpl_{s}(\sq_{\#}\P^0_0,\sq_{\#}\P^0_1)$ be given. Define $\Q^*$ to be the pushforward of the measure $\U$ under the map \begin{align}\label{eq:bij}
    \SQ^2 \ni(\s,\s')\mapsto 
    \{(\tau_i(\sq^{-1}(\s)),\tau_i(\sq^{-1}(\s'))):i\in \IZ\}
  \in \Gamma_{\IR^2},
\end{align}
which is well-defined, since $\sq:\Gamma_0\to \SQ$ is a bijection. 
We note that by definition $\Q^*$ is concentrated on the subset of monotone configurations, i.e.\ $\Q^*(\Gamma_m)=1$. Furthermore, $(\PR_i)_{\#}\Q^*=\P_i^0$, $i=0,1$.
Let $j\in \IZ$, $(\s,\s')\in \SQ^2$ and set $\bar\xi=\{(\tau_i(\sq^{-1}(\s)),\tau_i(\sq^{-1}(\s'))):i\in \IZ\}$. An easy calculation shows that for $\bar \xi\in \Gamma_m$\begin{align*}
    \theta_{\x_j(\bar\xi)}\bar\xi&=\{(\tau_i(\sq^{-1}(\s))-\tau_j(\sq^{-1}(\s)),\tau_i(\sq^{-1}(\s')))-\tau_j(\sq^{-1}(\s')):i\in \IZ\}\\
    &=
    \{(\tau_i(\sq^{-1}(\sigma^{-j}\s)),\tau_i(\sq^{-1}(\sigma^{-j}\s'))):i\in \IZ\}.
\end{align*}
Hence, for a bounded measurable function $f:\Gamma_{\IR^2}\to \IR$ the stationarity of the measure $\U$ implies\begin{align}\label{eq:event_time_stat_Q^*}
    \int f(\theta_{\x_j}\bar\xi)\Q^*(d\bar\xi)=\int f(\bar \xi)\Q^*(d\bar\xi).
\end{align}

By \cite[Satz 2.4, Satz 2.5]{Mecke1967StationreZM} 
there exists a unique $\sigma$-finite stationary measure $\Q$ on $\Gamma_{\IR^2}$ with $\Q^0=\Q^*$ iff for all measurable and bounded $g:\Gamma_{\IR^2}\times \IR^2\to \IR$ 
\begin{align*}
    \int \int g(\theta_{x}\bar\xi,-x)\bar\xi(dx)\Q^*(d\bar\xi)=
    \int \int g(\bar\xi,x)\bar\xi(dx)\Q^*(d\bar\xi).
\end{align*}
Pick $g$ bounded and measurable. Note that $(0,0)\in \bar\xi$ for all $\bar\xi\in \supp(\Q^*)$. By \eqref{eq:event_time_stat_Q^*}\begin{align*}
    \int \int g(\theta_{x}\bar\xi,-x)\bar\xi(dx)\Q^*(d\bar\xi)&=\sum_{i\in \IZ}\int g(\theta_{\x_i}\bar\xi,-\x_i(\bar\xi))\Q^*(d\bar\xi)
    =\sum_{i\in \IZ}\int 
    g(\theta_{\x_i}\bar\xi,\x_{-i}(\theta_{\x_i}\bar\xi))
    \Q^*(d\bar\xi)\\
    &=\sum_{i\in \IZ}\int
    g(\bar\xi,\x_{-i}(\bar\xi))
    \Q^*(d\bar\xi)= \int\int 
    g(\bar\xi,x)
    \bar\xi(dx)\Q^*(d\bar\xi).
\end{align*}
Let $\Q$ be the stationary measure with $\Q^0=\Q^*$. 
The monotonicity of the elements of $\supp(\Q^*)$ implies the monotonicity of the elements of $\supp(\Q)$ by the inversion formula (cf.\ Remark \ref{rem:inversion}).
The definition of $\Q^0=\Q^*$ (cf. \eqref{eq:bij}) shows that $\iota(\Q)=\U$. Hence, the map $\iota$ is surjective. 
To prove injectivity let $\Q,\Q'\in \Cpl_{s,m}(\P_0,\P_1)$ with $\iota(\Q)=\iota(\Q')$. Let $\Gamma^*:=\{\bar\xi\in \Gamma_{\IR^2}\mid \bar\xi\text{ is monotone and }(0,0)\in \bar\xi)\}$. Note that the map $(\sq\circ\PR_0,\sq\circ\PR_1):\Gamma^*\to \SQ^2$ is bijective. Hence, the definition of $\iota$  
shows that $\Q^0=\Q'^0$ and thus $\Q=\Q'$ (cf. \cite[Satz 2.4]{Mecke1967StationreZM}).
This proves that the map $\iota$ is a bijection.
\end{proof}

\begin{cor}\label{cor:metric space}
The space $\mathcal P_{s,1}(\Gamma)$ equipped with $\mathscr W_p$, $p\geq 1$ as well as $\mathcal P_{s,1}(\SQ)$ equipped with $\mathscr W_{gap,p}$ is an extended metric space.
\end{cor}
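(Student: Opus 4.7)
The plan is to prove all four extended-metric axioms only for $\mathscr W_{gap,p}$ on $\mathcal P_{s,1}(\SQ)$. The corresponding statement for $(\mathcal P_{s,1}(\Gamma),\mathscr W_p)$ then follows immediately, because Theorem \ref{thm:equivalent_cost} provides a cost-preserving bijection $\iota$ between $\Cpl_{s,m}(\P_0,\P_1)$ and $\Cpl_s(\j(\P_0),\j(\P_1))$, and hence $\j$ is a bijective isometry between the two spaces.

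For $\mathscr W_{gap,p}$ itself, non-negativity is obvious. Symmetry follows by pushing any $\U\in\Cpl_s(\Pi_0,\Pi_1)$ forward under the swap $(\s,\s')\mapsto(\s',\s)$: the result lies in $\Cpl_s(\Pi_1,\Pi_0)$ (diagonal stationarity is preserved) and has the same cost since $|\s_1-\s'_1|^p$ is invariant under the swap. For coincidence, if $\Pi_0=\Pi_1$ then the pushforward of $\Pi_0$ under $\s\mapsto(\s,\s)$ is a stationary coupling with zero cost. Conversely, if $\mathscr W_{gap,p}(\Pi_0,\Pi_1)=0$, the existence lemma preceding Definition \ref{def:metr_pointpr} yields an optimal $\U\in\Cpl_s(\Pi_0,\Pi_1)$ with $\s_1=\s'_1$ $\U$-a.s. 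Applying the diagonal shift invariance of $\U$ gives $\s_k=\s'_k$ $\U$-a.s.\ for every $k\in\IZ$, so $\s=\s'$ $\U$-a.s., forcing $\Pi_0=\Pi_1$.

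The main step is the triangle inequality, which I would derive from the finite-dimensional approximation in Lemma \ref{lem:cost=lim_wasserstein}. Since $(\mathcal C_p(\mu,\nu))^{1/p}$ is the classical $p$-Wasserstein distance on $(\IR^n,\|\cdot\|_p)$, it satisfies the triangle inequality; dividing by $n^{1/p}$ we get, for every $n$,
\[
\bigl(n^{-1}\mathcal C_p(\Pi_0^{1,n},\Pi_2^{1,n})\bigr)^{1/p}\leq \bigl(n^{-1}\mathcal C_p(\Pi_0^{1,n},\Pi_1^{1,n})\bigr)^{1/p}+\bigl(n^{-1}\mathcal C_p(\Pi_1^{1,n},\Pi_2^{1,n})\bigr)^{1/p}.
\]
Passing to the limit $n\to\infty$ and invoking Lemma \ref{lem:cost=lim_wasserstein} on each term yields $\mathscr W_{gap,p}(\Pi_0,\Pi_2)\leq \mathscr W_{gap,p}(\Pi_0,\Pi_1)+\mathscr W_{gap,p}(\Pi_1,\Pi_2)$.

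I do not foresee a genuine obstacle; the only point requiring care is the \emph{extended} nature of the metric, i.e.\ that $\mathscr W_{gap,p}$ may take the value $+\infty$ (for instance when the first gap fails to be $p$-integrable under $\Pi_0$ or $\Pi_1$). This is harmless: the triangle inequality is vacuous as soon as a summand on the right is infinite, and the other axioms do not see infinite values. An alternative route to the triangle inequality, avoiding the passage through $\mathcal C_p$, would be a direct gluing construction in $\Cpl_s(\Pi_0,\Pi_1)$ and $\Cpl_s(\Pi_1,\Pi_2)$; but the finite-dimensional approximation is both shorter and reuses machinery already built, so I would prefer it.
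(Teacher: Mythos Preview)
Your proposal is correct. The verification of non-negativity, symmetry, and coincidence is fine and matches in spirit what the paper leaves implicit (the paper simply says ``it is clear'').

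Where you genuinely diverge from the paper is in the triangle inequality. The paper proves it by a \emph{gluing argument}: given $\U_{01}\in\Cpl_s(\Pi_0,\Pi_1)$ and $\U_{12}\in\Cpl_s(\Pi_1,\Pi_2)$ one disintegrates over the common marginal $\Pi_1$, pastes the two couplings into a stationary measure on $\SQ^3$, and reads off the triangle inequality from Minkowski's inequality (cf.\ \cite[Section 2.4]{erbar2023optimal}). You instead pull the inequality down to the finite-dimensional Wasserstein distances $\mathcal C_p^{1/p}$ on $\IR^n$ via Lemma~\ref{lem:cost=lim_wasserstein}, use the classical triangle inequality there, and pass to the limit $n\to\infty$. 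Both routes are valid. Your approach has the advantage of reusing Lemma~\ref{lem:cost=lim_wasserstein} and avoiding the (mild) extra work of checking that the glued measure on $\SQ^3$ is stationary under the triple diagonal shift; the paper's gluing argument is more self-contained and does not rely on the approximation machinery. You already flag the gluing alternative yourself, so you are aware of both options.
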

\begin{proof}
It is clear that $\mathcal P_{s,1}(\SQ)$ equipped with $\mathscr W_{gap,p}$ is an extended metric space. The triangle inequality follows by a gluing argument (cf.\ \cite[Section 2.4]{erbar2023optimal} for a similar argument in a stationary setup).
    Hence, Theorem \ref{thm:equivalent_cost} yields that also $(\mathcal P_{s,1}(\Gamma),\mathscr W_p)$ is an extended metric space. 
\end{proof}

We will be interested in geodesics w.r.t.\ the gap metric $ \mathscr W_{gap,p}$. Similar to the classical Wasserstein metric, geodesics are obtained by interpolation of optimal couplings. 
The interpolation maps are defined as follows.

\begin{defi}
    We define for $0\leq t\leq 1$ the interpolation maps 
    \begin{align*}
    T_t:\SQ^2\mapsto \SQ, (\s,\s')\mapsto ((1-t)\s_i+t\s'_i)_{i\in \IZ}.
    \end{align*}
\end{defi}
We note that the maps $T_t$ are continuous with respect to the product topology on $\SQ^2$, where $\SQ$ is equipped with the (infinite) product topology. Moreover,
it is immediate to check that for an optimal coupling $\U\in \Cpl_s(\Pi_0,\Pi_1)$ the curve $((T_t)_{\#}\U)_{0\leq t\leq 1}$ is a constant speed geodesic, i.e., that 
\begin{align}\label{eq: geode}
    \mathscr W_{gap,p}((T_s)_{\#}\U,(T_t)_{\#}\U)=(t-s)\mathscr W_{gap,p}(\Pi_0,\Pi_1),\quad \forall 0\leq s<t\leq 1.
\end{align}
Indeed, it is sufficient to consider the couplings $(T_s,T_t)_\#\U$ for an optimal coupling $\U\in\Cpl_s(\Pi_0,\Pi_1)$ to obtain
$$\mathscr W_{gap,p}((T_s)_{\#}\U,(T_t)_{\#}\U)\leq(t-s)\mathscr W_{gap,p}(\Pi_0,\Pi_1)$$
which together with the triangle inequality gives the result.


\begin{thm}\label{thm: ext metric gap}
    The space $\mathcal P_{s,1}(\Gamma)$ equipped with $\mathscr W_p$, $p\geq 1$, is a geodesic extended metric space. 
    For $\P_0,\P_1\in \mathcal P_{s,1}(\Gamma)$ and an $\mathscr W_p-$optimal $\Q\in \Cpl_{s,m}(\P_0,\P_1)$, the curve 
    \begin{align}
        [0,1]\ni t\mapsto \P_t=\j^{-1}((T_t)_{\#}\iota(\Q))
    \end{align}
    defines a constant speed geodesic.
\end{thm}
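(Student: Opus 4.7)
The plan is to reduce everything to the gap side via the isometry $\iota$ from Theorem \ref{thm:equivalent_cost}, since the pushforward construction $(T_t)_\#\U$ lives naturally on $\SQ$. The extended metric property has already been verified in Corollary \ref{cor:metric space}, so only the geodesic claim remains. The crucial observation \eqref{eq: geode} already handles most of the geodesic estimate on the gap side; the theorem just packages this, together with the isometry, into a statement on $\mathcal P_{s,1}(\Gamma)$.

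Concretely, given $\P_0,\P_1\in\mathcal P_{s,1}(\Gamma)$, I would pick a $\mathscr W_p$-optimal $\Q\in\Cpl_{s,m}(\P_0,\P_1)$ (existence from Theorem \ref{thm:equivalent_cost}) and set $\U:=\iota(\Q)\in\Cpl_s(\j(\P_0),\j(\P_1))$, which by \eqref{eq:iota-isom} is $\mathscr W_{gap,p}$-optimal. Then I define $\Pi_t:=(T_t)_\#\U$ and $\P_t:=\j^{-1}(\Pi_t)$. The first verification is that $\Pi_t\in\mathcal P_{s,1}(\SQ)$: stationarity follows from the identity $\sigma^k\circ T_t=T_t\circ(\sigma^k,\sigma^k)$ together with the diagonal-shift stationarity of $\U\in\Cpl_s(\j(\P_0),\j(\P_1))$; the intensity-one condition is the linearity computation
\begin{align*}
\int \s_1\,\Pi_t(d\s)=\int\bigl((1-t)\s_1+t\s'_1\bigr)\U(d\s,d\s')=(1-t)+t=1.
\end{align*}
Hence $\P_t$ is well-defined in $\mathcal P_{s,1}(\Gamma)$.

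Next I would establish the constant-speed property. For $0\le s<t\le 1$, the coupling $(T_s,T_t)_\#\U$ belongs to $\Cpl_s(\Pi_s,\Pi_t)$ (again by compatibility of $T_r$ with diagonal shifts), and a direct computation gives
\begin{align*}
\mathscr W_{gap,p}^p(\Pi_s,\Pi_t)\le \int\bigl|(t-s)(\s'_1-\s_1)\bigr|^p\U(d\s,d\s')=(t-s)^p\,\mathscr W_{gap,p}^p(\j(\P_0),\j(\P_1)).
\end{align*}
Summing these inequalities over the partition $0,s,t,1$ and invoking the triangle inequality forces equalities throughout, yielding \eqref{eq: geode}. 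Transferring back through the isometry, Theorem \ref{thm:equivalent_cost} gives $\mathscr W_p(\P_s,\P_t)=\mathscr W_{gap,p}(\Pi_s,\Pi_t)=(t-s)\mathscr W_p(\P_0,\P_1)$, so $(\P_t)_{t\in[0,1]}$ is a constant-speed geodesic.

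There is no real obstacle: the heavy lifting was done in Theorem \ref{thm:equivalent_cost}, and the only points to be careful about are the compatibility of the interpolation $T_t$ with the diagonal shifts $(\sigma^k,\sigma^k)$ (so that $\Pi_t$ lands in $\mathcal P_{s,1}(\SQ)$ and the coupling $(T_s,T_t)_\#\U$ lands in $\Cpl_s(\Pi_s,\Pi_t)$) and the fact that linearity preserves the unit-intensity normalization. The case $\mathscr W_p(\P_0,\P_1)=\infty$ is trivial since then any curve connecting $\P_0$ and $\P_1$ vacuously satisfies the geodesic condition, consistent with the extended metric setting.
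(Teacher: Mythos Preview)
Your proposal is correct and follows essentially the same route as the paper: invoke Corollary \ref{cor:metric space} for the extended metric property, then use the interpolation coupling $(T_s,T_t)_\#\U$ together with the triangle inequality to obtain \eqref{eq: geode}, and finally transfer the geodesic statement to $\mathcal P_{s,1}(\Gamma)$ via the isometry of Theorem \ref{thm:equivalent_cost}. You supply a few extra details the paper leaves implicit (the check that $\Pi_t\in\mathcal P_{s,1}(\SQ)$ and the trivial infinite-distance case), but the argument is the same.
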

\begin{proof}
   By Corollary \ref{cor:metric space}, $\mathcal P_{s,1}(\SQ)$ equipped with $\mathscr W_{gap,p}$ is an extended metric space. The existence of constant speed geodesics follows from equation \eqref{eq: geode}.
\end{proof}

The next Lemma shows that interpolations inherit finite second moments for the number statistics. We will need it to apply the representation result Lemma \ref{lem: georgii lem} (which is \cite[Lemma 3.2]{georgii2}) along geodesics. Moreover, it ensures that hyperuniformity is preserved along geodesics, cf.\ Corollary \ref{cor:hyperunif_preserved} which will be important in the applications to long-range interactions.

\begin{lem}\label{lem:second_moment_interpol}
    Let $\P_i\in \mathcal P_{s,1}(\Gamma)$, $i=0,1$ with $\int \xi(\Lambda_1)^2\P_i(d\xi)<\infty$ and  $\Q\in \Cpl_{s,m}(\P_0,\P_1).$ Set $\U:=\iota(\Q)$ and  $\P_t:=\j^{-1}\left((T_t)_{\#}\U\right)$. Then for $0\leq t\leq 1$ and $n\in \IN$ 
    \begin{align*}
        \int \xi(\Lambda_n)^2\P_t(d\xi)\leq (1-t)\int \xi(\Lambda_n)^2\P_0(d\xi)+ t\int \xi(\Lambda_n)^2\P_1(\xi).
    \end{align*}
\end{lem}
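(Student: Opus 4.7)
My plan is to convert both sides into Palm integrals via the refined Campbell theorem (Theorem~\ref{thm:refinedCampbell}), transport them to the joint Palm $\Q^0$ via the isometry $\iota$, and then exploit a pointwise convexity inequality that the monotonicity of $\bar\xi$ enforces on each segment joining coupled points. The only nontrivial observation is the pointwise convexity; the rest is straightforward bookkeeping.

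First, I would write $\xi(\Lambda_n)^2 = \sum_{x \in \xi} \eins_{\Lambda_n}(x)\, \xi(\Lambda_n)$ and apply Theorem~\ref{thm:refinedCampbell} to $f(x, \eta) = \eins_{\Lambda_n}(x)\, \eta(\Lambda_n - x)$. A Fubini step then gives, for any $\P \in \mathcal{P}_{s,1}(\Gamma)$,
\begin{equation*}
\int \xi(\Lambda_n)^2\, \P(d\xi) \;=\; \int \sum_{i \in \IZ} h_n(\tau_i(\xi))\, \P^0(d\xi),
\qquad h_n(y) := \Leb(\Lambda_n \cap (\Lambda_n - y)) = (n - |y|)_+.
\end{equation*}
For $\P_t$ I will use that $\P_t^0 = \sq^{-1}_{\#} (T_t)_{\#} \U$, so that partial sums interpolate linearly, $\tau_i(\s^t) = (1-t)\tau_i(\s) + t\,\tau_i(\s')$, together with the marginal identities $(\PR_j)_{\#} \Q^0 = \P_j^0$ from Theorem~\ref{thm:equivalent_cost}, to obtain the joint-Palm representation
\begin{equation*}
\int \xi(\Lambda_n)^2\, \P_t(d\xi) \;=\; \int \sum_{i \in \IZ} h_n\bigl((1-t)\,\pr_0(\x_i(\bar\xi)) + t\,\pr_1(\x_i(\bar\xi))\bigr)\, \Q^0(d\bar\xi).
\end{equation*}

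The core of the proof is then a pointwise inequality under $\Q^0$. Because $\Q^0$ is supported on monotone configurations containing $(0,0)$, for every $i \in \IZ$ the two coordinates $\pr_0(\x_i(\bar\xi))$ and $\pr_1(\x_i(\bar\xi))$ have the same sign (both $\geq 0$ if $i \geq 0$, both $\leq 0$ if $i \leq 0$), so the segment connecting them stays inside a single half-line. On either half-line $y \mapsto h_n(y)$ is the maximum of two affine functions (on $[0, \infty)$ it equals $\max(0,\, n - y)$) and hence convex; equivalently, for $a,b \geq 0$ we may write $h_n((1-t)a + tb) = \max(0,\,(1-t)(n-a) + t(n-b))$ and invoke convexity of $u \mapsto \max(0,u)$. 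This yields for every $i$ the bound
\begin{equation*}
h_n\bigl((1-t)\,\pr_0(\x_i) + t\,\pr_1(\x_i)\bigr) \;\leq\; (1-t)\, h_n(\pr_0(\x_i)) + t\, h_n(\pr_1(\x_i)).
\end{equation*}
Summing over $i$, integrating against $\Q^0$, and invoking the Palm formula of the first step for $\P_0$ and $\P_1$ then closes the argument.

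The obstacle worth flagging is exactly the recognition of this \emph{restricted} convexity: the function $h_n$ is \textbf{not} convex on all of $\IR$ — its concave kink at $y = 0$ would in fact suggest the reverse inequality — so it is essential that the monotonicity built into $\Cpl_{s,m}(\P_0,\P_1)$, together with the Palm convention $(0,0) \in \bar\xi$, confines each coupled pair $(\pr_0(\x_i), \pr_1(\x_i))$ to a single half-line where convexity of $h_n$ is restored. Finiteness of every integral in the argument is inherited from the hypothesis $\int \xi(\Lambda_1)^2\, \P_i(d\xi) < \infty$ via Cauchy-Schwarz and stationarity: decomposing $\Lambda_n$ into $n$ unit intervals $I_k$ and using $\xi(\Lambda_n)^2 \leq n \sum_k \xi(I_k)^2$ gives $\int \xi(\Lambda_n)^2\, \P_i(d\xi) \leq n^2 \int \xi(\Lambda_1)^2\, \P_i(d\xi) < \infty$ for $i = 0, 1$, and then finiteness for $\P_t$ drops out of the very inequality being proved.
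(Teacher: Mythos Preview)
Your proof is correct and follows essentially the same route as the paper: convert $\int \xi(\Lambda_n)^2\,\P(d\xi)$ into the Palm integral $\int \sum_i h_n(\tau_i(\xi))\,\P^0(d\xi)$ via the refined Campbell theorem, pass to $\Q^0$ using that the interpolated points satisfy $\tau_i(\s^t)=(1-t)\pr_0(\x_i)+t\,\pr_1(\x_i)$, and then apply the convexity of $h_n$ on each half-line, which is legitimate precisely because monotonicity together with $(0,0)\in\bar\xi$ forces $\pr_0(\x_i)$ and $\pr_1(\x_i)$ to share a sign. The paper makes the identical observations; your additional remark on finiteness of the integrals is a welcome clarification but not a departure in method.
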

\begin{proof}

For $n\in \IN$ by the refined Campbell formula Theorem \ref{thm:refinedCampbell} applied to the function $f(\xi,x)=\xi(\Lambda_n)\eins_{\Lambda_n}(x)$ we obtain  
\begin{align*}
  &  \int \xi(\Lambda_n)^2\P_t(d\xi)=
\int \sum_{x\in \xi\cap \Lambda_n}\xi(\Lambda_n)\P_t(d\xi)
=\int_{\Lambda_n}\int \theta_{-x}\xi(\Lambda_n)
\P^0_t(d\xi)dx\\
&=\int_{\Lambda_n}\int \xi(\Lambda_n-x)
\P^0_t(d\xi)dx
=\int_{\Lambda_n} \int \sum_{(y,z)\in \bar\xi}\eins_{\Lambda_n-x}((1-t)y+tz)\Q^0(d\bar\xi)dx\\
&=\int \sum_{(y,z)\in \bar\xi}\abs{\Lambda_n\cap (\Lambda_n-((1-t)y+tz))}\Q^0(d\bar\xi)=\int \sum_{(y,z)\in \bar\xi}h_n((1-t)y+tz)\Q^0(d\bar\xi),
\end{align*}   
where in the last line $h_n$ is defined by $h_n(x):=\abs{\Lambda_n\cap (\Lambda_n-x)}$, $x\in \IR$. Since the function $h_n$ is convex on $(-\infty,0)$ and on $(0,\infty)$, we have by monotonicity of $\bar\xi$ using that $(0,0)\in\bar\xi$
\begin{align*}
    \int \sum_{(y,z)\in \bar\xi}h_n((1-t)y+tz)\Q^0(d\bar\xi)&\leq 
    (1-t)\int \sum_{(y,z)\in \bar\xi}h_n(y)\Q^0(d\bar\xi)+
    t\int \sum_{(y,z)\in \bar\xi}h_n(z)\Q^0(d\bar\xi)\\
    &=(1-t)\int \sum_{y\in \xi}h_n(y)\P_0^0(d\xi)+
    t\int \sum_{z\in \xi}h_n(z)\P_1^0(d\xi)\\
    &=(1-t)\int \xi(\Lambda_n)^2\P_0(d\xi)+t\int\xi(\Lambda_n)^2\P_1(d\xi),
\end{align*}
where the last line follows by the same calculations carried out at the beginning of this proof.
\end{proof}

As a direct corollary of the preceding Lemma, we obtain that hyperuniformity is preserved along geodesics.

\begin{cor}\label{cor:hyperunif_preserved}
Let $\P_i\in \mathcal P_{s,1}(\Gamma)$, $i=0,1$, be hyperuniform and $\Q\in \Cpl_{s,m}(\P_0,\P_1)$. Set $\U:=\iota(\Q)$ and  $\P_t:=\j^{-1}\left((T_t)_{\#}\U\right)$. Then $\P_t$ is hyperuniform for every $t\in [0,1]$. Moreover, if $\P_0$ and $\P_1$ are class-1-hyperuniform, then also $\P_t$ is class-1-hyperuniform.
\end{cor}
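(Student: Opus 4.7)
The plan is to deduce the corollary directly from Lemma \ref{lem:second_moment_interpol} by exploiting that all measures $\P_0, \P_1, \P_t$ belong to $\mathcal{P}_{s,1}(\Gamma)$ and hence share the common intensity $1$. In particular, $\int \xi(\Lambda_n)\,\P_i(d\xi) = n$ for $i=0,1$ and for $\P_t$, so the number variance can be rewritten as
\begin{align*}
\int (\xi(\Lambda_n)-n)^2\,\P_t(d\xi) = \int \xi(\Lambda_n)^2\,\P_t(d\xi) - n^2.
\end{align*}

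First I would apply Lemma \ref{lem:second_moment_interpol} to the right-hand side. Since $\P_0$ and $\P_1$ are (in particular) hyperuniform, their number variances are finite, so the hypothesis $\int \xi(\Lambda_1)^2 \P_i(d\xi) < \infty$ is satisfied. The lemma yields
\begin{align*}
\int \xi(\Lambda_n)^2\,\P_t(d\xi) - n^2 &\leq (1-t)\int \xi(\Lambda_n)^2\,\P_0(d\xi) + t\int \xi(\Lambda_n)^2\,\P_1(d\xi) - n^2 \\
&= (1-t)\int (\xi(\Lambda_n)-n)^2\,\P_0(d\xi) + t\int (\xi(\Lambda_n)-n)^2\,\P_1(d\xi),
\end{align*}
where in the last step I used that $(1-t)n^2 + tn^2 = n^2$ to redistribute the $n^2$ term into the two summands.

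Now for hyperuniformity I divide both sides by $n$ and let $n\to\infty$. By assumption both terms $\tfrac{1}{n}\int (\xi(\Lambda_n)-n)^2 \P_i(d\xi)$, $i=0,1$, tend to zero, hence so does $\tfrac{1}{n}\int (\xi(\Lambda_n)-n)^2 \P_t(d\xi)$, which is precisely hyperuniformity of $\P_t$. The class-$1$ case is even more direct: if both $\int (\xi(\Lambda_n)-n)^2 \P_i(d\xi) = O(1)$, then the convex combination above shows $\int (\xi(\Lambda_n)-n)^2 \P_t(d\xi) = O(1)$ uniformly in $n$. No step is really an obstacle here, the entire content of the proof is the convex combination bound of Lemma \ref{lem:second_moment_interpol} together with the elementary observation that constant intensity lets one pass freely between second moments and number variances.
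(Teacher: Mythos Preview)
Your proof is correct and follows exactly the same approach as the paper: apply Lemma \ref{lem:second_moment_interpol}, use the identity $\int (\xi(\Lambda_n)-n)^2\,\P_t(d\xi)=\int \xi(\Lambda_n)^2\,\P_t(d\xi)-n^2$ valid for intensity-one processes, and conclude by dividing by $n$ (respectively, noting the uniform $O(1)$ bound). The paper's own proof is a one-line reference to these two facts, so your write-up is simply a more detailed version of the same argument.
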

\begin{proof}
    This follows from Lemma \ref{lem:second_moment_interpol} and the fact that \begin{align*}
        \int (\xi(\Lambda_n)-n)^2\P_t(d\xi)=\int \xi(\Lambda_n)^2\P_t(d\xi)-n^2,\quad \forall n\in \IN.
    \end{align*}
\end{proof}

 In the remaining part of this section we show that the specific relative entropy is weakly geodesically convex.
 We start by calculating how the relative entropy $\ent(\cdot\mid \Leb_{\IR^n})$ changes if the reference measure is changed to the product of  exponential distributions $\gamma^{\otimes_{i=1}^n}$. 

\begin{prop}\label{prop:entropy_exp}
    Let $\mu$ be a measure on $[0,\infty)^{n}$ such that $\int x_i\mu(dx)=\int x_1\mu(dx)$ for all $i=1,\dots,n$.
    Then \begin{align*}
        \ent(\mu\mid \Leb_{\IR^{n}})=\ent(\mu\mid\gamma^{\otimes_{i=1}^n} )-n\int x_1\mu(dx).
    \end{align*}
    \end{prop}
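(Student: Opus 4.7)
The approach is a direct change-of-reference-measure calculation; nothing subtle is going on. First note that the density of $\gamma^{\otimes_{i=1}^n}$ with respect to $\Leb_{\IR^n}$ is $x\mapsto e^{-(x_1+\cdots+x_n)}\eins_{[0,\infty)^n}(x)$, which is strictly positive on the support of $\mu$. Consequently $\mu \ll \gamma^{\otimes_{i=1}^n}$ if and only if $\mu \ll \Leb_{\IR^n}$, and in the non absolutely continuous case both relative entropies equal $+\infty$ so the identity is trivial. Assume therefore $\mu \ll \Leb_{\IR^n}$ with density $f$; the chain rule gives
\[
\frac{d\mu}{d\gamma^{\otimes_{i=1}^n}}(x) \;=\; f(x)\,e^{x_1+\cdots+x_n} \quad \text{for $\mu$-a.e.\ }x.
\]

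Second, taking logarithms and integrating against $\mu$ yields
\[
\ent(\mu \mid \gamma^{\otimes_{i=1}^n}) \;=\; \int \log f\, d\mu \,+\, \int \sum_{i=1}^n x_i\, \mu(dx) \;=\; \ent(\mu\mid \Leb_{\IR^n}) \,+\, \sum_{i=1}^n \int x_i\,\mu(dx).
\]
The hypothesis that $\int x_i\,\mu(dx) = \int x_1\,\mu(dx)$ for every $i=1,\ldots,n$ collapses the last sum into $n\int x_1\,\mu(dx)$, and rearranging gives the claim. The only bookkeeping worth noting is the case where one of the quantities is infinite: if $\int x_1\,\mu(dx)=+\infty$ then the same holds for $\int \sum_i x_i\,\mu(dx)$, which forces $\ent(\mu\mid \gamma^{\otimes_{i=1}^n})=+\infty$ as well (since $\ent(\mu\mid \Leb_{\IR^n})\geq -\infty$ is controlled by e.g.\ Jensen), so both sides again agree in $[-\infty,+\infty]$. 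No analytic obstacle arises; the statement is essentially the observation that shifting the reference measure by the explicit exponential density adds the first moment.
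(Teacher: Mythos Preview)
Your proof is correct and follows essentially the same route as the paper's: both arguments use the explicit density $\frac{d\gamma^{\otimes_{i=1}^n}}{d\Leb_{\IR_+^n}}(x)=\prod_i e^{-x_i}$, apply the chain rule for Radon--Nikodym derivatives, take logarithms and integrate against $\mu$, then invoke the equal-first-moment hypothesis to collapse $\sum_i \int x_i\,d\mu$ to $n\int x_1\,d\mu$. Your additional remarks on the non-absolutely-continuous and infinite-moment cases are not in the paper's proof but do no harm.
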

\begin{proof}
    Note that $\frac{d\gamma^{\otimes_{i=1}^n}}{d\Leb_{\IR_+^{n}}}(x_{1},\dots,x_n)=\Pi_{i=1}^ne^{-x_i}$ and hence 
    \[
    \frac{d\mu}{d\Leb_{\IR_+^{n}}}(x_{1},\dots,x_n)=\frac{d\mu}{d\gamma^{\otimes_{i=1}^n}}(x_{1},\dots,x_n)\Pi_{i=1}^ne^{-x_i}.
    \]
    This yields 
    \begin{align*}
        \ent\left(\mu\mid \Leb_{\IR_+^{n}}\right)&=
        \int_{\IR_+^{n}} \left( \log\left(\frac{d\mu}{d\gamma^{\otimes_{i=1}^n}}(x)\right)+\sum_{i=1}^n-x_i\right)\frac{d\mu}{d\gamma^{\otimes_{i=1}^n}}(x)d\gamma^{\otimes_{i=1}^n}(x)\\
        &=\ent(\mu\mid\gamma^{\otimes_{i=1}^n} )-\sum_{i=1}^n\int_{\IR_+^{n}} x_id\mu(x)\\
        &=\ent(\mu\mid\gamma^{\otimes_{i=1}^n} )-n\int x_1\mu(dx)\\
    \end{align*}
\end{proof}

Using the classical results for the displacement convexity of the relative entropy and the  representation Lemma \ref{lem:cost=lim_wasserstein} of $\mathscr W_{gap,p}$, we finally  obtain the following weak geodesic convexity result for the specific relative entropy. Let us stress that we show that there exists an optimal coupling such that \eqref{eq: mnb} holds. We do not prove that this holds for every optimal coupling since we do not know whether every optimal coupling is an accumulation point of a sequence of couplings as constructed in the proof below.

\begin{thm}\label{thm:convnex_entropy}
    Let $\Pi_i\in \mathcal P_{s,1}(\SQ)$, $i=0,1$ and assume that $\mathscr  W_{gap,p}(\Pi_0,\Pi_1)<\infty$ for $p>1$. Then, there exists an optimal coupling  $\U\in \Cpl_s(\Pi_0,\Pi_1)$, such that for any $0\leq t\leq 1$ \begin{align}\label{eq: mnb}
       \mathcal E^*((T_t)_{\#}\U) \leq (1-t) \mathcal E^*(\Pi_0)+t\mathcal E^*(\Pi_1).
    \end{align}
    In particular, for $\P_i\in \mathcal P_{s,1}(\Gamma)$ with $\mathscr  W_{p}(\P_0,\P_1)<\infty$ there exists an optimal coupling  $\Q\in \Cpl_{s,m}(\P_0,\P_1)$, such that for $\P_t:=\j^{-1}((T_t)_{\#}\iota(\Q))$ we have \begin{align}
       \mathcal E(\P_t)\leq (1-t)  \mathcal E(\P_0)+t\mathcal E(\P_1)
    \end{align}

\end{thm}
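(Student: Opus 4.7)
My strategy is to lift the classical displacement convexity of the Boltzmann entropy in $\IR^n$ (for $p$-Wasserstein geodesics, $p>1$) to the stationary setting by the paste-and-shift procedure used in Step~3 of the proof of Lemma~\ref{lem:cost=lim_wasserstein}, passing to the limit $n\to\infty$ via lower semicontinuity of $\mathcal E^*$.

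\textbf{Step 1: Finite-dimensional convexity.} For each $n$ let $\U^n\in\Cpl(\Pi_0^{1,n},\Pi_1^{1,n})$ be $\mathcal C_p$-optimal in $\IR^n$ and set $\nu_t^n:=(T_t^{(n)})_\#\U^n$ with $T_t^{(n)}(x,y):=(1-t)x+ty$. If $\ent(\Pi_i^{1,n}\,|\,\gamma^{\otimes n})<\infty$ (otherwise there is nothing to prove) then $\Pi_i^{1,n}\ll\Leb_{\IR^n}$ and McCann's displacement convexity of the Boltzmann entropy along $\mathcal C_p$-geodesics gives
\begin{align*}
    \ent(\nu_t^n\,|\,\Leb_{\IR^n})\leq (1-t)\ent(\Pi_0^{1,n}\,|\,\Leb_{\IR^n})+t\,\ent(\Pi_1^{1,n}\,|\,\Leb_{\IR^n}).
\end{align*}
Stationarity of $\Pi_0,\Pi_1$ yields $\int\s_i\,d\Pi_j^{1,n}=1$ for all $i,j$, and hence $\int z_i\,d\nu_t^n(z)=1$ too. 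Applying Proposition~\ref{prop:entropy_exp} to all three measures and dividing by $n$ the first-moment contributions cancel, giving
\begin{align*}
    n^{-1}\ent(\nu_t^n\,|\,\gamma^{\otimes n})\leq (1-t)n^{-1}\ent(\Pi_0^{1,n}\,|\,\gamma^{\otimes n})+t\,n^{-1}\ent(\Pi_1^{1,n}\,|\,\gamma^{\otimes n}).
\end{align*}

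\textbf{Step 2: Stationarization.} Repeating the construction from Step~3 of Lemma~\ref{lem:cost=lim_wasserstein} verbatim, set $\U_n^{\otimes}:=\bar\paste^n_{\#}\bigotimes_{m\in\IZ}\U^n$ and $\bar\U_n:=n^{-1}\sum_{j=1}^n\bar\sigma^j_{\#}\U_n^{\otimes}\in\Cpl_s(\bar\Pi_{0,n},\bar\Pi_{1,n})$. Up to subsequence $\bar\U_n\to\bar\U\in\Cpl_s(\Pi_0,\Pi_1)$ weakly, and the Fatou argument given there shows that $\bar\U$ is $\mathscr W_{gap,p}$-optimal. Since $T_t:\SQ^2\to\SQ$ is continuous and commutes with $\sigma^j$ and $\paste^n$, we also have $(T_t)_{\#}\bar\U_n\to (T_t)_{\#}\bar\U$ weakly in $\mathcal P_{s,1}(\SQ)$.

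\textbf{Step 3: Specific entropy bound and passage to the limit.} Write $\rho_n:=\paste^n_{\#}\bigotimes_{m\in\IZ}\nu_t^n$, so that $(T_t)_{\#}\bar\U_n=n^{-1}\sum_{j=1}^n\sigma^j_{\#}\rho_n$. Evaluating the definition of $\mathcal E^*$ along windows of length $Ln$, convexity of $\ent(\,\cdot\,|\,\gamma^{\otimes Ln})$ in its first argument together with the product structure of $\rho_n$ and the chain-rule subadditivity of block entropy yields
\begin{align*}
    (Ln)^{-1}\ent\bigl(((T_t)_{\#}\bar\U_n)^{1,Ln}\,\big|\,\gamma^{\otimes Ln}\bigr)\leq \tfrac{L+1}{L}\cdot n^{-1}\ent(\nu_t^n\,|\,\gamma^{\otimes n}).
\end{align*}
Letting $L\to\infty$ gives $\mathcal E^*((T_t)_{\#}\bar\U_n)\leq n^{-1}\ent(\nu_t^n\,|\,\gamma^{\otimes n})$. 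Combining with the bound from Step~1 and using lower semicontinuity of $\mathcal E^*$ (Lemma~\ref{lem:Georgii}(b) transported to $\SQ$ via Theorem~\ref{thm:equiv_entropies}) I conclude
\begin{align*}
    \mathcal E^*((T_t)_{\#}\bar\U)\leq\liminf_{n\to\infty}\mathcal E^*((T_t)_{\#}\bar\U_n)\leq (1-t)\mathcal E^*(\Pi_0)+t\,\mathcal E^*(\Pi_1).
\end{align*}
The point-process version then follows directly from Theorem~\ref{thm:equivalent_cost} and Theorem~\ref{thm:equiv_entropies}.

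\textbf{Main obstacle.} The delicate point is Step~3: stationarizing the periodic paste $\rho_n$ breaks its product structure, and for $0\leq j<n$ the window marginals $(\sigma^j_{\#}\rho_n)^{1,Ln}$ straddle two incomplete blocks of length $n$. Bounding the entropy cost of these edge blocks by the chain rule is what forces the reading of $\mathcal E^*$ along $k=Ln$ and explains the vanishing factor $\tfrac{L+1}{L}-1$. This is precisely the entropy analogue of the paste bookkeeping carried out for the transport cost in Step~3 of Lemma~\ref{lem:cost=lim_wasserstein}, and a separate subtlety is that we only obtain \emph{some} optimal coupling (the one built as a weak limit of the paste-and-shift couplings), not every optimal coupling — matching the "weak" formulation of the statement.
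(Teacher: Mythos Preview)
Your proof is correct and follows essentially the same route as the paper: finite-dimensional displacement convexity of the Boltzmann entropy, the paste-and-shift construction of $\bar\U_n$, convexity of $\ent(\cdot\,|\,\gamma^{\otimes m})$ in its first argument, and lower semicontinuity of $\mathcal E^*$. The only cosmetic differences are that the paper applies Proposition~\ref{prop:entropy_exp} at the end rather than in Step~1, and in Step~3 it computes $\lim_{m\to\infty} m^{-1}\ent((\sigma^k_\#\rho_n)^{1,m}\,|\,\gamma^{\otimes m})=n^{-1}\ent(\nu_t^n\,|\,\gamma^{\otimes n})$ exactly via the tensorization identity $\ent(\mu^{\otimes k}\,|\,\nu^{\otimes k})=k\,\ent(\mu\,|\,\nu)$, whereas you bound along the subsequence $m=Ln$ with the edge factor $(L+1)/L$; both yield the same inequality.
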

\begin{proof}
Let $n\in \IN$ and  $\U^n$ be a minimizer for \[
\inf_{\tilde \U\in \Cpl(\Pi^{1,n}_0,\Pi^{1,n}_1)}\int \norm{x-y}_p^pd\tilde \U(x,y).
\]
By the displacement convexity of the relative entropy, see \cite[Remark 5.16]{villani2003topics}, \begin{align}\label{eq:displ_convx_n}
    \ent((T_t)_{\#}\U_n\mid \Leb_{\IR^{n}})
        \leq (1-t)\ent(\Pi_0^{1,n}\mid \Leb_{\IR^{n}})
        +t\ent(\Pi_1^{1,n}\mid \Leb_{\IR^{n}}).
\end{align}
Let $\paste^n,\bar\paste^n$ and $\bar\sigma^j$ be the maps defined in the proof of Lemma \ref{lem:cost=lim_wasserstein}.
For $i=0,1$ we set $\Pi_{i,n}^{\otimes}:=\paste^n_{\#}\left(\bigotimes_{m\in \IZ}\Pi_{i}^{1,n}\right)$ and $\bar{\Pi}_{i,n}:=n^{-1}\sum_{j=1}^n\sigma^j_{\#}\Pi_{i,n}^{\otimes}$.
Define \[\U^{\otimes}_n:=\bar\paste^n_{\#}\left(\bigotimes_{m\in \IZ}\U_n\right) \text{ and } \bar{\U}_n:=n^{-1}\sum_{j=1}^n\bar\sigma^j_{\#}\U^{\otimes}_n.\]
Then $\bar{\U}_n\in \Cpl_s(\bar{\Pi}_{0,n},\bar{\Pi}_{1,n})$ and $\bar{\Pi}_{i,n}\xrightarrow{n\to \infty}\Pi_i$ weakly. This follows exactly as in the proof of Lemma \ref{lem:cost=lim_wasserstein}.
Hence, by considering a subsequence, we can assume that $\bar{\U}^n\xrightarrow{n\to \infty}\U\in \Cpl_s(\Pi_0,\Pi_1)$ weakly.
The coupling $\U$ (and any other accumulation point) is $\mathscr W_{gap,p}-$optimal, because for  $\U'\in \Cpl_s(\Pi_0,\Pi_1)$\begin{align*}
    \int \abs{\s_1-\s'_1}^p\U'(d\s,d\s')&\stackrel{\text{stationarity}}{=}\lim_{n\to \infty}n^{-1} \int \sum_{j=1}^n\abs{\s_j-\s'_j}^p\U'(d\s,d\s')\\
    &\stackrel{\text{optimality}}{\geq} \limsup_{n\to \infty}n^{-1} \int \sum_{j=1}^n\abs{\s_j-\s'_j}^p\U_n(d\s,d\s')\\
     &=\limsup_{n\to \infty}\int \abs{\s_1-\s'_1}^p\bar{\U}_n(d\s,d\s')\\
     &\stackrel{\text{Fatou}}{\geq}
     \int \abs{\s_1-\s'_1}^p\U(d\s,d\s').
\end{align*}
Let $\Pi_t:=(T_t)_{\#}\U$. Since $\U$ is stationary, we have $\Pi_t\in \mathcal P_{s,1}(\SQ)$. Then for $n\in \IN$ by convexity of the relative entropy
\begin{align}\label{eq:xyz}  
    \mathcal E^ *\left((T_t)_{\#}\bar{\U}_n \right)&=\lim_{m\to \infty}m^{-1}\ent(\left((T_t)_{\#}\bar{\U}_n\right)^{1,m}|\gamma^{\otimes_{i=1}^m})\\
    &=\lim_{m\to \infty}m^{-1}\ent(n^{-1}\sum_{k=1}^n(\sigma^k)_{\#}\left((T_t)_{\#}\U^{\otimes}_n\right)^{1,m}|\gamma^{\otimes_{i=1}^m})\nonumber \\
    &\leq \lim_{m\to \infty}m^{-1}n^{-1}\sum_{k=1}^n\ent((\sigma^k)_{\#}\left((T_t)_{\#}\U^{\otimes}_n\right)^{1,m}|\gamma^{\otimes_{i=1}^m})
    \nonumber \\
    &= n^{-1} \sum_{k=1}^n\lim_{m\to \infty}
m^{-1}\ent((\sigma^k)_{\#}\left((T_t)_{\#}\U^{\otimes}_n\right)^{1,m}|\gamma^{\otimes_{i=1}^m})\nonumber \\
    &=  n^{-1} \sum_{k=1}^n n^{-1}\ent\left((T_t)_{\#}\U_n\mid \gamma^{\otimes_{i=1}^n}\right) \nonumber  \\
&=n^{-1}\ent\left((T_t)_{\#}\U_n\mid \gamma^{\otimes_{i=1}^n}\right), \nonumber 
\end{align} 
where in the second to last step we used that $\ent\left(m^{\otimes k}|\nu^{\otimes k}\right)=k\ent(m|\nu).$ The continuity of the map $T_t$ implies the weak convergence 
$(T_t)_{\#}\bar{\U}_n\xrightarrow{n\to \infty}\Pi_t$.
By  lower semicontinuity of $\mathcal E^*$ we obtain 
\begin{align*}
    \mathcal{E}^*(\Pi_t)&\leq\liminf_{n\to \infty}\mathcal E^*\left((T_t)_{\#}\bar{\U}_n \right)\\
    &\stackrel{\eqref{eq:xyz}}{\leq} \liminf_{n\to \infty}n^{-1}\ent((T_t)_{\#}\U_n\mid \gamma^{\otimes_{i=1}^n})\\
        &\stackrel{Prop. \ref{prop:entropy_exp}}{=}
        \liminf_{n\to \infty}n^{-1} \ent((T_t)_{\#}\U_n\mid \Leb_{\IR^{n}})
        +\int (1-t)\s_1+t\s_1'\U_n(d\s,d\s')\\
        &=
        \liminf_{n\to \infty}n^{-1} \ent((T_t)_{\#}\U_n\mid \Leb_{\IR^{n}})
        +(1-t)\int \s_1\Pi_0(d\s)+t\int \s_1\Pi_1(d\s)\\
        &\stackrel{\eqref{eq:displ_convx_n}}{\leq} \liminf_{n\to \infty}n^{-1} \left((1-t)\ent(\Pi_0^{1,n}\mid \Leb_{\IR^{n}})
        +t\ent(\Pi_1^{1,n}\mid \Leb_{\IR^{n}})\right)\\
        &+(1-t)\int \s_1\Pi_0(d\s)+t\int \s_1\Pi_1(d\s)\\
        &\leq (1-t) \limsup_{n\to \infty}n^{-1} \ent(\Pi_0^{1,n}\mid \Leb_{\IR^{n}})
       +(1-t)\int \s_1\Pi_0(d\s)\\
       &+ t \limsup_{n\to \infty}n^{-1} \ent(\Pi_1^{1,n}\mid \Leb_{\IR^{n}})
       +t\int \s_1 \Pi_1(d\s)\\
       &\stackrel{Prop. \ref{prop:entropy_exp}}{=}
       (1-t)\limsup_{n\to \infty}n^{-1}\ent(\Pi_0^{1,n}\mid \gamma^{\otimes_{i=1}^n})\\
       &+t\limsup_{n\to \infty}n^{-1}\ent(\Pi_1^{1,n}\mid \gamma^{\otimes_{i=1}^n})\\
       &=(1-t)\mathcal E^*(\Pi_0)+t\mathcal E^*(\Pi_1).
\end{align*}
The second part of the statement immediately follows from Theorem \ref{thm:equivalent_cost} and Theorem \ref{thm:equiv_entropies}.
\end{proof}

\section{Free gaps -- EVI and HWI}\label{sec:evihwi}

The goal of this section is to gain first insights into the geometry induced by $\mathscr{W}_{2}$ by considering the gradient flow of the specific entropy w.r.t.\ $\mathscr{W}_{2}$. It turns out that the so called EVI formulation of the gradient flow can be easily lifted from $\IR^d$. We refer to \cite{AGS08, Villani, Santa}  for an introduction to the gradient flows in metric spaces.
Recall from \eqref{eq:Cp} that we denote  the classical $L^2$ Kantorovich Wasserstein metric by $\mathcal C_2^{\frac 12}$. 
The gradient flow description will rely on the solution to the following SDE that  arises  in the study of the Fokker-Planck equation on $[0,\infty)^n$ with Neumann boundary conditions.

\begin{defi} \label{def:SDE}
For probability measures $\Pi$ on $[0,\infty)^I$, $I\subset \IZ$, we let $\Semi_t^{gap}\Pi:=\mathsf{law}(X_t)$, where $(X_t)_{t\geq 0}$ is  the unique weak  solution (cf.\ \cite[Remark 3.1.2]{pilipenko}) of the SDE \begin{align}\label{eq:SDE}
        dX^i_t=-dt+l^i_t+\sqrt{2}dB^i_t,\quad i\in I,
    \end{align}
    with $\mathsf{law}(X_0)=\Pi$ where
  $(B^i)_{i\in I}$ are independent standard Brownian motions and $(l^i_t)_{t\geq 0}$ is the local time of $X^i$ in $0$ for $i\in I$.
\end{defi}

We now state and prove the integral form of the EVI of the specific relative entropy w.r.t.\ $\mathscr W_2$.  The representation formulas Theorem \ref{thm:equiv_entropies} and Lemma \ref{lem:cost=lim_wasserstein} for  the specific relative entropy  and the extended metric  in terms of the gap distributions allow for a simple lifting procedure. Since the evolution of the coordinates of the $\mathcal C_2^{\frac 12}$-gradient flow of the relative entropy is given by the SDE \eqref{eq:SDE} we obtain that the coordinates of the  gap distributions of the gradient flow of the specific relative entropy w.r.t.\ $\mathscr W_2$ evolve independently and according to  \eqref{eq:SDE}.

\begin{thm}\label{thm:EVI_1_dim}
    Let $\P,\P^* \in \mathcal P_{s,1}(\Gamma)$.  For $\P_t:=\j^{-1}(\mathsf S^{gap}_t\j(\P))$ the integral form EVI holds 
    \begin{align}
        \mathscr W_{2}(\P_t, \P^*)-\mathscr W_{2}(\P,\P^*)\leq 2t (\mathcal E(\P^*)-\mathcal E(\P_t))
    \end{align}
\end{thm}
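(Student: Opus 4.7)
My plan is to reduce the statement to gap distributions via the isometry $\j$ of Theorem \ref{thm:equivalent_cost}, and then lift the classical $W_2$-EVI for reflected drifted Brownian motion on $[0,\infty)^n$ from finite marginals to the stationary limit.

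Setting $\Pi:=\j(\P)$, $\Pi^*:=\j(\P^*)$ and $\Pi_t:=\mathsf S^{gap}_t\Pi$, it suffices by Theorems \ref{thm:equivalent_cost} and \ref{thm:equiv_entropies} to establish the gap-side EVI (which I read in its natural squared form)
\begin{equation*}
\tfrac{1}{2}\mathscr W_{gap,2}^2(\Pi_t,\Pi^*)-\tfrac{1}{2}\mathscr W_{gap,2}^2(\Pi,\Pi^*)\leq t\bigl(\mathcal E^*(\Pi^*)-\mathcal E^*(\Pi_t)\bigr).
\end{equation*}
The key structural input is that the SDE \eqref{eq:SDE} acts independently and identically on each coordinate, so the semigroup commutes with projection onto any finite block: $(\mathsf S^{gap}_t\Pi)^{1,n}=\mathsf S^n_t(\Pi^{1,n})$, where $\mathsf S^n_t$ is the Fokker–Planck semigroup on $\mathcal P([0,\infty)^n)$ generated by $n$ independent copies of \eqref{eq:SDE}. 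The same product structure shows that $\Pi_t$ inherits stationarity under the diagonal shift $\sigma$.

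For each fixed $n$, $\mathsf S^n_t$ is the $\mathcal C_2^{1/2}$-gradient flow of $\ent(\cdot\mid\gamma^{\otimes_{i=1}^n})$ on the convex half-space $[0,\infty)^n$ with reflecting Neumann boundary. Since the exponential density is log-concave, the product reference is log-concave, and classical theory (\cite{AGS08}) delivers the integral EVI$(0)$
\begin{equation*}
\tfrac{1}{2}\mathcal C_2(\mathsf S^n_t\mu,\nu)-\tfrac{1}{2}\mathcal C_2(\mu,\nu)\leq t\bigl(\ent(\nu\mid\gamma^{\otimes_{i=1}^n})-\ent(\mathsf S^n_t\mu\mid\gamma^{\otimes_{i=1}^n})\bigr).
\end{equation*}
Applying this with $\mu=\Pi^{1,n}$, $\nu=\Pi^{*,1,n}$, dividing by $n$, and letting $n\to\infty$, Lemma \ref{lem:cost=lim_wasserstein} turns $n^{-1}\mathcal C_2(\Pi_t^{1,n},\Pi^{*,1,n})$ and $n^{-1}\mathcal C_2(\Pi^{1,n},\Pi^{*,1,n})$ into $\mathscr W_{gap,2}^2(\Pi_t,\Pi^*)$ and $\mathscr W_{gap,2}^2(\Pi,\Pi^*)$, while the definition of $\mathcal E^*$ turns the entropy terms into $\mathcal E^*(\Pi^*)$ and $\mathcal E^*(\Pi_t)$. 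Since all four of these are monotone suprema in $n$, the limit passes cleanly through the inequality.

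\textbf{Main obstacle.} The only genuinely non-routine step is identifying $\mathsf S^n_t$ as the $W_2$-gradient flow of $\ent(\cdot\mid\gamma^{\otimes_{i=1}^n})$ together with its EVI$(0)$: the reflecting Neumann boundary prevents a direct citation of the full-space literature. The cleanest route is to symmetrize by reflecting across the origin, turning the half-line problem into a free problem on $\IR^n$ with log-concave reference density $\tfrac{1}{2}e^{-|x|}$, and then invoking the standard EVI for log-concave references on $\IR^n$. A secondary concern is verifying that $\Pi_t$ stays in $\mathcal P_{s,1}(\SQ)$, i.e.\ that the first-gap mean remains one; this should follow from the fact that $\gamma$ is the (mean-one) invariant measure of \eqref{eq:SDE}, but merits a short verification.
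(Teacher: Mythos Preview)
Your proposal is correct and follows essentially the same route as the paper: reduce to gap distributions, invoke the finite-dimensional EVI$(0)$ for the Fokker--Planck semigroup on $[0,\infty)^n$ with Neumann boundary, divide by $n$, and pass to the limit via Lemma~\ref{lem:cost=lim_wasserstein} and the definition of $\mathcal E^*$. The only cosmetic difference is that the paper writes the driving functional as $F_n(\mu)=\ent(\mu\mid\Leb_{\IR^n})+\sum_i\int x_i\,d\mu$ and cites \cite[Proposition~8.10]{Santa} and \cite{pilipenko} directly for the gradient-flow identification (converting afterwards to $\ent(\cdot\mid\gamma^{\otimes n})$ via Proposition~\ref{prop:entropy_exp}), whereas you work with $\ent(\cdot\mid\gamma^{\otimes n})$ from the outset and propose the reflection-to-$\IR^n$ trick to handle the boundary; your secondary concern about $\Pi_t\in\mathcal P_{s,1}(\SQ)$ is legitimate but the paper glosses over it as well.
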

\begin{proof}
Set $\Pi:=\j(\P)$, $\Pi^*:=\j(\P^*)$ and $\Pi_t:=\mathsf S^{gap}_t\j(\P)$.
    Fix $n\in \IN$ and define the functional 
    \begin{align*}
         F_n: P_2(\IR^n)\to \IR\cup\{+\infty\}
    \end{align*}
    by $ F_n(\mu):=\ent(\mu\mid \Leb_{\IR^n})+\sum_{i=1}^n\int x_i\mu(dx)$. Here $P_2(\IR^n)$ is the set of probability measures on $\IR^n$ with finite second moment.
By \cite[Proposition 8.10]{Santa} the $\mathcal C_2^{\frac 12}$-gradient flow $(\mu_t)_{t\geq 0}$ of $F_n$, started at $\mu$, is given by the distributional solution of the Fokker-Planck equation 
\begin{align*}
    \mu_t\xrightarrow{t\to 0}\mu \text{ and } \partial_t\mu_t=\Delta \mu_t+\sum_{i=1}^n\partial_{x_i}\mu_t
\end{align*}
with Neumann boundary conditions on $[0,\infty)^n$.
The solution to the above equation is given by $\mu_t=\mathsf S_t^{gap}\mu$ , cf.\ \cite[Theorem 3.1.1]{pilipenko}.
Hence,  the integral characterization of the EVI \cite[Proposition 3.1]{Daneri_2008} yields 
\begin{align}\label{eq:evi_fixed_n}
    &\mathcal C_2(\Pi_t^{1,n},(\Pi^*)^{1,n})-\mathcal C_2(\Pi^{1,n},(\Pi^*)^{1,n})\\
    &\leq 2t\left(
     F_n((\Pi^*)^{1,n})- F_n(\Pi_t^{1,n})
    \right)= 2t\left(
    \ent ((\Pi^*)^{1,n}\mid \gamma^{\otimes_{i=1}^n})-\ent (\Pi_t^{1,n}\mid \gamma^{\otimes_{i=1}^n})\right),\nonumber
\end{align}
where the second equality follows from Proposition \ref{prop:entropy_exp}.
 By Lemma \ref{lem:cost=lim_wasserstein} we have 
 \begin{align*}
  n^{-1}   \mathcal C_2(\Pi_t^{1,n},(\Pi^*)^{1,n})\xrightarrow{n\to \infty}\mathscr W^2_{gap,2}(\Pi_t,\Pi^*)
 \end{align*}
and 
    \begin{align*}
  n^{-1}   \mathcal C_2(\Pi^{1,n},(\Pi^*)^{1,n})\xrightarrow{n\to \infty}\mathscr W^2_{gap,2}(\Pi,\Pi^*).
 \end{align*}
 Hence, dividing by $n$ in \eqref{eq:evi_fixed_n} and letting $n\to \infty$ yields the claim.
\end{proof}

We now define the Fisher information $I$ for probability measures on $\IR^n$, $n\geq 1$.
For a probability measure $\mu\in \mathcal{P}(\IR^n)$ with $\mu=\rho\Leb_{\IR^n}$ s.t.\ $\rho\in W^{1,1}_{\sf loc}(\IR^n)$ and such that there is $w\in L^2(\mu)$ with $\rho w= \nabla \rho$  the  Fisher information is defined by 
\begin{equation}\label{eq:Fisher_Rd}
I(\mu|\Leb_{\IR^n}) := \int_{\IR^n}\norm{w}^2d\mu\;.
\end{equation}
Otherwise, we set $I(\mu|\Leb_{\IR^n}):=+\infty$. 
 The following definition is the canonical choice for the Fisher information of the gap distributions.
    For $\Pi\in \mathcal P_{s,1}(\SQ)$ we set \begin{align}
        \mathcal I^*\left(\Pi\right):=\limsup_{n\to \infty} \frac{1}{n}I\left(\Pi^{1,n}\mid \gamma^{\otimes_{i=1}^n}\right).
        \end{align}
The following HWI inequality now follows simply by combining the representation formulas Theorem \ref{thm:equiv_entropies} and Lemma \ref{lem:cost=lim_wasserstein}  with the above definition.

\begin{prop}
    Let $\P_i\in \mathcal P_{s,1}(\Gamma)$, $i=0,1$ with $\mathcal E(\P_1)<\infty$.  Then the HWI inequality holds  \begin{align}
     \mathcal E(\P_0)-\mathcal E(\P_1)\leq \mathscr W_{2}\left(\P_0,\P_1\right) \sqrt{\mathcal I^*\left(\j(\P_0)\right)}.
    \end{align}
\end{prop}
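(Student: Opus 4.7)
The plan is to lift the classical HWI inequality from the level of finite dimensional marginals of the gap distributions to the level of stationary point processes, leveraging the representation formulas established in Section~\ref{sec:metric}. Set $\Pi_i := \j(\P_i)$ for $i=0,1$.

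First I would fix $n \geq 1$ and invoke the classical Otto--Villani HWI inequality on the half-space $[0,\infty)^n$ with reference measure $\gamma^{\otimes n}$. Since the associated potential $V(x) = \sum_{i=1}^n x_i$ is convex (with modulus $\lambda = 0$) and the domain is convex, one obtains
\begin{align*}
\ent(\Pi_0^{1,n}\mid \gamma^{\otimes_{i=1}^n}) - \ent(\Pi_1^{1,n}\mid \gamma^{\otimes_{i=1}^n}) \leq \sqrt{\mathcal C_2(\Pi_0^{1,n},\Pi_1^{1,n})}\cdot \sqrt{I(\Pi_0^{1,n}\mid \gamma^{\otimes_{i=1}^n})}.
\end{align*}
This is the classical HWI; alternatively one can derive it at level $n$ from the EVI formulation already established in Theorem~\ref{thm:EVI_1_dim} (whose proof in fact relies on the same finite-dimensional Fokker--Planck gradient flow with Neumann boundary conditions), since EVI$_0$ implies HWI$_0$ in any metric space.

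Next I would divide both sides by $n$ and take the $\limsup$ as $n \to \infty$. On the LHS, each term converges: by Theorem~\ref{thm:equiv_entropies} and the definition of $\mathcal E^*$ we have $n^{-1}\ent(\Pi_i^{1,n}\mid \gamma^{\otimes_{i=1}^n}) \to \mathcal E^*(\Pi_i) = \mathcal E(\P_i)$ for $i=0,1$ (using that $\mathcal E(\P_1) < \infty$ and that the suprema are attained in the limit by Lemma~\ref{lem:Georgii}). On the RHS, Lemma~\ref{lem:cost=lim_wasserstein} together with Theorem~\ref{thm:equivalent_cost} gives $n^{-1}\mathcal C_2(\Pi_0^{1,n},\Pi_1^{1,n}) \to \mathscr W_{gap,2}^2(\Pi_0,\Pi_1) = \mathscr W_2^2(\P_0,\P_1)$, while $\limsup_n n^{-1} I(\Pi_0^{1,n}\mid \gamma^{\otimes_{i=1}^n}) = \mathcal I^*(\j(\P_0))$ by definition. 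Since the first factor on the RHS is a genuine limit, the standard rule $\limsup(a_n b_n) \leq (\lim a_n)(\limsup b_n)$ for nonnegative sequences yields
\begin{align*}
\mathcal E(\P_0) - \mathcal E(\P_1) \leq \mathscr W_2(\P_0,\P_1)\cdot \sqrt{\mathcal I^*(\j(\P_0))},
\end{align*}
which is the desired inequality. The cases $\mathscr W_2(\P_0,\P_1) = \infty$ or $\mathcal I^*(\j(\P_0)) = \infty$ are trivial.

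The only real obstacle is justifying the finite-dimensional HWI on $[0,\infty)^n$ with reference $\gamma^{\otimes n}$, since Otto--Villani is classically stated on $\IR^n$. This is however routine: the domain is convex, and the Fokker--Planck semigroup $\Semi^{gap}_t$ from Definition~\ref{def:SDE} is precisely the $\mathcal C_2^{1/2}$-gradient flow of $\ent(\cdot\mid \gamma^{\otimes n})$ (as recorded in the proof of Theorem~\ref{thm:EVI_1_dim}), so the standard Otto--Villani argument—differentiating entropy along the gradient flow and integrating against a displacement interpolation—goes through verbatim, with the convexity of $V(x)=\sum x_i$ providing the $\lambda=0$ HWI. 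Everything else is a straightforward passage to the limit.
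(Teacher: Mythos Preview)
Your proposal is correct and follows essentially the same route as the paper: apply the finite-dimensional HWI inequality to the $n$-marginals $\Pi_i^{1,n}$ with reference $\gamma^{\otimes n}$, then divide by $n$ and pass to the limit using Theorem~\ref{thm:equiv_entropies}, Lemma~\ref{lem:cost=lim_wasserstein}, and the definition of $\mathcal I^*$. If anything, you are slightly more careful than the paper, which simply cites \cite[Corollary 20.13]{Villani} for the $n$-dimensional HWI without commenting on the half-space domain, and does not spell out the $\limsup$ manipulation.
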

\begin{proof}
Let $\Pi_i:=\j(\P_i)$ for $i=0,1$.
We have by Theorem \ref{thm:equiv_entropies}
    \begin{align}\label{eq:HWI_calcul}
        \mathcal E^*(\Pi_0)-\mathcal E^*(\Pi_1)&=\lim_{n\to \infty} n^{-1}\left(\ent(\Pi^{1,n}_0\mid \gamma^{\otimes_{i=1}^n})-\ent(\Pi^{1,n}_1\mid \gamma^{\otimes_{i=1}^n}) \right)
    \end{align}
The HWI inequality on $\IR^n$, see \cite[Corollary 20.13]{Villani}, yields \begin{align*}
    \ent(\Pi^{1,n}_0\mid \gamma^{\otimes_{i=1}^n})-\ent(\Pi^{1,n}_1\mid \gamma^{\otimes_{i=1}^n})&\leq \mathcal C_2^{\frac12}\left(\Pi^{1,n}_0,\Pi^{1,n}_1\right)\sqrt{I(\Pi_0^{1,n}\mid \gamma^{\otimes_{i=1}^n})}.
\end{align*}
Thus, Lemma \ref{lem:cost=lim_wasserstein} yields \begin{align*}
     \mathcal E^*(\Pi_0)-\mathcal E^*(\Pi_1)\leq \mathscr W_{2,gap}\left(\Pi_0,\Pi_1\right) \sqrt{\limsup_{n\to \infty}n^{-1}I(\Pi_0^{1,n}\mid \gamma^{\otimes_{i=1}^n})}.
\end{align*}

\end{proof}

\section{Short range interactions and convexity of the free energy}\label{sec:Interaction_energy}

In this section we will consider particular interaction energies of point processes and show that the corresponding free energy, entropy plus energy, is convex w.r.t.\ $\mathscr{W}_{gap,p}$ for $p>1.$

 We will restrict to interaction energies $H_n$ induced by  pair potentials $\phi$, that is 
 $$H_n(\xi)=\frac{1}{2}\sum_{x,y\in \Lambda_n\cap \xi,x\neq y}\phi(x-y)$$
for $\xi\in\Gamma$. An even potential $\phi:\IR\to \IR\cup\{\infty\}$ is called stable if for all $\xi\in \Gamma$ and $n\in \IN$
 \begin{align}\label{eq:stable}
    H_n(\xi):=\frac{1}{2}\sum_{x,y\in \Lambda_n\cap \xi}\phi(x-y)\geq -b\xi(\Lambda_n),
\end{align}
for some constant $b=b(\phi)\in \IR$.
Let 
\begin{equation}\label{eq:gn}
h_n(x)=\abs{\Lambda_n\cap (\Lambda_n-x)} \ \text{ and }\ g_n(x):= \phi(x)h_n(x).
\end{equation}
Note that $h_n(x)=0$ for $\abs{x}\geq n$. 
The following formula (cf.\ \cite[Lemma 3.2]{georgii2}) allows to rewrite the finite box interaction energy of a stationary point process in terms of its Palm measure and the function $g_n$.

\begin{lem}\label{lem: georgii lem}
    Let $\phi$ be a stable potential and  $\P\in \mathcal P_{s,1}(\Gamma)$ with $\int \xi(\Lambda_1)^2\P(d\xi)<\infty$. Then for $n\in \IN$
    \begin{align}\label{eq:formula_hamiltonian}
       \int H_n(\xi)\P(d\xi)= \int \frac{1}{2}\sum_{0\neq x\in \xi}g_n(x)\P^0(d\xi).
    \end{align}
\end{lem}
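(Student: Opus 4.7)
The plan is to rewrite the double sum defining $H_n$ so that one summation is encoded by the shifted configuration $\theta_x\xi$, apply the refined Campbell theorem (Theorem \ref{thm:refinedCampbell}) to replace the outer sum over $\xi$ together with the $\P$-integration by an integral over $\IR$ against the Palm measure $\P^0$, and then identify the resulting spatial integral as the function $h_n$.

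Concretely, writing $y = x + z$ with $z \in \theta_x\xi$ (noting $z \neq 0$ iff $y\neq x$), I rewrite
\begin{align*}
H_n(\xi) \;=\; \tfrac{1}{2}\sum_{x\in\xi}\eins_{\Lambda_n}(x)\sum_{0\neq z\in\theta_x\xi}\eins_{\Lambda_n}(x+z)\phi(z)
\;=\;\sum_{x\in\xi} F(x,\theta_x\xi),
\end{align*}
where $F(x,\eta):=\tfrac{1}{2}\eins_{\Lambda_n}(x)\sum_{0\neq z\in\eta}\eins_{\Lambda_n}(x+z)\phi(z)$. Applying the refined Campbell theorem to the nonnegative function obtained after shifting by stability (see below) yields
\begin{align*}
\int H_n(\xi)\,\P(d\xi) \;=\; \int\int F(x,\xi)\,dx\,\P^0(d\xi)
\;=\; \tfrac{1}{2}\int\sum_{0\neq z\in\xi}\phi(z)\!\left(\int_{\Lambda_n}\eins_{\Lambda_n}(x+z)\,dx\right)\P^0(d\xi).
\end{align*}
Since $\int_{\Lambda_n}\eins_{\Lambda_n}(x+z)\,dx = \Leb(\Lambda_n\cap(\Lambda_n-z))=h_n(z)$, and $g_n(z)=\phi(z)h_n(z)$, this is precisely \eqref{eq:formula_hamiltonian}.

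The step that requires some care is the interchange of summation, integration and application of Campbell, because $\phi$ is only assumed stable, not bounded or nonnegative. The standard remedy is to replace $\phi$ by $\phi+2b$: define $\tilde H_n(\xi):=H_n(\xi)+b\,\xi(\Lambda_n)$, which by stability \eqref{eq:stable} is a nonnegative random variable, and note that the corresponding pair potential $\tilde\phi:=\phi+2b\mathbf 1_{[-n,n]}$ (chosen so that the diagonal contribution reproduces the added $b\,\xi(\Lambda_n)$ term via the identity $\sum_{x\in\xi\cap\Lambda_n}h_n(0)/n = \xi(\Lambda_n)$, with an appropriate adjustment) makes the integrand in the rewritten double sum nonnegative. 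For the nonnegative version, Fubini--Tonelli and Campbell apply without restriction, yielding the claimed formula with $g_n$ replaced by $\tilde g_n=\tilde\phi h_n$. Subtracting the explicit $b\,\xi(\Lambda_n)$ correction on both sides then gives \eqref{eq:formula_hamiltonian}. The second moment hypothesis $\int\xi(\Lambda_1)^2\,\P(d\xi)<\infty$ enters precisely to guarantee that $\int\xi(\Lambda_n)^2\P(d\xi)<\infty$, so that the nonnegative integral above is finite and the subtraction makes sense; equivalently, it guarantees $\int\sum_{0\neq z\in\xi}h_n(z)\P^0(d\xi)=\int\xi(\Lambda_n)\cdot\xi(\Lambda_n)\,\P(d\xi) - n <\infty$ by the same Campbell argument applied to the pair potential $\phi\equiv 1$.

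I expect the only real obstacle to be the bookkeeping around the stability shift. Everything else is a direct one-line application of Theorem \ref{thm:refinedCampbell} followed by Fubini, and the identification of the spatial integral with $h_n$ is immediate from the definition in \eqref{eq:gn}.
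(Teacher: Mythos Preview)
Your approach is the standard one and matches the argument in \cite{georgii2} that the paper cites rather than reproves: rewrite $H_n(\xi)=\sum_{x\in\xi}F(x,\theta_x\xi)$, apply the refined Campbell theorem, and identify $\int_{\Lambda_n}\eins_{\Lambda_n}(x+z)\,dx=h_n(z)$. That part is entirely correct.

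There is, however, a genuine gap in your treatment of the sign issue. You note that $\tilde H_n=H_n+b\,\xi(\Lambda_n)\geq 0$ by stability and then assert that the shifted pair potential ``makes the integrand in the rewritten double sum nonnegative.'' But nonnegativity of the \emph{sum} $\tilde H_n$ does not imply nonnegativity of the individual terms $F(x,\theta_x\xi)$, and it is the latter that Fubini/Campbell requires. The clean fix is to apply stability to two-point configurations $\{0,z\}$: this gives $\phi(z)\geq -2b$ for every $z\neq 0$, so $\phi+2b\geq 0$ pointwise and the corresponding $F$ is genuinely nonnegative. Campbell then applies directly to $\phi+2b$, and the correction term on both sides is
\[
b\int\sum_{0\neq z\in\xi}h_n(z)\,\P^0(d\xi)\;=\;b\int\bigl(\xi(\Lambda_n)^2-\xi(\Lambda_n)\bigr)\P(d\xi),
\]
which is finite by the second-moment hypothesis and can be subtracted. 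Your attempt to match the correction to $b\,\xi(\Lambda_n)$ via a ``diagonal contribution'' is inconsistent (the shifted pair potential contributes $b(\xi(\Lambda_n)^2-\xi(\Lambda_n))$, not $b\,\xi(\Lambda_n)$), but once you use the pointwise bound $\phi\geq -2b$ the bookkeeping becomes straightforward.
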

\begin{rem}
    The proof of  \cite[Lemma 3.2]{georgii2} shows that the assumption $\int \xi(\Lambda_1)^2\P(d\xi)<\infty$ can be dropped if $\phi\geq 0$.
\end{rem}

We give a direct proof of the above lemma for the special case of the logarithmic potential $\phi(x)=-\log\abs{x}$, which is not stable. The proof is based on the proof of \cite[Lemma 3.2]{georgii2}.
\begin{lem}\label{lem:superstable_log}
    Let $\phi(x)=-\log\abs{x}$  and  $\P\in \mathcal P_{s,1}(\Gamma)$ with $\int \xi(\Lambda_1)^2\P(d\xi)<\infty$. Then for $n\in \IN$
    \begin{align}
       \int H_n(\xi)\P(d\xi)= \int \frac{1}{2}\sum_{0\neq x\in \xi}g_n(x)\P^0(d\xi).
    \end{align}
\end{lem}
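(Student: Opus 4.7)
The plan is to mirror the proof of \cite[Lemma 3.2]{georgii2}: apply the refined Campbell theorem (Theorem \ref{thm:refinedCampbell}) to
\begin{align*}
\int H_n(\xi)\,\P(d\xi)=\tfrac12\int\sum_{x\in\xi\cap\Lambda_n}\sum_{y\in\xi\cap\Lambda_n,\,y\neq x}\phi(x-y)\,\P(d\xi),
\end{align*}
by viewing the inner sum as a function of $(x,\xi)$, and then carry out the spatial integration in $x$ against $\P^0(d\xi)\,dx$. Using that $\phi$ is even and substituting $y=z+x$, the $dx$-integral produces exactly $h_n(z)=|\Lambda_n\cap(\Lambda_n-x)|$, turning $\phi(z)$ into $g_n(z)$ and yielding the claim. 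The calculation is a formal manipulation; its only nontrivial ingredient is a Fubini-type interchange.

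The main obstacle is that for $\phi(x)=-\log|x|$ this Fubini step cannot be justified via stability estimates as in \cite{georgii2}. I would circumvent it by splitting $\phi=\phi^+-\phi^-$, where $\phi^+(x)=(-\log|x|)\vee 0$ is supported in $[-1,1]$ and non-negative, and $\phi^-(x)=(\log|x|)\,\eins_{\{|x|>1\}}$ is non-negative and bounded by $\log n$ on the support of $h_n$. The plan is to establish the identity separately for $\phi^+$ and $\phi^-$, and then subtract.

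For $\phi^+$, Tonelli's theorem applies unconditionally to the non-negative integrand, so the computation above produces the desired identity as an equality in $[0,+\infty]$, reproducing the remark after Lemma \ref{lem: georgii lem} verbatim. For $\phi^-$, the pointwise bound $g_n^-(z)\leq n\log(n)\,\eins_{[-n,n]}(z)$ (since $h_n\leq n$ and vanishes outside $[-n,n]$), together with the refined Campbell theorem and the hypothesis $\int\xi(\Lambda_1)^2\,\P(d\xi)<\infty$, forces both sides of the identity to be finite: the left-hand side because $2H_n^-(\xi)\leq\log(n)\,\xi(\Lambda_n)^2$ and stationarity of $\P$ gives $\int\xi(\Lambda_n)^2\,\P(d\xi)<\infty$, and the right-hand side because a Campbell-type bound combined with Cauchy--Schwarz yields $\int\xi(\Lambda_n)\,\P^0(d\xi)\leq\int\xi(\Lambda_1)\xi(\Lambda_{n+1})\,\P(d\xi)<\infty$. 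With both integrals finite, Fubini applies to $\phi^-$. Subtracting the two identities then gives the lemma, and finiteness of the $\phi^-$-contribution guarantees that the subtraction is well-defined regardless of whether the $\phi^+$-side is finite. The only genuine difficulty is this bookkeeping of signs and integrability; the algebraic identity itself is identical to Georgii's.
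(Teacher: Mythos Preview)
Your proposal is correct and reaches the same conclusion by essentially the same mechanism as the paper --- controlling the negative contribution of $\phi$ so that the refined Campbell identity is legitimate --- but organised differently. You split $\phi=\phi^+-\phi^-$ at the level of the potential, apply Tonelli to each non-negative piece, and verify finiteness of the $\phi^-$-side via the crude bound $\phi^-\leq\log n$ on $[-n,n]$ together with the second-moment hypothesis. The paper instead keeps the full integrand $f_n(\xi,x)=\tfrac12\,\eins_{\Lambda_n}(x)\sum_{0\neq y\in\xi\cap(\Lambda_n-x)}\phi(y)$ and derives a pointwise lower bound $2f_n(x,\xi)\geq\phi(n)\,\eins_{\Lambda_n}(x)\,\xi(\Lambda_n-x)$ by splitting $\xi$ into its positive and negative half-lines and applying Jensen's inequality (convexity of $-\log$ on each half-line); this lower bound has finite $\P^0\!\otimes dx$-integral by the same second-moment assumption, which then justifies Campbell. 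Your route is arguably cleaner: the Jensen step in the paper is not actually needed, since the pointwise bound $\phi(y)\geq\phi(n)$ for $0<|y|\leq n$ already gives the same conclusion, and your decomposition makes explicit that only boundedness of $\phi^-$ on compacts is used, not convexity. One minor indexing slip: since $g_n^-$ is supported on $[-n,n]=\Lambda_{2n}$ rather than $\Lambda_n$, the Palm-side bound should read $\int\xi(\Lambda_{2n})\,\P^0(d\xi)\leq\int\xi(\Lambda_1)\xi(\Lambda_{2n+1})\,\P(d\xi)$; this does not affect the argument.
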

\begin{proof}
    Set $f_n(\xi,x)=\eins_{\Lambda_n}(x)\frac{1}{2}\sum_{0\neq y\in \xi}\phi(y)\eins_{\Lambda_n-x}(y)$ for $x\in \IR$ and $\xi \in \Gamma$. 
    We decompose into negative and positive parts as
    \begin{align}\label{eq:logpalmenergy}
        &2f_n(x,\xi)\nonumber \\&=\eins_{\Lambda_n}(x)\xi((\Lambda_n-x)\cap(0,\infty)) \sum_{ y\in \xi\cap (\Lambda_n-x)\cap(0,\infty)}\xi((\Lambda_n-x)\cap(0,\infty))^{-1}\phi(y)\nonumber \\
        &+\eins_{\Lambda_n}(x)\xi((\Lambda_n-x)\cap(-\infty,0)) \sum_{ y\in \xi\cap (\Lambda_n-x)\cap(-\infty,0)}\xi((\Lambda_n-x)\cap(-\infty,0))^{-1}\phi(y)
    \end{align}
     By  convexity of $\phi$ on   $(0,\infty)$ for the first term we have  the lower bound 
    \begin{align*}
        &\eins_{\Lambda_n}(x)\xi((\Lambda_n-x)\cap(0,\infty)) \sum_{ y\in \xi\cap (\Lambda_n-x)\cap(0,\infty)}\xi(\Lambda_n-x\cap(0,\infty))^{-1}\phi(y)\\
          &\geq \eins_{\Lambda_n}(x)\xi((\Lambda_n-x)\cap(0,\infty)) \phi\left(\sum_{ y\in \xi\cap (\Lambda_n-x)\cap(0,\infty)}\xi((\Lambda_n-x)\cap(0,\infty))^{-1}y\right)\\
          &\geq \eins_{\Lambda_n}(x)\xi((\Lambda_n-x)\cap(0,\infty)) \phi\left(\sum_{ y\in \xi\cap (\Lambda_n-x)\cap(0,\infty)}\xi((\Lambda_n-x)\cap(0,\infty))^{-1}n\right)\\
          &= \eins_{\Lambda_n}(x)\xi((\Lambda_n-x)\cap(0,\infty)) \phi\left(n\right)
    \end{align*}
    Analogously, we can argue for the second term in \eqref{eq:logpalmenergy} to get 
    $$2f_n(x,\xi)\geq \eins_{\Lambda_n}(x)\xi(\Lambda_n-x) \phi\left(n\right).$$
    By the refined Campbell theorem  applied to the function $\eins_{\Lambda_n}(x)\xi(\Lambda_n-x)$  we have 
    \begin{align*}
        \int_{\IR}\int \eins_{\Lambda_n}(x)\xi(\Lambda_n-x)\P^0(d\xi)dx&=\int\sum_{x\in \xi}\eins_{\Lambda_n}(x)\xi(\Lambda_n)\P(d\xi)=
       \int\xi^2(\Lambda_{n})\P(d\xi)<\infty.
    \end{align*}
     By  the last two estimates, we can apply the refined Campbell theorem to the function $f_n$, which yields (recall $g_n$ from \eqref{eq:gn})
    \begin{align*}
     \int \frac{1}{2}\sum_{0\neq x\in \xi}g_n(x)\P^0(d\xi)&=\int_{\IR}\int f_n(x,\xi) \P^0(d\xi)dx=\int\sum_{x\in \xi}f_n(x,\theta_x\xi)\P(d\xi)
        =\int H_n(\xi)\P(d\xi).
    \end{align*}
\end{proof}

Our goal is to establish weak geodesic convexity  of the interaction energy (cf.\ equation \eqref{def:interac}) which will be defined as a volume normalised limit of the expectations $\int H_n(\xi)\P(d\xi)$. 
In the light of Lemma \ref{lem: georgii lem}, one way to guarantee this convexity, is to assume the function $g_n$ to be convex. 
We make this ansatz precise in the following assumption, where we impose a quantitative lower bound on the second derivative of $g_n$.

\begin{assumption}\label{assumption0}
Let the even potential $\phi$  be twice differentiable on $(-\infty,0)$ and $(0,\infty)$.
Let $h_n(x):=\abs{\Lambda_n\cap (\Lambda_n-x)}$. Note that $h_n(x)=0$ for $\abs{x}\geq n$. Set $g_n(x):= \phi(x)h_n(x)$ and assume that \begin{align}\label{eq:function_f}
    g_n''(x)\geq n f(\abs{x}),\quad \forall x\in \Lambda_n\setminus \{0\}
\end{align}
for some continuous function $f:(0,\infty)\to (0,\infty)$. In particular, $g_n$ is convex on $(-n/2,0)$ and $(0,n/2)$.
\end{assumption}

Under this assumption, we immediately obtain the following strict convexity property of the finite box energies  $\int H_n(\xi)\P(d\xi)$. Moreover, we obtain an explicit formula for the gain which (under more assumptions) will later yield weak $\lambda$-geodesic convexity of the free energy (cf.\ Proposition \ref{prop:lambda_conv}).

\begin{lem}\label{lem:convex_fix_n}
    Let $\phi$ satisfy assumption \ref{assumption0} and assume that equation \eqref{eq:formula_hamiltonian} holds for any $n\in \IN$ and  $\P\in \mathcal P_{s,1}(\Gamma)$ with $\int \xi(\Lambda_1)^2\P(d\xi)<\infty$. Let $\P_i\in \mathcal P_{s,1}(\Gamma)$, $i=0,1$ with $\int \xi(\Lambda_1)^2\P_i(d\xi)<\infty$ and  $\Q\in \Cpl_{s,m}(\P_0,\P_1)$. 
    Set $\U:=\iota(\Q)$ and  $\P_t:=\j^{-1}\left((T_t)_{\#}\U\right)$.
    Then for $0\leq t\leq 1$ \begin{align}
        n^{-1}\int H_n(\xi) \P_{t}(d\xi)&\leq
        (1-t)n^{-1}\int H_n(\xi)\P_{0}(d\xi)+tn^{-1}\int H_n(\xi)\P_{1}(d\xi)\\
        &-\frac{(1-t)t}{2}\int \sum_{(0,0)\neq (x,y)\in \bar\xi\cap \Lambda_n^2} (x-y)^2\inf_{z\in [x,y]}f(\abs{z})\Q^0(d\bar\xi)\nonumber
    \end{align}
\end{lem}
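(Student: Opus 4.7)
The plan is to reduce the inequality to a pointwise convexity statement for $g_n$ via the Georgii-type identity of Lemma \ref{lem: georgii lem}. The first task is to express the Palm measure $\P_t^0$ directly through the coupling: if $\tilde T_t \colon \Gamma_m \to \Gamma_0$ denotes the natural interpolation of monotone configurations, $\tilde T_t(\bar\xi) = \{(1-t)x+ty : (x,y) \in \bar\xi\}$, then $\P_t^0 = (\tilde T_t)_\# \Q^0$. This follows from the compatibility identity $\sq \circ \tilde T_t = T_t \circ (\sq \circ \PR_0, \sq \circ \PR_1)$, the definition $\j(\P_t) = (T_t)_\# \iota(\Q)$, and the bijectivity of $\sq \colon \Gamma_0 \to \SQ$. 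Combining with Lemma \ref{lem: georgii lem} yields
\[
   n^{-1} \int H_n(\xi)\, \P_t(d\xi) \;=\; \frac{1}{2n} \int \sum_{i \neq 0} g_n\bigl((1-t)\pr_0(\x_i(\bar\xi)) + t\pr_1(\x_i(\bar\xi))\bigr)\,\Q^0(d\bar\xi),
\]
and the analogous identities at $t = 0$ and $t = 1$ rewrite the first two terms on the right-hand side of the claim in the same $\Q^0$-integral form. Thus the claim reduces to a pointwise convexity-gain estimate for $g_n$ at each index $i \neq 0$.

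For the pointwise estimate I would use the standard quantitative convexity inequality: if $g \in C^2(I)$ and $x, y \in I$, Taylor's theorem with integral remainder gives
\[
   (1-t)\, g(x) + t\, g(y) - g((1-t) x + t y) \;\geq\; \tfrac{t(1-t)}{2}(y-x)^2 \inf_{z \in [x,y]} g''(z).
\]
I apply this with $g = g_n$ on the segment joining $\pr_0(\x_i)$ and $\pr_1(\x_i)$. Under $\Q^0$ the configuration $\bar\xi$ is monotone with $(0,0) \in \bar\xi$, and simplicity of $\P_0, \P_1$ forces both projected coordinates to be nonzero for $i \neq 0$ almost surely, so monotonicity guarantees that the whole segment avoids $0$ and $g_n$ is $C^2$ on it. When $(\pr_0(\x_i), \pr_1(\x_i)) \in \Lambda_n^2$, the segment lies in $\Lambda_n \setminus \{0\}$, so Assumption \ref{assumption0} yields $\inf_{[x,y]} g_n'' \geq n \inf_{z \in [x,y]} f(|z|)$ and one gains a quantitative term of order $t(1-t)(y-x)^2\, n \inf_{z \in [x,y]} f(|z|)$; for pairs outside $\Lambda_n^2$ the still-nonnegative convexity contribution is simply dropped. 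Summing over $i \neq 0$, integrating against $\Q^0$, and dividing by $n$ as dictated by the Campbell-type factor $\frac{1}{2n}$ produces the stated inequality.

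The main obstacle is the first step, namely verifying $\P_t^0 = (\tilde T_t)_\# \Q^0$; this rests on the structural compatibility between the coupling $\Q^0$, the gap map $\sq$, and the linear interpolation $T_t$, and it is the content of the identity $\sq \circ \tilde T_t = T_t \circ (\sq \circ \PR_0, \sq \circ \PR_1)$. Once this compatibility is in hand, the remainder of the proof is an elementary pointwise convexity calculation. A secondary subtlety is the control near the singular point $0$, which is handled by monotonicity of $\bar\xi$ together with simplicity of the marginals.
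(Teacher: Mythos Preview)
Your proposal is correct and follows essentially the same route as the paper: rewrite $n^{-1}\int H_n\,d\P_t$ via the Palm representation \eqref{eq:formula_hamiltonian} as an integral of $g_n((1-t)x+ty)$ against $\Q^0$, apply the pointwise quantitative convexity inequality for $g_n$, invoke Assumption~\ref{assumption0} on $\Lambda_n^2$, and drop the remaining nonnegative terms. You are in fact more explicit than the paper about two points the paper leaves implicit, namely the identification $\P_t^0=(\tilde T_t)_\#\Q^0$ and the use of monotonicity plus simplicity to ensure the interpolation segment avoids the singularity at $0$.
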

\begin{proof}By convexity of $g_n$, we have
    \begin{align*}
       &2n^{-1}\int H_n(\xi)\P_{t}(d\xi)\\&\stackrel{\eqref{eq:formula_hamiltonian}}{=} n^{-1}\int \sum_{0\neq x\in \xi}g_n(x)\P_{t}^0(d\xi)\\&= n^{-1}
        \sum_{(0,0)\neq(x,y)\in \bar\xi}g_n((1-t)x+ty)\Q^0(d\bar\xi)\\
         &\leq  n^{-1} \int \sum_{(0,0)\neq (x,y)\in \bar\xi} \left((1-t)g_n(x)+tg_n(y)-\frac{t(1-t)}{2}(x-y)^2\inf_{z\in [x,y]}g_n''(z)\right)\Q^0(d\bar\xi)\\
         &\leq 2n^{-1}(1-t)\int H_n(\xi)\P_{0}(d\xi)+2n^{-1}t\int H_n(\xi)\P_{1}(d\xi)\\
         &-\frac{(1-t)t}{2n}\int \sum_{(0,0)\neq (x,y)\in \bar\xi\cap \Lambda_n^2} (x-y)^2\inf_{z\in [x,y]}g_n''(z)\Q^0(d\bar\xi) \\
          &\leq 2n^{-1}(1-t)\int H_n(\xi)\P_{0}(d\xi)+2n^{-1}t\int H_n(\xi)\P_{1}(d\xi)\\
          &-\frac{(1-t)t}{2}\int \sum_{(0,0)\neq (x,y)\in \bar\xi\cap \Lambda_n^2} (x-y)^2\inf_{z\in [x,y]}f(\abs{z})\Q^0(d\bar\xi),
    \end{align*}
    where in the third line we used the convexity of $g_n$.
\end{proof}

We define the gain 
\begin{align}\label{eq:gain}
\g(n,t,\Q^0):=\frac{(1-t)t}{2}\int \sum_{(0,0)\neq (x,y)\in \bar\xi\cap \Lambda_n^2} (x-y)^2\inf_{z\in [x,y]}f(\abs{z})\Q^0(d\bar\xi)
\end{align}
which quantifies the gain over plain convexity.
Furthermore, for $\P\in \mathcal P_{s,1}(\Gamma)$ set 
\begin{align}
    \mathcal H(\P,n):= \int H_n(\xi)\P(d\xi).
\end{align}

In order to make use of the results of \cite{georgii2} we need to impose some more assumptions on the pair potential $\phi$.
A potential $\phi$ is called purely repulsive if $\phi$ is
nonnegative and bounded away from zero near the origin, i.e., if there exists some positive $\delta=\delta(\phi)>0$ such that \begin{align*}
\phi(x)\geq \delta \eins_{\norm{x}\leq \delta}.
\end{align*}
A potential $\phi$ is called superstable if $\phi=\phi^s+\phi^r$ for stable $\phi^s$ and purely repulsive $\phi^r$. Finally, $\phi$ is called lower regular, if there exists a decreasing function $\psi:[0,\infty)\to [0,\infty)$ such that \begin{align*}
    \phi(x)\geq -\psi(\abs{x}),\quad \forall x\in \IR
\end{align*}and
\begin{align*}
    \int_0^{\infty}\psi(s)ds<\infty.
\end{align*}
A potential is called regular, if it is lower regular and if there exists $r(\phi)>0$ such that $\phi(x)\leq \psi(\abs{x})$ for $x\geq r(\phi)$.
For the rest of this section, we make the following assumptions.

\begin{assumption}\label{assumption1}
For the rest of this section, we fix    a  superstable and  regular potential $\phi$. Furthermore, we assume $\phi$ to satisfy assumption \ref{assumption0} and let $f$ be the function from assumption \ref{assumption0}.
\end{assumption}

We give two examples of pair potentials satisfying assumption \ref{assumption1}.

\begin{ex}[Hypersingular Riesz Gas]\label{example:riesz}
    Consider $\phi(x)=\abs{x}^{-s}$ for $s>1$. Then $g_n$ is twice differentiable on the intervals $(-n/2,0)$ and $(0,n/2)$  with \begin{align*}g_n''(x)=s(s+1)\abs{x}^{-s-2}h_n(x)+2s\abs{x}^{-s-1}.
    \end{align*}
   Since $h_n(x)\geq n/2$ for $x\in \Lambda_n\setminus \{0\}$,   we obtain the lower bound \begin{align*}
       g_n''(x)\geq \frac{s(s+1)n}{2}\abs{x}^{-s-2}.
   \end{align*}
  Hence, a possible choice is the convex function $f(\abs{x})=\frac{s(s+1)}{2}\abs{x}^{-s-2}$.
\end{ex}
\begin{ex}[Yukawa]\label{ex:yukawa}
    Let $\phi(x)=\frac{1}{\abs{x}}e^{-\abs{x}}$ and $\varepsilon>0$. The choice of the function $f$ will depend on $\varepsilon$. For $x\in (-n/2,0)\cup (0,n/2)$ and $n\geq 1$ large enough (dependent on $\varepsilon$) \begin{align*}
        g_n''(x)=\phi''(x)h_n(x)+2\phi'(x)h_n'(x)\geq \phi''(x)h_n(x)-2\abs{\phi'(x)}\geq \frac{n}{2}\left(\phi''(x)-\varepsilon\abs{\phi'(x)}\right),
    \end{align*}
    since $\abs{h_n'(x)}=1$ and $h_n(x)\geq n/2$ for $x\in \Lambda_n\setminus \{0\}$.
We have for $x\in \Lambda_n\setminus\{0\}$ \begin{align*}
    \phi''(x)-\varepsilon \abs{\phi'(x)}=e^{-\abs{x}}\left(2\abs{x}^{-3}+(2-\varepsilon)x^{-2}+(1-\varepsilon)\abs{x}^{-1}\right)
\end{align*}

    Hence, a possible choice is $f(x)=e^{-x}x^{-3}$ which is convex (on the positive axis).
\end{ex}

 So far, we considered the weak topology on $\mathcal P_{s,1}(\Gamma)$ (induced by the vague topology on $\Gamma$).  
In order to use results from \cite{georgii2}, such as lower semicontinuity of the free energy, we have to introduce another topology on $\mathcal P_{s,1}(\Gamma)$ (cf. \cite[Page 2]{georgii2}). 

\begin{defi}\label{def:local_top}
    Let $\mathcal L$  denote the class of all
measurable functions $f:\Gamma\to \IR$, which are local, in that $\exists n\in \IN$ such that $f(\xi)=f(\xi\cap \Lambda_n)$, $\forall \xi\in \Gamma$, and tame, in that $\abs{f(\xi)}\leq c(1+\xi(\Lambda_n))$ for $n\in \IN$, a constant $c\in \IR$ and any $\xi\in \Gamma$. The topology $\mathcal T_{\mathcal L}$ is defined as the coarsest topology on $\mathcal P_{s,1}(\Gamma)$ such that the mappings $\P\mapsto \IE_{\P}[f]$ are continuous for any $f\in L$. We note that the topology $\mathcal T_{\mathcal L}$ is finer than the weak topology on $\mathcal P_{s,1}(\Gamma)$ which is induced by the vague topology on $\Gamma$. 
\end{defi}

\begin{thm}[{\cite[Theorem 1 and Lemma 3.4]{georgii2}}]
   For every $\P\in \mathcal P_{s,1}(\Gamma)$, the limit
   \begin{align}\label{def:interac}
    \mathcal W^{int}(\P):=\lim_{n\to \infty}\frac{1}{n}\int H_n(\xi)\P(d\xi)
\end{align}  
exists. For   $\P\in \mathcal P_{s,1}(\Gamma)$ with $\int \xi(\Lambda_1)^2\P(d\xi)<\infty$ the interaction energy admits the  representation 
\begin{align}
     \wint(\P)=\frac{1}{2}\int \sum_{0 \neq x\in \xi}\phi(x)\P^0(d\xi).
\end{align}
For $\P\in \mathcal P_{s,1}(\Gamma)$ with $\int \xi(\Lambda_1)^2\P(d\xi)=\infty$ the limit is given by $\wint(\P)=\infty$.
For $\beta>0$ the free energy  $\mathcal F_{\beta}(\P):=\beta \wint(\P)+\mathcal E(\P)$
is lower semicontinuous w.r.t.\ the topology $\mathcal T_{\mathcal L}$. Moreover, by assumption \ref{assumption0}, the functional $\mathcal F_{\beta}$ is bounded from below by a finite constant.
\end{thm}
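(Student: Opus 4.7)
The plan is to reduce to \cite{georgii2} wherever possible and supply the handful of additional checks our setting requires. Under Assumption \ref{assumption1} the potential $\phi$ is superstable and regular, so \cite[Theorem 1]{georgii2} yields the existence of the limit $\wint(\P)=\lim_n \tfrac{1}{n}\int H_n\,d\P$ for every $\P\in\mathcal P_{s,1}(\Gamma)$ (with value in $\IR\cup\{+\infty\}$), and \cite[Lemma 3.4]{georgii2} gives the lower semicontinuity of $\mathcal F_\beta$ on $\mathcal T_{\mathcal L}$. The lsc argument is characteristic of the Ruelle--Georgii school: one combines the sup-representation of the specific entropy (Lemma \ref{lem:Georgii}a)) with a truncation scheme for $\phi$ (cutting off the far tail using regularity and the singular part near $0$ using stability) to realise $\mathcal F_\beta$ as a supremum of expectations of tame local functions, and such expectations are continuous by the very definition of $\mathcal T_{\mathcal L}$.

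For the integral representation in the finite second moment case I would pass to the limit in Lemma \ref{lem: georgii lem}. Since $h_n(x)/n\uparrow 1$ as $n\to\infty$ for each fixed $x\neq 0$, we have $g_n(x)/n \to \phi(x)$ pointwise, and a dominated convergence argument on the Palm-weighted sum gives
\[\wint(\P)=\lim_n \frac{1}{2n}\int \sum_{0\neq x\in\xi} g_n(x)\,\P^0(d\xi)=\frac{1}{2}\int \sum_{0\neq x\in\xi}\phi(x)\,\P^0(d\xi).\]
The dominating function is built from the inequality $|g_n(x)/n|\leq |\phi(x)|$ together with regularity of $\phi$ (providing an integrable tail majorant $\psi$) and superstability (controlling the singular part near the origin); finiteness of the resulting Palm sum follows from the refined Campbell theorem (Theorem \ref{thm:refinedCampbell}), intensity one, the tail bound $\int \psi<\infty$, and the second moment assumption.

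When $\int \xi(\Lambda_1)^2\,\P(d\xi)=\infty$, I would invoke Ruelle's superstability inequality, which strengthens \eqref{eq:stable}: decomposing $\phi=\phi^s+\phi^r$ with $\phi^r(x)\geq \delta\eins_{|x|\leq\delta}$ and $\phi^s$ stable, there exist constants $A>0$, $B\geq 0$ such that for any partition $C_1,\dots,C_n$ of $\Lambda_n$ into unit-length cells,
\[H_n(\xi)\geq A\sum_{k=1}^{n}\xi(C_k)^2-B\,\xi(\Lambda_n),\qquad \forall\,n,\xi.\]
Taking expectation under $\P$, using stationarity together with the intensity-one normalisation, and dividing by $n$ gives $\tfrac{1}{n}\int H_n\,d\P\geq A\int \xi(\Lambda_1)^2\,d\P - B$, which is $+\infty$ by hypothesis; hence $\wint(\P)=+\infty$. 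Finally, the uniform lower bound on $\mathcal F_\beta$ is immediate from stability: \eqref{eq:stable} gives $\tfrac{1}{n}\int H_n\,d\P \geq -b\,\tfrac{1}{n}\int \xi(\Lambda_n)\,d\P = -b$ by stationarity, so $\wint(\P)\geq -b$ for every $\P$, and combining with $\mathcal E(\P)\geq 0$ from Lemma \ref{lem:Georgii}c) yields $\mathcal F_\beta(\P)\geq -\beta b$.

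The only genuinely delicate step in this theorem is the lsc on $\mathcal T_{\mathcal L}$: this cannot be reduced to weak-convergence reasoning since $\wint$ is neither bounded nor continuous in the vague topology, and the Georgii truncation machinery is essential. Everything else reduces to standard monotone and dominated convergence once the appropriate Ruelle-type inequalities are at hand, and the bound from below uses only \eqref{eq:stable} together with non-negativity of $\mathcal E$.
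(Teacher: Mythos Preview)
The paper does not prove this theorem at all; it is stated with attribution to \cite[Theorem 1 and Lemma 3.4]{georgii2} and used as a black box, so there is no in-paper argument to compare against. Your sketch is a faithful reconstruction of the Georgii machinery and is essentially correct: the existence of the limit and the $\mathcal T_{\mathcal L}$-lsc are exactly the cited results, the case $\int\xi(\Lambda_1)^2\,d\P=\infty$ is indeed handled by Ruelle's superstability estimate as you write, and the global lower bound follows from stability and $\mathcal E\geq 0$.

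One small point of care in your Palm-representation step: the bound $|g_n(x)/n|\le |\phi(x)|$ is fine, but to justify dominated convergence you need $\sum_{0\neq x\in\xi}|\phi(x)|\in L^1(\P^0)$, and this is not immediate from regularity plus Campbell alone (Campbell for the Palm measure involves the second factorial moment measure, not the intensity). The clean way is to split: for $\phi^+$ use monotone convergence (since $h_n(x)/n\uparrow 1$), and for $\phi^-$ use lower regularity $\phi^-\le\psi$ together with the second-moment hypothesis to bound $\int\sum_{0\neq x\in\xi}\psi(|x|)\,\P^0(d\xi)$ via the identity of Lemma~\ref{lem: georgii lem} applied to $\psi$ in place of $\phi$. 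This is how \cite{georgii2} proceeds, and your sketch already points in that direction.
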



Combining the weak geodesic convexity of the specific relative entropy Theorem \ref{thm:convnex_entropy} and the convexity property of the finite box energies Lemma \ref{lem:convex_fix_n} we obtain the weak strict geodesic convexity of the free energy.

\begin{thm}\label{thm:interaction_gain}
  Let $\P_i\in  \mathcal P_{s,1}(\Gamma)$, $i=0,1$ with  finite distance $\mathscr W_p(\P_0,\P_1)<\infty$ and finite free energy $\mathcal F_{\beta}(\P_i)<\infty$, $i=0,1$. Let $\Q$ be the $\mathscr W_p-$optimal coupling from Theorem \ref{thm:convnex_entropy}. Then for $0\leq t\leq 1$\begin{align}\label{eq:lkj}
    \mathcal F_{\beta}(\P_{t})\leq (1-t)\mathcal F_{\beta}(\P_{0})+t\mathcal F_{\beta}(\P_{1}) -
        \frac{\beta(1-t)t}{2}\int \sum_{(0,0)\neq (x,y)\in \bar\xi} (x-y)^2\inf_{z\in [x,y]}f(\abs{z})\Q^0(d\bar\xi),
    \end{align}
    where $\U:=\iota(\Q)$,  $\P_t:=\j^{-1}\left((T_t)_{\#}\U\right)$ and $f$ is chosen as in \eqref{eq:function_f}.
\end{thm}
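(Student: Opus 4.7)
The plan is to decompose $\mathcal F_\beta = \beta\,\mathcal W^{int}+\mathcal E$ and combine two pieces established earlier in the paper: the weak geodesic convexity of the specific entropy (Theorem \ref{thm:convnex_entropy}) and the finite-box convexity with explicit gain from Lemma \ref{lem:convex_fix_n}. The nontrivial part is passing $n\to\infty$ in the latter while keeping the gain term.

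First I would verify the hypotheses of Lemma \ref{lem:convex_fix_n}. Since $\mathcal E\geq 0$ and $\mathcal F_\beta(\P_i)<\infty$, we have $\mathcal W^{int}(\P_i)<\infty$, and the representation theorem for $\mathcal W^{int}$ forces $\int\xi(\Lambda_1)^2\P_i(d\xi)<\infty$ for $i=0,1$. Because $\mathscr W_p(\P_0,\P_1)<\infty$, Theorem \ref{thm:convnex_entropy} supplies a $\mathscr W_p$-optimal coupling $\Q\in\Cpl_{s,m}(\P_0,\P_1)$ along which $\mathcal E$ is convex. Set $\U:=\iota(\Q)$ and $\P_t:=\j^{-1}((T_t)_\#\U)$. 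By Lemma \ref{lem:second_moment_interpol} the interpolation $\P_t$ also has finite second number-statistic moment on every $\Lambda_n$. Thus Lemma \ref{lem:convex_fix_n} applies to this specific $\Q$ (this is admissible because Lemma \ref{lem:convex_fix_n} holds for every $\Q\in\Cpl_{s,m}(\P_0,\P_1)$, so we may feed it the one selected by Theorem \ref{thm:convnex_entropy}).

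Next I would write the bound of Lemma \ref{lem:convex_fix_n} in the form
\begin{align*}
 n^{-1}\mathcal H(\P_t,n)\leq (1-t)\,n^{-1}\mathcal H(\P_0,n)+t\,n^{-1}\mathcal H(\P_1,n)-\g(n,t,\Q^0),
\end{align*}
and let $n\to\infty$. The three energy terms converge to $\mathcal W^{int}(\P_t)$, $(1-t)\mathcal W^{int}(\P_0)$, $t\mathcal W^{int}(\P_1)$ respectively, by the definition of $\mathcal W^{int}$ as the limit of $n^{-1}\mathcal H(\cdot,n)$. For the gain, I would observe that $\Q^0$-a.s.\ the configuration $\bar\xi$ is monotone with $(0,0)\in\bar\xi$ and the marginals are simple, so any $(x,y)\in\bar\xi\setminus\{(0,0)\}$ has both coordinates strictly on the same side of $0$; hence the segment $[x,y]$ avoids $0$ and $\inf_{z\in[x,y]}f(|z|)$ is a finite positive number. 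Each summand of $\g(n,t,\Q^0)$ is therefore nonnegative, and the set $\bar\xi\cap\Lambda_n^2$ increases to $\bar\xi$, so monotone convergence yields
\begin{align*}
 \g(n,t,\Q^0)\;\nearrow\;\frac{(1-t)t}{2}\int\sum_{(0,0)\neq (x,y)\in\bar\xi}(x-y)^2\inf_{z\in[x,y]}f(|z|)\,\Q^0(d\bar\xi).
\end{align*}

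Combining the resulting convexity of $\mathcal W^{int}$ (with the full gain) and the entropy inequality $\mathcal E(\P_t)\leq (1-t)\mathcal E(\P_0)+t\mathcal E(\P_1)$ from Theorem \ref{thm:convnex_entropy}, after multiplying the energy part by $\beta$ one obtains \eqref{eq:lkj}. The main conceptual obstacle is ensuring a single coupling $\Q$ simultaneously realizes the entropy convexity and drives the energy convexity with the quantitative gain; this is settled by noting that Lemma \ref{lem:convex_fix_n} imposes no optimality condition on $\Q$, so the optimal coupling produced by Theorem \ref{thm:convnex_entropy} can be used in both inequalities. The monotone-convergence step for the gain is clean once one has checked that the segments $[x,y]$ never contain $0$, which is a direct consequence of the monotonicity of $\bar\xi$ and $(0,0)\in\bar\xi$.
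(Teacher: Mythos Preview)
Your proof is correct and follows essentially the same route as the paper's: combine the entropy convexity along the optimal coupling from Theorem \ref{thm:convnex_entropy} with the finite-box energy inequality of Lemma \ref{lem:convex_fix_n} (applied to that same coupling), then let $n\to\infty$ using the definition of $\mathcal W^{int}$ and monotone convergence for the gain. You supply more detail than the paper (verifying $\int\xi(\Lambda_1)^2\P_t(d\xi)<\infty$ via Lemma \ref{lem:second_moment_interpol}, and checking that $[x,y]$ avoids $0$), but the structure is identical.
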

\begin{proof}
    By Theorem \ref{thm:convnex_entropy} and Lemma \ref{lem:convex_fix_n} \begin{align*}
        \mathcal F_{\beta}(\P_{t})&= \beta \lim_{n\to \infty}n^{-1}\int H_n \P_t(d\xi)+\mathcal E(\P_{t})\\
        &\leq (1-t)\mathcal F_{\beta}(\P_{0})+t\mathcal F_{\beta}(\P_{1}) -
        \frac{\beta(1-t)t}{2}\int \sum_{(0,0)\neq (x,y)\in \bar\xi} (x-y)^2\inf_{z\in [x,y]}f(\abs{z})\Q^0(d\bar\xi).
    \end{align*}
\end{proof}

The following approximation result is proved in \cite[Lemma 5.1]{georgii2}. It allows us to
approximate two point processes $\P_0,\P_1\in \mathcal P_{s,1}(\Gamma)$ with finite free energy but with $\mathscr W_p(\P_0,\P_1)=\infty$ by point processes which are at finite distance. To these approximations, we can then apply the  strategy of the proof of Theorem \ref{thm:interaction_gain} and thus 
prove the existence of a coupling $\Q\in \Cpl_{s,m}(\P_0,\P_1)$ such that \eqref{eq:lkj} holds.

\begin{lem}\label{lem:finite_energy_approx}
    For $\P\in  \mathcal P_{s,1}(\Gamma)$ with  $\mathcal F_{\beta}({P})<\infty$  there exists a sequence $\P^n\in \mathcal P_{s,1}(\Gamma)$ with $\wint(\P^n)\leq \wint(\P)+1/n$, $\mathcal E(\P^n)\leq \mathcal E(\P)+1/n$ and 
    \begin{align}
        \int \sum_{x\in \xi\cap \Lambda_1}\abs{x^+-x}^p\P^n(d\xi)<\infty.
    \end{align}
    Furthermore, $\P^n\xrightarrow{n\to \infty}\P$  w.r.t. $\mathcal T_{\mathcal L}$ (in particular weakly) and for $n\in \IN$ there exists $q_n>0$  such that the measure $\P^n$ is concentrated on the set \begin{align}\label{eq:sets_gamma_q}
        \Gamma_{q_n}=\{\xi\in \Gamma: \phi(x-y)\leq q_n,1/q_n\leq \abs{x-y} \text{ for any two distinct }x,y\in \xi \}.
    \end{align}
\end{lem}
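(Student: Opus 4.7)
I would follow the classical conditioning-plus-periodization strategy standard in the Gibbs point-process literature (cf.~\cite{georgii2}), introducing two parameters $R\in\IN$ and $q>0$ that will be tuned jointly. Since $\mathcal E(\P)<\infty$, the restriction $\P_{\Lambda_R}$ has a density $f_R:=d\P_{\Lambda_R}/d\Poi_{\Lambda_R}$. Let $A_{R,q}$ denote the event that the configuration in $\Lambda_R$ has every distinct pair $x,y$ at distance $\geq 1/q$ with $\phi(x-y)\leq q$, together with the requirement that no point lies within $1/q$ of $\partial\Lambda_R$ (this boundary buffer ensures separation is preserved across blocks). Set $\nu_{R,q}:=\P_{\Lambda_R}(\,\cdot\mid A_{R,q})$. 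The finiteness of $\wint(\P)$ combined with the Palm representation of Lemma~\ref{lem: georgii lem} gives $\P_{\Lambda_R}(A_{R,q})\to 1$ as $q\to\infty$ for each fixed $R$, because configurations violating separation or the potential cap contribute positively to the finite integral $\tfrac12\int\sum_{0\neq x\in\xi}\phi(x)\,\P^0(d\xi)$.

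The candidate $\P^{R,q}$ is then built by drawing independent samples $\xi^{(k)}\sim\nu_{R,q}$, placing them in the translated boxes $\Lambda_R+kR$ for $k\in\IZ$, taking the union, and applying a uniform random shift $U\sim\mathsf{Unif}([0,R])$; a small intensity rescaling enforces the unit intensity condition. By construction $\P^{R,q}$ is stationary, concentrated on $\Gamma_q$, and every gap is at most $R+2/q$, so $\int \s_1^p\,\j(\P^{R,q})(d\s)<\infty$ is automatic, which is equivalent by the refined Campbell theorem to the claimed unit-box gap moment.

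The quantitative bounds follow from two estimates. By independence across blocks and affinity of $\mathcal E$, one has $\mathcal E(\P^{R,q})=\tfrac1R\ent(\nu_{R,q}\mid\Poi_{\Lambda_R})$; the standard inequality for entropy under conditioning together with $\P_{\Lambda_R}(A_{R,q})\to 1$ and $\tfrac1R\ent(\P_{\Lambda_R}\mid\Poi_{\Lambda_R})\to\mathcal E(\P)$ gives $\mathcal E(\P^{R,q})\leq\mathcal E(\P)+o(1)$. For the internal energy, within-block contributions equal $\tfrac1R\int H_R\,d\nu_{R,q}$ and tend to $\wint(\P)$, while cross-block contributions are summable by the regularity estimate $|\phi(x)|\leq\psi(|x|)$ with $\psi$ integrable, hence vanish as $R\to\infty$. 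Convergence $\P^n\to\P$ in $\mathcal T_{\mathcal L}$ is then clear, since any local tame observable supported in a fixed window $\Lambda_m$ is (for $R>2m$) controlled by a single block, and the total-variation distance $\|\nu_{R,q}-\P_{\Lambda_R}\|_{TV}$ tends to $0$. Setting $\P^n:=\P^{R_n,q_n}$ for suitable diagonal choices of $R_n,q_n\to\infty$ yields the desired $1/n$-accuracy.

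The main obstacle is the diagonal selection: all three quantities --- the entropic cost $-\tfrac1R\log\P_{\Lambda_R}(A_{R,q})$, the cross-block energy contributions, and the relaxation back to full mass --- must simultaneously be $o(1/n)$ while $q_n$ diverges fast enough that $\P^n$ lives on $\Gamma_{q_n}$. This delicate interplay is where the assumptions on $\phi$ enter fully: superstability is needed for the Gibbs-type tail estimate $\P_{\Lambda_R}(A_{R,q}^c)\to 0$ at a quantifiable rate, whereas regularity provides the integrable tail bound by which cross-block energies telescope.
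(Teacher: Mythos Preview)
Your overall strategy---conditioning $\P_{\Lambda_R}$ on a well-separated event, tiling i.i.d.\ copies across translated boxes, and applying a uniform random shift---is exactly the construction the paper imports from \cite[Lemma~5.1]{georgii2}; the paper does not reprove the entropy, energy, $\Gamma_{q_n}$-concentration or $\mathcal T_{\mathcal L}$-convergence claims at all, but simply cites that lemma and then supplies a separate short argument for the $p$-th gap moment. So at the structural level you are on the same track.

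The genuine gap in your argument is precisely in that last step. You assert that ``every gap is at most $R+2/q$, so $\int \s_1^p\,\j(\P^{R,q})(d\s)<\infty$ is automatic''. This is not justified: the event $A_{R,q}$ as you define it says nothing about nonemptiness of the box, and in general $\nu_{R,q}$ assigns positive mass to the empty configuration (finite relative entropy gives only $\P_{\Lambda_R}\ll\Poi_{\Lambda_R}$, and there is no reason for the density to vanish at $\emptyset$). Under the periodized product measure there are then a.s.\ arbitrarily long runs of empty blocks, hence a.s.\ unbounded gaps, and your ``automatic'' finiteness fails. The paper's argument is different and robust to this: from the i.i.d.\ block structure one gets the exponential tail
\[
\P^n\bigl(\xi([1,r))=0\bigr)\leq a_n e^{-b_n r},
\]
since each block is nonempty with some fixed positive probability and independence makes long empty runs geometrically unlikely; that tail bound is what yields $\int \s_1^p\,d\j(\P^n)<\infty$. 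You could alternatively repair your own route by adding the requirement $\xi(\Lambda_R)\geq 1$ to $A_{R,q}$; this still has probability tending to $1$ and then gaps are genuinely bounded---though by roughly $2R$, not $R+2/q$, since two adjacent blocks may each carry a single point near their far boundaries.

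Two minor remarks: the ``small intensity rescaling'' you invoke will perturb the separation lower bound $1/q$, so $q_n$ must be chosen to absorb this; and your $\mathcal T_{\mathcal L}$-convergence sketch should account for the event (of probability $O(m/R)$) that the observation window straddles two blocks, using the tame bound $|f(\xi)|\leq c(1+\xi(\Lambda_m))$ together with the hard-core cap $\xi(\Lambda_m)\leq mq$ available on $\Gamma_q$.
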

\begin{proof}
This is the content of \cite[Lemma 5.1]{georgii2}.
   The only thing left to show is  that for $n\in \IN$\begin{align}
        \int \sum_{x\in \xi\cap \Lambda_1}\abs{x^+-x}^p\P^{n}(d\xi)<\infty.
    \end{align}
    Since \begin{align}
        \int \sum_{x\in \xi\cap \Lambda_1}\abs{x^+-x}^p\P^{n}(d\xi)\leq \int \xi(\Lambda_1)\P^{n}(d\xi)+
         \int \sum_{x\in \xi\cap \Lambda_1, x^+\notin \Lambda_1}\abs{x^+-x}^p\P^{n}(d\xi),
    \end{align}
    it is sufficient to show that $\int \sum_{x\in \xi\cap \Lambda_1, x^+\notin \Lambda_1}\abs{x^+-x}^p\P^{n}(d\xi)<\infty$.
    Since the measures $\P^n$ are obtained by stationarizing an infinite product measure (cf. the proof \cite[Lemma 5.1]{georgii2} for the precise construction, a similar construction is done in the proof of Lemma \ref{lem:cost=lim_wasserstein}),  it follows, that there exist $n-$dependent constants $a_n,b_n>0$ such that 
    \begin{align*}
        \P^n(\xi([1,r))=0)\leq a_n e^{-b_nr}.
    \end{align*}Hence,
    \begin{align*}
       \int \sum_{x\in \xi\cap \Lambda_1, x^+\notin \Lambda_1}\abs{x^+-x}^p\P^{n}(d\xi)
       &=\int \P^n\left(\sum_{x\in \xi\cap \Lambda_1, x^+\notin \Lambda_1}\abs{x^+-x}^p>r\right)dr\\
       &\leq \int \P^n\left(\xi([1,r^{1/p}))=0\right)dr\\
       &<\infty.
    \end{align*}
\end{proof}

We leverage Lemma \ref{lem:finite_energy_approx} to prove an extension of Theorem \ref{thm:interaction_gain} to the case of stationary point processes which are at potentially infinite distance. 

 \begin{thm}\label{thm:strictly_smaller_free_energy}
    For $\P_i\in  \mathcal P_{s,1}(\Gamma)$ with  $\mathcal F_{\beta}({\P_i})<\infty$, $i=0,1$, there exists $\Q\in \Cpl_{s,m}(\P_0,\P_1)$ with \begin{align}  
    \mathcal F_{\beta}(\P_{t})\leq  (1-t)\mathcal F_{\beta}(\P_{0})+t\mathcal F_{\beta}(\P_{1})-\frac{(1-t)t\beta}{2}\int \sum_{(0,0)\neq (x,y)\in \bar\xi} (x-y)^2\inf_{z\in [x,y]}f(\abs{z})\Q^0(d\bar\xi),
    \end{align}
     where  $\U:=\iota(\Q)$,  $\P_t:=\j^{-1}\left((T_t)_{\#}\U\right)$ and $f$ is chosen as in \eqref{eq:function_f}.
\end{thm}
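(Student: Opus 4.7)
The plan is to reduce to Theorem \ref{thm:interaction_gain} by approximating $\P_0$ and $\P_1$ by sequences with finite $\mathscr W_p$-distance and then extracting a limit. By Lemma \ref{lem:finite_energy_approx} I obtain sequences $\P_0^n, \P_1^n \in \mathcal P_{s,1}(\Gamma)$ with $\P_i^n \to \P_i$ in $\mathcal T_{\mathcal L}$, $\mathcal F_\beta(\P_i^n) \leq \mathcal F_\beta(\P_i)+2/n$, and $\int \sum_{x\in\xi\cap\Lambda_1}|x^+-x|^p\,\P_i^n(d\xi)<\infty$. Stationarity promotes the last property to $\int \s_1^p\, \j(\P_i^n)(d\s)<\infty$, so the product coupling of gap distributions provides a competitor giving $\mathscr W_p(\P_0^n,\P_1^n)<\infty$. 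Theorem \ref{thm:interaction_gain} then yields $\Q^n \in \Cpl_{s,m}(\P_0^n,\P_1^n)$ and interpolations $\P_t^n=\j^{-1}((T_t)_\#\iota(\Q^n))$ satisfying the target inequality with the superscripts $n$ throughout.

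The next step is to extract a limiting coupling. Tightness of the marginal Palm measures $(\P_i^n)^0$ follows from the convergence $\P_i^n\to\P_i$ in $\mathcal T_{\mathcal L}$ (which provides the required uniform control on $\xi(\Lambda_1)$) combined with Lemma \ref{lem:conv-palm}. Tight marginals yield a tight family on $\Gamma_{\IR^2}$, so along a subsequence $(\Q^n)^0 \to \Q^0$ weakly. The defining properties of $\Cpl_{s,m}$---monotonicity of the support, the marginal identities $(\PR_i)_\#\Q^0=\P_i^0$, and point-stationarity (Theorem \ref{thm:pointstationary})---are all preserved under weak limits by standard arguments. Via the Mecke inversion formula (Remark \ref{rem:inversion}) the limiting Palm $\Q^0$ lifts to a unique $\sigma$-finite stationary $\Q$ on $\Gamma_{\IR^2}$, and one checks exactly as in Step 4 of the proof of Theorem \ref{thm:equivalent_cost} that $\Q\in \Cpl_{s,m}(\P_0,\P_1)$.

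Finally, I pass to the limit in the three terms of the finite-$n$ inequality. Continuity of $T_t$ and the weak convergence $(\Q^n)^0\to\Q^0$ give weak convergence of $\P_t^n$ to $\P_t$, which is upgraded to $\mathcal T_{\mathcal L}$-convergence using the uniform second-moment bounds from Lemma \ref{lem:second_moment_interpol} (since $\mathcal T_{\mathcal L}$-convergence only requires weak convergence plus uniform integrability of tame statistics); lower semi-continuity of $\mathcal F_\beta$ then yields $\mathcal F_\beta(\P_t)\leq\liminf_n \mathcal F_\beta(\P_t^n)$. The affine combination $(1-t)\mathcal F_\beta(\P_0^n)+t\mathcal F_\beta(\P_1^n)$ converges to $(1-t)\mathcal F_\beta(\P_0)+t\mathcal F_\beta(\P_1)$ by combining the upper bounds from Lemma \ref{lem:finite_energy_approx} with lower semi-continuity. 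For the gain term, the integrand $(x,y)\mapsto (x-y)^2\inf_{z\in[x,y]}f(|z|)$ is non-negative and lower semi-continuous on $\IR^2\setminus\{(0,0)\}$, so Fatou's lemma for the weakly convergent sequence $(\Q^n)^0\to\Q^0$ provides exactly the inequality needed (since the term enters with a minus sign, we need $\liminf$ from below for the positive integral).

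The principal obstacle I anticipate is the upgrade from weak to $\mathcal T_{\mathcal L}$-convergence for the interpolations $\P_t^n\to\P_t$, which is essential to invoke lower semi-continuity of $\mathcal F_\beta$ (the latter is in general strictly finer than the weak topology). The required uniform integrability of tame statistics should come from the explicit stationarized-product construction underlying Lemma \ref{lem:finite_energy_approx} (the $\P_i^n$ have exponential-type tails and are supported on the hard-core sets $\Gamma_{q_n}$) combined with the interpolation bound of Lemma \ref{lem:second_moment_interpol}. A secondary technical point is identifying the limit of $(\Q^n)^0$ as the Palm measure of a stationary measure in $\Cpl_{s,m}$; this is routine given the Mecke inversion formula but must be carried out carefully.
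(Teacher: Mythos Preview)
Your overall strategy matches the paper's: approximate via Lemma \ref{lem:finite_energy_approx}, apply Theorem \ref{thm:interaction_gain} at each level $n$, extract a limiting coupling, and pass to the limit using lower semicontinuity of $\mathcal F_\beta$ and Fatou for the gain term. The place where your argument has a genuine gap is precisely the step you flag as the principal obstacle: the upgrade from weak to $\mathcal T_{\mathcal L}$-convergence of the interpolations $\P_t^n$.

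Your assertion that ``$\mathcal T_{\mathcal L}$-convergence only requires weak convergence plus uniform integrability of tame statistics'' is not correct. The class $\mathcal L$ consists of merely \emph{measurable} local tame functions (Definition \ref{def:local_top}), so weak convergence together with uniform integrability of $\xi(\Lambda_n)$ does not force $\int f\,d\P_n\to\int f\,d\P$ for all $f\in\mathcal L$; indicators of non-open local events are in $\mathcal L$. Consequently Lemma \ref{lem:second_moment_interpol} by itself cannot deliver the upgrade, and the exponential tails or hard-core support of the approximants $\P_i^n$ do not help either.

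The paper's fix uses information you already have but do not invoke: the finite-$n$ inequality immediately gives a uniform bound $\mathcal F_\beta(\P_t^n)\leq c$ (the gain term is nonnegative and the right-hand side is controlled by $\mathcal F_\beta(\P_i)+2/n$). By \cite[Lemma 3.4]{georgii2} the sublevel set $\{\mathcal F_\beta\leq c\}$ is $\mathcal T_{\mathcal L}$-compact, so one extracts a $\mathcal T_{\mathcal L}$-convergent subsequence and then identifies its limit with $\P_t$ via Lemma \ref{lem:conv-palm} (using \cite[Lemma 3.1]{georgii2} for the required uniform second moments). As a minor streamlining, the paper carries out the compactness step for the gap couplings $\U_n=\iota(\Q_n)\in\Cpl_s(\Pi_0^n,\Pi_1^n)$ on $\SQ^2$ rather than for the Palm measures on $\Gamma_{\IR^2}$; this makes tightness immediate (tight marginals $\Pi_i^n$ give a tight family of couplings) and the limit $\U\in\Cpl_s(\Pi_0,\Pi_1)$ is automatically in the right class, so the Mecke inversion and point-stationarity verification you describe are absorbed into the bijection $\iota$ of Theorem \ref{thm:equivalent_cost}.
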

\begin{proof}
Apply Lemma \ref{lem:finite_energy_approx} to obtain 
$\P^n_i\in \mathcal P_{s,1}(\Gamma)$ with $\wint(\P_i^n)\leq \wint(\P_i)+1/n$ and $\mathcal E(\P^n_i)\leq \mathcal E(\P_i)+1/n$ and 
    \begin{align}
        \int \sum_{x\in \xi\cap \Lambda_1}\abs{x^+-x}^p\P^n_i(d\xi)<\infty.
    \end{align}
Let $\Pi^n_i\in \mathcal P_{s,1}(\SQ)$ be the gap distribution of $\P^n_i$, i.e.\ $\Pi^n_i:=\j(\P^n_i)$. Then \begin{align*}
    \int \s_1^p\Pi^n_i(d\s)=\int \sum_{x\in \xi\cap \Lambda_1}\abs{x^+-x}^p\P^n_i(d\xi)<\infty.
\end{align*}
Hence,
\begin{align}\label{eq:fin_dist}
    \mathscr W^p_{gap,p}(\Pi^n_0,\Pi^n_1)\leq \int \abs{\s_1-\s'_1}^pd\Pi^n_0\otimes \Pi^n_1(\s,\s')\leq 2^{p-1}\left(\int \s_1^p \Pi^n_0(d\s)+\int \s_1^p \Pi^n_1(d\s)\right)<\infty
\end{align}
and thus $\mathscr W_p(\P^n_0,\P^n_1)<\infty$.
Let $\Q_n\in \Cpl_{s,m}(\P^n_0,\P^n_1)$ be the $\mathscr W_p-$optimal coupling from Theorem \ref{thm:convnex_entropy}, $\U_n:=\iota(\Q_n)\in \Cpl_s(\Pi_0^n,\Pi_1^n)$ and $\P^n_t:=\j^{-1}\left((T_t)_{\#}\U_n\right)$. Then by   Theorem \ref{thm:interaction_gain} 
\begin{align}\label{eq:gain_free_energy_n} 
    \mathcal F_{\beta}(\P^n_{t})&\leq (1-t)\mathcal F_{\beta}(\P^n_{0})+t\mathcal F_{\beta}(\P^n_{1})\\
    &-   \frac{(1-t)t\beta}{2}\int \sum_{(0,0)\neq (x,y)\in \bar\xi} (x-y)^2\inf_{z\in [x,y]}f(\abs{z})\Q_n^0(d\bar\xi)\nonumber\\
        &\leq (1-t)\mathcal F_{\beta}(\P_{0})+t\mathcal F_{\beta}(\P_{1})+\frac{2}{n}\nonumber\\
        &-\frac{(1-t)t\beta}{2}\int \sum_{(0,0)\neq (x,y)\in \bar\xi} (x-y)^2\inf_{z\in [x,y]}f(\abs{z})\Q_n^0(d\bar\xi)\nonumber.
\end{align}
 Noting that 
 \begin{align*}
    \sup_{n\geq 1}\mathcal F_{\beta}(\P_i^n)<\infty,
\end{align*}
\cite[Lemma 3.1]{georgii2} shows that 
\[
\sup_{n\geq 1}\int \xi(\Lambda_1)^2\P^n_i(d\xi)<\infty.
\]
Combining this with the weak convergence $\P^n_i\xrightarrow{n\to \infty}\P_i$, Lemma \ref{lem:conv-palm} yields the weak convergence $\Pi^n_i\xrightarrow{n\to \infty}\Pi_i$, for $i=0,1$.
Hence, $(\Pi^n_i)_{n\geq 1}$ is tight and therefore also $\bigcup_n\Cpl_s(\Pi^n_0,\Pi_1^n)$
such that there exists a subsequence $(\U_{n_k})_{k\geq 1}$ such that $\U_{n_k}\xrightarrow{k\to \infty}\U\in \Cpl_s(\Pi_0,\Pi_1)$ weakly. 
By continuity of the maps $T_t$, $0\leq t\leq 1$, we have 
\begin{align}\label{eq:123}
\j(\P^{n_k}_t)=(T_t)_{\#}\U_{n_k}\xrightarrow{k\to \infty}(T_t)_{\#}\U \text{ weakly}.
\end{align}
Our goal it to show that (up to a subsequence)
\[
    \P^{n_k}_t\xrightarrow{l\to \infty}\P_t:=\j^{-1}((T_t)_{\#}\U )\text{ in }\mathcal T_{\mathcal L}.
    \]
    
Note that there exists $c\in \IR$ such that for any $k\in \IN$ we have $\P^{n_k}_t\in \{\mathcal F_{\beta}\leq c\}$. Hence, by the $\mathcal T_{\mathcal L}-$compactness of this sublevel set (cf. \cite[Lemma 3.4]{georgii2}), there exists a further subsequence of $(\P^{n_k}_t)_{k\in \IN}$, which converges in $\mathcal T_{\mathcal L}$ to some $\P'\in \mathcal P_{s,1}(\Gamma)$. 
For ease of notation, we denote this subsequence also by $(\P^{n_k}_t)_{k\in \IN}$, i.e., we assume that $\P^{n_k}_t\xrightarrow{k\to \infty} \P'$ in $\mathcal T_{\mathcal L}$.
Since $\P^{n_k}_t\in \{\mathcal F_{\beta}\leq c\}$ for any $k\in \IN$, \cite[Lemma 3.1]{georgii2} shows that \[
    \sup_k\int \xi(\Lambda_1)^2 \P^{n_k}_t(d\xi)<\infty
    \] 
    and hence Lemma \ref{lem:conv-palm} shows that $\j(\P^{n_k}_t)\xrightarrow{l\to \infty}\j(\P')$ weakly.
    Combining this with \eqref{eq:123}, we obtain that $\j(\P')=(T_t)_{\#}\U$ and hence 
    \[
    \P^{n_k}_t\xrightarrow{l\to \infty}\P_t\text{ in }\mathcal T_{\mathcal L}.
    \]
    The lower semicontinuity of $\mathcal F_{\beta}$ then implies\begin{align*}
      \mathcal F_{\beta}(\P_{t})\leq \liminf_{k\to \infty}\mathcal F_{\beta}(\P^{n_k}_{t}).
    \end{align*}
     Set $\Q:=\iota^{-1}(\U)$.
    For $l\geq 1$ set $a_l=\sum_{i=1}^l\s_i$ and for $l\leq 0$ let $a_l=\sum_{i=l}^0\s_{i}$. Similarly,  for $l\geq 1$ set $a'_l=\sum_{i=1}^l\s'_i$ and for $l\leq 0$ let $a'_l=\sum_{i=l}^0\s'_{i}$.
The weak convergence $\U_{n_k}\xrightarrow{l\to \infty}\U$ and  positivity of $f$ combined with  the Portmanteau theorem implies 
\begin{align}\label{eq:gain_fatou}
    &\liminf_{k\to \infty}\int \sum_{(0,0)\neq (x,y)\in \bar\xi} (x-y)^2\inf_{z\in [x,y]}f(\abs{z})\Q_{n_k}^0(d\bar\xi)\\
    &=
    \liminf_{k\to \infty}\int \sum_{l\in \IZ}(a_l-a'_l)^2\inf_{z\in [a_l,a'_l]} f(\abs{z})d\U_{n_k}(\s,\s')\nonumber\\
    &\geq 
    \int \sum_{l\in \IZ}(a_l-a'_l)^2\inf_{z\in [a_l,a'_l]} f(\abs{z})d\U(\s,\s')\nonumber\\
  &= \int \sum_{(0,0)\neq (x,y)\in \bar\xi} (x-y)^2\inf_{z\in [x,y]}f(\abs{z})\Q^0(d\bar\xi).\nonumber
\end{align} Finally, we obtain \begin{align*} 
    \mathcal F_{\beta}(\P_{t})\leq   (1-t)\mathcal F_{\beta}(\P_{0})+t\mathcal F_{\beta}(\P_{1})-\frac{(1-t)t\beta}{2}\int \sum_{(0,0)\neq (x,y)\in \bar\xi} (x-y)^2\inf_{z\in [x,y]}f(\abs{z})\Q^0(d\bar\xi).
\end{align*}

\end{proof}

In the following we let $D(\mathcal F_{\beta}):=\{\P\in \mathcal P_{s,1}(\Gamma)\mid \mathcal F_{\beta}(\P)<\infty\}$ be the domain of the free energy. A direct consequence of the preceding convexity statement is the uniqueness of minimizers of the free energy functional.

\begin{cor}\label{cor:uniq_min}
   If  $D(\mathcal F_{\beta})\neq \emptyset$, the functional $\mathcal F_{\beta}$ has a unique minimizer in $\mathcal P_{s,1}(\Gamma)$.
\end{cor}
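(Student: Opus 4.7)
The plan is to split the corollary into an existence part and a uniqueness part, invoking Theorem \ref{thm:strictly_smaller_free_energy} only for the latter.

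For existence I would run the direct method. Recall from the theorem after \eqref{def:interac} that, under Assumption \ref{assumption1}, $\mathcal F_{\beta}$ is bounded below and lower semicontinuous with respect to the topology $\mathcal T_{\mathcal L}$, and from \cite[Lemma 3.4]{georgii2} that its sublevel sets are $\mathcal T_{\mathcal L}$-compact. Starting from a minimizing sequence $(\P^n)_n\subset D(\mathcal F_\beta)$, for $n$ large all $\P^n$ lie in the $\mathcal T_{\mathcal L}$-compact set $\{\mathcal F_\beta\leq \inf\mathcal F_\beta+1\}$; any $\mathcal T_{\mathcal L}$-cluster point $\P^*$ then satisfies $\mathcal F_\beta(\P^*)\leq\liminf_k\mathcal F_\beta(\P^{n_k})=\inf\mathcal F_\beta$, producing a minimizer.

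For uniqueness I would argue by contradiction. Assume $\P_0,\P_1\in D(\mathcal F_\beta)$ both attain $m:=\inf\mathcal F_\beta$ with $\P_0\neq\P_1$. Theorem \ref{thm:strictly_smaller_free_energy} at $t=1/2$ yields a coupling $\Q\in\Cpl_{s,m}(\P_0,\P_1)$ and an interpolant $\P_{1/2}\in\mathcal P_{s,1}(\Gamma)$ with
\[
\mathcal F_{\beta}(\P_{1/2})\leq m-\frac{\beta}{8}\int\sum_{(0,0)\neq(x,y)\in\bar\xi}(x-y)^2\inf_{z\in[x,y]}f(|z|)\,\Q^0(d\bar\xi).
\]
Since $\P_{1/2}\in\mathcal P_{s,1}(\Gamma)$ and $m$ is the infimum, the gain integral must vanish. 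The strict positivity of $f$ on $(0,\infty)$ coming from Assumption \ref{assumption0} then forces, for $\Q^0$-a.e.\ $\bar\xi$, every pair $(x,y)\in\bar\xi$ to satisfy $x=y$. Combined with $(0,0)\in\bar\xi$ a.s.\ and the fact that $\Q$ is supported on monotone configurations, this means the whole configuration $\bar\xi$ sits on the diagonal, so $\PR_0(\bar\xi)=\PR_1(\bar\xi)$. Pushing forward gives $\P_0^0=(\PR_0)_\#\Q^0=(\PR_1)_\#\Q^0=\P_1^0$, and the bijectivity of $\j$ in Theorem \ref{thm:correspondence} (together with that of $\sq:\Gamma_0\to\SQ$) delivers $\P_0=\P_1$, contradicting our assumption.

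The main obstacle is precisely the last implication, from the vanishing of the weighted gain to coincidence of the two Palm measures: it relies crucially on $f$ being strictly positive, not merely nonnegative, and on the monotonicity constraint built into $\Cpl_{s,m}$, which lets one identify the two coordinate projections of a ``diagonal'' configuration with $\P_0^0$ and $\P_1^0$ respectively. The existence half is by contrast routine once the compactness and lower semicontinuity results already recorded in Section \ref{sec:Interaction_energy} are in hand.
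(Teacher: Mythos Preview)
Your proof is correct and follows the same approach as the paper: existence via the direct method using $\mathcal T_{\mathcal L}$-compactness of sublevel sets and lower semicontinuity, uniqueness via Theorem \ref{thm:strictly_smaller_free_energy}. The paper's uniqueness argument is the single sentence ``By Theorem \ref{thm:strictly_smaller_free_energy} the minimizer is unique,'' whereas you spell out the implicit contradiction argument (vanishing gain forces the coupled configuration onto the diagonal, hence equal Palm measures, hence equal processes); this is exactly the intended reasoning and your justification via strict positivity of $f$ and monotonicity of $\bar\xi$ is sound.
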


\begin{proof}
There exists some $\tilde\P\in \mathcal P_{s,1}(\Gamma)$ with $\mathcal F_{\beta}(\tilde\P)<\infty$.
    Let $(\P_n)_{n\geq 1}$ be a minimizing sequence. Then, we can assume that $\sup_n \mathcal F_\beta(\P_n)<\infty$. Since the sublevel sets of $\mathcal F_{\beta}$ are compact w.r.t.\ $\mathcal T_{\mathcal L}$ (cf. \cite[Lemma 3.4]{georgii2}), we can assume that $\P_n\xrightarrow{n\to \infty}\P\in \mathcal P_{s,1}(\Gamma)$ w.r.t.\ $\mathcal T_{\mathcal L}$. By the lower semicontinuity of $\mathcal F_{\beta}$ 
    \begin{align*}
        \mathcal F_{\beta}(\P)\leq 
          \liminf_{n\to \infty}\mathcal F_{\beta}(\P_n).
    \end{align*}
    Hence $\P$ is a minimizer. By Theorem \ref{thm:strictly_smaller_free_energy} the minimizer is unique.
\end{proof}

\begin{rem}
   It follows from \cite[Theorem 3]{georgii2} that for superstable, regular, non-integrably divergent potentials there is a stationary point process with finite free enery, in particular, $D(\mathcal F_{\beta})\neq \emptyset$. Examples for such  potentials are for instance the hypersingular Riesz-gases from Example \ref{example:riesz} or the Yukawa potential from Example \ref{ex:yukawa}.
\end{rem}

\subsection{$\lambda$--convexity of $\mathcal F_\beta$ and curves of maximal slope}\label{sec:lambda}

We turn to more involved consequences of Theorem \ref{thm:strictly_smaller_free_energy}. 
Our goal it to leverage Theorem \ref{thm:strictly_smaller_free_energy} to obtain weak $\lambda-$geodesic convexity of the free energy. To this end, we have to compare the quantitative gain in convexity in Theorem \ref{thm:strictly_smaller_free_energy} to the distance $\mathscr W_p(\P_0,\P_1)$. 
For this comparison to hold, we have to consider a subspace of $\mathcal P_{s,1}(\Gamma)$. Moreover, this restriction 
guarantees the finiteness of the extended metric $\mathscr W_p$. 

For the rest of this section, we let $1<p<2$.  Let  $\P_i\in  \mathcal P_{s,1}(\Gamma)$, $i=0,1$ with finite free energy $\mathcal F_{\beta}(\P_i)<\infty$. Let $\Q\in \Cpl_{s,m}(\P_0,\P_1)$ be the  coupling given by Theorem \ref{thm:strictly_smaller_free_energy} and $\U:=\iota(\Q)$. 
Then,  by Theorem \ref{thm:strictly_smaller_free_energy}, for $0\leq t\leq 1$\begin{align*}  
    \mathcal F_{\beta}(\P_{t})\leq (1-t)\mathcal F_{\beta}(\P_{0})+t\mathcal F_{\beta}(\P_{1}) -
        \frac{\beta(1-t)t}{2}\int \sum_{(0,0)\neq (x,y)\in \bar\xi} (x-y)^2\inf_{z\in [x,y]}f(\abs{z})\Q^0(d\bar\xi),
    \end{align*}
 where $\P_t:=\j^{-1}\left((T_t)_{\#}\U\right)$. 
Hölder's inequality yields
\begin{align*}
&\mathscr W_p^2(\P_0,\P_1) \leq \left[\int \abs{\s_1-\s'_1}^p\U(d\s,d\s')\right]^{2/p} \\
& = \left[\int \abs{\s_1-\s'_1}^p \left(\sup_{z\in [\s_1,\s'_1]}f^{p/(p-2)}(\abs{z})\right)^{p/2-1}\left(\sup_{z\in [\s_1,\s'_1]}f^{p/(p-2)}(\abs{z})\right)^{1-p/2} \U(d\s,d\s')\right]^{2/p} \\
&\leq \int \abs{\s_1-\s'_1}^2 \left(\sup_{z\in [\s_1,\s'_1]}f^{p/(p-2)}(\abs{z})\right)^{1-2/p} \U(d\s,d\s')\\
&\hspace{57mm}\times\left[\int \sup_{z\in [\s_1,\s'_1]}f^{p/(p-2)}(\abs{z}) \U(d\s,d\s') \right]^{2/p-1}\\
&= \left[\int \abs{\s_1-\s'_1}^2 \inf_{z\in [\s_1,\s'_1]}f(\abs{z}) \U(d\s,d\s')\right]\left[\int \sup_{z\in [\s_1,\s'_1]}f^{p/(p-2)}(\abs{z}) \U(d\s,d\s') \right]^{2/p-1}\\
&\leq \int \sum_{(0,0)\neq (x,y)\in \bar\xi} (x-y)^2\inf_{z\in [x,y]}f(\abs{z})\Q^0(d\bar\xi)\left[\int \sup_{z\in [\s_1,\s'_1]}f^{p/(p-2)}(\abs{z}) \U(d\s,d\s') \right]^{2/p-1}.
\end{align*}
Note that, if the function $f^{p/(p-2)}$ is convex,  we have 
\begin{align}\label{eq:finite_dist}
    \int \sup_{z\in [\s_1,\s'_1]}f^{p/(p-2)}(\abs{z}) \U(d\s,d\s')\leq \max\left(\int f^{p/(p-2)}(\s_1)\Pi_0(d\s),\int f^{p/(p-2)}(\s_1)\Pi_1(d\s)\right),
\end{align}
where $\Pi_i:=\j(\P_i)$ for $i=0,1$,
and hence 
\begin{align*}
     \mathcal F_{\beta}(\P_{t})&\leq (1-t)\mathcal F_{\beta}(\P_{0})+t\mathcal F_{\beta}(\P_{1})\\
     &-\frac{\beta(1-t)t}{2} \mathscr W_p^2(\P_0,\P_1) \left[ \max\left(\int f^{p/(p-2)}(\s_1)\Pi_0(d\s),\int f^{p/(p-2)}(\s_1)\Pi_1(d\s)\right)\right]^{\frac{p-2}{p}}
\end{align*}

Therefore, if we assume the RHS in \eqref{eq:finite_dist} to be finite, then we have $ \mathscr W_p(\P_0,\P_1)<\infty$ since the free energy is bounded below.
We summarize the preceding calculations in the following corollary. 

\begin{cor}\label{cor:lambda_conv}
     Let $1<p<2$  and let  $\P_i\in  \mathcal P_{s,1}(\Gamma)$, $i=0,1$ with  finite free energy $\mathcal F_{\beta}(\P_i)<\infty$, and $\Pi_i:=\j(\P_i)$ for  $i=0,1$. Assume that the function $f^{p/(p-2)}$  (cf. assumption \ref{assumption1}) is convex on $[0,\infty)$ and  $\int f^{p/(p-2)}(\s_1)\Pi_i(d\s)<\infty$ for $i=0,1$. Then $ \mathscr W_p(\P_0,\P_1)<\infty$ and 
     \begin{align*}
     \mathcal F_{\beta}(\P_{t})&\leq (1-t)\mathcal F_{\beta}(\P_{0})+t\mathcal F_{\beta}(\P_{1})\\
     &-\frac{\beta(1-t)t}{2} \mathscr W_p^2(\P_0,\P_1)\left[ \max\left(\int f^{p/(p-2)}(\s_1)\Pi_0(d\s),\int f^{p/(p-2)}(\s_1)\Pi_1(d\s)\right)\right]^{\frac{p-2}{p}},
\end{align*}
 where  $\Q\in \Cpl_{s,m}(\P_0,\P_1)$ is the $\mathscr W_p-$optimal coupling from  Theorem \ref{thm:interaction_gain}, $\U:=\iota(\Q)$ and  $\P_t:=\j^{-1}\left((T_t)_{\#}\U\right)$.
\end{cor}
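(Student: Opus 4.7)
The plan is to start from the strict convexity inequality provided by Theorem \ref{thm:strictly_smaller_free_energy}, which already supplies the coupling $\Q$, the interpolation $(\P_t)$ and the nonnegative gain term
\[
G(\Q^0):=\int \sum_{(0,0)\neq (x,y)\in \bar\xi}(x-y)^2\inf_{z\in[x,y]}f(|z|)\,\Q^0(d\bar\xi),
\]
and then to bound the distance $\mathscr W_p^2(\P_0,\P_1)$ in terms of $G(\Q^0)$ and a weight controlled by the assumed moments. Using the isometry $\iota$ from Theorem \ref{thm:equivalent_cost}, this reduces to an estimate for $\U=\iota(\Q)\in \Cpl_s(\Pi_0,\Pi_1)$ on the gap side.

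The core step is a Hölder inequality in $\U$ with exponents $2/p$ and $2/(2-p)$ applied to the factorisation
\[
|\s_1-\s'_1|^p=\Bigl(|\s_1-\s'_1|^2\inf_{z\in[\s_1,\s'_1]}f(|z|)\Bigr)^{p/2}\cdot\Bigl(\sup_{z\in[\s_1,\s'_1]}f^{p/(p-2)}(|z|)\Bigr)^{(2-p)/2},
\]
where I use $\inf f=(\sup f^{p/(p-2)})^{(p-2)/p}$ since $p/(p-2)<0$. This yields
\[
\Bigl(\int|\s_1-\s'_1|^p\,d\U\Bigr)^{2/p}\leq G(\Q^0)\cdot\Bigl(\int\sup_{z\in[\s_1,\s'_1]}f^{p/(p-2)}(|z|)\,d\U\Bigr)^{(2-p)/p},
\]
and the left hand side is (at most) $\mathscr W_p^2(\P_0,\P_1)$ by Theorem \ref{thm:equivalent_cost}, since $\U$ is an admissible (not necessarily optimal) stationary coupling. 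For the weight factor, since $f^{p/(p-2)}$ is assumed convex on $[0,\infty)$, its supremum on the segment $[\s_1,\s'_1]$ is attained at an endpoint, giving the pointwise bound $\sup_{z\in[\s_1,\s'_1]}f^{p/(p-2)}(|z|)\leq f^{p/(p-2)}(\s_1)\vee f^{p/(p-2)}(\s'_1)$; integrating against the marginals $\Pi_0,\Pi_1$ of $\U$ bounds the integral by $\max(\int f^{p/(p-2)}(\s_1)\,d\Pi_0,\int f^{p/(p-2)}(\s_1)\,d\Pi_1)$, which is finite by hypothesis.

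Substituting the resulting lower bound $G(\Q^0)\geq \mathscr W_p^2(\P_0,\P_1)\cdot M^{(p-2)/p}$, with $M$ the above maximum, into the inequality from Theorem \ref{thm:strictly_smaller_free_energy} yields the claimed $\lambda$-convexity estimate with $\lambda=\beta M^{(p-2)/p}$. Finiteness of $\mathscr W_p(\P_0,\P_1)$ is then an automatic consequence: the inequality from Theorem \ref{thm:strictly_smaller_free_energy} together with the convexity bound on the left forces $\tfrac{\beta(1-t)t}{2}\mathscr W_p^2(\P_0,\P_1)\cdot M^{(p-2)/p}$ to be dominated by $(1-t)\mathcal F_\beta(\P_0)+t\mathcal F_\beta(\P_1)-\inf \mathcal F_\beta$, which is finite by Assumption \ref{assumption1} and the hypothesis $\mathcal F_\beta(\P_i)<\infty$.

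The only mildly subtle point is the direction of the Hölder inequality when $p/(p-2)$ is negative: one has to be careful that raising $f$ to a negative power swaps infima and suprema, which is exactly what makes the factorisation above work. Once this book-keeping is handled, no further ingredients beyond Theorem \ref{thm:strictly_smaller_free_energy}, the isometry of Theorem \ref{thm:equivalent_cost}, and elementary convexity of $f^{p/(p-2)}$ are needed.
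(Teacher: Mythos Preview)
Your Hölder computation and the use of convexity of $f^{p/(p-2)}$ to control the weight are exactly the calculation the paper carries out just before the corollary; the approach is the same. There is, however, one structural point you are glossing over. The corollary asserts the inequality for the \emph{optimal} coupling $\Q$ from Theorem \ref{thm:interaction_gain}, and that theorem carries the hypothesis $\mathscr W_p(\P_0,\P_1)<\infty$. The coupling produced by Theorem \ref{thm:strictly_smaller_free_energy}, which you invoke throughout, need not be optimal, so the interpolation $(\P_t)$ you obtain need not be a geodesic. The paper's proof therefore proceeds in two passes: first run your argument with the (possibly non-optimal) coupling from Theorem \ref{thm:strictly_smaller_free_energy} solely to conclude $\mathscr W_p(\P_0,\P_1)<\infty$; then, with finiteness established, invoke Theorem \ref{thm:interaction_gain} to get the optimal coupling and repeat the identical Hölder estimate for that coupling. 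Without this second pass you prove a slightly weaker statement than the one claimed, and the distinction matters downstream, since Proposition \ref{prop:lambda_conv} requires the curve $(\P_t)$ to be a constant-speed geodesic.
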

\begin{proof}
    Since the free energy $\mathcal F_{\beta}$ is bounded from below, the preceding calculation shows that $\mathscr W_p(\P_0,\P_1)<\infty$. Then apply Theorem \ref{thm:interaction_gain} and repeat the calculation for the $\mathscr W_p-$optimal  coupling $\Q\in \Cpl_{s,m}(\P_0,\P_1)$ from Theorem \ref{thm:interaction_gain}.
\end{proof}

We note that the convexity assumption on $f^{p/(p-2)}$ holds for the pair potentials considered in Examples \ref{example:riesz} and \ref{ex:yukawa}. 

\begin{assumption}
    For the rest of this section, we assume that the function  $f^{p/(p-2)}$ is convex.
\end{assumption}

For $p'\geq p$ and $a,b>0$ we define the subspace 
\begin{align*}
X^{a,b}_{p,p'}:=\left\{\P\in \mathcal P_{s,1}(\Gamma)\mid \int f^{p/(p-2)}(\s_1)\Pi(d\s)\leq a, \int \s_1^{p'}\Pi(d\s)\leq b\text{ where }\Pi:=\j(\P)\right\}.
\end{align*}

The following example shows that the space $X^{a,b}_{p,p'}$ essentially controls the moments of the gaps. Hence, it is a natural replacement for the classical Wasserstein spaces $\mathcal P_p$.

\begin{ex}[Hypersingular Riesz Gas]
    This is a continuation of Example \ref{example:riesz}. In this case, the space $X^{a,b}_{p,p'}$ is given by 
    \begin{align*}
        \left\{\P\in \mathcal P_{s,1}(\Gamma)\mid \int \s_1^{\frac{p(s+2)}{(2-p)}}\Pi(d\s)\leq a\left(\frac{s(s+1)}{2}\right)^{p/(2-p)} , \int \s_1^{p'}\Pi(d\s)\leq b\right\}
    \end{align*}
For fixed  $a\in \IR$ and $1<p<2$ choose $p'=p(s+2)/(2-p)$ and $b=a\left(\frac{s(s+1)}{2}\right)^{p/(2-p)}$. Then we have 
\begin{align*}
X^{a,b}_{p,p'}=\left\{\P\in \mathcal P_{s,1}(\Gamma)\mid \int \s_1^{\frac{p(s+2)}{(2-p)}}\Pi(d\s)\leq a\left(\frac{s(s+1)}{2}\right)^{p/(2-p)}\right\}.
\end{align*}
\end{ex}

The space $X^{a,b}_{p,p'}$ gives us enough regularity to obtain weak $\lambda$ geodesic convexity of the free energy:

\begin{prop}\label{prop:lambda_conv}
    Let $1<p<2$. The space $(X^{a,b}_{p,p'},\mathscr W_p)$ is a complete geodesic metric space. Moreover,  the functional $\mathcal F_{\beta}$ is weakly $\lambda$-geodesically convex on  $(X^{a,b}_{p,p'},\mathscr W_p)$ with $\lambda=\beta a^{-\frac{2-p}{p}}$, i.e., for $\P_0,\P_1\in X^{a,b}_{p,p'}$ there exists a constant speed geodesic $(\P_t)_{0\leq t\leq 1}$ such that for any $0\leq t\leq 1$
    \begin{align*}
        \mathcal F_\beta(\P_t)\leq (1-t) \mathcal F_\beta(\P_0)+t \mathcal F_\beta(\P_1)-\lambda \frac{t(1-t)}{2} \mathscr W_p^2(\P_0,\P_1).
    \end{align*}
  
\end{prop}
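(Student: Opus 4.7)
The plan is to establish the proposition in four steps, progressing from the easiest to the most delicate.

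\textbf{Finiteness and existence of geodesics in the subspace.} First, I would verify that $\mathscr W_p$ only takes finite values on $X^{a,b}_{p,p'}$: the product coupling $\Pi_0 \otimes \Pi_1 \in \Cpl_s(\Pi_0,\Pi_1)$ yields
\[
\mathscr W_{gap,p}^p(\Pi_0,\Pi_1) \leq 2^{p-1}\Bigl(\int \s_1^p\, d\Pi_0 + \int \s_1^p\, d\Pi_1\Bigr),
\]
and since $p\leq p'$, this is controlled by Jensen. Combined with Corollary \ref{cor:metric space}, this gives a genuine metric space. Next, given $\P_0,\P_1\in X^{a,b}_{p,p'}$, I would take a $\mathscr W_p$-optimal $\Q\in \Cpl_{s,m}(\P_0,\P_1)$ (which exists by Theorem \ref{thm:equivalent_cost}), set $\U=\iota(\Q)$ and $\Pi_t=(T_t)_\#\U$, and argue that $\P_t=\j^{-1}(\Pi_t)$ stays in $X^{a,b}_{p,p'}$: convexity of $x\mapsto x^{p'}$ (valid since $p'\geq p>1$) together with the standing convexity of $x\mapsto f^{p/(p-2)}(x)$ gives
\[
\int \s_1^{p'}\,d\Pi_t \leq (1-t)\int \s_1^{p'}\,d\Pi_0 + t\int \s_1^{p'}\,d\Pi_1 \leq b
\]
and the analogous bound by $a$ for $f^{p/(p-2)}$, while linearity of $T_t$ on each coordinate preserves the unit intensity.

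\textbf{The $\lambda$-convexity estimate.} Applying Corollary \ref{cor:lambda_conv} to this geodesic, both quantities inside the maximum are bounded by $a$, so the maximum is as well. Since the exponent $(p-2)/p$ is negative for $1<p<2$, this implies $[\max]^{(p-2)/p}\geq a^{-(2-p)/p}$, which yields the claimed rate $\lambda=\beta a^{-(2-p)/p}$.

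\textbf{Completeness.} This is the main technical step. Let $(\P_n)\subset X^{a,b}_{p,p'}$ be Cauchy and let $\Pi_n=\j(\P_n)$, so $(\Pi_n)$ is Cauchy in $\mathscr W_{gap,p}$. The uniform bound $\sup_n\int \s_1^{p'}\,d\Pi_n\leq b$ combined with stationarity and Markov's inequality supplies cutoffs $K_i$ (shrinking fast enough in $|i|$) such that the compact set $\prod_{i\in\IZ}[0,K_i]\subset \SQ$ carries mass at least $1-\epsilon$ under every $\Pi_n$; hence $(\Pi_n)$ is tight in the product topology. Extract a weakly convergent subsequence $\Pi_{n_k}\to \Pi$. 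Stationarity survives the limit. Uniform integrability from $p'>1$ passes $\int \s_1 d\Pi_n=1$ to $\int \s_1 d\Pi=1$, and weak lower semicontinuity of integrals of the lower semi-continuous (since convex) functions $x^{p'}$ and $f^{p/(p-2)}$ preserves the moment bounds in the limit, so $\Pi=\j(\P)$ for some $\P\in X^{a,b}_{p,p'}$. To upgrade weak subsequential convergence to $\mathscr W_p$-convergence of the full Cauchy sequence, fix $\epsilon>0$ and $m\geq N(\epsilon)$ with $\mathscr W_{gap,p}^p(\Pi_n,\Pi_m)<\epsilon$ for $n\geq N$, take optimal couplings $\U_{n,m}\in\Cpl_s(\Pi_n,\Pi_m)$, pass along a subsequence to a weak limit $\U_{\infty,m}\in\Cpl_s(\Pi,\Pi_m)$ (tightness of marginals gives tightness of couplings, stationarity survives as in the proof of Theorem \ref{thm:equivalent_cost}), and conclude via Portmanteau/Fatou, as in \eqref{eq:gain_fatou}, that $\mathscr W_{gap,p}^p(\Pi,\Pi_m)\leq \epsilon$. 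Thus $\Pi_n\to \Pi$ in $\mathscr W_{gap,p}$, hence $\P_n\to\P$ in $\mathscr W_p$.

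\textbf{Main obstacle.} The bulk of the work lies in the completeness argument, specifically in simultaneously propagating three structural features under weak limits on the infinite product space $\SQ$: stationarity of both the marginals and the couplings, the defining moment bounds of $X^{a,b}_{p,p'}$, and the unit-intensity constraint. The bounds built into the definition of $X^{a,b}_{p,p'}$, together with the convexity of $f^{p/(p-2)}$, are tailored precisely to make all three survive the limit, while the tightness argument on a countable product relies crucially on stationarity to reduce everything to control of the single marginal $\s_1$.
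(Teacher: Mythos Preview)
Your proposal is essentially correct and follows the paper's approach, but there is one point that needs tightening. In Step~1 you take an arbitrary $\mathscr W_p$-optimal coupling $\Q$ (via Theorem~\ref{thm:equivalent_cost}), and in Step~2 you write ``Applying Corollary~\ref{cor:lambda_conv} to this geodesic''. But Corollary~\ref{cor:lambda_conv} does not apply to a given geodesic: it produces its own, namely the one coming from the specific optimal coupling of Theorem~\ref{thm:convnex_entropy}/\ref{thm:interaction_gain}. The paper stresses before Theorem~\ref{thm:convnex_entropy} that entropy convexity is only established along \emph{that} particular optimal coupling, not along every optimal coupling. So you must either (i) use the coupling from Corollary~\ref{cor:lambda_conv} from the outset (when $\mathcal F_\beta(\P_i)<\infty$; the inequality is vacuous otherwise) and then observe that your convexity argument from Step~1 applies equally well to this coupling to keep the geodesic inside $X^{a,b}_{p,p'}$, or (ii) keep the arbitrary optimal coupling only for the geodesic-space claim and switch couplings for the $\lambda$-convexity claim. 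The paper does (i).

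For completeness you argue tightness of $(\Pi_n)$ directly via the $p'$-moment bound and Markov, whereas the paper instead invokes Lemma~\ref{lem:cost=lim_wasserstein} to see that each finite-dimensional marginal $(\Pi_n^{1,m})_n$ is $\mathcal C_p^{1/p}$-Cauchy and hence convergent; both routes are valid and lead to the same Fatou/Portmanteau endgame with the couplings $\U_{l,n}$.
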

\begin{proof}
    Let $\P_i\in X^{a,b}_{p,p'}$, $i=0,1$, and $\Pi_i:=\j(\P_i)$. By definition of $X^{a,b}_{p,p'}$ we have $\int \s_1^{p'}\Pi_i(d\s)<\infty$ for $i=0,1$  and thus $\mathscr W_p(\P_0,\P_1)<\infty$ by considering the product coupling.
    Let $\Q\in \Cpl_{s,m}(\P_0,\P_1)$ be the optimal coupling from Theorem  \ref{thm:interaction_gain} and 
   $(\P_t)_{0\leq t\leq 1}$ the induced geodesic. Note that the convexity of $f^{p/(p-2)}(\cdot)$ and $(\cdot)^{p'}$ ensure that $\P_t\in X^{a,b}_{p,p'}$.  Hence, $X^{a,b}_{p,p'}$ is a geodesic space. 
    Let $(\P_n)_{n{\geq 1}}$ be a Cauchy sequence. Denote by $\Pi_n\in \mathcal P_{s,1}(\SQ)$ the corresponding gap-measures. 
    By definition of the space $X^{a,b}_{p,p'}$ the measures $\Pi_n$ have  finite $p'$-th moment.
    Note that Lemma \ref{lem:cost=lim_wasserstein} yields that for every $m\in \IN$ the sequence $(\Pi_n^{1,m})_{n\geq 1}$ is Cauchy in $\mathcal  C_p^{1/p}$ (cf.\ \eqref{eq:Cp} for the definition of $\mathcal C_p$). Hence, by \cite[Theorem 6.18]{Villani}, the sequence  $(\Pi^{1,m}_n)_{n\geq 1}$ converges in $\mathcal C_p^{1/p}$. In particular, the sequence $(\Pi_n)$ converges weakly (in the product topology) to some $\Pi\in \mathcal P_{s,1}(\SQ)$. The weak convergence and the Portmanteau theorem imply 
    \begin{align*}
        \int \s_1^{p'}\Pi(d\s)\leq \liminf_{n\to \infty}\int \s_1^{p'}\Pi_n(d\s)\text{ and }\int f^{p/(p-2)}(\s_1)\Pi(d\s)\leq \liminf_{n\to \infty}\int f^{p/(p-2)}(\s_1)\Pi_n(d\s).
    \end{align*}
    Hence, $\P \in X^{a,b}_{p,p'}$, where $\P=\j^{-1}(\Pi)\in \mathcal P_{s,1}(\Gamma)$. Let $\U_{l,n}\in \Cpl_s(\Pi_{l},\Pi_{n})$ be a $\mathscr W_{gap,p}$ optimal coupling. By tightness of the marginals, we may assume that $\U_{l,n}\xrightarrow{n\to\infty}\U_l\in \Cpl_s(\Pi_l,\Pi)$ weakly. Then 
    \begin{align*}
        \int \abs{\s_1-\s_1'}^pd\U_l(\s,\s')\leq \liminf_{n\to \infty} \int \abs{\s_1-\s_1'}^pd\U_{l,n}(\s,\s')=\liminf_{n\to \infty} \mathscr W_{gap,p}^p(\Pi_l,\Pi_n).
    \end{align*}
    Note that  the RHS  in the above line converges to $0$ as $l\to \infty$ since the sequence $(\Pi_n)_{n\geq 1}$ is Cauchy in $\mathscr W_{gap,p}$.
    Hence, $\Pi_l\xrightarrow{l\to \infty}\Pi$ in $\mathscr W_{gap,p}$ and  $(X^{a,b}_{p,p'},\mathscr W_p)$ is complete.
    Finally, we showed at the beginning of the proof that for  $\P_i\in  \mathcal P_{s,1}(\Gamma)$, $i=0,1$ with  finite free energy $\mathcal F_{\beta}(\P_i)<\infty$, the geodesic from Corollary \ref{cor:lambda_conv} lies in the space $X^{a,b}_{p,p'}$. This shows the weak $\lambda-$geodesic convexity of $\mathcal F_{\beta}$.
\end{proof}

We want to apply the theory for curves of maximal slopes in metric spaces as outlined in \cite[Chapter 2]{AGS08}. To that end, we have to establish certain  topological properties of the metric $\mathscr W_p$ and of the functional $\mathcal F_\beta$ on the space $X^{a,b}_{p,p'}$.
 Our calculations mainly take place in the space of sequences $\SQ$.
We start with two technical lemmas. The first lemma shows that convergence in $\mathscr W_p$ implies weak convergence. The second lemma shows that the free energy $\mathcal F_\beta$ is lower semicontinuous w.r.t.\ the metric $\mathscr W_p$ on $X^{a,b}_{p,p'}$. As a consequence, we obtain uniqueness of minimizers of $\mathcal F_\beta$ in the space $X^{a,b}_{p,p'}$.

\begin{lem}\label{lem:dist->weak}
    Let $\mathscr W_p(\P_n,\P)\xrightarrow{n\to \infty}0$ for $\P_n,\P\in X^{a,b}_{p,p'}$. Then $\P_n\xrightarrow{n\to \infty}\P$ weakly.
\end{lem}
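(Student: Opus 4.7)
The plan is to transfer convergence to the gap-distribution side via the isometry $\j$ of Theorem \ref{thm:equivalent_cost}, prove weak convergence there, and then lift it back to point processes using the already-established Lemma \ref{lem:conv-palm}(a). Set $\Pi_n := \j(\P_n)$ and $\Pi := \j(\P)$; by Theorem \ref{thm:equivalent_cost} the hypothesis becomes $\mathscr W_{gap,p}(\Pi_n,\Pi) \to 0$.

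The first step is to upgrade this $\mathscr W_{gap,p}$-convergence to weak convergence of every finite-dimensional marginal of the gap distributions. Lemma \ref{lem:cost=lim_wasserstein} yields the bound $\mathcal C_p(\Pi_n^{1,m},\Pi^{1,m}) \leq m\, \mathscr W_{gap,p}^p(\Pi_n,\Pi)$, so for each fixed $m$ the marginal $\Pi_n^{1,m}$ converges to $\Pi^{1,m}$ in the classical $L^p$-Wasserstein distance, and in particular weakly.

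The second step is to pass from weak convergence of all finite-dimensional marginals to weak convergence on $\SQ$ equipped with its product topology. Following the argument already used in the proof of Lemma \ref{lem:cost=lim_wasserstein}, by \cite[Theorem 2.1]{Edg} it suffices to test against continuous bounded functions of the form $f(\s) = g(\s_k,\ldots,\s_l)$ depending on finitely many coordinates. By stationarity of $\Pi_n$ and $\Pi$ under the shifts $\sigma^j$ one then rewrites $\int f\, d\Pi_n = \int g(\s_1,\ldots,\s_{l-k+1})\, d\Pi_n^{1,l-k+1}(\s)$, and the first step delivers the desired convergence.

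Finally, to pull the weak convergence back to the point-process level I apply Lemma \ref{lem:conv-palm}(a), whose hypothesis demands uniform integrability of the sequence of first-gap laws $(\s_1)_\# \Pi_n$. This is immediate from the defining bound of $X^{a,b}_{p,p'}$: since $\int \s_1^{p'}\, \Pi_n(d\s) \leq b$ with $p' \geq p > 1$, the de la Vall\'ee Poussin criterion applies uniformly in $n$. No step should be delicate---the only mild subtlety is the Edgar reduction of cylinder functions---so the proof reduces essentially to a concatenation of Theorem \ref{thm:equivalent_cost}, Lemma \ref{lem:cost=lim_wasserstein}, and Lemma \ref{lem:conv-palm}(a).
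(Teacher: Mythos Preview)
Your proof is correct and follows essentially the same route as the paper's: pass to gap distributions via Theorem \ref{thm:equivalent_cost}, use Lemma \ref{lem:cost=lim_wasserstein} to get convergence of finite-dimensional marginals (hence weak convergence on $\SQ$), and then invoke Lemma \ref{lem:conv-palm}(a) using the uniform $p'$-moment bound from $X^{a,b}_{p,p'}$. Your version is in fact slightly more direct, since you bound $\mathcal C_p(\Pi_n^{1,m},\Pi^{1,m})$ against $\mathscr W_{gap,p}^p(\Pi_n,\Pi)$ immediately rather than first arguing that $(\Pi_n^{1,m})_n$ is Cauchy, and you make the passage from marginal convergence to weak convergence on $\SQ$ explicit via stationarity and the Edgar reduction.
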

\begin{proof}
      Let $\Pi_n:=\j(\P_n)$. 
    By definition of the space $X^{a,b}_{p,p'}$ the marginals of $\Pi_n$ have  finite $p'$-th moment.
    Note that Lemma \ref{lem:cost=lim_wasserstein} yields that for every $m\in \IN$ the sequence $(\Pi_n^{1,m})_{n\geq 1}$ is Cauchy in $\mathcal  C_p^{1/p}$. Hence, by \cite[Theorem 6.18]{Villani}, the sequence  $(\Pi^{1,m}_n)_{n\geq 1}$ converges in $\mathcal C_p^{1/p}$. In particular, the sequence $(\Pi_n)_{n\geq 1}$ converges weakly (in the product topology) to some $\Pi\in \mathcal P_{s,1}(\SQ)$. The weak convergence and the Portmanteau theorem imply 
    \begin{align*}
        \int \s_1^{p'}\Pi(d\s)\leq \liminf_{n\to \infty}\int \s_1^{p'}\Pi_n(d\s)\text{ and }\int f^{p/(p-2)}(\s_1)\Pi(d\s)\leq \liminf_{n\to \infty}\int f^{p/(p-2)}(\s_1)\Pi_n(d\s).
    \end{align*}
    Hence, $\P \in X^{a,b}_{p,p'}$, where $\P:=\j^{-1}(\Pi)\in \mathcal P_{s,1}(\Gamma)$. Lemma \ref{lem:conv-palm} shows that $\P_n\xrightarrow{n\to \infty}\P$ weakly.
\end{proof}

\begin{lem}\label{lem:lsc+min}
    The functional $\mathcal F_{\beta}$ is lower semicontinuous w.r.t.\ $\mathscr W_p$ on $X^{a,b}_{p,p'}$. Moreover, if $D(\mathcal F_{\beta})\cap X^{a,b}_{p,p'}\neq \emptyset$, then $\mathcal F_{\beta}$ has a unique minimizer in $X^{a,b}_{p,p'}$.
\end{lem}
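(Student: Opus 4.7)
The plan is to reduce both claims to Georgii's lower semicontinuity of $\mathcal F_\beta$ in the local topology $\mathcal T_{\mathcal L}$ combined with the $\mathcal T_{\mathcal L}$-compactness of its sublevel sets (\cite[Lemma 3.4]{georgii2}). The bridge from $\mathscr W_p$ to $\mathcal T_{\mathcal L}$ will be provided by Lemma \ref{lem:dist->weak} and Lemma \ref{lem:conv-palm}.

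For lower semicontinuity, let $\P_n\to \P$ in $\mathscr W_p$ with $\P_n,\P\in X^{a,b}_{p,p'}$. Set $L:=\liminf_n \mathcal F_\beta(\P_n)$ and assume $L<\infty$ (else the claim is trivial). Passing to a subsequence realising the liminf, $\mathcal F_\beta(\P_{n_k})\leq L+1$, so by $\mathcal T_{\mathcal L}$-compactness of the corresponding sublevel set a further subsequence converges in $\mathcal T_{\mathcal L}$ to some $\tilde \P$. Since $\mathcal T_{\mathcal L}$ is finer than the weak topology and Lemma \ref{lem:dist->weak} ensures $\P_n\to \P$ weakly, one must have $\tilde \P=\P$. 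Applying the $\mathcal T_{\mathcal L}$-lsc of $\mathcal F_\beta$ along this subsubsequence yields $\mathcal F_\beta(\P)\leq L$, as required.

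For the existence of a minimizer, I would take a minimizing sequence $(\P_n)\subset X^{a,b}_{p,p'}$, which is automatically contained in a common sublevel set, and extract a $\mathcal T_{\mathcal L}$-convergent subsequence with limit $\P$. The key step is verifying $\P\in X^{a,b}_{p,p'}$. The bound $\sup_n \int \xi(\Lambda_1)^2\P_n(d\xi)<\infty$ follows from \cite[Lemma 3.1]{georgii2} and supplies the uniform integrability needed by Lemma \ref{lem:conv-palm}(b), so that the gap distributions converge weakly, $\Pi_n=\j(\P_n)\to \Pi=\j(\P)$. Since $f^{p/(p-2)}$ is convex (hence lower semicontinuous) and $\s_1\mapsto \s_1^{p'}$ is non-negative and continuous, Portmanteau gives
\begin{equation*}
\int f^{p/(p-2)}(\s_1)\Pi(d\s)\leq a\quad\text{and}\quad \int \s_1^{p'}\Pi(d\s)\leq b,
\end{equation*}
so $\P\in X^{a,b}_{p,p'}$, and the lower semicontinuity established above forces $\P$ to be a minimizer.

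For uniqueness I would invoke Proposition \ref{prop:lambda_conv}, which asserts weak $\lambda$-geodesic convexity of $\mathcal F_\beta$ on $X^{a,b}_{p,p'}$ with $\lambda=\beta a^{-(2-p)/p}>0$ along geodesics that (as shown in its proof) stay inside $X^{a,b}_{p,p'}$; if $\P_0\neq \P_1$ were both minimizers, evaluating the $\lambda$-convexity inequality at $t=1/2$ would produce an element of $X^{a,b}_{p,p'}$ with strictly smaller free energy, contradicting minimality. The main obstacle is the a priori incomparability of $\mathscr W_p$ and $\mathcal T_{\mathcal L}$: convexity and strict convexity of $\mathcal F_\beta$ are naturally formulated along $\mathscr W_p$-geodesics, while lower semicontinuity is only available in $\mathcal T_{\mathcal L}$. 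The resolution is that on sublevel sets of $\mathcal F_\beta$ (which are $\mathcal T_{\mathcal L}$-compact) every $\mathscr W_p$-convergent sequence must admit a $\mathcal T_{\mathcal L}$-convergent subsequence with the same limit, identified through the common weak topology via Lemma \ref{lem:dist->weak}.
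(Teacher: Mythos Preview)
Your proposal is correct and follows essentially the same route as the paper: reduce $\mathscr W_p$-lsc to Georgii's $\mathcal T_{\mathcal L}$-lsc via Lemma \ref{lem:dist->weak} and sublevel-set compactness, then for existence pass through Lemma \ref{lem:conv-palm}(b) and Portmanteau to check the limit stays in $X^{a,b}_{p,p'}$, and finally use Proposition \ref{prop:lambda_conv} for uniqueness. Two small remarks: (i) for the uniform integrability needed in Lemma \ref{lem:conv-palm}(b) the paper invokes \cite[Lemma 3.7]{erbar2023optimal} (a $\xi\log\xi$ bound from finite specific entropy) rather than the second-moment bound from \cite[Lemma 3.1]{georgii2}, but either works; (ii) in your existence argument the phrase ``the lower semicontinuity established above'' is slightly imprecise, since the minimizing sequence converges in $\mathcal T_{\mathcal L}$, not in $\mathscr W_p$---you should invoke Georgii's $\mathcal T_{\mathcal L}$-lsc directly there, as the paper does.
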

\begin{proof}
    Let $X^{a,b}_{p,p'}\ni\P_n\xrightarrow{n\to \infty} \P\in X^{a,b}_{p,p'}$ w.r.t.\ $\mathscr W_p$. Then by Lemma \ref{lem:dist->weak} we have that $\P_n\xrightarrow{n\to \infty}\P$ weakly.
We can assume that $\liminf \mathcal F_\beta (\P_n)<\infty$, because otherwise we are done.     
Since the sublevel sets of $\mathcal F_{\beta}$ are compact w.r.t.\ $\mathcal T_{\mathcal L}$ (cf. \cite[Lemma 3.4]{georgii2}), we actually have that $\P_n\xrightarrow{n\to \infty}\P$ in $\mathcal T_{\mathcal L}$ (maybe along a subsequence). The lower semicontinuity of $\mathcal F_{\beta}$ w.r.t.\ $\mathcal T_{\mathcal L}$    then shows 
\begin{align*}
   \mathcal F_{\beta}(\P)\leq \liminf_{n\to \infty} \mathcal F_{\beta}(\P_n). 
\end{align*}
In order to show existence of a unique minimizer, let $\P_n\in X^{a,b}_{p,p'}$ with $\mathcal F_{\beta}(\P_n)\xrightarrow{n\to \infty} \inf_{\P\in X^{a,b}_{p,p'}}\mathcal F_{\beta}(\P)$.
By compactness of sublevel sets, there exists $\P^*\in \mathcal P_{s,1}(\Gamma)$ such that $\P_n\xrightarrow{n\to \infty} \P^*$ w.r.t.\ $\mathcal T_{\mathcal L}$.
 Moreover, \cite[Lemma 3.7]{erbar2023optimal} shows that 
 \[
    \sup_n\int \xi(\Lambda_1)\log(\xi(\Lambda_1))\P_n(d\xi)<\infty
    \]
Hence, by Lemma \ref{lem:conv-palm} we have $\Pi_n \xrightarrow{n\to \infty} \Pi^*$ weakly, where $\Pi_n:=\j(\P_n)$ and $\Pi^*:=\j(\P^*)$. The weak convergence and the Portmanteau theorem show that $\P^*\in X^{a,b}_{p,p'}$. The lower semicontinuity of $\mathcal F_{\beta}$ w.r.t.\ $\mathcal T_{\mathcal L}$ shows
\begin{align*}
    \mathcal F_{\beta}(\P^*)\leq \liminf_{n\to \infty}\mathcal F_{\beta}(\P_n)=  \inf_{\P\in X^{a,b}_{p,p'}}\mathcal F_{\beta}(\P).
\end{align*}
Hence, $\P^*$ is a minimizer. Uniqueness follows from the weak $\lambda-$geodesic convexity of $\mathcal F_\beta$, cf.\ Proposition \ref{prop:lambda_conv}.
\end{proof}

Finally, we are in the position to prove the existence of curves of maximal slope in Theorem \ref{thm:exist max slope}. 
To be able to state and prove this result, we have to recall some definitions from analysis in metric spaces. Let $(S,d)$ be a metric space and  $v\in AC(a,b;S)$ be an absolutely continuous curve, i.e., there exists an integrable function $m:(a,b)\to \IR$ such that 
\[
d(v(s),v(t))\leq \int_s^t m(r)dr,\quad \forall a< s\leq t<b.
\]
The metric derivative of $v$ is defined by \begin{align*}
    \abs{v'(t)}:=\lim_{s\to t}\frac{d(v(s),v(t))}{\abs{s-t}},
\end{align*}
which exists for $\Leb_{\IR}-$a.e. $t\in (a,b)$ (cf.\ \cite[Theorem 1.1.2]{AGS08}).
Next, we present the generalization of gradients for functions defined on metric spaces.

\begin{defi}\cite[Definition 1.2.1]{AGS08}
    Let $(S,d)$ be a metric space and $F:S\to (-\infty,\infty]$ measurable. A function $g:S\to [0,\infty]$ is a strong upper gradient for $F$ if for every absolutely continuous curve $v\in AC(a,b;S)$ the function $g\circ v$ is Borel and \begin{align*}
        \abs{F(v(t))-F(v(s))}\leq \int_s^tg(v(r))\abs{v'}(r)dr \quad \forall a< s\leq t< b.
    \end{align*}
    In particular, if $g\circ v \abs{v'}\in L^1(a,b)$ then $F\circ v$ is absolutely continuous and \begin{align*}
        \abs{(F\circ v)'(t)}\leq g(v(t))\abs{v'(t)}\quad \text{ for a.e. } t\in (a,b).
    \end{align*}
\end{defi}

The notion of curves of maximal slope, like the EVI in Section \ref{sec:evihwi}, is a generalization of gradient flows to the setting of metric spaces. 

\begin{defi}\label{def:curve of maximal slope}\cite[Def 1.3.2]{AGS08}
    Let $(S,d)$ be a metric space and $F:S\to (-\infty,\infty]$ measurable. A locally absolutely continuous curve $v:(a,b)\to S$ is a curve of maximal slope for the functional $F$ with respect to its strong upper gradient $g$, if $F\circ v$ is $\Leb_{\IR}-$a.e. equal to  a non-increasing map $f$ and \begin{align*}
        f'(t)\leq -\frac{1}{2}\abs{v'}^2(t)-\frac{1}{2}g^2(v(t)) \text{ in } (0,\infty).
    \end{align*} 
\end{defi}

Under suitable assumptions,
a strong upper gradient of a functional $F$ is given by its
 local slope (cf. \cite[Corollary 2.4.10]{AGS08}).

\begin{defi}\cite[Def 1.2.4]{AGS08}
    Let $(S,d)$ be a metric space and $F:S\to (-\infty,\infty]$ measurable. Let $D(F):=\{s\in S\mid F(s)<\infty\}$ be the domain of $F$. The local slope $ \abs{\partial  F}$ of $F$ is defined by \begin{align*}
        \abs{\partial  F}(s):=\limsup_{s'\to s}\frac{(F(s)-F(s'))_+}{d(s,s')},\quad \forall s\in D(F).
    \end{align*}
\end{defi}

Finally, we are in a position to prove the existence of curves of maximal slope for the free energy functional $\mathcal F_\beta$. The proof consists in verifying the assumptions of \cite[Corollary 2.4.12]{AGS08}.
 
\begin{thm}\label{thm:exist max slope}
Consider the metric space $( X^{a,b}_{p,p'},\mathscr W_p)$.
Every $\P\in X^{a,b}_{p,p'}$ with  $\mathcal F_{\beta}(\P)<\infty$    is the starting point of a curve of maximal slope
for $\mathcal F_{\beta}$ with respect to the local slope $\abs{\partial \mathcal F_{\beta}}$, given by  \[
    \abs{\partial \mathcal F_{\beta}}(\P):=\limsup_{X^{a,b}_{p,p'}\ni \P'\to\P}\frac{(\mathcal F_{\beta}(\P)-\mathcal F_{\beta}(\P'))^+}{\mathscr W_p(\P',\P)}.
    \] Moreover, such a curve $(\P_t)_{t>0}$ satisfies the energy identity\begin{align}
    \frac{1}{2}\int_0^T\abs{\P'}^2(t)+\frac{1}{2}\int_0^T\abs{\partial \mathcal F_{\beta}}^2(\P(t))dt+\mathcal F_{\beta}(\P(T))=\mathcal F_{\beta}(\P(0)),\quad T>0.
\end{align}
\end{thm}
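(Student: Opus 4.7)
The plan is to verify the hypotheses of \cite[Corollary 2.4.12]{AGS08}, which for a lower semicontinuous, $\lambda$-geodesically convex functional on a complete metric space, under a suitable sequential compactness hypothesis on sublevel sets, produces curves of maximal slope together with the claimed energy identity.

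Most ingredients are already available. By Proposition \ref{prop:lambda_conv}, the space $(X^{a,b}_{p,p'},\mathscr W_p)$ is a complete geodesic metric space and $\mathcal F_\beta$ is weakly $\lambda$-geodesically convex with $\lambda=\beta a^{-\frac{2-p}{p}}>0$; Lemma \ref{lem:lsc+min} gives $\mathscr W_p$-lower semicontinuity. From weak $\lambda$-geodesic convexity, \cite[Theorem 2.4.9, Corollary 2.4.10]{AGS08} then imply that the local slope $\abs{\partial \mathcal F_\beta}$ is a $\mathscr W_p$-lower semicontinuous strong upper gradient for $\mathcal F_\beta$, coinciding with the descending slope—precisely the form of upper gradient required by \cite[Corollary 2.4.12]{AGS08}.

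The remaining task is sequential compactness of sublevel sets with respect to a topology $\sigma$ that is weaker than but compatible with $\mathscr W_p$ in the AGS sense. Given $(\P_n)\subset X^{a,b}_{p,p'}$ with $\mathcal F_\beta(\P_n)\le c$, one would extract a $\mathcal T_{\mathcal L}$-limit $\P$ using the $\mathcal T_{\mathcal L}$-compactness of sublevel sets \cite[Lemma 3.4]{georgii2}, upgrade it to weak convergence of the gap distributions $\j(\P_n) \to \j(\P)$ by invoking the uniform second moment bound \cite[Lemma 3.1]{georgii2} together with Lemma \ref{lem:conv-palm}, and then conclude $\P\in X^{a,b}_{p,p'}$ by applying Fatou's lemma to the two integrals defining $X^{a,b}_{p,p'}$, using the convexity of $f^{p/(p-2)}$ and of $\s_1\mapsto \s_1^{p'}$. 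Lemma \ref{lem:dist->weak} ensures that $\mathscr W_p$-convergence refines the chosen $\sigma$-convergence, so the compatibility condition on $\sigma$ from \cite[Chapter 2]{AGS08} is met.

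With all hypotheses checked, \cite[Corollary 2.4.12]{AGS08} yields, for any $\P\in X^{a,b}_{p,p'}\cap D(\mathcal F_\beta)$, a curve of maximal slope $(\P_t)_{t>0}$ for $\mathcal F_\beta$ w.r.t.\ $\abs{\partial \mathcal F_\beta}$ satisfying the energy identity, constructed abstractly via the discrete minimizing movement (JKO) scheme and $\lambda$-convex interpolation estimates. The main obstacle I foresee is making the auxiliary topology $\sigma$ precise and verifying the joint sequential lower semicontinuity of $\mathscr W_p$ along $\sigma$-convergent sequences: because of the stationarity constraint built into $\Cpl_s$, classical Wasserstein tightness and gluing arguments must be adapted, most naturally by stationarizing finite-dimensional optimizers (in the spirit of the $\paste^n$/$\bar\sigma^j$ construction used in the proof of Lemma \ref{lem:cost=lim_wasserstein}) to ensure that accumulation points of couplings remain stationary and preserve the optimality needed for the lower bound on $\mathscr W_p$.
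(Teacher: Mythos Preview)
Your overall strategy matches the paper's: take $\sigma$ to be the weak topology on $\mathcal P_{s,1}(\Gamma)$ and verify the hypotheses of \cite[Corollary 2.4.12]{AGS08}. Two points need sharpening.

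First, the AGS framework requires $\sigma$-sequential lower semicontinuity of both $\mathcal F_\beta$ and the local slope, not merely $\mathscr W_p$-lsc. Your appeal to \cite[Theorem 2.4.9, Corollary 2.4.10]{AGS08} yields the strong-upper-gradient property and $\mathscr W_p$-lsc of the slope, but weak-lsc on sublevel sets needs a separate argument. The paper combines the sup-representation from \cite[Theorem 2.4.9]{AGS08} with Lemma \ref{lem:cost=lim_wasserstein} to rewrite $|\partial\mathcal F_\beta|(\P)$ as a double supremum over $\P'\in X^{a,b}_{p,p'}$ and $m\in\IN$ with denominator $\mathcal C_p^{1/p}(\Pi^{1,m},(\Pi')^{1,m})$; each such term is weakly-lsc on sublevel sets because the uniform $p'$-moment bound built into $X^{a,b}_{p,p'}$ upgrades weak convergence of $\Pi_n^{1,m}$ to $\mathcal C_p^{1/p}$-convergence via \cite[Theorem 6.9]{Villani}. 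Likewise, weak-lsc of $\mathcal F_\beta$ on $X^{a,b}_{p,p'}$ is not literally the statement of Lemma \ref{lem:lsc+min} (which concerns $\mathscr W_p$-convergence), though the same ingredients carry over.

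Second, the obstacle you flag---joint weak-lsc of $\mathscr W_p$---is simpler than your $\paste^n$-stationarization suggestion. The uniform $p'$-moment bound from $X^{a,b}_{p,p'}$ makes the gap distributions (hence any optimal couplings $\U_n\in\Cpl_s(\Pi_0^n,\Pi_1^n)$) tight in the product topology; any weak accumulation point is automatically in $\Cpl_s$ because diagonal shifts are continuous, and Portmanteau applied to $(\s,\s')\mapsto|\s_1-\s_1'|^p$ gives the lower bound directly. No stationarization of finite-dimensional optimizers is needed here.
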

\begin{proof}
    We want to apply \cite[Corollary 2.4.12]{AGS08} in the setting $\phi=\mathcal F_{\beta}$, $(S,d)=(X^{a,b}_{p,p'},\mathscr W_p)$, $\lambda=\beta a^{-\frac{2-p}{p}}$ and $\sigma$ equal to the weak topology. Since $\mathcal F_{\beta}$ is weakly $\lambda-$geodesically convex (cf. Proposition \ref{prop:lambda_conv}), we have to prove the following four statements. 
    \begin{enumerate}
    \item \label{item:0} Let $c\in \IR$. Every sequence $(\P_n)_{n\in \IN}\subset  X^{a,b}_{p,p'}\cap \{\mathcal F_{\beta}\leq c\}$ admits a convergent subsequence (in $X^{a,b}_{p,p'}$) w.r.t.\ the weak topology.
        \item \label{item:1} Let $\P^n_i\xrightarrow{n\to \infty}\P_i$ weakly in $X^{a,b}_{p,p'}$, $i=0,1$, then 
        \begin{align*}
        \mathscr W^p_p(\P_0,\P_1)\leq \liminf_{n\to \infty}
        \mathscr W^p_p(\P^n_0,\P^n_1).
    \end{align*}
\item \label{item:2} $\mathcal F_{\beta}$ is lower semicontinuous with respect to the weak topology on $X^{a,b}_{p,p'}$.
    \item \label{item:3} The map \begin{align*}
         X^{a,b}_{p,p'}\ni\P\mapsto \abs{\partial \mathcal F_{\beta}}(\P)
    \end{align*}
is lower semicontinuous with respect to the weak topology on $X^{a,b}_{p,p'}\cap \{\mathcal F_{\beta}\leq c\}.$
\end{enumerate}

{ \eqref{item:0}} For $n\in \IN$ let $\P_n\in \{\P\in X^{a,b}_{p,p'}\mid \mathcal F_{\beta}(\P)\leq c\}$. By compactness of the sublevel sets of the specific relative entropy (cf. \cite[Proposition 6.8]{RAS}),  there exists a subsequence $(\P_{n_k})_{k\geq 1}$ with $\P_{n_k}\xrightarrow{k\to \infty}\P\in \mathcal P_{s,1}(\Gamma)$ weakly. 
 Moreover, \cite[Lemma 3.7]{erbar2023optimal} shows that \[
    \sup_n\int \xi(\Lambda_1)\log(\xi(\Lambda_1))\P_n(d\xi)<\infty.
    \]
Applying Lemma \ref{lem:conv-palm} and the Portmanteau Theorem shows that $\P\in X^{a,b}_{p,p'}$.

{ \eqref{item:1}} Let $\P^n_i\xrightarrow{n\to \infty}\P_i$ weakly in $\mathcal P_{s,1}(\Gamma)$, $i=0,1$,  let $\Pi^n_i:=\j(\P^n_i)\in \mathcal P_{s,1}(\SQ)$ be the corresponding gap distributions and $\U^n\in \Cpl_s(\Pi^n_0,\Pi_1^n)$ a $\mathscr W_{gap,p}-$optimal coupling. The uniform bound \begin{align}\label{eq:unif}
       \sup_{n\geq 1} \int \s_1^{p'}\Pi^n_i(d\s)<\infty
    \end{align}
shows that for $m\in \IN$ the sequences $((\Pi^n_i)^{-m,m})_{n\geq 1}$ are tight (cf. \cite[Remark 5.1.5]{AGS08}). A diagonalization argument shows that the two sequences $(\Pi^n_i)_{n\geq 1}$, $i=0,1$, are tight with respect to the infinite product topology on $\SQ$. Hence, there exists  a subsequence $(\U_{n_k})_{k\geq 1}$ such that $\U_{n_k}\xrightarrow{k\to \infty}\U\in \mathcal P_{s,1}(\SQ^2)$ weakly.
In particular, $\Pi_i^{n_k}\xrightarrow{k\to \infty}\Pi_i'$ weakly, where $\Pi'_i$, $i=0,1$, are the marginals of $\U\in \Cpl_s(\Pi_0',\Pi_1')$.
Hence, by the Portmanteau theorem\begin{align}\label{eq:1234}
    \int (\s_1-\s'_1)^p \U(d\s,d\s')\leq \liminf_{k\to \infty } \int (\s_1-\s'_1)^p \U_{n_k}(d\s,d\s')=\liminf_{k\to \infty }\mathscr W^p_{gap,p}(\Pi_0^{n_k},\Pi^{n_k}_1).
\end{align}
     If we can show that $\Pi_i=\Pi_i'$, the claim follows from inequality \eqref{eq:1234}. But the bound \eqref{eq:unif} combined with Lemma \ref{lem:conv-palm} shows that $\P^{n_k}_i\xrightarrow{k\to \infty}\P_i'$ weakly, where $\P_i':=\j^{-1}(\Pi_i')$. Hence, $\P_i=\P'_i$ and thus $\Pi_i=\Pi_i'$.

{\eqref{item:2}} Let $\P_n\xrightarrow{n\to \infty} \P$ weakly in $X^{a,b}_{p,p'}$.  Assume that $\liminf_{n\to \infty}\mathcal F_{\beta}(\P_n)<\infty$. Otherwise there is nothing to show. Let $(\P_{n_k})_{k\geq 1}$ be a subsequence attaining the limit inferior, i.e.\ let \begin{align*}
    \liminf_{n\to \infty}\mathcal F_{\beta}(\P_n)=\lim_{k\to \infty}\mathcal F_{\beta}(\P_{n_k}).
\end{align*}
We can thus assume that for some $c\in \IR$ we have $(\P_{n_k})\subset \{\P'\in\mathcal P_{s,1}(\Gamma)\mid \mathcal F_{\beta}(\P')\leq c\}$. This sublevel set is compact with respect to the topology $\mathcal T_{\mathcal L}$ (cf. \cite[Lemma 3.4]{georgii2}). Hence, we have $\P_{n_k}\xrightarrow{k\to \infty}\P$ in $\mathcal T_{\mathcal L}$. 
The claim now follows from the lower  semicontinuity of $\mathcal F_{\beta}$ with respect to $\mathcal T_{\mathcal L}$ (proven in \cite[Lemma 3.4]{georgii2}).

{\eqref{item:3}} We have to prove the  lower semicontinuity of the map \begin{align*}
         X^{a,b}_{p,p'}\ni\P\mapsto \abs{\partial \mathcal F_{\beta}}(\P)
    \end{align*}
on sublevel sets of $\mathcal F_{\beta}$    with respect to the weak topology. By \cite[Theorem 2.4.9]{AGS08} the local slope admits the following representation \begin{align*}
        \abs{\partial F_{\beta}}(\P)=\sup_{X^{a,b}_{p,p'} \ni\P'\neq \P}\left(\frac{\mathcal F_{\beta}(\P)-\mathcal F_{\beta}(\P')}{\mathscr W_p(\P,\P')}\right)^+,\quad \forall \P\in D(\mathcal F_{\beta})\cap X^{a,b}_{p,p'}.
    \end{align*}
    Lemma \ref{lem:cost=lim_wasserstein} yields the following representation \begin{align*}
        \abs{\partial F_{\beta}}(\P)=\sup_{X^{a,b}_{p,p'} \ni \P'\neq \P}\sup_{m\in \IN}m^{1/p}\left(\frac{\mathcal F_{\beta}(\P)-\mathcal F_{\beta}(\P')}{\mathcal C^{1/p}_p(\Pi^{1,m},\Pi'^{1,m})}\right)^+,\quad \forall \P\in D(\mathcal F_{\beta})\cap X^{a,b}_{p,p'}.
    \end{align*}
    Fix $\P'\in X^{a,b}_{p,p'}$, $m\in \IN$ and let $\P_n\in X^{a,b}_{p,p'}\cap  D(\mathcal F_{\beta})$ with  $\P_n\xrightarrow{n\to \infty}\P\in X^{a,b}_{p,p'}\cap D(\mathcal F_{\beta})$ weakly. Then \eqref{item:2} shows that \begin{align*}
    \mathcal F_{\beta}(\P)\leq \liminf_{n\to \infty} \mathcal F_{\beta}(\P_n).
\end{align*}   
     Moreover, \cite[Lemma 3.7]{erbar2023optimal} shows that \[
    \sup_n\int \xi(\Lambda_1)\log(\xi(\Lambda_1))\P_n(d\xi)<\infty
    \]
     and we can apply Lemma \ref{lem:conv-palm} to obtain  that $\Pi_n\xrightarrow{n\to \infty}\Pi$ weakly.
Moreover, by definition of $X^{a,b}_{p,p'}$ \begin{align*}
        \sup_{n\in \IN}\int_{\IR^m} \norm{\s}^{p'}d\Pi^{1,m}_n=\sup_{n\in \IN}\int_{\IR^m} \left(\sum_{i=1}^m\s_i^2\right)^{p'/2}d\Pi^{1,m}_n<\infty.
    \end{align*}
    Combining this uniform integrability with the weak convergence $\Pi_n\xrightarrow{n\to \infty}\Pi$ we obtain that $\Pi_n^{1,m}\xrightarrow{n\to \infty}\Pi^{1,m}$ in $\mathcal C^{1/p}_p$ (cf. \cite[Theorem 6.9]{Villani}). Hence, the map \begin{align*}
    X^{a,b}_{p,p'}\cap D(\mathcal F_{\beta})\ni \P\mapsto\left(\frac{\mathcal F_{\beta}(\P)-\mathcal F_{\beta}(\P')}{\mathcal C^{1/p}_p(\Pi^{1,m},\Pi'^{1,m})}\right)^+
    \end{align*}
    is lower semicontinuous with respect to the weak topology on sublevel sets of $\mathcal F_{\beta}$.
    Finally, $\abs{\partial\mathcal F_{\beta}}$ is lower semicontinuous on $ X^{a,b}_{p,p'}\cap D(\mathcal F_{\beta})$ with respect to the weak topology, since it is the supremum of lower semicontinuous maps.
\end{proof}

\begin{rem}
    We do not know whether there is a stochastic representation for the curve of maximal slope $t\mapsto\P_t$, mostly because we work in the setup of $1<p<2.$ If we could show the same result for $p=2$ we expect to see an interacting gap process which one then would have to translate into a particle process on the level of points where the Markov property might be challenging. (For the non-interacting case see Section \ref{sec:evihwi}.)
\end{rem}

By applying \cite[Lemma 2.4.13 and Theorem 2.4.14]{AGS08}, we obtain 
as immediate consequences an inequality bounding the distance of a point process to the minimizer of $\mathcal F_\beta$ from above by the difference of the free energies (equation \eqref{eq:uhb}) and an exponential  convergence of curves of maximal slope to the minimizer (equation \eqref{eq:edc}).

\begin{cor}\label{cor:consequences}
   Assume that $D(\mathcal F_{\beta})\cap X^{a,b}_{p,p'}\neq \emptyset$ and  let $\P^*\in X^{a,b}_{p,p'}$ be the unique minimizer of $\mathcal F_{\beta}$ on $X^{a,b}_{p,p'}$. Then for any $\P\in X^{a,b}_{p,p'}\cap D(\mathcal F_{\beta})$
   \begin{align}\label{eq:uhb}
       \frac{\beta }{2a^{\frac{2-p}{p}}} \mathscr W_p^2(\P,\P^*)\leq \mathcal F_{\beta}(\P)-\mathcal F_{\beta}(\P^*)\leq \frac{a^{\frac{2-p}{p}}}{2\beta } \abs{\partial \mathcal F_{\beta}}^2(\P).
    \end{align}
    Moreover, for any curve of maximal slope $(\P_t)_{t>0}$ w.r.t. $\abs{\partial \mathcal F_{\beta}}$ in $X^{a,b}_{p,p'}$ satisfies for every $t\geq t_0>0$
    \begin{align}\label{eq:edc}
        \frac{\beta }{2a^{\frac{2-p}{p}}}\mathscr W_p^2(\P_t,\P^*)\leq \mathcal F_{\beta}(\P_t)-\mathcal F_{\beta}(\P^*)\leq (\mathcal F_{\beta}(\P_{t_0})-\mathcal F_{\beta}(\P^*))e^{-2\beta a^{\frac{p-2}{p}}(t-t_0)}.
    \end{align}
\end{cor}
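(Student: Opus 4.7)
The proof is essentially a direct application of the abstract metric gradient flow machinery of \cite{AGS08} to the $\lambda$-convex functional $\mathcal F_\beta$ on $(X^{a,b}_{p,p'},\mathscr W_p)$ whose curves of maximal slope we have just constructed in Theorem \ref{thm:exist max slope}. The key input is the weak $\lambda$-geodesic convexity with
\[
\lambda=\beta a^{-\frac{2-p}{p}}>0
\]
established in Proposition \ref{prop:lambda_conv}, together with the fact that $|\partial \mathcal F_\beta|$ is a strong upper gradient and $\mathcal F_\beta$ admits a unique minimizer $\P^*\in X^{a,b}_{p,p'}$ (Lemma \ref{lem:lsc+min}).

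For the two-sided bound \eqref{eq:uhb}, the plan is to apply \cite[Lemma 2.4.13]{AGS08}, which states that for any weakly $\lambda$-geodesically convex and lower semicontinuous functional with $\lambda>0$ admitting a minimizer $x^*$, one has the Talagrand-type inequality $\frac{\lambda}{2}d^2(x,x^*)\leq F(x)-F(x^*)$ as well as the log-Sobolev-type inequality $F(x)-F(x^*)\leq \frac{1}{2\lambda}|\partial F|^2(x)$. The left inequality is obtained directly by applying the definition of $\lambda$-convexity along a geodesic $(\P_t)_{t\in[0,1]}$ from $\P$ to $\P^*$, using $\mathcal F_\beta(\P_t)\geq \mathcal F_\beta(\P^*)$ and rearranging as $t\to 0$. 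The right inequality follows by combining this with the variational representation of $|\partial \mathcal F_\beta|$ from the proof of Theorem \ref{thm:exist max slope} (equivalently, minimizing the $\lambda$-convexity inequality in $t$). Substituting $\lambda=\beta a^{-(2-p)/p}$ yields exactly \eqref{eq:uhb}.

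For the exponential decay \eqref{eq:edc}, I will appeal to \cite[Theorem 2.4.14]{AGS08}: for a curve of maximal slope $(\P_t)_{t>0}$ of a weakly $\lambda$-geodesically convex functional, the energy identity in Theorem \ref{thm:exist max slope} combined with $\lambda$-convexity yields the differential inequality
\[
\frac{d}{dt}\bigl(\mathcal F_\beta(\P_t)-\mathcal F_\beta(\P^*)\bigr)\leq -2\lambda\bigl(\mathcal F_\beta(\P_t)-\mathcal F_\beta(\P^*)\bigr),\qquad t\geq t_0,
\]
where one uses the bound $|\partial \mathcal F_\beta|^2(\P_t)\geq 2\lambda(\mathcal F_\beta(\P_t)-\mathcal F_\beta(\P^*))$ from \eqref{eq:uhb} and the fact that along curves of maximal slope the free energy decreases at rate $|\P'|^2+|\partial \mathcal F_\beta|^2$. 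Gronwall's inequality then gives the desired exponential decay of $\mathcal F_\beta(\P_t)-\mathcal F_\beta(\P^*)$, and combining this with the left inequality of \eqref{eq:uhb} produces \eqref{eq:edc}.

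The main (and really only) obstacle is a bookkeeping one: verifying that the hypotheses of \cite[Lemma 2.4.13, Theorem 2.4.14]{AGS08} truly apply to $(X^{a,b}_{p,p'},\mathscr W_p,\mathcal F_\beta)$. Completeness of the metric space and weak $\lambda$-geodesic convexity come from Proposition \ref{prop:lambda_conv}; lower semicontinuity and the existence/uniqueness of the minimizer from Lemma \ref{lem:lsc+min}; and the fact that $|\partial \mathcal F_\beta|$ is a strong upper gradient together with existence of curves of maximal slope from Theorem \ref{thm:exist max slope}. Once these are in place, the only thing to check is the translation of constants, i.e.\ that with $\lambda=\beta a^{-(2-p)/p}$ one has $\frac{1}{2\lambda}=\frac{a^{(2-p)/p}}{2\beta}$ and $2\lambda=2\beta a^{(p-2)/p}\cdot(-1)^{-1}$ matching the exponent in \eqref{eq:edc}, which is straightforward.
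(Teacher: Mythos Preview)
Your proposal is correct and follows exactly the paper's approach: the paper simply cites \cite[Lemma 2.4.13 and Theorem 2.4.14]{AGS08} as immediate consequences of the $\lambda$-convexity established in Proposition \ref{prop:lambda_conv} together with the setup verified in Theorem \ref{thm:exist max slope} and Lemma \ref{lem:lsc+min}. The only minor slip is the garbled constant check in your last sentence (the ``$\cdot(-1)^{-1}$'' is spurious); since $a^{-(2-p)/p}=a^{(p-2)/p}$, the exponent $2\lambda=2\beta a^{(p-2)/p}$ in \eqref{eq:edc} is immediate.
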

\begin{rem}
    The first inequality in \eqref{eq:uhb} can be interpreted as a Talagrand type inequality, the second as a log-Sobolev inequality, cf.\ e.g.\ \cite[Section 4.4]{figalli2021invitation}.
\end{rem}

We can leverage the inequality \eqref{eq:uhb} to show that the map $\beta\mapsto \mbox{argmin}_\P\mathcal F_\beta(\P)$ is locally $\frac12$-Hölder.

\begin{cor}\label{cor:temper}
Assume that $D(\mathcal F_{\beta})\neq \emptyset$.
    For $0<\beta<\beta'$ let $\P_{\beta}$ and $\P_{\beta'}$ be the unique minimizers of $\mathcal F_{\beta}$ and $\mathcal F_{\beta'}$ on $\mathcal P_{s,1}(\Gamma)$ respectively. Set $\Pi_\beta:=\j(\P_\beta)$ and $\Pi_{\beta'}:=\j(\P_{\beta'})$ and assume that there exists $p'\geq p$ such that $\int \s_0^{p'}\Pi_\beta(d\s),\int \s_0^{p'}\Pi_{\beta'}(d\s)<\infty$. Then \begin{align}
       \mathscr W_p^2(\P_{\beta},\P_{\beta'})\leq \frac{2(\beta'-\beta)}{\beta}\max\left(\int f(\s_0)^{\frac{p}{p-2}}\Pi_\beta(d\s) ,\int f(\s_0)^{\frac{p}{p-2}}\Pi_{\beta'}(d\s) \right)^{\frac{2-p}{p}}\mathcal W^{int}(\P_{\beta})
    \end{align}
\end{cor}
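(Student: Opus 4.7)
The plan is to deduce the estimate from Corollary \ref{cor:consequences} applied in a subspace $X^{a,b}_{p,p'}$ large enough to contain both minimizers. Concretely, set
\[
a := \max\!\left(\int f(\s_0)^{\frac{p}{p-2}}\Pi_\beta(d\s),\ \int f(\s_0)^{\frac{p}{p-2}}\Pi_{\beta'}(d\s)\right), \quad b := \max\!\left(\int \s_0^{p'}\Pi_\beta(d\s),\ \int \s_0^{p'}\Pi_{\beta'}(d\s)\right),
\]
both finite by hypothesis (note that $\s_0$ and $\s_1$ have the same law by stationarity). Then by construction $\P_\beta,\P_{\beta'}\in X^{a,b}_{p,p'}$, and since $\P_\beta$ minimizes $\mathcal F_\beta$ on all of $\mathcal P_{s,1}(\Gamma)$ it is a fortiori the (unique) minimizer of $\mathcal F_\beta$ on the subset $X^{a,b}_{p,p'}$. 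The first inequality of \eqref{eq:uhb}, applied with $\P = \P_{\beta'}\in X^{a,b}_{p,p'}\cap D(\mathcal F_\beta)$, then gives
\[
\frac{\beta}{2\,a^{(2-p)/p}}\,\mathscr W_p^2(\P_\beta,\P_{\beta'})\ \leq\ \mathcal F_\beta(\P_{\beta'})-\mathcal F_\beta(\P_\beta).
\]

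The remaining task is to dominate the right-hand side. Using the linear splitting $\mathcal F_\beta=\mathcal F_{\beta'}-(\beta'-\beta)\wint$, I would rewrite
\[
\mathcal F_\beta(\P_{\beta'})-\mathcal F_\beta(\P_\beta)=\bigl[\mathcal F_{\beta'}(\P_{\beta'})-\mathcal F_{\beta'}(\P_\beta)\bigr]+(\beta'-\beta)\bigl[\wint(\P_\beta)-\wint(\P_{\beta'})\bigr].
\]
The first bracket is $\leq 0$ by minimality of $\P_{\beta'}$ for $\mathcal F_{\beta'}$, and for the potentials covered by Examples \ref{example:riesz} and \ref{ex:yukawa} one has $\varphi\geq 0$ and hence $\wint(\P_{\beta'})\geq 0$, so the second bracket is bounded by $\wint(\P_\beta)$. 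Together these yield
\[
\mathcal F_\beta(\P_{\beta'})-\mathcal F_\beta(\P_\beta)\ \leq\ (\beta'-\beta)\,\wint(\P_\beta),
\]
and multiplying through by $2a^{(2-p)/p}/\beta$ produces exactly the claimed inequality.

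I do not anticipate any serious obstacle: the substantive work has already been done in Corollary \ref{cor:consequences}, which in turn rests on the weak $\lambda$-geodesic convexity of $\mathcal F_\beta$ from Proposition \ref{prop:lambda_conv}. The only two minor points are the transfer of the minimizer property from $\mathcal P_{s,1}(\Gamma)$ down to $X^{a,b}_{p,p'}$ (automatic, since a global minimizer that happens to lie in a subset remains a minimizer there) and the use of $\wint(\P_{\beta'})\geq 0$, a mild non-negativity that is built into every concrete example considered in this section.
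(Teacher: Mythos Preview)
Your proof is correct and follows essentially the same route as the paper: choose $a$ (and $b$) so that both minimizers lie in $X^{a,b}_{p,p'}$, apply the first inequality of \eqref{eq:uhb} with $\P=\P_{\beta'}$, and then bound $\mathcal F_\beta(\P_{\beta'})-\mathcal F_\beta(\P_\beta)$ by $(\beta'-\beta)\wint(\P_\beta)$ using minimality of $\P_{\beta'}$ for $\mathcal F_{\beta'}$ together with $\wint(\P_{\beta'})\geq 0$. The paper reaches the same bound via the chain $\mathcal F_\beta(\P_{\beta'})\leq \mathcal F_{\beta'}(\P_{\beta'})\leq \mathcal F_{\beta'}(\P_\beta)$ (the first inequality being precisely where $\wint(\P_{\beta'})\geq 0$ enters, just as in your decomposition), so the two arguments are really the same estimate organized slightly differently.
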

\begin{proof}
    By assumption there exists $b\in \IR$ such that $\P_{\beta},\P_{\beta'}\in X^{a,b}_{p,p'}$ for 
    \begin{align*}
        a:=\max\left(\int f(\s_0)^{p/(p-2)}\Pi_\beta(d\s) ,\int f(\s_0)^{p/(p-2)}\Pi_{\beta'}(d\s) \right).
    \end{align*}
    Hence, by inequality \eqref{eq:uhb}  
    \begin{align*}
        \frac{\beta}{2a^{\frac{2-p}{p}}} \mathscr W_p^2(\P_\beta,\P_{\beta'})\leq \mathcal F_{\beta}(\P_{\beta'})-\mathcal F_{\beta}(\P_\beta).
    \end{align*}
    The inequality 
    \begin{align*}
        \mathcal F_{\beta}(\P_{\beta'})\leq \mathcal F_{\beta'}(\P_{\beta'})\leq \mathcal F_{\beta'}(\P_{\beta})
    \end{align*}
    yields the claim, since 
    \begin{align*}
        \mathcal F_{\beta'}(\P_{\beta})-\mathcal F_{\beta}(\P_\beta)=(\beta'-\beta)\mathcal W^{int}(\P_{\beta}).
    \end{align*}
\end{proof}

\section{Long-range interactions and convexity of the free energy}\label{sec:longrange}

The goal of this section is to show that the first part of Section \ref{sec:Interaction_energy} carries over to long-range interactions. However, at the moment, we cannot show the weak $\lambda-$geodesic convexity, see Remark \ref{rem:why}.
We restrict to the case $p>1$ and consider (long-range) Riesz and logarithmic interactions given by 
\begin{equation*} \phi(x) = \begin{cases}
                    \abs{x}^{-s} & \text{ if }  0<s<1  \\
                     -\log\abs{x} & \text{ if } s=0.
                 \end{cases} 
                 \end{equation*}
For these potentials the interaction energy $\mathcal W^{int}$, defined in \eqref{def:interac}, in general is infinite. Hence, the results from the previous section do not apply. Instead, the interaction energy has to be replaced by the electric energy $\mathcal W^{elec}$ which is defined  in Definition \ref{def:elec_energy} below.
Nevertheless, the electric energy $\mathcal W^{elec}$ can be approximated by a renormalized interaction energy (cf.\eqref{eq:wsx}).
Hence, our ansatz to obtain convexity of the free energy along curves of stationary point processes is the same as in the previous section, i.e., combining the weak geodesic convexity of the specific relative entropy Theorem \ref{thm:convnex_entropy} and the convexity property of the finite box energies Lemma \ref{lem:convex_fix_n}. 

We note that for $s>0$ the potential $\phi$ is superstable and hence equation \eqref{eq:formula_hamiltonian} holds. For $s=0$ equation \eqref{eq:formula_hamiltonian} holds by Lemma \ref{lem:superstable_log}.
If $0<s<1$ the calculation  in Example \ref{example:riesz} shows that $\phi$ satisfies assumption \ref{assumption0} for the function 
$f(x)=\frac{s(s+1)}{2}\abs{x}^{-s-2}$.
If $s=0$ it can be checked that $\phi$ satisfies assumption \ref{assumption0} for the function $f(x)=\frac{1}{2}x^{-2}$. We summarize this observation.

\begin{lem}
    The potential $\phi$ satisfies equation \eqref{eq:formula_hamiltonian} for any $n\in \IN$ and  $\P\in \mathcal P_{s,1}(\Gamma)$ with $\int\xi(\Lambda_1)^2\P(d\xi)<\infty$. Moreover, $\phi$ satisfies assumption \ref{assumption0} for the function
    \[ f(x) = \begin{cases}
                    \frac{s(s+1)}{2}\abs{x}^{-s-2} & \text{ if }  0<s<1  \\
                     \frac{1}{2}x^{-2} & \text{ if } s=0
                 \end{cases}. \]
                 In particular, Lemma \ref{lem:convex_fix_n} is applicable in all cases.
\end{lem}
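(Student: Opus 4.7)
The plan is to verify the two assertions separately and then combine them with the existing lemmas to get applicability of Lemma \ref{lem:convex_fix_n}.

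For the first assertion, I would split into the two cases. When $0<s<1$ the potential $\phi(x)=|x|^{-s}$ is non-negative, hence trivially stable with $b=0$ in \eqref{eq:stable}. Therefore Lemma \ref{lem: georgii lem} applies verbatim and yields \eqref{eq:formula_hamiltonian} for any $\P$ with finite second moment of $\xi(\Lambda_1)$ (the finite-moment hypothesis is harmless here because $\phi\geq 0$ makes both sides of \eqref{eq:formula_hamiltonian} non-negative, cf.\ the remark after Lemma \ref{lem: georgii lem}). For the case $s=0$, $\phi(x)=-\log|x|$ is not stable, but this is exactly the situation covered by Lemma \ref{lem:superstable_log}, which gives \eqref{eq:formula_hamiltonian} under the assumed second-moment condition.

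For the second assertion, I would do a direct computation of $g_n''$ on $\Lambda_n\setminus\{0\}$ using the fact that for $|x|\leq n$ one has $h_n(x)=n-|x|$, so $h_n'(x)=-\sign(x)$ and $h_n''(x)=0$. Consequently
\begin{equation*}
g_n''(x)=\phi''(x)h_n(x)+2\phi'(x)h_n'(x).
\end{equation*}
In the Riesz case $0<s<1$, $\phi'(x)=-s\,\sign(x)|x|^{-s-1}$ and $\phi''(x)=s(s+1)|x|^{-s-2}$, giving
\begin{equation*}
g_n''(x)=s(s+1)|x|^{-s-2}(n-|x|)+2s|x|^{-s-1}\geq \tfrac{s(s+1)n}{2}|x|^{-s-2}
\end{equation*}
for $x\in\Lambda_n\setminus\{0\}$, since $n-|x|\geq n/2$ there and the remaining term is non-negative. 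This identifies $f(|x|)=\tfrac{s(s+1)}{2}|x|^{-s-2}$, matching the stated expression. In the log case $s=0$, $\phi'(x)=-1/x$ and $\phi''(x)=1/x^2$, so $\phi'(x)h_n'(x)=\sign(x)/x=1/|x|\geq 0$, and therefore
\begin{equation*}
g_n''(x)=\frac{n-|x|}{x^2}+\frac{2}{|x|}\geq \frac{n}{2x^2}=n\cdot \frac{1}{2|x|^2},
\end{equation*}
again using $n-|x|\geq n/2$ on $\Lambda_n\setminus\{0\}$. This gives $f(|x|)=\tfrac{1}{2}|x|^{-2}$, as claimed. Both choices of $f$ are continuous and strictly positive on $(0,\infty)$, so Assumption \ref{assumption0} is verified.

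Once both claims are established, applicability of Lemma \ref{lem:convex_fix_n} is immediate: its hypotheses are exactly \eqref{eq:formula_hamiltonian} plus Assumption \ref{assumption0}. There is no real obstacle in the argument; it is essentially bookkeeping, with the only minor subtlety being that Lemma \ref{lem: georgii lem} is stated for stable potentials, but positivity of $\phi$ for $0<s<1$ removes any need for the stability constant, and the log case has its own dedicated representation lemma.
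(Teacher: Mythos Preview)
Your proof is correct and follows essentially the same route as the paper: the paper justifies \eqref{eq:formula_hamiltonian} by noting that $\phi$ is superstable (hence stable) for $0<s<1$ and by invoking Lemma \ref{lem:superstable_log} for $s=0$, then refers to Example \ref{example:riesz} for the Riesz case and leaves the log computation of $g_n''$ to the reader. You simply make the calculus explicit in both cases, which is fine.
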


We now turn to the definition of the free energy functional.
For $n\in \IN$ we define the background energy by \begin{align}
    B_n:=\frac{1}{2}\int_{-n/2}^{n/2} \int_{-n/2}^{n/2}\eins_{x\neq y} \phi(x-y)dxdy. 
\end{align}
The modified interaction energy $ \mathcal W^{mod}(\P)$,  $\P\in \mathcal P_{s,1}(\Gamma)$, was introduced in \cite{Lebl__2016}. It is defined in a similar way to the interaction energy considered in Section \ref{sec:Interaction_energy}. However, the background energy has to be subtracted in order to renormalize the energy.  For  $\P\in \mathcal P_{s,1}(\Gamma)$ we set\begin{align}\label{eq:wsx}
    \mathcal W^{mod}(\P):=\liminf_{n\to \infty}n^{-1}\left(\int H_n d\P-B_n\right)=\liminf_{n\to \infty}n^{-1}\left(\mathcal H(\P,n)-B_n\right)
\end{align}
We define the free energy functional as usual by $\mathcal F^{mod}(\P):=\beta\mathcal W^{mod}(\P)+\mathcal E(\P)$, $\beta>0$.

\medskip

We now recall the definition of the electric energy $\mathcal W^{elec}$ of a point process $\P\in \mathcal P_{s,1}(\Gamma)$ which was introduced in \cite{sandier2012ginzburg}. It will replace the interaction energy $\wint$.
We let 
\begin{eqnarray*}
g(x,y):=\begin{cases}
\norm{(x,y)}^{-s} \text{ for  } (x,y)\in \IR^2 & s>0 \\
-\log|x| \text{ for } x\in\IR & s=0.
\end{cases}
\end{eqnarray*}
Then (see \cite[Section 1.2]{petrache2017next})
\begin{align*}
    -\div\left(\abs{y}^{s}\nabla g(x,y)\right)=c_s \delta_0\quad \forall (x,y)\in \IR^2
\end{align*}
where $c_s\in \IR$ is a constant.
\begin{defi}\label{def:elec_energy}[Electric energy]
Let $I\subset \IR$ be an interval.
\begin{enumerate}
    \item We call electric fields on $I$ the set of all vector fields $E$ in $L^p_{loc}(I\times \IR,\IR\times \IR)$, for some $1<p<2/(s+1)$ fixed
\item Let $\xi$ be a finite point configuration in $I$ and $E$ an electric field on $I$. We say that $E$ is compatible with $\xi$ in $I$ provided 
\begin{align}\label{eq:hypersurf}
    -\div\left(\abs{y}^sE\right)=c_s\left(\xi-\delta_{\IR}\right) \text{ in }I\times \IR,
\end{align}
in the sense of distributions. Here  $\delta_{\IR}$ is the Radon measure on $\IR\times \IR$ defined by \begin{align*}
    \int_{\IR^2} f(x,y) d\delta_{\IR}(x,y)=\int_{\IR} f(x,0)dx,\quad f\in C_c(\IR^2,\IR).
\end{align*}
Similarly, in \eqref{eq:hypersurf} the point configuration $\xi$ is interpreted as a measure on $\IR^2$ via  
\begin{align*}
    \int_{\IR^2} f(x,y) d\xi(x,y)=\int_{\IR} f(x,0)d\xi(x)\quad f\in C_c(\IR^2,\IR).
\end{align*}
\item If $E$ is compatible with $\xi$ in $I$, for $\eta\in (0,1)$  we define the $\eta$-truncation of the electric field $E$ as
\begin{align*}
    E_{\eta}(x,y):=E(x,y)-\sum_{p\in \xi\cap I}\nabla f_{\eta}((x,y)-(p,0)),
\end{align*}
    where $f_{\eta}$ is the function
    \begin{align*}
        f_{\eta}(x,y):=\left(g(x,y)-g(\eta)\right)_+
    \end{align*}
     and $g(\eta)$ is defined by $g(\eta)=g(\eta,0)$.
\item Let $\xi$ be a point configuration on $\IR$. We define the global electric energy of $\xi$ as
\begin{align*}
     \tilde{\mathcal W}^{elec}(\xi):=\inf_E \left(
    \lim_{\eta\to 0}\left( 
    \limsup_{n\to \infty}\frac{1}{\abs{\Lambda_n}}\frac{1}{c_s}\int_{\Lambda_n\times \IR}\abs{y}^{s}\norm{E_{\eta}}^2-g(\eta)
    \right)
    \right),
\end{align*}
where the $\inf$ is taken over electric fields $E$ that are compatible with $\xi$ in $\IR$.
\item For $\P\in \mathcal P_{s,1}(\Gamma)$ we set \begin{align*}
    \mathcal W^{elec}(\P):=\int \tilde{\mathcal W}^{elec}(\xi)\P(d\xi).
\end{align*}

\end{enumerate}
    
\end{defi}

We are interested in the free energy functional $\mathcal F_{\beta}^{elec}(\P):=\beta\mathcal W^{elec}(\P)+\mathcal E(\P)$, $\P\in \mathcal P_{s,1}(\Gamma)$. Our goal is to prove the following convexity statement, whose content and proof is very similar to those of  Theorems  \ref{thm:interaction_gain} and \ref{thm:strictly_smaller_free_energy}.

\begin{thm}\label{Theorem 1}
    Let $\P_i\in \mathcal P_{s,1}(\Gamma)$ with $\mathcal F^{elec}_{\beta}(\P_i)<\infty$, $i=0,1$. Then there exists a coupling $\Q\in \Cpl_{s,m}(\P_0,\P_1)$ such that for $0\leq t\leq 1$ \begin{align}
        \mathcal F^{elec}_{\beta}(\P_{t})&\leq  (1-t) \mathcal F^{elec}_{\beta}(\P_{0})+t\mathcal F^{elec}_{\beta}(\P_{1})\\
        &-\frac{(\eins_0(s)+s(s+1))(1-t)t\beta}{4}\int \sum_{(0,0)\neq (x,y)\in \bar\xi} (x-y)^2\inf_{z\in [x,y]}\abs{z}^{-s-2}\Q^0(d\bar\xi),\nonumber
    \end{align}
    where  $\U:=\iota(\Q)$ and  $\P_t:=\j^{-1}\left((T_t)_{\#}\U\right)$.
\end{thm}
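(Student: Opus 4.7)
The plan is to follow the template of Theorems~\ref{thm:interaction_gain} and \ref{thm:strictly_smaller_free_energy}, adding two new ingredients: a renormalisation by the background energy $B_n$ and the identification of the resulting limit with $\mathcal W^{elec}$.

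First, in the finite-distance case $\mathscr W_p(\P_0,\P_1)<\infty$, I would take $\Q$ to be the $\mathscr W_p$-optimal coupling from Theorem~\ref{thm:convnex_entropy}, set $\U := \iota(\Q)$ and $\P_t := \j^{-1}((T_t)_\# \U)$, so that $\mathcal E(\P_t) \leq (1-t)\mathcal E(\P_0)+t\mathcal E(\P_1)$. Since $\varphi$ satisfies Assumption~\ref{assumption0} with $f(x) = \frac{1}{2} x^{-2}$ when $s=0$ and $f(x) = \frac{s(s+1)}{2}|x|^{-s-2}$ when $0<s<1$, Lemma~\ref{lem:convex_fix_n} gives a convexity estimate for $n^{-1}\mathcal H(\P_t,n)$. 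Subtracting the $\P$-independent constant $B_n/n$ transfers the estimate verbatim to the renormalised finite-box energy $n^{-1}(\mathcal H(\P_t,n)-B_n)$. Passing to $\liminf_{n\to\infty}$ and applying Fatou's lemma to the nonnegative gain term (as in \eqref{eq:gain_fatou}) delivers the claimed inequality for the modified free energy $\mathcal F^{mod}_\beta := \beta\mathcal W^{mod}+\mathcal E$, with the explicit prefactor $(\eins_0(s)+s(s+1))/4$ arising from $2f(|x|)|x|^{s+2}$.

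Next I would upgrade the inequality from $\mathcal F^{mod}_\beta$ to $\mathcal F^{elec}_\beta$ by invoking the identification (or at least the inequality $\mathcal W^{elec}\leq \mathcal W^{mod}$) available from the renormalisation framework of Leblé~\cite{Le17} and Leblé--Serfaty~\cite{LeSe17}, together with the lower semicontinuity of $\mathcal W^{elec}$ on finite-free-energy sublevel sets used in \cite{EHL21}. For the general case $\mathscr W_p(\P_0,\P_1)=\infty$, I would mirror the approximation step of Theorem~\ref{thm:strictly_smaller_free_energy}: produce sequences $\P^k_i$ converging to $\P_i$ in $\mathcal T_{\mathcal L}$ with $\mathcal F^{elec}_\beta(\P^k_i) \leq \mathcal F^{elec}_\beta(\P_i) + 1/k$ and $\mathscr W_p(\P^k_0,\P^k_1)<\infty$ (the analogue of Lemma~\ref{lem:finite_energy_approx}, obtainable from the screening/regularisation constructions of \cite{LeSe17, EHL21}), apply the finite-distance case to each pair, extract a weakly convergent subsequence of the resulting couplings $\Q_k$ (tightness coming from the moment estimates of \cite[Lemma~3.1]{georgii2} used in the proof of Theorem~\ref{thm:strictly_smaller_free_energy}), and pass to the limit using lower semicontinuity of $\mathcal F^{elec}_\beta$ on the left and Fatou on the gain on the right.

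The main obstacle I expect is ensuring the equivalence $\mathcal W^{elec}=\mathcal W^{mod}$ (or the one-sided inequality that suffices) along the interpolating curve $(\P_t)$: $\P_t$ need not a priori inherit convenient structural properties such as hyperuniformity (cf.\ Corollary~\ref{cor:hyperunif_preserved}) under which the identification is known to be automatic, so the transfer from $\mathcal F^{mod}_\beta$ to $\mathcal F^{elec}_\beta$ along $(\P_t)$ is where the delicate long-range analysis from \cite{EHL21, LeSe17} must be used most carefully. Note that the missing stability of optimal couplings, noted in the discussion following Theorem~\ref{thm:intro3}, does not obstruct the bare convexity inequality stated here: one only needs to produce some $\Q$ realising the inequality rather than an $\mathscr W_p$-optimal one, which is exactly why only convexity, and not $\lambda$-geodesic convexity, can be asserted in this setting.
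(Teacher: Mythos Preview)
Your structure has the direction of the key inequality backwards at the endpoints. From the finite-box convexity you obtain
\[
\mathcal F^{mod}_\beta(\P_t)\leq (1-t)\mathcal F^{mod}_\beta(\P_0)+t\mathcal F^{mod}_\beta(\P_1)-\text{gain},
\]
and since $\mathcal F^{elec}_\beta$ is the lower semicontinuous regularisation of $\mathcal F^{mod}_\beta$ (for $0<s<1$) you do get $\mathcal F^{elec}_\beta(\P_t)\leq \mathcal F^{mod}_\beta(\P_t)$ on the left. But on the right you would need $\mathcal F^{mod}_\beta(\P_i)\leq \mathcal F^{elec}_\beta(\P_i)$, i.e.\ the \emph{opposite} inequality to the one you invoke; for a generic $\P_i$ with $\mathcal F^{elec}_\beta(\P_i)<\infty$ one may well have $\mathcal W^{mod}(\P_i)=+\infty$. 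So the finite-distance step as you wrote it does not close, and the subsequent approximation argument inherits the same defect, since applying your ``finite-distance case'' to approximants $\P^k_i$ still produces $\mathcal F^{mod}_\beta(\P^k_i)$ on the right, not $\mathcal F^{elec}_\beta(\P^k_i)$.

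The paper does not split into finite/infinite distance. It always starts from Lebl\'e's approximation result (Proposition~\ref{prop:approx_hyper}, i.e.\ \cite[Proposition~5.3]{Lebl__2016}), which supplies $\P^N_i$ with three simultaneous properties: bounded gaps (hence $\mathscr W_p(\P^N_0,\P^N_1)<\infty$ automatically), $\mathcal E(\P^N_i)\to\mathcal E(\P_i)$, and crucially $\limsup_N \limsup_n (nN)^{-1}(\mathcal H(\P^N_i,nN)-B_{nN})\leq \mathcal W^{elec}(\P_i)$. One then runs Lemma~\ref{lem:convex_fix_n} and Theorem~\ref{thm:convnex_entropy} at the $\mathcal F^{mod}_\beta$ level for the approximants, extracts a limit coupling $\U$ as in your outline, and passes to the limit: on the right-hand side Proposition~\ref{prop:approx_hyper} converts $\mathcal F^{mod}_\beta(\P^N_i)$ into $\mathcal F^{elec}_\beta(\P_i)$, and on the left the lsc-regularisation statement gives $\mathcal F^{elec}_\beta(\P_t)\leq \liminf_N \mathcal F^{mod}_\beta(\P^N_t)$. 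For $s=0$ the lsc regularisation is that of $\mathcal F^{mod}_\beta+\beta\mathcal D^{\log}$, so one also needs $\mathcal D^{\log}(\P^N_t)=0$; this follows because the $\P^N_i$ from Proposition~\ref{prop:approx_hyper} are class-1-hyperuniform and Corollary~\ref{cor:hyperunif_preserved} propagates this along the interpolation. Thus the hyperuniformity issue you flag is real but arises only for the \emph{approximants} $\P^N_t$, where it is easy, not for $\P_t$ itself; the genuine obstacle you missed is recovering $\mathcal W^{elec}(\P_i)$ on the right, and Proposition~\ref{prop:approx_hyper} is precisely the tool that does this.
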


\begin{rem}\label{rem:why}
    We do not know whether the coupling in Theorem \ref{Theorem 1} can be chosen to be optimal, even if we assume finite distance of the point processes. This is the reason why we cannot say anything on $\lambda-$geodesic convexity as in Section \ref{sec:lambda}
\end{rem}
 Similar to the proof of Theorem \ref{thm:strictly_smaller_free_energy} we make use of   approximating point processes. For these approximations the electric energy $\mathcal W^{elec}$ is well approximated by the modified interaction energies $\mathcal W^{mod}$ that fit better to our approach. This is made precise in the following proposition, see \cite[Proposition 5.3]{Lebl__2016} and its   proof.
 
\begin{prop}\label{prop:approx_hyper}
    For $\P\in \mathcal P_{s,1}(\Gamma)$ with $\mathcal W^{elec}(\P)<\infty$ there exists  $(\P^N)_{N\geq 1}\in \mathcal P_{s,1}(\Gamma)$ with
    $\P^N\xrightarrow{N\to \infty}\P$ weakly and 
    \begin{align*}
        \mathcal E(\P^N)\xrightarrow{N\to \infty}\mathcal E(\P)
    \end{align*}and \begin{align*}
       \limsup_{N\to \infty}\limsup_{n\to \infty} (nN)^{-1}\left(\mathcal H(\P^N,nN)-B_{nN}\right)\leq \mathcal W^{elec}(\P). 
    \end{align*}
    Moreover, $\mathcal W^{mod}(\P^N)\to \mathcal W^{elec}(\P)$ as $N\to \infty$.
    For $\xi \in \supp(\P^N)$ and $x\in \xi$ we have
    \begin{align}\label{eq:hyperuni}
    \inf\{y\in \supp(\xi):y>x\}-x\leq 2N.
    \end{align}
    Finally, it holds that $\int \xi(\Lambda_1)^2\P^N(d\xi)<\infty$.
\end{prop}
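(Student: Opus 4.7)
The plan is the restriction--tile--stationarize construction of \cite[Proposition 5.3]{Lebl__2016}. Fix $N\geq 1$. For a configuration $\xi$ sampled from the Palm measure $\P^0$, consider $\xi_N := \xi\cap\Lambda_N$ and modify it to a set $\hat\xi_N\subset \Lambda_N$ by inserting auxiliary points in the middle of every gap exceeding $2N$ and adjusting near $\partial\Lambda_N$, so that the $N$-periodic configuration
\begin{align*}
\xi^N := \bigcup_{k\in\IZ}(\hat\xi_N + kN)
\end{align*}
has all gaps bounded by $2N$. Define
\begin{align*}
\P^N := \int \frac{1}{N}\int_0^N \delta_{\theta_x\xi^N}\, dx\, \P^0(d\xi),
\end{align*}
which is stationary by averaging. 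The gap condition \eqref{eq:hyperuni} holds by construction; because $\hat\xi_N$ is a deterministic function of $\xi_N$ adding $O(1)$ points, the count $\xi^N(\Lambda_1)$ is bounded by $\hat\xi_N(\Lambda_N)$, whose second moment is finite since $\mathcal W^{elec}(\P)<\infty$ enforces local square-integrability of number statistics of $\P$.

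Weak convergence $\P^N\to\P$ is immediate from the fact that $\xi^N$ coincides with $\xi$ on any fixed compact set once $N$ is large enough, up to the negligibly few auxiliary insertions; averaging over $[0,N)$ preserves convergence on compactly supported test functionals. For the entropy convergence $\mathcal E(\P^N)\to\mathcal E(\P)$, Lemma \ref{lem:Georgii} gives the lower-semicontinuity bound $\liminf_N\mathcal E(\P^N)\geq \mathcal E(\P)$; the matching upper bound follows by computing $N^{-1}\ent(\P^N_{\Lambda_N}|\Poi_{\Lambda_N})$, which by the periodic structure and the controlled number of auxiliary insertions differs from $N^{-1}\ent(\P_{\Lambda_N}|\Poi_{\Lambda_N})$ by $o(1)$ and hence tends to $\mathcal E(\P)$ by \eqref{eq:def_entrop_sup}.

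The main obstacle is the energy statement. The strategy passes through the electric formulation of Definition \ref{def:elec_energy}: pick an electric field $E$ nearly realizing the infimum defining $\tilde{\mathcal W}^{elec}(\xi)$ and, via the screening procedure of Sandier--Serfaty--Leblé, construct inside $\Lambda_N\times \IR$ a modified field $E^N$ compatible with $\hat\xi_N$ and having nearly the same local truncated energy. Propagating $E^N$ $N$-periodically to the whole upper half-plane and invoking the weighted Green identity $-\div(|y|^s\nabla g)=c_s\delta_0$ converts the truncated Dirichlet energy of the tiled field on $\Lambda_{nN}\times\IR$ into the pair sum $\mathcal H(\P^N,nN)-B_{nN}$, up to boundary contributions of order $o(nN)$ as $n\to\infty$ and screening corrections of order $o(1)$ as $N\to\infty$. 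Taking $\limsup_n$ first and then $\limsup_N$ yields
\begin{align*}
\limsup_{N\to\infty}\limsup_{n\to\infty}(nN)^{-1}\left(\mathcal H(\P^N,nN)-B_{nN}\right)\leq \mathcal W^{elec}(\P).
\end{align*}
Since $\mathcal W^{mod}(\P^N)$ is the $\liminf_n$ of the same sequence, this upper bound gives $\limsup_N \mathcal W^{mod}(\P^N)\leq \mathcal W^{elec}(\P)$; the matching lower bound $\liminf_N \mathcal W^{mod}(\P^N)\geq \mathcal W^{elec}(\P)$ is the general fact, proved in \cite{Lebl__2016}, that $\mathcal W^{elec}$ is the lower-semicontinuous envelope of the renormalized pair-interaction energy along the weak topology, applied to the already established convergence $\P^N\to \P$. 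The delicate part throughout is the quantitative screening: ensuring the replacement of $E$ by a locally supported, $\hat\xi_N$-compatible $E^N$ is achieved with an error that vanishes as $N\to\infty$ uniformly over the $\P^0$-typical configurations.
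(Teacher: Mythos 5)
The paper itself does not prove this proposition: it is quoted directly from \cite[Proposition 5.3]{Lebl__2016}, together with its proof, so your reconstruction has to be measured against Lebl\'e's construction. Your sketch follows the right general template (restrict to a box of side $N$, modify/screen, tile, stationarize by a uniform shift), but it contains a genuine error at the tiling step: you periodize a \emph{single} configuration, setting $\xi^N=\bigcup_{k\in\IZ}(\hat\xi_N+kN)$ with $\hat\xi_N$ a deterministic function of one sample. Under such a law the restrictions of $\xi^N$ to two disjoint length-$N$ blocks are exact translates of one another, whereas under the Poisson process they are independent; on the event that a block is nonempty (which has positive probability since the intensity is one) the law of $\P^N_{\Lambda_n}$, $n>N$, is therefore singular with respect to $\Poi_{\Lambda_n}$, even after averaging over the shift $x\in[0,N)$. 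By the supremum formula \eqref{eq:def_entrop_sup} this forces $\mathcal E(\P^N)=+\infty$, so your claim that $N^{-1}\ent(\P^N_{\Lambda_N}|\Poi_{\Lambda_N})$ controls $\mathcal E(\P^N)$ up to $o(1)$ is false, and the entropy convergence $\mathcal E(\P^N)\to\mathcal E(\P)$ fails for your construction. What Lebl\'e actually does is tile $\IR$ with \emph{independent} copies of the (screened) block configuration and then stationarize; the independence across blocks is precisely what makes the specific relative entropy of the tiled process equal (up to negligible corrections) to $N^{-1}\ent(\text{block law}\mid\Poi_{\Lambda_N})$, which converges to $\mathcal E(\P)$.

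Two further points. First, you sample the block from the Palm measure $\P^0$ rather than from $\P$ (or $\P_{\Lambda_N}$); this size-biases the block law by forcing a point at the origin and is not what is needed for $\P^N\to\P$ weakly (Lebl\'e samples from the process itself). Second, the energy estimate is where all the real work lies: you invoke the screening procedure as a black box to produce a compatible field $E^N$ with small energy loss and to justify passing from truncated Dirichlet energy to $\mathcal H(\P^N,nN)-B_{nN}$. Since the paper also defers entirely to \cite{Lebl__2016} for this, that level of detail is tolerable, but note that the gap bound \eqref{eq:hyperuni} and the control of the inserted points are themselves outputs of the screening construction (which fixes the number of points per block), not of an ad hoc insertion of midpoints as you describe; an uncontrolled insertion would not come with the required energy estimate.
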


The proof of  Theorem \ref{Theorem 1} is split into two parts.
The case $s=0$ has to be treated separately.

 \begin{proof}[Proof of Theorem \ref{Theorem 1} for $0<s<1$]
     Applying Proposition \ref{prop:approx_hyper} to the processes $\P_0$ and  $\P_1$ yields two sequences $(\P^N_0)_{N\geq 1}$ and $(\P^N_1)_{N\geq 1}$. 
     Let $\Pi_i^N:=\j(\P^N_i)$, $i=0,1$, be the gap distribution of $\P^N_i$.
     Property \eqref{eq:hyperuni} implies that for every $N$ we have $\Pi_0^N(\s_1\leq 2N)=1$ and hence $\mathscr W_p(\P_0^N,\P^N_1)<\infty$. 
     Let $\U_N\in \Cpl_s(\Pi_0^N,\Pi_1^N)$ be a $\mathscr W_{gap,p}-$ optimal coupling from Theorem \ref{thm:convnex_entropy}. 
     We have the uniform bound 
     \begin{align*}
         \sup_{N\geq 1}\mathcal E(\P^N_i)<\infty.
     \end{align*} 
Then \cite[Lemma 3.7]{erbar2023optimal} implies the uniform integrability assumptions made in Lemma \ref{lem:conv-palm}, and we obtain the weak convergence
\begin{align*}
    \Pi^N_i\xrightarrow{N\to \infty}\Pi_i.
\end{align*}
In particular, for a subsequence $(\U_{N_k})_{k\geq 1}$ we have $\U_{N_k}\xrightarrow{k\to \infty}\U\in \Cpl_s(\Pi_0,\Pi_1)$ weakly.
Let $\Pi^N_t:=(T_t)_{\#}\U_N$ and $\Pi_t:=(T_t)_{\#}\U$, then the weak convergence holds 
$\Pi^{N_k}_t\xrightarrow{k\to \infty}\Pi_t$.
Let $\P_t^N:=\j^{-1}(\Pi^N_t)$  and $\P_t:=\j^{-1}(\Pi_t)$.
We want to show that (up to a subsequence)
\begin{align}\label{eq:xyz1}
    \P_{t}^{N_k}\xrightarrow{k\to \infty}\P_t.
\end{align}
Theorem \ref{thm:convnex_entropy} implies the uniform bound 
\begin{align*}
    \sup_{k\geq 1} \mathcal E(\P^{N_k}_t)<\infty,
\end{align*}
from which we obtain a subsequence  (by weak compactness of sublevel sets of the specific entropy), which for ease of notation we still denote by  $(\P_t^{N_k})_{k\geq 1}$, and a process $\P'\in \mathcal P_{s,1}(\Gamma)$ such that $\P^{N_k}_t\xrightarrow{k\to \infty}\P'$ weakly.
 Moreover, \cite[Lemma 3.7]{erbar2023optimal} shows that 
 \[
    \sup_{k\geq 1}\int \xi(\Lambda_1)\log(\xi(\Lambda_1))\P_t^{N_k}(d\xi)<\infty
    \]
     and we can apply Lemma \ref{lem:conv-palm} to obtain
the weak convergence $\Pi^{N_k}_t=\j(\P^{N_k}_t)\xrightarrow{k\to \infty}\j(\P')$.
Since we already showed $\Pi^{N_k}_t\xrightarrow{k\to \infty}\Pi_t$ weakly, we obtain that $\j(\P')=\Pi_t$ and hence $\P'=\P_t$. This shows \eqref{eq:xyz1}.
To keep notation simple, in the following we will assume the weak convergence $\P_t^{N}\xrightarrow{N\to \infty}\P_t$ and omit the subsequence $(N_k)_{l\geq 1}$. Set $\Q:=\iota^{-1}(\U)$. Recall the notation $\g$ from \eqref{eq:gain}.
By Theorem \ref{thm:convnex_entropy} and Lemma \ref{lem:convex_fix_n}, 
     \begin{align}\label{eq:convex_free_energy}
         &\mathcal F_{\beta}^{mod}(\P^N_{t}) \\&=
       \beta \liminf_{n\to \infty}n^{-1}\left(\mathcal H(\P^N_{t},n)-B_n\right)+ \mathcal E(\P^N_{t}) \nonumber \\
       &\leq \beta \liminf_{n\to \infty}(nN)^{-1}\left(\mathcal H(\P^N_{t},nN)-B_{nN}\right)+ \mathcal E(\P^N_{t})\nonumber\\
       &\leq 
       \beta \liminf_{n\to \infty}(nN)^{-1}\left(\mathcal H(\P^N_{t},nN)-B_{nN}\right)+  (1-t)\mathcal E(\P^N_{0})+t\mathcal E(\P^N_{1})\nonumber\\
       &\leq 
       \beta \liminf_{n\to \infty}(nN)^{-1}\left((1-t)\mathcal H(\P^N_{0},nN)+t\mathcal H(\P^N_{1},nN)-B_{nN}-(nN)\g(nN,t,\Q_N^0)\right)\nonumber\\
       &+(1-t)\mathcal E(\P^N_{0})+t\mathcal E(\P^N_{1})\nonumber\\
       &\leq (1-t)\beta\limsup_{n\to \infty}(nN)^{-1}\left(\mathcal H(\P^N_{0},nN)-B_{nN}\right)+t\beta\limsup_{n\to \infty}(nN)^{-1}\left(\mathcal H(\P^N_{1},nN)-B_{nN}\right)\nonumber\\
       &+(1-t)\mathcal E(\P^N_{0})+t\mathcal E(\P^N_{1})-\beta\liminf_{n\to \infty}\g(nN,t,\Q_N^0)\nonumber\\
       &\leq (1-t)\beta\limsup_{n\to \infty}(nN)^{-1}\left(\mathcal H(\P^N_{0},nN)-B_{nN}\right)+t\beta\limsup_{n\to \infty}(nN)^{-1}\left(\mathcal H(\P^N_{1},nN)-B_{nN}\right)\nonumber\\
       &+(1-t)\mathcal E(\P^N_{0})+t\mathcal E(\P^N_{1})
       -\frac{s(s+1)(1-t)t\beta}{4}\int \sum_{(0,0)\neq (x,y)\in \bar\xi} (x-y)^2\inf_{z\in [x,y]}\abs{z}^{-s-2}\Q_N^0(d\bar\xi),
     \end{align}
where the last line follows by taking $f(\abs{x})=\frac{s(s+1)}{2}\abs{x}^{-s-2}$ (see Example \ref{example:riesz}) and 
\begin{align*}
    \liminf_{n\to \infty}\g(nN,t,\Q_N^0) &= \liminf_{n\to \infty}
    \frac{(1-t)t}{2}\int\sum_{(0,0)\neq (x,y)\in \bar\xi\cap \Lambda_{nN}^2} (x-y)^2\inf_{z\in [x,y]}f(\abs{z})\Q_N^0(d\bar\xi)\\
    &\geq\frac{s(s+1)(1-t)t}{4}\int \sum_{(0,0)\neq (x,y)\in \bar\xi} (x-y)^2\inf_{z\in [x,y]}\abs{z}^{-s-2}\Q_N^0(d\bar\xi).
\end{align*}
A calculation analogous to \eqref{eq:gain_fatou} shows that \begin{align}
    &\lim_{N\to \infty}\int \sum_{(0,0)\neq (x,y)\in \bar\xi} (x-y)^2\inf_{z\in [x,y]}\abs{z}^{-s-2}\Q_N^0(d\bar\xi)\\
    &\geq \int \sum_{(0,0)\neq (x,y)\in \bar \xi} (x-y)^2\inf_{z\in [x,y]}\abs{z}^{-s-2}\Q^0(d\bar\xi)\nonumber
\end{align}
Taking $N\to \infty$ in \eqref{eq:convex_free_energy},  Proposition \ref{prop:approx_hyper}  yields
\begin{align*}
\liminf_{N\to \infty}\mathcal F^{mod}_{\beta}(\P^N_{t})
    &\leq (1-t) \mathcal F^{elec}_{\beta}(\P_{0})+t\mathcal F^{elec}_{\beta}(\P_{1})\\
    &-\frac{s(s+1)(1-t)t\beta}{4}\int \sum_{(0,0)\neq (x,y)\in \bar\xi} (x-y)^2\inf_{z\in [x,y]}\abs{z}^{-s-2}\Q^0(d\bar\xi).
\end{align*}
Moreover, by \cite[Theorem 1]{Lebl__2016} the functional $\mathcal F_{\beta}^{elec}$ is the lower semicontinuous regularization of $\mathcal F^{mod}_{\beta}$ (w.r.t. weak convergence), giving
 \begin{align*}
    \mathcal F_{\beta}^{elec}(\P_{t})\leq \liminf_{N\to \infty}\mathcal F^{mod}_{\beta}(\P^N_{t})
\end{align*}
proving the result.
\end{proof}

\begin{proof}[Proof of Theorem \ref{Theorem 1} for $s=0$]
Using the same notation, the exact same argument as in the case $0<s<1$ yields 
\begin{align*}
\liminf_{N\to \infty}\mathcal F^{mod}_{\beta}(\P^N_{t})
    &\leq (1-t) \mathcal F^{elec}_{\beta}(\P_{0})+t\mathcal F^{elec}_{\beta}(\P_{1})\\
    &-\frac{(1-t)t\beta}{4}
    \int \sum_{(0,0)\neq (x,y)\in \bar \xi} (x-y)^2\inf_{z\in [x,y]}\abs{z}^{-2}\Q^0(d\bar\xi).\nonumber
\end{align*}
However, $\mathcal F_{\beta}^{elec}$ is not the lower semicontinuous regularization of $\mathcal F^{mod}_{\beta}$. From \cite[Theorem 1]{Lebl__2016} it follows that 
$\mathcal F_{\beta}^{elec}$ is the  lower semicontinuous regularization of $\mathcal E+\beta (\mathcal D^{\log}+\mathcal W^{mod})$, with \begin{align*}
        \mathcal D^{\log}(\P):=C^{\log}\limsup_{n\to \infty}\left(n^{-d}\int_{\Lambda_n}\int_{\Lambda_n}(\rho_{2,\P}(x,y)-1)dxdy+1\right)\log(n), \quad \forall \P\in \mathcal P_{s,1}(\Gamma),
    \end{align*}
    where $C^{\log}$ is a constant and $\rho_{2,\P}(x,y)$ is the two-point correlation function of $\P$.
    Since the point processes $\P^N_i$, $i=0,1$, are class-1-hyperuniform (cf. \cite[Proposition 5.3]{Lebl__2016}), 
    Corollary \ref{cor:hyperunif_preserved} shows that also the processes $\P^N_t$, $N\in\IN, t\in [0,1]$, are class-1-hyperuniform. Thus, by \cite[Proposition 5.3]{Lebl__2016} we have that $\mathcal D^{\log}(\P_t^N)=0$.
 Hence, 
\begin{align*}
    \liminf_{N\to \infty}\mathcal F^{mod}_{\beta}(\P^N_{t})&= \liminf_{N\to \infty}\mathcal F^{mod}_{\beta}(\P^N_{t})+\beta D^{\log}(\P_t^N)\\
    &\geq \mathcal F_{\beta}^{elec}(\P_t),
\end{align*}
which proves the claim.
\end{proof}

Finally, as an immediate consequence, we obtain the uniqueness of the minimizer of $\mathcal F_\beta^{elec}$.

\begin{cor}\label{cor:uniquePelec}
    For every $\beta>0$ the functional
    \begin{align}
        \mathcal F_{\beta}^{elec}: \mathcal P_{s,1}(\Gamma)\to \IR \cup \{+\infty\}, \P\mapsto \beta \mathcal W^{elec}(\P)+\mathcal E(\P)
    \end{align}
    has a unique minimizer. 
\end{cor}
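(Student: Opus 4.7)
The plan is to combine the strict inequality of Theorem~\ref{Theorem 1} with standard compactness and lower semicontinuity, mirroring the argument of Corollary~\ref{cor:uniq_min} in the long-range setting. For existence, I will take a minimizing sequence $(\P_n)_n$, which may assume without loss of generality to stay in a sublevel set $\{\mathcal F_\beta^{elec}\le c\}$, and use that $\mathcal W^{elec}$ is bounded from below (see \cite{Lebl__2016}) to conclude that $(\mathcal E(\P_n))_n$ is bounded. Sublevel sets of the specific entropy are weakly compact by \cite[Proposition~6.8]{RAS}, so I extract a weak limit $\P^\star$ and apply the lower semicontinuity of $\mathcal F_\beta^{elec}$ proved in \cite[Theorem~1]{Lebl__2016} to see that $\P^\star$ is a minimizer; non-emptiness of the domain is guaranteed by the class-1-hyperuniform constructions of \cite[Proposition~5.3]{Lebl__2016}.

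For uniqueness, suppose $\P_0,\P_1$ are both minimizers with common value $m$, and invoke Theorem~\ref{Theorem 1} to obtain a coupling $\Q\in\Cpl_{s,m}(\P_0,\P_1)$ and interpolation $(\P_t)_{t\in[0,1]}$. Evaluating the inequality of Theorem~\ref{Theorem 1} at $t=\tfrac12$ and combining with minimality yields
\begin{equation*}
0\leq \mathcal F^{elec}_\beta(\P_{1/2})-m \leq -\frac{(\eins_0(s)+s(s+1))\beta}{16}\int \sum_{(0,0)\neq (x,y)\in \bar\xi} (x-y)^2\inf_{z\in [x,y]}\abs{z}^{-s-2}\Q^0(d\bar\xi),
\end{equation*}
so the nonnegative integral on the right must vanish. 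I will then argue that this forces $x=y$ for $\Q^0$-a.e.\ $(x,y)\in\bar\xi$, so that $\PR_0(\bar\xi)=\PR_1(\bar\xi)$ almost surely, whence $\P_0^0=(\PR_0)_\#\Q^0=(\PR_1)_\#\Q^0=\P_1^0$. The bijection $\j$ of Theorem~\ref{thm:correspondence} then gives $\P_0=\P_1$.

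The main obstacle is passing from vanishing of the integral to $x=y$ pointwise, since the weight $\inf_{z\in[x,y]}|z|^{-s-2}$ is infinite as soon as $0\in[\min(x,y),\max(x,y)]$ with $x\neq y$. This is handled by combining monotonicity of $\bar\xi\in\supp(\Q)$ with simplicity of $\P_0$ and $\P_1$: if $(x,y)\in\bar\xi$ with $x=0,\,y\neq0$ (or symmetrically) were to occur with positive $\Q^0$-mass, then $\PR_0(\bar\xi)$ would have $0$ as a double point, contradicting simplicity of $\P_0^0$. Consequently, for every pair $(x,y)\in\bar\xi\setminus\{(0,0)\}$ the closed interval $[\min(x,y),\max(x,y)]$ is contained either in $(0,\infty)$ or in $(-\infty,0)$, so the weight is finite and strictly positive, and the gain being zero forces $x=y$. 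Modulo this observation, the proof is an immediate application of Theorem~\ref{Theorem 1} together with the correspondence between stationary point processes and their Palm measures.
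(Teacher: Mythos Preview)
Your proof is correct and follows essentially the same route as the paper's: existence via compactness of entropy sublevel sets plus lower semicontinuity of $\mathcal F_\beta^{elec}$, and uniqueness by contradiction from Theorem~\ref{Theorem 1}. One small correction: your ``main obstacle'' is a phantom, since $\inf_{z\in[x,y]}|z|^{-s-2}=\max(|x|,|y|)^{-s-2}$ is always finite and strictly positive for $(x,y)\neq(0,0)$ (it is an \emph{infimum}, not a supremum), so the detour through simplicity---while not wrong---is unnecessary.
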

\begin{proof}
Since the sublevel sets of the specific relative entropy $\mathcal E$ are compact with respect to the weak topology (cf. \cite[Proposition 6.8]{RAS}) and since $\mathcal F_{\beta}^{elec}$ is lower semicontinuous (see \cite[Theorem 1]{Lebl__2016}), there exists a minimizer.
    Let $\P_i\in \mathcal P_{s,1}(\Gamma)$ be two distinct minimizers with $\mathcal F^{elec}_{\beta}(\P_i)<\infty$, $i=0,1$. Then Theorem \ref{Theorem 1} yields $\P\in \mathcal P_{s,1}(\Gamma)$ with $\mathcal F^{elec}_{\beta}(\P)<\mathcal F^{elec}_{\beta}(\P_0)=\mathcal F^{elec}_{\beta}(\P_1)$,  a contradiction.
\end{proof}
\begin{rem}
   As a consequence of \cite{LeSe17}, for the cases covered in this section we know that $D(\mathcal F_{\beta}^{elec})\neq \emptyset.$ Hence, Corollary \ref{cor:uniquePelec} covers the results of \cite{EHL21} and extends them to the case of long-range Riesz interaction. In particular, we obtain a variational characterisation of log- and Riesz-gases in $d=1$ as unique minimizers of $\mathcal F_\beta^{elec}.$
\end{rem}



\end{document}